\newtheorem{theorem}{Theorem}[section]
\newtheorem{definitio}[theorem]{Definition}
\newtheorem{lemma}[theorem]{Lemma}
\newtheorem{proposition}[theorem]{Proposition}
\newtheorem{corollary}[theorem]{Corollary}
\theoremstyle{remark}
\newtheorem*{remark}{Remark}
\newtheorem*{notation}{Notation}
\newtheorem*{notations}{Notations}
\newtheorem*{convnot}{Conventions and notations}
\newtheorem*{plan}{Plan of the paper}
\newtheorem*{acknowledgement}{Acknowledgement}
\definecolor{vert}{RGB}{0,205,0}
\def\fl#1{\smash{\mathop{\hbox to 11mm{ \rightarrowfill\ }}\limits^{\scriptstyle{#1}}}}
\newcommand{\N}{\mathbb{N}}
\newcommand{\Z}{\mathbb{Z}}
\newcommand{\Q}{\mathbb{Q}}
\newcommand{\K}{\mathbb{K}}
\newcommand{\R}{\mathbb{R}}
\newcommand{\Ztt}{\mathbb{Z}[t^{\pm1}]}
\newcommand{\Qtt}{\mathbb{Q}[t^{\pm1}]}
\newcommand{\Al}{\mathfrak{A}}
\newcommand{\bl}{\mathfrak{b}}
\newcommand{\un}{\mathcal{O}}
\newcommand{\iso}{\fl{\scriptstyle{\cong}}}
\newcommand{\tang}{\mathcal{T}_q}
\newcommand{\A}{\mathcal{A}}
\newcommand{\F}{\mathcal{F}}
\newcommand{\G}{\mathcal{G}}
\newcommand{\tA}{\widetilde{\mathcal{A}}}
\newcommand{\tAw}{\widetilde{\mathcal{A}}_\textrm{\textnormal{w}}}
\newcommand{\holw}{Hol$_\textrm{w}$}
\newcommand{\eqw}{\sim_\textnormal{w}}
\newcommand{\eql}{\sim_\ell}
\newcommand{\eqlgl}{\sim_{g\ell}}
\newcommand{\xd}{\,\raisebox{-0.5ex}{\begin{tikzpicture} \draw[->] (0,0) -- (0,0.4);\end{tikzpicture}}\,}
\newcommand{\xdop}{\,\raisebox{-0.5ex}{\begin{tikzpicture} \draw[<-] (0,0) -- (0,0.4);\end{tikzpicture}}\,}
\newcommand{\xc}{\raisebox{-0.2ex}{\begin{tikzpicture} \draw (0,0) circle (0.16); \draw[->] (0.16,0.03) -- (0.16,0.04);\end{tikzpicture}}\,}
\newcommand{\xxlw}{\ooalign{$\bigcirc$\cr\hidewidth $\vcenter{\hbox{\smaller\smaller\smaller$\bigcirc$}}$\hidewidth\cr\hidewidth$\ast$\hidewidth}^{}}
\newcommand{\Zp}{Z^\bullet}
\newcommand{\Zc}{Z^\circ}
\newcommand{\tZ}{\tilde{Z}}
\newcommand{\expd}{\textrm{\textnormal{exp}}_\sqcup}
\newcommand{\lk}{\textrm{\textnormal{lk}}}
\newcommand{\hW}{\widehat{W}}
\newcommand{\LMO}{{\scalebox{0.6}{LMO}}}
\newcommand{\Kri}{{\scalebox{0.6}{Kri}}}
\newcommand{\Les}{{\scalebox{0.6}{Les}}}
\newcommand{\aug}{{\scalebox{0.6}{aug}}}
\newcommand{\z}{{\scalebox{0.6}{$\Z$}}}
\newcommand{\DD}{\mathcal D}
\newcommand{\SK}{\textnormal{SK}}
\newcommand{\Int}{\mathrm{Int}}
\newcommand{\fract}[2]{\hbox{\leavevmode \kern.1em \raise .25ex \hbox{\the\scriptfont0 $#1$}\kern-.1em }\big/  {\hbox{\kern-.15em \lower .5ex \hbox{\the\scriptfont0 $#2$}} }}
\newcommand{\draww}[1]{\draw[white,line width=5pt] #1 \draw #1}
\newcommand{\struts}[3]{
\raisebox{-0.5cm}{
\begin{tikzpicture} 
\draw[->] (0,0) node[right] {$\scriptstyle #1$} -- (0,0.4) node[left] {$\scriptstyle #3$};
\draw (0,0.4) -- (0,0.8) node[right] {$\scriptstyle #2$};
\end{tikzpicture}}}
\title{A universal finite type invariant of knots in homology $3$--spheres}
\author{Benjamin Audoux}
\author{Delphine Moussard}
\begin{document}

\begin{abstract}
 An essential goal in the study of finite type invariants of some objects (knots, manifolds) is the construction of a universal finite type invariant, universal in the sense that it contains all finite type invariants of the given objects. Such a universal finite type invariant is known for knots in the $3$--sphere ---the Kontsevich integral--- and for homology $3$--spheres ---the Le--Murakami--Ohtsuki invariant. For knots in homology $3$--spheres, an invariant constructed by Garoufalidis and Kricker as a lift of the Kontsevich integral has been considered for the last two decades as the best candidate to be a universal finite type invariant. Although this invariant is eventually universal in restriction to knots whose Alexander polynomial is trivial, we prove here that it is not powerful enough in general. For that we provide a refinement of its construction which produces a strictly stronger invariant, and we prove that this new invariant is a universal finite type invariant of knots in homology $3$--spheres. This provides a full diagrammatic description of the graded space of finite type invariants of knots in homology $3$--spheres.
\end{abstract}

\maketitle

\tableofcontents

\section{Introduction}

Given a set of topological objects, in general knots or manifolds, and an operation on them, like crossing change for knots or some surgery on manifolds, the finite type invariants for these objects are defined by their behaviour with respect to this operation.
The notion of finite type invariants was introduced independently by Goussarov and Vassiliev in the early 90's for the study of invariants of knots in $S^3$. It was extended by Ohtsuki to invariants of integral homology $3$--spheres \cite{Oht4}. Then different theories were developed, including in particular a theory due to Goussarov and Habiro independently which applies to invariants of all $3$--manifolds and their knots \cite{Gou,Hab}. Garoufalidis, Kricker and Rozansky built upon the latter to study invariants of knots in homology $3$--spheres \cite{GK,GR}. 

To a theory of finite type invariants, {\em ie} a set of objects and an operation, one associates the graded space defined by a 
filtration, provided by the operation, on the vector space generated by the objects; its dual is the space of finite type invariants, graded by their degree. In the study of such a theory, the grail is to obtain a combinatorial description of this graded space, by identifying it with a graded space of Feynman diagrams; this requires the construction of a universal finite type invariant. This has been achieved by Bar-Natan and Kontsevich for knots in $S^3$ using the Kontsevich integral \cite{Kon,BN}. For integral homology $3$--spheres in the Goussarov--Habiro theory, the description of the graded space follows from works of Garoufalidis--Goussarov--Polyak \cite{GGP}, Habiro \cite{Hab} and Le \cite{Le}, with the LMO invariant of Le--Murakami--Ohtsuki as universal finite type invariant. This was generalized to rational homology $3$--spheres by works of Massuyeau \cite{Mas} and the second author \cite{M2}. In their study of finite type invariants of knots in homology $3$--spheres, Garoufalidis--Kricker--Rozansky described the graded space in the case of knots whose Alexander polynomial is trivial with, as universal invariant, the Kricker invariant defined in \cite{GK} following a first construction of \cite{Kri}. A graded space of diagrams suitable for knots with any Alexander polynomial was proposed in \cite{M7}, but the universal invariant that would make the description complete was still missing. Here, we construct a refinement of the Kricker invariant and we prove that it is a universal finite type invariant of knots in homology $3$--spheres. Since this refinement is strict, it shows in particular that the Kricker invariant is not universal.

The universal invariants that appear in these theories all derive from the Kontsevich integral of knots and links in $S^3$ in some sense. The constructions of the LMO invariant and the Kricker invariant apply the Kontsevich integral to surgery presentations of the manifold or the pair (manifold, knot). Another approach is to extend directly the idea of the Kontsevich integral: in this case, the invariants of $3$--manifolds and their knots are obtained by computing integrals in associated configuration spaces. This approach has led to the KKT invariant of Kontsevich--Kuperberg--Thurston for homology $3$--spheres \cite{Kon2,KT,Les5} and to the Lescop invariant for knots in homology $3$--spheres \cite{Les2}. These two methods are expected to produce equivalent invariants, but no direct comparison is known for the LMO and KKT invariants, nor for the Lescop and Kricker invariants (meaning that we are not able to express one invariant in terms of the other). However, these invariants may be compared through finite type invariants theories. 
The LMO and KKT invariants are both universal finite type invariants of integral homology $3$--spheres, so that they are equivalent (in the sense that they distinguish the same pairs of manifolds) \cite{Le,Les}; this also holds for rational homology $3$--spheres with homology groups of a given cardinality \cite{Mas,M2}. Similarly, the equivalence of the Kricker and Lescop invariants for knots in homology $3$--spheres with trivial Alexander polynomial stems from \cite{GK,M7}. In \cite{Les3}, Lescop conjectured that this equivalence holds for all null-homologous knots in rational homology $3$--spheres. The description of the graded space given here is a great step toward this conjecture.

\begin{notations}
 For $\K=\Z,\Q$, a {\em $\K$--sphere} is a closed oriented $3$--manifold which has the same homology with coefficients in $\K$ as the standard $3$--sphere. A {\em $\K\SK$--pair} is a pair $(M,K)$ where $M$ is a $\K$--sphere and $K$ is an oriented knot in $M$ with trivial homology class in $H_1(M;\Z)$. We consider $\K$--spheres and $\K$SK--pairs up to orientation-preserving homeomorphism (of pairs when appropriate).
\end{notations}

We consider finite type invariants of $\Q\SK$--pairs with respect to a surgery move called null LP--surgery. It turns out that the classes of $\Q\SK$--pairs up to null LP--surgeries are classified by their Blanchfield modules, namely their Alexander modules equipped with the Blanchfield form \cite{M3}. Hence we fix a Blanchfield module $(\Al,\bl)$ and we work with $\Q\SK$--pairs whose Blanchfield module is isomorphic to $(\Al,\bl)$. We denote by $\G(\Al,\bl)$ the graded space associated to the finite type theory for null LP--surgery on those $\Q\SK$--pairs (see the end of Section \ref{subsecsurgeries} for a precise definition). In \cite{M7}, a graded space of diagrams $\A^\aug(\Al,\bl)$ was constructed, together with a canonical onto map $\varphi:\A^\aug(\Al,\bl)\twoheadrightarrow\G(\Al,\bl)$. The superscript ``aug'' refers to diagrams that are ``augmented'' with isolated vertices, as detailed in the next section.

\begin{figure} [htb]
\begin{center}
\begin{tikzpicture} [xscale=1.5,yscale=0.5]
 \draw (2.2,2) node {$\A^\aug(\Al,\bl)$};
 \draw (2,-2) node {$\A^\aug(\delta)$};
 \draw (0.2,0) node {$\G(\Al,\bl)$};
 \draw[->>] (1.5,1.6) -- (0.7,0.45); \draw (1.2,0.6) node {$\varphi$};
 \draw[->] (0.7,-0.45) -- (1.5,-1.6); \draw (1,-1.7) node {$Z^\aug$};
 \draw[->] (2,1.3) -- (2,-1.3); \draw (2.4,0) node {$\psi$};
 \draw[->] (0.7,0.75) -- (1.5,1.9); \draw (1,1.8) node {$\tilde Z^\aug$};
\end{tikzpicture}
\end{center}
\caption{Invariants and maps on the graded space\\ {\footnotesize Here, $Z^{\rm \scriptscriptstyle aug}$ is the augmented version of either $Z^{\rm \scriptscriptstyle Kri}$ or $Z^{\rm \scriptscriptstyle Les}$.}}
\label{figcommutativediagram}
\end{figure}

The Kricker invariant $Z^\Kri$ and the Lescop invariant $Z^\Les$ are valued in a graded space of diagram $\A(\delta)$ which depends on the annihilator $\delta$ of the Alexander module $\Al$. They can be written as series of finite type invariants and they induce maps on $\G(\Al,\bl)$ \cite{Les3,M6}. These invariants can be completed into an invariant with values in an ``augmented'' diagram space $\A^\aug(\delta)$. The composition with the map $\varphi$ is in both cases the same map $\psi:\A^\aug(\Al,\bl)\to\A^\aug(\delta)$. However, we proved in \cite{AM} that the map $\psi$ is not injective in general. The main goal of this article is to refine the Kricker invariant into an invariant $\tZ$ with values in $\A(\Al,\bl)$, so that its augmented version provides the inverse of the map $\varphi$. This will show that the graded space $\G(\Al,\bl)$ is naturally isomorphic to $\A^\aug(\Al,\bl)$. Hence, as it realizes the isomorphism, our invariant is a universal finite type invariant for $\Q\SK$--pairs with respect to null LP--surgeries, meaning that all finite type invariants can be recovered from it.

\begin{theorem}[Theorem~\ref{thtZauguniv}]
 There is an invariant $\tZ^\aug$ of $\Q\SK$--pairs which induces the inverse of the map $\varphi:\A^\aug(\Al,\bl)\to\G(\Al,\bl)$.
\end{theorem}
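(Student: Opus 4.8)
The strategy is to construct the refined invariant $\tZ$ directly, build its augmented version $\tZ^\aug$, and verify the two halves of the inverse relation with $\varphi$ separately. First I would recall the construction of the Kricker invariant $Z^\Kri$ from a surgery presentation of a $\Q\SK$--pair and identify precisely where information is lost when passing from $\A(\Al,\bl)$ to $\A(\delta)$ via $\psi$: the Kricker invariant records only the $\delta$-truncated data, whereas the Blanchfield module $(\Al,\bl)$ carries finer module-theoretic information. The refinement $\tZ$ should be obtained by a more careful bookkeeping of the equivariant linking data in the infinite cyclic cover, keeping track of the Alexander module structure rather than collapsing it to the annihilator $\delta$; concretely, one lifts the Kontsevich integral of a surgery link to the cover as in \cite{GK,Kri} but interprets the resulting labels in $\A(\Al,\bl)$ using the Blanchfield form $\bl$ to control the gluing. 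Proving that $\tZ$ is well defined — independent of the surgery presentation, hence invariant under handle slides, stabilizations, and the Kirby-type moves relating presentations — is the first technical block; here I would lean on the analogous well-definedness of $Z^\Kri$ and on the classification result of \cite{M3} that $\Q\SK$--pairs up to null LP--surgery are classified by their Blanchfield modules, so that it suffices to check invariance under null LP--surgeries that fix the Blanchfield module.

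Next I would pass to the augmented versions. Just as $Z^\Kri$ is completed to $Z^\aug$ valued in $\A^\aug(\delta)$ by adjoining isolated-vertex contributions, $\tZ$ is completed to $\tZ^\aug$ valued in $\A^\aug(\Al,\bl)$; the point of the augmentation is exactly to make room for the degree-zero / separated pieces that are needed to realize a two-sided inverse rather than just a one-sided one. I would then show $\tZ^\aug$ descends to a well-defined map on $\G(\Al,\bl)$, i.e. is a finite type invariant in the null LP--surgery sense, by the standard argument that its $n$-th truncation kills $(n+1)$-fold alternating sums of null LP--surgeries — this follows from the behaviour of the Kontsevich integral under the relevant local moves, together with the clasper/LP-surgery calculus.

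The heart of the proof is the identity $\tZ^\aug \circ \varphi = \mathrm{id}$ on $\A^\aug(\Al,\bl)$ and $\varphi \circ \tZ^\aug = \mathrm{id}$ on $\G(\Al,\bl)$. For the first, since $\varphi$ is onto and both sides are graded, it is enough to evaluate $\tZ^\aug$ on a $\Q\SK$--pair obtained by realizing a given diagram $D \in \A^\aug(\Al,\bl)$ as a clasper/LP-surgery on a model pair, and to check that the leading term of $\tZ^\aug$ on that pair is $D$ itself, with higher-degree corrections lying in higher filtration — a universality-type computation local in each clasper, where the Blanchfield-module labels are matched by construction. For the second identity one uses that $\varphi$ is an isomorphism once one knows it is injective; injectivity of $\varphi$ is equivalent to $\tZ^\aug$ being a left inverse together with surjectivity of $\varphi$, so the two statements bootstrap each other and the content is really the single leading-term computation above. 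The main obstacle I anticipate is precisely controlling this leading-term computation when the Alexander polynomial is nontrivial: the equivariant linking numbers take values in $\Q(t)$ modulo the relevant ideal, and one must show that the refined labelling in $\A(\Al,\bl)$ is compatible — under the diagrammatic relations (AS, IHX, and the module relations built from $\bl$) — with the gluing formula for the Kontsevich integral in the cover. This is where the previously observed non-injectivity of $\psi$ (from \cite{AM}) must be seen to be repaired by the finer target, and checking that no further relations are forced on $\A(\Al,\bl)$ — so that $\tZ^\aug$ really lands in the full space and inverts $\varphi$ rather than some quotient — is the delicate point.
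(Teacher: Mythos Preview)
Your outline has the right shape---construct $\tZ$, augment it, show it is finite type, and verify $\tZ^\aug\circ\varphi=\mathrm{id}$ (which, combined with the known surjectivity of $\varphi$, gives the full inverse)---but two concrete ingredients are missing and one point is confused. First, the construction: you say $\tZ$ comes from ``more careful bookkeeping'' of equivariant linking data, but the actual mechanism is a new operation $\omega$ that replaces the formal Gaussian integral. Given a non-degenerate Gaussian $G=\expd(\tfrac12 W)\sqcup H$ in $\tA(*_{\{1,\dots,n\}})$, one does \emph{not} contract legs against $\expd(-\tfrac12 W^{-1})$ (which lands in $\A(\delta)$); instead one relabels the $i$-colored univalent vertices of $H$ by the generators $x_i$ of the Alexander module presented by $^tW$, and records the Blanchfield linkings $f_{vv'}=-(W^{-1})_{ij}$. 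Showing this $\omega$ is well defined---in particular that it respects the winding and link relations on $\tA(\xxlw_{\pi_0(L)})$---is the technical core (Proposition~\ref{propkey} in the paper) and does not follow from the well-definedness of $Z^\Kri$. Second, your claim that well-definedness ``suffices to check under null LP--surgeries that fix the Blanchfield module'' is a conceptual slip: well-definedness of $\tZ$ on a $\Q\SK$--pair means invariance under Kirby moves on the surgery presentation, not invariance under LP--surgeries (which change the pair). The Kirby~II invariance in particular requires a nontrivial computation with $\omega$ and the change of presentation of the Alexander module under handleslide.

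Third, the augmentation and the finite type argument are more specific than you indicate. The isolated vertices are not generic ``degree-zero pieces'': they are labelled by primes $p$, and the augmented invariant is $\tZ^\aug=\tZ\sqcup\expd(\sum_p\rho_p)$ where $\rho_p(M)=-v_p(|H_1(M;\Z)|)\,\bullet_p$. For the finite type property one does not argue directly on arbitrary null LP--surgeries; one first reduces to \emph{elementary} surgeries (connected sum, $d$--surgery, borromean) via Theorem~\ref{thelsur}, and then handles each genus separately: genus~$0$ via multiplicativity (Lemma~\ref{lemma:elsurgenus0}), genus~$1$ via a framed $\Q$--torus argument (Proposition~\ref{propelsurgenus1} and Corollary~\ref{corZgenus1surgery}), and genus~$3$ via Le's computation (Theorem~\ref{thLe}). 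Only the borromean case contributes to the leading term, and the identification $\tZ_n\circ\varphi_n=\mathrm{id}$ (Proposition~\ref{proptZcircphi}) then reduces to matching meridians of the surgery link with the leaf labels of the realizing Y--link via Lemma~\ref{lemmaTrivialSurgery}. Your ``local in each clasper'' is the right intuition for this last step, but without the elementary-surgery reduction you cannot conclude that the non-borromean contributions vanish at the correct degree.
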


The invariant $\tZ$ is a lift of the Kricker invariant. The fact that the map $\psi$ is not always injective, while $\varphi$ is an isomorphism, proves that $\tZ$ strictly contains $Z^\Kri$.

Moreover, the description of the graded space $\G(\Al,\bl)$ for all Blanchfield modules shows that the Kricker invariant and the Lescop invariant induce the same map on $\G(\Al,\bl)$. A refinement of the Lescop invariant similar to the one we construct for the Kricker invariant would allow to prove the Lescop conjecture.

One can focus on knots in $\Z$--spheres, or $\Z\SK$--pairs.
There is a natural $\Z$--version of the null LP--surgery move, which provides a similar notion of finite type invariants. All the results of this paper can be transposed to this setting. In particular, the analogous result to the above theorem is given in Theorem~\ref{thZZuniv}.

\begin{plan}
 In Section~\ref{secbackground}, we give definitions and notations and we recall some background. In Section~\ref{secwinding}, we introduce the surgery presentations that we will use to define the invariant and we give a combinatorial computation of the associated equivariant linking matrices. Section~\ref{secdiagrams} is devoted to diagram spaces and diagrammatic operations that underly the construction of the invariant. Section~\ref{secinvariant} gives the construction of the invariant $\tZ$. In Section~\ref{secuniversality}, we describe the behaviour of our invariant with respect to null LP--surgeries and we establish universality. Finally, Section~\ref{secZpairs} adapts the results to the setting of knots in $\Z$--spheres.
\end{plan}

\begin{convnot}\ \\
 The boundary of an oriented manifold is oriented with the outward normal first convention. \\
 For submanifolds $X$ and $Y$ of a manifold $M$ such that $\dim(X)+\dim(Y)=\dim(M)$, $\langle X,Y\rangle$ denotes the algebraic intersection number of $X$ and $Y$.\\
 For $\K=\Z,\Q$, a {\em genus--$g$ $\K$--handlebody} is a compact $3$--manifold which has the same homology with coefficients in $\K$ as the standard genus--$g$ handlebody.\\
 Given a matrix $W$, $^tW$ denotes the transpose of $W$.\\
 For a matrix $A(t)$ with polynomial coefficients, we set $\bar A(t)=A(t^{-1})$. \\
 We denote by $\expd$ the exponential of diagrams with respect to the disjoint union. 
\end{convnot}

\begin{acknowledgement}
  The authors thank deeply the anonymous referee for its numerous comments which were very helpful to improve the clarity of the paper.  
\end{acknowledgement}

\section{Finite type invariants and diagram spaces}
\label{secbackground}

\subsection{Filtration defined by null LP--surgeries} \label{subsecsurgeries}

We first recall the definition of the Alexander module and the Blanchfield form. 

Let $(M,K)$ be a $\Q\SK$--pair. The \emph{exterior} $X$ of $K$ is the complement in $M$ of an open tubular neighborhood of $K$. Let $\tilde{X}$ be the \emph{infinite cyclic covering} of $X$, {\em ie} the covering associated with the kernel of the map $\pi_1(X)\to\Z=\langle t\rangle$ which sends the positive meridian of $K$ to $t$. The automorphism group of the covering is isomorphic to $\Z$; let $\tau$ be the generator associated with the action of the positive meridian. 
Denoting the action of $\tau$ as the multiplication by $t$, 
we get a structure of $\Qtt$--module on $\Al(M,K)=H_1(\tilde{X};\Q)$. This $\Qtt$--module is called the \emph{Alexander module} of $(M,K)$. 
It is a finitely generated torsion $\Qtt$--module. 

Let $\delta\in\Qtt$ be the annihilator of~$\Al$ (it is defined up to an invertible element of $\Qtt$, which has no importance here). 
Given two disjoint knots $J_1$ and $J_2$ in $\tilde{X}$, define the {\em equivariant linking number} of $J_1$ and~$J_2$ as: 
$$\lk_e(J_1,J_2)=\frac{1}{\delta(t)}\sum_{k\in\Z}\big\langle S,\tau^k(J_2)\big\rangle\, t^k,$$
where $S$ is a rational 2--chain in $\tilde X$ such that $\partial S=\delta(\tau)J_1$ and $\langle\cdot,\cdot\rangle$ is the intersection pairing in~$\tilde X$.
It is a well-defined element of $\frac{1}{\delta(t)}\Qtt$ which satisfies $\lk_e(J_2,J_1)(t)=\lk_e(J_1,J_2)(t^{-1})$ and 
$\lk_e(P(\tau)J_1,J_2)(t)=P(t)\lk_e(J_1,J_2)(t)$.
Now define the \emph{Blanchfield form} $\bl : \Al\times\Al \to \frac{\Q(t)}{\Qtt}$ as follows: if $\gamma$ ({\em resp} $\eta$) is the homology class of $J_1$ ({\em resp} $J_2$) in $\Al$, set
$$\bl(\gamma,\eta)=\lk_e(J_1,J_2)\ mod\ \Qtt.$$
The Blanchfield form is {\em hermitian}: $\bl(\gamma,\eta)(t)=\bl(\eta,\gamma)(t^{-1})$ and $\bl(P(t)\gamma,\eta)(t)=P(t)\,\bl(\gamma,\eta)(t)$ 
for all $\gamma,\eta\in\Al$ and all $P\in\Qtt$. 
Moreover, it is {\em non degenerate} (see Blanchfield in \cite{Bla}): $\bl(\gamma,\eta)=0$ for all $\eta\in\Al$ implies $\gamma=0$. 

The Alexander module of a $\Q\SK$--pair $(M,K)$ endowed with its Blanchfield form is its {\em Blanchfield module} denoted by $(\Al,\bl)(M,K)$. 
In the sequel, by {\em a Blanchfield module $(\Al,\bl)$}, we mean a pair $(\Al,\bl)$ that can be realized as the Blanchfield module of a $\Q\SK$--pair. 
An isomorphism between Blanchfield modules is an isomorphism between the underlying Alexander modules which preserves the Blanchfield form.

\medskip

We now define LP--surgeries. 
Note that the boundary of a genus--$g$ $\Q$--handlebody is homeomorphic to the standard genus--$g$ surface. 
The \emph{Lagrangian} $\mathcal{L}_A$ of a $\Q$--handlebody $A$ is the kernel of the map $i_*: H_1(\partial A;\Q)\to H_1(A;\Q)$ induced by the inclusion; it is indeed a Lagrangian subspace of $H_1(\partial A;\Q)$ with respect to the intersection form. 
Two $\Q$--handlebodies $A$ and $B$ have \emph{LP--identified} boundaries if $(A,B)$ is equipped with a homeomorphism $h:\partial A\to\partial B$ such that $h_*(\mathcal{L}_A)=\mathcal{L}_B$.

Let $M$ be a $\Q$--sphere, let $A\subset M$ be a $\Q$--handlebody and let $B$ be a $\Q$--handlebody whose boundary is LP--identified with $\partial A$. Set $M\left(\frac{B}{A}\right)=(M\setminus Int(A))\cup_{\partial A=_h\partial B}B$. We say that the $\Q$--sphere $M\left(\frac{B}{A}\right)$ is obtained from $M$ by the \emph{Lagrangian-preserving surgery}, or \emph{LP--surgery}, $\left(\frac{B}{A}\right)$.
 
Given a $\Q\SK$--pair $(M,K)$, a subset $A\subset M\setminus K$ is {\em null in $M\setminus K$} if the map $i_* : H_1(A;\Q)\to H_1(M\setminus K;\Q)$ induced by the inclusion has a trivial image.
A \emph{null LP--surgery} on $(M,K)$ is an LP--surgery $\left(\frac{B}{A}\right)$ such that $A$ is null in $M\setminus K$. 
The $\Q\SK$--pair obtained by surgery is denoted by $(M,K)\left(\frac{B}{A}\right)$. 

\medskip

We can now define a filtration on the rational vector space $\F_0$ generated by all $\Q\SK$--pairs up to orientation-preser\-ving homeomorphism. 
For $n\geq1$, we define $\F_n$ as the subspace of $\F_0$ generated by the {\em brackets}
$$\left[(M,K);\left(\frac{B_i}{A_i}\right)_{1\leq i \leq n}\right]=\sum_{I\subset \{ 1,...,n\}} (-1)^{|I|} (M,K)\left(\left(\frac{B_i}{A_i}\right)_{i\in I}\right)$$ 
for all $\Q\SK$--pairs $(M,K)$ and all families of $\Q$--handlebodies $(A_i,B_i)_{1\leq i \leq n}$, where the $A_i$ are null in $M\setminus K$ and disjoint, and each pair $(A_i,B_i)$ has LP--identified boundaries. Note that $\F_{n+1}\subset \F_n$. 
An invariant $\lambda$ of $\Q\SK$--pairs valued in some rational vector space is a \emph{finite type invariant of degree at most $n$ with respect to null LP--surgeries} if its $\Q$--linear extension to $\F_0$ satisfies $\lambda(\F_{n+1})=0$. 

As proven in \cite[Theorem 1.14]{M3}, two $\Q\SK$--pairs are related by null LP--surgeries if and only if their Blanchfield modules are isomorphic. This allows to work with $\Q\SK$--pairs with a given Blanchfield module. 
Hence, we fix an abstract Blanchfield module $(\Al,\bl)$, namely a pair $(\Al,\bl)$ which is isomorphic to the Blanchfield module of some $\Q\SK$--pair (these pairs have been characterized in \cite[Proposition~1.2 $\&$ Theorem~1.4]{M1}). Then we consider the subspace  $\F_0(\Al,\bl)$ of $\F_0$ generated by the $\Q\SK$--pairs whose Blanchfield module is isomorphic to $(\Al,\bl)$. Let $\big(\F_n(\Al,\bl)\big)_{n\in\N}$ be the filtration defined on $\F_0(\Al,\bl)$ by null LP--surgeries. Then, for $n\in\N$, $\F_n$ is the direct sum, over all isomorphism classes $(\Al,\bl)$ of Blanchfield modules, of the $\F_n(\Al,\bl)$. Set $\G_n(\Al,\bl)=\F_n(\Al,\bl) / \F_{n+1}(\Al,\bl)$ and $\G(\Al,\bl)=\prod_{n\in\N}\G_n(\Al,\bl)$. Our goal is to describe the graded space $\G(\Al,\bl)$; note that $\G_0(\Al,\bl)\cong\Q$. 

\subsection{Borromean surgeries}

We now introduce a specific type of LP--surgeries.
The {\em standard Y--graph} is the graph $\Gamma_0\subset \mathbb{R}^2$ represented in Figure \ref{figY4}. 
The looped edges of $\Gamma_0$ are called \emph{leaves} and the vertex incident to three different edges is called the {\em internal vertex}. 
With $\Gamma_0$ is associated a regular neighborhood $\Sigma(\Gamma_0)$ of $\Gamma_0$ in the plane. 
The surface $\Sigma(\Gamma_0)$ is oriented with the usual convention. This induces an orientation of the leaves and an {\em orientation} of the internal vertex, {\em ie} a cyclic order of the three edges, shown in Figure \ref{figY4}. 
Consider a 3--manifold $M$ and an embedding $h:\Sigma(\Gamma_0)\to \Int(M)$. The image $\Gamma$ of $\Gamma_0$ is a {\em Y--graph}, endowed with its {\em associated surface} $\Sigma(\Gamma)=h(\Sigma(\Gamma_0))$. The Y--graph $\Gamma$ is equipped with the framing induced by~$\Sigma(\Gamma)$. 
A \emph{Y--link} in $M$ is a collection of Y--graphs in $M$ whose associated surfaces are disjoint. 

\begin{figure}[htb] 
\begin{center}
\begin{tikzpicture} [scale=0.15]
\newcommand{\feuille}[1]{
\draw[rotate=#1,thick,color=gray] (0,-11) circle (5);
\draw[rotate=#1,thick,color=gray] (0,-11) circle (1);
\draw[rotate=#1,line width=8pt,color=white] (-2,-6.42) -- (2,-6.42);
\draw[rotate=#1,thick,color=gray] (2,-1.15) -- (2,-6.42);
\draw[rotate=#1,thick,color=gray] (-2,-1.15) -- (-2,-6.42);
\draw[white,rotate=#1,line width=5pt] (0,0) -- (0,-8);
\draw[rotate=#1] (0,0) -- (0,-8);
\draw[rotate=#1] (0,-11) circle (3);
\draw[->,rotate=#1] (-3,-10.9) -- (-3,-11.1);}
\draw (0,0) circle (1.5);
\draw[->] (0.1,1.5) -- (-0.1,1.5);
\feuille{0}
\feuille{120}
\feuille{-120}
\draw (-4,10) node{$\scriptstyle{\textrm{leaf}}$};
\draw[->] (-5,9) -- (-6.3,7.5);
\draw (11.5,-1) node{$\scriptstyle{\textrm{internal vertex}}$};
\draw[<-] (0.5,-0.1) -- (4,-1);
\draw (4.7,-9.2) node{$\Gamma_0$};
\draw[color=gray] (6.5,-16.8) node{$\Sigma(\Gamma_0)$};
\end{tikzpicture}
\end{center}
\caption{The standard Y--graph and its associated surface}\label{figY4}
\end{figure}

\begin{figure}[htb] 
\begin{center}
\begin{tikzpicture} [scale=0.15]
\begin{scope}
\newcommand{\feuille}[1]{
\draw[rotate=#1] (0,0) -- (0,-8);
\draw[rotate=#1] (0,-11) circle (3);}
\feuille{0}
\feuille{120}
\feuille{-120}
\draw (3,-4) node{$\Gamma$};
\end{scope}
\draw[very thick,->] (21.5,-3) -- (23.5,-3);
\begin{scope}[xshift=1200]
\newcommand{\bras}[1]{
\draw[rotate=#1] (0,-1.5) circle (2.5);
\draw [rotate=#1,white,line width=8pt] (-0.95,-4) -- (0.95,-4);
\draw[rotate=#1] {(0,-11) circle (3) (1,-3.9) -- (1,-7.6)};
\draw[rotate=#1,white,line width=6pt] (-1,-5) -- (-1,-8.7);
\draw[rotate=#1] {(-1,-3.9) -- (-1,-8.7) (-1,-8.7) arc (-180:0:1)};}
\bras{0}
\draw [white,line width=6pt,rotate=120] (0,-1.5) circle (2.5);
\bras{120}
\draw [rotate=-120,white,line width=6pt] (-1.77,0.27) arc (135:190:2.5);
\draw [rotate=-120,white,line width=6pt] (1.77,0.27) arc (45:90:2.5);
\bras{-120}
\draw [white,line width=6pt] (-1.77,0.27) arc (135:190:2.5);
\draw [white,line width=6pt] (1.77,0.27) arc (45:90:2.5);
\draw (-1.77,0.27) arc (135:190:2.5);
\draw (1.77,0.27) arc (45:90:2.5);
\draw (3.5,-4.5) node{$L$};
\end{scope}
\end{tikzpicture}
\end{center}
\caption{Y--graph and associated surgery link}\label{figborro4}
\end{figure}

Let $\Gamma$ be a Y--graph in a 3--manifold $M$ and let $\Sigma$ be its associated surface. Embed $\Sigma\times[-1,1]$ in $M$ so that $\Sigma\times\{0\}$ coincides with $\Sigma$, and in this $\Sigma\times[-1,1]$, associate with $\Gamma$ the six components link $L$ represented in Figure \ref{figborro4}. 
The \emph{borromean surgery on~$\Gamma$} is the surgery along the framed link~$L$; the surgered manifold is denoted $M(\Gamma)$. 
Borromean surgeries were introduced and studied by Matveev in \cite{Mat}. He proved in particular that two $3$--manifolds are related by a sequence of borromean surgeries if and only if they have isomorphic first homology group and linking pairing.
Moreover, he showed that a borromean surgery can be realized by cutting a genus--$3$ handlebody (a regular neighborhood of the Y--graph) and regluing it in another way, which preserves the Lagrangian. It follows that borromean surgeries are specific LP--surgeries.

\subsection{Colored Jacobi diagrams}

Fix a Blanchfield module $(\Al,\bl)$ and let $\delta\in\Qtt$ be the annihilator of $\Al$. 
An {\em $(\Al,\bl)$--colored diagram} $D$ is a unitrivalent graph without {\em strut} (isolated edge), with the following data:
\begin{itemize}
\item trivalent vertices are oriented (an {\em orientation of a trivalent vertex} is a cyclic order of the three half-edges that meet at this vertex; by convention, we fix it as \raisebox{-1.5ex}{
 \begin{tikzpicture} [scale=0.2]
  \newcommand{\tiers}[1]{
  \draw[rotate=#1,color=white,line width=4pt] (0,0) -- (0,-2);
  \draw[rotate=#1] (0,0) -- (0,-2);}
  \draw (0,0) circle (1);
  \draw[<-] (-0.05,1) -- (0.05,1);
  \tiers{0}
  \tiers{120}
  \tiers{-120}
 \end{tikzpicture}}
 in the pictures),
 \item edges are oriented and labeled by elements of $\Qtt$;
 \item univalent vertices are labeled by elements of $\Al$;
 \item for all pair $(v,v')$ in the set $V$ of univalent vertices of $D$, with $v\neq v'$, a rational fraction $f_{vv'}^D(t)\in\frac{1}{\delta(t)}{\Qtt}$ is fixed such that $f_{vv'}^D(t)\ mod\ \Qtt=\bl(\gamma,\gamma')$, where $\gamma$ ({\em resp} $\gamma'$) is the coloring of $v$ ({\em resp} $v'$); we require that $f_{v'v}^D(t)=f_{vv'}^D(t^{-1})$.
\end{itemize}
When it does not seem to cause confusion, we write $f_{vv'}$ for $f_{vv'}^D$. 
The {\em degree} of a colored diagram is the number of trivalent vertices of its underlying graph. The unique degree 0 diagram is the empty diagram. 
For $n\geq0$, set: 
$$\hat\A_n(\Al,\bl)=\frac{\Q \langle(\Al,\bl)\mbox{--colored diagrams of degree $n$}\rangle}{\Q \langle\mbox{AS, IHX, LE, OR, Hol, LV, EV, LD}\rangle},$$ 
where the relations are described in Figures~\ref{figrelAStoHol} and \ref{figrelLVEVLD}. Throughout the article, the notation $\Q\langle-\rangle$ means the space of all finite linear combinations of the elements in the brackets.

\begin{figure}[htb] 
\begin{center}
\begin{tikzpicture} [scale=0.3]
\begin{scope} [xshift=0cm]
 \draw (0,4) -- (2,2);
 \draw (2,2) -- (4,4);
 \draw (2,2) -- (2,0);
 \draw (5,2) node{$+$};
 \draw (8,2) .. controls +(2,0) and +(2.5,-1) .. (6,4);
 \draw[white,line width=6pt] (8,2) .. controls +(-2,0) and +(-2.5,-1) .. (10,4);
 \draw (8,2) .. controls +(-2,0) and +(-2.5,-1) .. (10,4);
 \draw (8,0) -- (8,2);
 \draw (11,2) node{$=$};
 \draw (12.5,2) node{0};
 \draw (6,-1.5) node{AS};
\end{scope}
\begin{scope} [xshift=-1cm]
 \draw (18,4) -- (20,3) -- (20,1) -- (18,0);
 \draw (20,1) -- (22,0);
 \draw (20,3) -- (22,4);
 \draw (20,2) node[left] {$\scriptstyle{1}$};
 \draw (23,2) node{$-$};
 \draw (24,4) -- (25,2) -- (27,2) -- (28,4);
 \draw (24,0) -- (25,2);
 \draw (27,2) -- (28,0);
 \draw (26,2) node[above] {$\scriptstyle{1}$};
 \draw (29,2) node{$+$};
 \draw (30,4) -- (33,2) -- (34,0);
 \draw[white,line width=6pt] (31,2) -- (34,4);
 \draw (30,0) -- (31,2) -- (34,4);
 \draw (31,2) -- (33,2);
 \draw (32,2) node[below] {$\scriptstyle{1}$};
 \draw (35,2) node{$=$};
 \draw (36.5,2) node{0};
 \draw (27,-1.5) node{IHX};
\end{scope}
\begin{scope} [xshift=40cm]
 \draw (0,0) -- (0,4);
 \draw[->] (0,2.9) -- (0,3) node[right] {$P(t)$};
 \draw (4,2) node{$=$};
 \draw (5.5,0) -- (5.5,4);
 \draw[->] (5.5,3.1) -- (5.5,3) node[right] {$P(t^{-1})$};
 \draw (4,-1.5) node{OR};
\end{scope}
\begin{scope} [yshift=-10cm]
 \draw (0,2) node{$x$};
 \draw (1,0) -- (1,4);
 \draw[->] (1,0) -- (1,3);
 \draw (1.8,2.8) node{$P$};
 \draw (3.2,2) node{$+$};
 \draw (4.7,2) node{$y$};
 \draw (5.7,0) -- (5.7,4);
 \draw[->] (5.7,0) -- (5.7,3);
 \draw (6.5,2.8) node{$Q$};
 \draw (7.9,2) node{$=$};
 \draw (9.4,0) -- (9.4,4);
 \draw[->] (9.4,0) -- (9.4,3);
 \draw (12.4,2.8) node{$xP+yQ$};
 \draw (6.5,-1.5) node{LE};
\end{scope}
\begin{scope} [xshift=22cm,yshift=-9cm,scale=0.9]
 \newcommand{\edge}[1]{
 \draw[rotate=#1] (0,0) -- (0,3);
 \draw[rotate=#1,->] (0,0) -- (0,1.5);}
 \edge{0} \draw (0,1.5) node[right] {$P$};
 \edge{120} \draw (-1.5,-0.75) node[above] {$Q$};
 \edge{240} \draw (1,-0.9) node[above right] {$R$};
 \draw (4,0) node{$=$};
\begin{scope} [xshift=8cm]
 \edge{0} \draw (0,1.5) node[right] {$tP$};
 \edge{120} \draw (-1.5,-0.75) node[above] {$tQ$};
 \edge{240} \draw (1,-0.9) node[above right] {$tR$};
\end{scope}
 \draw (14,-3.5) node{Hol};
\end{scope}
\begin{scope} [xshift=40cm,yshift=-9cm,scale=0.9]
 \foreach \x in {0,7} {
 \draw (\x,0) -- (\x,3);
 \draw (\x,-1) circle (1);
 \draw[->] (\x-1,-1) -- (\x-1,-1.1) node[left] {$Q$};
 \draw[->] (\x,0) -- (\x,1.5);}
 \draw (0,1.3) node[right] {$P$};
 \draw (3,0) node{$=$};
 \draw (7,1.3) node[right] {$tP$};
\end{scope}
\end{tikzpicture}
\end{center} \caption{Relations, where $x,y\in\Q$ and $P,Q,R\in\Qtt$} \label{figrelAStoHol}
\end{figure}

\begin{figure}[htb] 
\begin{center}
\begin{tikzpicture} [scale=0.3]
\begin{scope} [xshift=-4cm,yshift=-17cm]
\draw (-1,2) node{$x$};
\draw (1,0) -- (1,4);
\draw (1,1) node[right] {$D_1$};
\draw (1,4) node{$\bullet$};
\draw (1,4) node[right] {$\gamma_1$};
\draw (1,4) node[below left] {$\scriptstyle{v}$};
\draw (4,2) node{$+$};
\draw (5.5,2) node{$y$};
\draw (7.5,0) -- (7.5,4);
\draw (7.5,1) node[right] {$D_2$};
\draw (7.5,4) node{$\bullet$};
\draw (7.5,4) node[right] {$\gamma_2$};
\draw (7.5,4) node[below left] {$\scriptstyle{v}$};
\draw (10.5,2) node{$=$};
\draw (13,0) -- (13,4);
\draw (13,1) node[right] {$D$};
\draw (13,4) node{$\bullet$};
\draw (13,4) node[right] {$x\gamma_1+y\gamma_2$};
\draw (13,4) node[below left] {$\scriptstyle{v}$};
\draw (8.5,-2) node{$xf^{D_1}_{vv'}(t)+yf^{D_2}_{vv'}(t)=f^{D}_{vv'}(t)\quad \forall\, v'\neq v$};
\draw (7,-4.5) node{LV};
\end{scope}
\begin{scope} [xshift=22cm,yshift=-17cm]
\draw (-0.8,0) -- (-0.8,4);
\draw (-0.8,4) node{$\bullet$};
\draw (-0.8,4) node[below left] {$\scriptstyle{v}$};
\draw[->] (-0.8,0) -- (-0.8,2);
\draw (-0.8,4) node[right] {$\gamma$};
\draw (0.6,1.8) node{$PQ$};
\draw (-0.8,0) node[right] {$D$};
\draw (3,2) node{$=$};
\draw (5,0) -- (5,4);
\draw (5,4) node{$\bullet$};
\draw (5,4) node[below left] {$\scriptstyle{v}$};
\draw[->] (5,0) -- (5,2);
\draw (5,4) node[right] {$Q(t)\gamma$};
\draw (6,1.8) node{$P$};
\draw (5,0) node[right] {$D'$};
\draw (10,3) node[right] {$f_{vv'}^{D'}(t)=Q(t)f_{vv'}^D(t)$};
\draw (12,1) node[right] {$\forall\, v'\neq v$};
\draw (7,-2) node{EV};
\end{scope}
\begin{scope} [xshift=10cm,yshift=-29cm]
\draw (-1,0) -- (-1,4);
\draw (-1,2) node[right] {$1$};
\draw (-1,4) node{$\bullet$};
\draw (-1,4) node[below left] {$\scriptstyle{v_1}$};
\draw (-1,4) node[right] {$\gamma_1$};
\draw (2.5,0) -- (2.5,4);
\draw (2.5,2) node[right] {$1$};
\draw (2.5,4) node{$\bullet$};
\draw (2.5,4) node[below left] {$\scriptstyle{v_2}$};
\draw (2.5,4) node[right] {$\gamma_2$};
\draw (2.5,0) node[right] {$D$};
\draw (5.5,2) node{$=$};
\draw (8,0) -- (8,4);
\draw (8,2) node[right] {$1$};
\draw (8,4) node{$\bullet$};
\draw (8,4) node[below left] {$\scriptstyle{v_1}$};
\draw (8,4) node[right] {$\gamma_1$};
\draw (11.5,0) -- (11.5,4);
\draw (11.5,2) node[right] {$1$};
\draw (11.5,4) node{$\bullet$};
\draw (11.5,4) node[below left] {$\scriptstyle{v_2}$};
\draw (11.5,4) node[right] {$\gamma_2$};
\draw (11.5,0) node[right] {$D'$};
\draw (14.5,2) node{$+$};
\draw (16,0) -- (16,2.75) (18.5,0) -- (18.5,2.75) arc (0:180:1.25);
\draw[->] (17.2,4) -- (17.3,4);
\draw (17.25,4) node[above] {$P$};
\draw (18.5,0) node[right] {$D''$};
\draw (7.5,-2) node {$f^D_{v_1v_2}=f^{D'}_{v_1v_2}+P$};
\draw (8,-4.5) node {LD};
\end{scope}
\end{tikzpicture}
\end{center}
\caption{Relations, where $x,y\in\Q$, $P,Q,R\in\Qtt$ and $\gamma,\gamma_1,\gamma_2\in\Al$} \label{figrelLVEVLD}
\end{figure}

The automorphism group $Aut(\Al,\bl)$ of the Blanchfield module $(\Al,\bl)$ acts on $\hat\A_n(\Al,\bl)$ 
by acting on the colorings of all the univalent vertices of a diagram 
simultaneously. Denote by Aut the relation which identifies two diagrams obtained from one another by the action of an element 
of $Aut(\Al,\bl)$. Set:
$$\A_n(\Al,\bl)=\hat\A_n(\Al,\bl)/\langle \textrm{Aut} \rangle \qquad \textrm{and} \qquad \A(\Al,\bl)=\prod_{n\in\N}\A_n(\Al,\bl).$$ 

An {\em $(\Al,\bl)$--augmented diagram} is the union of an $(\Al,\bl)$--colored diagram (its {\em Jacobi part}) and of finitely many isolated vertices colored by prime integers. The {\em degree} of an $(\Al,\bl)$--augmented diagram is the number of its vertices of valence $0$ or $3$. Set:
$$\A_n^\aug(\Al,\bl)=\frac{\Q \langle\mbox{$(\Al,\bl)$--augmented diagrams of degree $n$}\rangle}{\Q \langle\mbox{AS, IHX, LE, OR, Hol, LV, EV, LD, Aut}\rangle}\quad\textrm{for }n\geq 0,$$
and $\A^\aug(\Al,\bl)=\prod_{n\in\N}\A_n^\aug(\Al,\bl).$
Note that a diagram with a univalent vertex $v$ labeled by $0$ may be non trivial, despite the relation LV, if the $f_{vv'}$ are not all trivial.

It will be useful later on to notice that the relations LD and Hol give the generalized relation LD described in Figure ~\ref{FigLDgen}.
\begin{figure}[htb] 
\begin{center}
\begin{tikzpicture} [scale=0.3]
\foreach \x in {0,9} {
\begin{scope} [xshift=\x cm]
\draw (-1,0) -- (-1,4) node{$\bullet$} node[below left] {$\scriptstyle{v_1}$} node[right] {$\gamma_1$};
\draw[->] (-1,2) node[right] {$Q$};
\draw (2.5,0) -- (2.5,4) node{$\bullet$} node[below left] {$\scriptstyle{v_2}$} node[right] {$\gamma_2$};
\draw[->] (2.5,2) node[right] {$R$};
\end{scope}}
\draw (2.5,0) node[right] {$D$};
\draw (5.5,2) node{$=$};
\draw (11.5,0) node[right] {$D'$};
\draw (14.5,2) node{$+$};
\draw (16,0) -- (16,2.75) (18.5,0) -- (18.5,2.75) arc (0:180:1.25);
\draw[->] (17.2,4) -- (17.3,4);
\draw (17.25,4) node[above] {$QP\bar R$};
\draw (18.5,0) node[right] {$D''$};
\draw (28,2) node {$f^D_{v_1v_2}=f^{D'}_{v_1v_2}+P$};
\end{tikzpicture}
\end{center}
\caption{Generalized relation LD} \label{FigLDgen}
\end{figure}

\subsection{The map $\varphi$}
\label{subsecvarphi}

We shall now recall the construction of the map $\varphi:\A^\aug(\Al,\bl)\to\G(\Al,\bl)$. For this, we fix a $\Q\SK$--pair $(M,K)$ with Blanchfield module isomorphic to $(\Al,\bl)$ and we associate surgery data to $(\Al,\bl)$--colored diagrams. For the labels of univalent vertices to make sense in the Blanchfield module of $(M,K)$, we need to choose an isomorphism $\xi:(\Al,\bl)\to(\Al,\bl)(M,K)$. However, different choices produce the same map $\varphi$ (which amounts to saying that the map $\varphi$ respects the relation Aut); hence we will keep implicit the isomorphism $\xi$ in the sequel.

\begin{figure}[htb] 
\begin{center}
\begin{tikzpicture} [scale=0.2]
\draw (-36,0) -- (-18,0);
\draw (-36,0) node{$\scriptscriptstyle{\bullet}$};
\draw (-18,0) node{$\scriptscriptstyle{\bullet}$};
\draw[->,line width=1.5pt,>=latex] (-10.5,0) -- (-7.5,0);
\draw (0,0) node{$\scriptscriptstyle{\bullet}$};
\draw (0,0) -- (6,0);
\draw (8,0) circle (2);
\draw[color=white,line width=6pt] (8,0) arc (-180:-90:2);
\draw (10,0) circle (2);
\draw[color=white,line width=6pt] (10,0) arc (0:90:2);
\draw (10,0) arc (0:90:2);
\draw[->] (11.5,1.3) -- (11.3,1.5);
\draw[->] (6.5,-1.3) -- (6.7,-1.5);
\draw (12,0) -- (18,0);
\draw (18,0) node{$\scriptscriptstyle{\bullet}$};
\end{tikzpicture}
\end{center}
\caption{Replacement of an edge} \label{figrempla4}
\end{figure}

An $(\Al,\bl)$--colored diagram is {\em elementary} if its edges that connect two trivalent vertices are colored by powers of $t$ and if its edges adjacent to univalent vertices are colored by $1$. Let~$D$ be an elementary diagram. Embed $D$ in $M\setminus K$ in such a way that the vertices of $D$ are embedded in some ball $B\subset M\setminus K$ and, for each edge colored by $t^k$, the closed curve obtained by connecting the extremities of this edge by a path in $B$ has linking number $k$ with $K$. Equip~$D$ with the framing induced by an immersion in the plane which induces the fixed orientation of the trivalent vertices. If an edge connects two trivalent vertices, then insert a positive Hopf link in this edge, as shown in Figure \ref{figrempla4}. 
At each univalent vertex $v$, glue a leaf $\ell_v$, trivial in $H_1(M\setminus K;\Q)$, in order to obtain a null Y--link $\Gamma$. Let $V$ be the set of all univalent 
vertices of $D$. Let $\tilde{B}$ be a lift of the ball $B$ in the infinite cyclic covering $\tilde{X}$ of the exterior of $K$ in $M$. 
For $v\in V$, let $\hat{\ell}_v$ be the extension of $\ell_v$ in $\Gamma$ (see Figure~\ref{figextension}) and let $\tilde{\ell}_v$ be the lift of $\hat{\ell}_v$ in $\tilde{X}$ defined by lifting the basepoint (the point $*$ on Figure~\ref{figextension}) in $\tilde{B}$. The null Y--link $\Gamma$ is a {\em realization} of $D$ if:
\begin{itemize}
 \item for all $v\in V$, $\tilde{\ell}_v$ is homologous to the label $\gamma_v$ of $v$, 
 \item for all $(v,v')\in V^2$, $\lk_e(\tilde{\ell}_v,\tilde{\ell}_{v'})=f_{vv'}$. 
\end{itemize}
If such a realization exists, the elementary diagram $D$ is {\em realizable}. 

\begin{figure}[htb]
\begin{center}
\begin{tikzpicture} [scale=0.4]
\begin{scope} [xshift=-5cm]
\draw (0,0) -- (0,3);
\draw (0,4) circle (1);
\draw[->] (1,3.9) -- (1,4.1);
\draw (0,0) node[right] {$*$};
\draw (0,0) node {$\scriptstyle{\bullet}$};
\draw (1,4) node[right] {$\ell_v$};
\end{scope}
\draw[->,very thick] (-0.5,2.5) -- (0.5,2.5);
\begin{scope} [xshift=5cm]
\draw (0,0) .. controls +(0.2,0) and +(0,-3) .. (0.2,3);
\draw (0,0) .. controls +(-0.2,0) and +(0,-3) .. (-0.2,3);
\draw (0,4) circle (1);
\draw[white,very thick] (-0.18,3) -- (0.18,3);
\draw[->] (1,3.9) -- (1,4.1);
\draw (0,0) node[right] {$*$};
\draw (0,0) node {$\scriptstyle{\bullet}$};
\draw (1,4) node[right] {$\hat{\ell}_v$};
\end{scope}
\end{tikzpicture} 
\end{center}
\caption{Extension of a leaf in a Y--graph\\{\footnotesize The left picture shows a part of a Y--graph made of a trivalent vertex $*$ and a univalent vertex~$v$ joined by an edge, and a leaf~$\ell_v$ glued to $v$. In the right picture, the edge and the leaf~$\ell_v$ have been replaced by the extension $\hat\ell_v$ of $\ell_v$.}} \label{figextension}
\end{figure}

Realizable elementary diagrams turn out to generate the graded space $\A(\Al,\bl)$. Any realization of such a diagram, of degree $n$, provides a family of $n$ disjoint null borromean surgeries in $M\setminus K$, defining a bracket in $\F_n(\Al,\bl)$. By \cite[Section 4]{M7}, this gives a well-defined graded $\Q$--linear map $\varphi:\A(\Al,\bl)\to\G(\Al,\bl)$. Further, this map can be
extended to $\A^\aug(\Al,\bl)$: to an isolated vertex labeled by $p$, we associate a surgery $\left(\frac{B_p}{B^3}\right)$, where $B_p$ is a $\Q$--ball satisfying $|H_1(B_p;\Z)|=p$. 

\begin{theorem}[\textup{\cite[Theorem 2.7]{M7}}]
 The graded $\Q$--linear map $\varphi:\A^\aug(\Al,\bl)\to\G(\Al,\bl)$ is well-defined, canonical and surjective.
\end{theorem}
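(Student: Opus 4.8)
The plan is to establish the three asserted properties of $\varphi:\A^\aug(\Al,\bl)\to\G(\Al,\bl)$ — well-definedness, canonicity (independence of choices), and surjectivity — by reducing everything to the already-cited material from \cite{M7} and organizing the verification around the generating set of realizable elementary diagrams. First I would record the reduction, stated just above the theorem, that realizable elementary diagrams generate $\A(\Al,\bl)$: every $(\Al,\bl)$--colored diagram is equivalent, modulo the relations AS, IHX, LE, OR, Hol, LV, EV, LD (and Aut), to a linear combination of such diagrams. This is the point where the edge and vertex relations are used to normalize edge labels to powers of $t$ between trivalent vertices and to $1$ near univalent vertices, and where the realizability of the needed diagrams must be invoked from \cite[Section~4]{M7}. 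For the augmented part, one extends by declaring that an isolated vertex labeled by a prime $p$ is sent to the bracket of the single $\Q$--ball surgery $\bigl(\tfrac{B_p}{B^3}\bigr)$, and one checks this is compatible with the multilinear bracket structure.

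Next I would verify well-definedness. Given a realizable elementary diagram $D$ of degree $n$, a realization produces $n$ disjoint null borromean surgeries in $M\setminus K$, hence (since borromean surgeries are LP--surgeries and nullity is preserved) an $n$-fold bracket in $\F_n(\Al,\bl)$, whose class in $\G_n(\Al,\bl)$ is the proposed value $\varphi(D)$. One must show this class does not depend on (i) the particular realization chosen, (ii) the embedding of $D$ and its framing, (iii) the ambient pair $(M,K)$ within its Blanchfield-module class, and (iv) the isomorphism $\xi$. Items (i)--(iii) are exactly the content of \cite[Section~4]{M7}: different realizations differ by moves that change the bracket only by elements of $\F_{n+1}$, and changing $(M,K)$ by a null LP--surgery changes the bracket by a higher-degree bracket as well; I would cite this rather than reprove it. Item (iv), independence of $\xi$, is handled by observing that two isomorphisms $(\Al,\bl)\to(\Al,\bl)(M,K)$ differ by an automorphism in $\mathit{Aut}(\Al,\bl)$, so the two resulting values differ by applying the relation Aut, which holds in $\A^\aug(\Al,\bl)$ by construction; hence the map descends to the quotient by Aut. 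One also checks that each defining relation (AS, IHX, LE, OR, Hol, LV, EV, LD) maps to zero in $\G(\Al,\bl)$ — again this is the verification carried out in \cite{M7}, using that the relations correspond to identities among brackets of borromean and ball surgeries modulo $\F_{n+1}$.

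For surjectivity I would argue that $\F_n(\Al,\bl)$ is generated modulo $\F_{n+1}(\Al,\bl)$ by brackets of $n$ disjoint null LP--surgeries, and that each such bracket can be replaced, modulo $\F_{n+1}$, by a bracket coming from a realization of a realizable elementary diagram. Concretely: any null LP--surgery can be approximated by null borromean surgeries (Matveev's theorem together with the null condition), an $n$-fold bracket of borromean surgeries on a null Y--link is the $\varphi$-image of the associated elementary diagram up to sign and framing conventions, and the $\Q$--ball surgeries account for the ``augmented'' isolated-vertex generators; this is precisely the surjectivity statement proved in \cite{M7}. The main obstacle — and the reason this theorem is merely recalled rather than reproved in full — is the well-definedness part: controlling, modulo $\F_{n+1}$, how the bracket of a family of borromean surgeries changes under all the moves relating two realizations of the same colored diagram (isotopy of leaves, change of the chosen lifts $\tilde\ell_v$, change of the filling $2$--chains computing equivariant linking), and matching these changes exactly with the combinatorial relations LV, EV, LD, Hol, OR. Since all of this is established in \cite[Theorem~2.7]{M7}, my proof would consist of assembling these inputs and verifying the single new point, namely that the extension to augmented diagrams and the passage to the quotient by Aut are consistent, so that $\varphi$ is a well-defined, canonical, graded, surjective $\Q$--linear map on $\A^\aug(\Al,\bl)$.
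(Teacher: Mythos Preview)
Your proposal is appropriate: the paper itself gives no proof of this theorem, as it is simply recalled from \cite[Theorem~2.7]{M7} with a brief indication of what ``canonical'' means. Your outline correctly identifies this and sketches the structure of the argument in \cite{M7} --- reduction to realizable elementary diagrams, invariance under the choice of realization and of $(M,K)$, compatibility with the relations, and surjectivity via reduction to borromean and $\Q$--ball surgeries --- which is exactly the content being cited; there is nothing further to compare.
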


That our map is canonical means that it does not depend of any of the choices we made, including the choice of the $\Q\SK$--pair $(M,K)$.

\subsection{Kricker and Lescop invariants}
\label{subsecKriLes}

We first introduce the diagram space in which the Kricker invariant $Z^\Kri$ and the Lescop invariant $Z^\Les$ take values.

Let $\delta\in\Qtt$. A {\em $\delta$--colored diagram} is a trivalent graph whose vertices are oriented and whose edges are oriented and colored by $\frac{1}{\delta(t)}\Qtt$. The degree of a $\delta$--colored diagram is the number of its vertices. 
Set:
$$\A_n(\delta)=\frac{\Q\langle\mbox{$\delta$--colored diagrams of degree $n$}\rangle}{\Q\langle\mbox{AS, IHX, LE, OR, Hol}\rangle},$$
where AS, IHX, LE, OR, Hol are the relations represented in Figure \ref{figrelAStoHol} with $P,Q,R\in\frac{1}{\delta(t)}\Qtt$. We have a graded algebra $\A(\delta)=\prod_{n\in\N} \A_n(\delta)$, where the product is defined by the disjoint union.
Since any trivalent graph has an even number of vertices, we have $\A_{2n+1}(\delta)=0$ for all~$n\geq0$.

There is a natural ``closing'' map from $\A(\Al,\bl)$ to $\A(\delta)$. 
With an $(\Al,\bl)$--colored diagram $D$ of degree $n$, we associate a $\delta$--colored diagram defined as the sum of all ways of pairing all vertices as indicated in Figure~\ref{figgroup}. 
This provides a well-defined $\Q$--linear map:
$\psi_n:\A_n(\Al,\bl)\to\A_n(\delta).$

\begin{figure}[thb] 
\begin{center}
\begin{tikzpicture} [scale=0.5]
\draw (0,0) -- (0,4) (2.5,0) -- (2.5,4);
\draw (0,4) node {$\scriptstyle{\bullet}$} (2.5,4) node {$\scriptstyle{\bullet}$};
\draw (0,4) node[above right] {$v$} (2.5,4) node[above right] {$v'$};
\draw[->] (0,0) -- (0,2);
\draw[->] (2.5,0) -- (2.5,2);
\draw (0,2) node[right] {$P$} (2.5,2) node[right] {$Q$};
\draw (-0.43,-0.25) -- (0,0) -- (0.43,-0.25) (2.07,-0.25) -- (2.5,0) -- (2.93,-0.25);
\draw[dashed] (-0.86,-0.5) -- (-0.43,-0.25) (0.43,-0.25) -- (0.86,-0.5) (1.64,-0.5) -- (2.07,-0.25) (2.93,-0.25) -- (3.36,-0.5);
\draw[->,line width=1.5pt] (4.8,2) -- (5.7,2);
\begin{scope} [xshift=8.5cm]
\draw (-0.43,-0.25) -- (0,0) -- (0.43,-0.25) (2.07,-0.25) -- (2.5,0) -- (2.93,-0.25);
\draw[dashed] (-0.86,-0.5) -- (-0.43,-0.25) (0.43,-0.25) -- (0.86,-0.5) (1.64,-0.5) -- (2.07,-0.25) (2.93,-0.25) -- (3.36,-0.5);
\draw (0,0) -- (0,2.75) (2.5,0) -- (2.5,2.75) arc (0:180:1.25);
\draw[->] (1.2,4) -- (1.3,4);
\draw (1.25,4) node[above] {$P(t)Q(t^{-1})f_{vv'}(t)$};
\end{scope}
\end{tikzpicture}
\end{center}
\caption{Pairing of vertices} \label{figgroup}
\end{figure}

The following result asserts the existence and the properties of an invariant $Z$ which may be either the Lescop invariant or the Kricker invariant. Althought it is not known 
whether they are equal or not, they both satisfy the properties of the theorem. Until the end of this section, we shall refer to any of the two by ``the invariant $Z$''.

\begin{theorem}[\cite{Les2,Les3,Kri,GK,M6}] \label{thinvariantZ}
 There is an invariant $Z=(Z_n)_{n\in\N}$ of $\Q\SK$--pairs with the following properties. 
 \begin{itemize}
  \item If $(M,K)$ is a $\Q\SK$--pair with Blanchfield module $(\Al,\bl)$, then $Z_n(M,K)\in\A_n(\delta)$, where $\delta$ is the annihilator of $\Al$. 
  \item Fix a Blanchfield module $(\Al,\bl)$ and let $\delta$ be the annihilator of $\Al$. The $\Q$--linear extension $Z_n:\F_0(\Al,\bl)\to\A_n(\delta)$ vanishes on $\F_{n+1}(\Al,\bl)$ and $Z_n\circ\varphi_n=\psi_n$.
  \item The invariant $Z$ is multiplicative under connected sum.
 \end{itemize}
\end{theorem}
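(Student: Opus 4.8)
\emph{This statement assembles results of \cite{Les2,Les3,Kri,GK,M6}, so the plan is to fit these pieces together rather than to reprove everything.} Take $Z$ to be either the Kricker invariant $Z^\Kri$ or the Lescop invariant $Z^\Les$. For $Z^\Kri$ I would recall the construction of \cite{Kri,GK}: start from a surgery presentation of $(M,K)$ by a framed link $L$ in $S^3$, with $K$ disjoint from $L$ and with $L$ null-homologous in the complement of $K$, apply the Kontsevich integral to $(S^3,K\cup L)$, and perform the equivariant formal Gaussian integration of \cite{GK} against the equivariant linking matrix of $L$. That matrix is invertible over $\frac{1}{\delta(t)}\Qtt$ precisely because $L$ is null, which is why the edges of the output diagrams carry labels in $\frac{1}{\delta(t)}\Qtt$; imposing the relations AS, IHX, LE, OR, Hol forced by the indeterminacies of the procedure then shows $Z_n(M,K)\in\A_n(\delta)$, giving the first bullet. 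For $Z^\Les$ one uses instead the configuration-space integrals of \cite{Les2}, whose output is organised into the same space $\A(\delta)$ by \cite{Les3}.

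For the second bullet, the vanishing of $Z_n$ on $\F_{n+1}(\Al,\bl)$ — the finite-type property — I would invoke \cite{M6} (resp.\ \cite{Les3}): a bracket over $n+1$ disjoint null borromean surgeries, read off the surgery presentation, feeds into the Aarhus integral only in degrees $\geq n+1$. The identity $Z_n\circ\varphi_n=\psi_n$ is the substantive point. The plan: fix a realizable elementary $(\Al,\bl)$--colored diagram $D$ of degree $n$, realize it by $n$ disjoint null borromean surgeries on a $\Q\SK$--pair $(M,K)$ with Blanchfield module $(\Al,\bl)$ as in Section~\ref{subsecvarphi}, and compute the lowest (degree $n$) part of $Z$ on the corresponding bracket in $\F_n(\Al,\bl)$. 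By the borromean surgery formula for $Z$, this part is obtained by inserting, for each Y--graph, the tripod carried by its internal vertex, together with the Hopf-link contributions on the internal edges of $D$ (Figure~\ref{figrempla4}), and then pairing the resulting univalent vertices, each pairing of $v$ against $v'$ weighted by $\lk_e(\tilde\ell_v,\tilde\ell_{v'})$. Since the realization makes $\tilde\ell_v$ represent $\gamma_v$ in $\Al$ and $\lk_e(\tilde\ell_v,\tilde\ell_{v'})=f_{vv'}$, this is exactly the vertex-pairing of Figure~\ref{figgroup}, i.e.\ $\psi_n(D)$. On an augmented diagram one checks the remaining datum: an isolated vertex labelled by a prime $p$ is realized by the surgery $\left(\frac{B_p}{B^3}\right)$ on a $\Q$--ball with $|H_1(B_p;\Z)|=p$, and $Z$ of such a surgery records $p$ compatibly with the augmentation.

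The third bullet, multiplicativity under connected sum, follows from the compatibility of the Kontsevich integral and of the equivariant Aarhus integration with juxtaposition of presentations: a presentation of $(M\#M',K\#K')$ is the ``side by side'' disjoint union of presentations of $(M,K)$ and $(M',K')$, its equivariant linking matrix is block diagonal, and the formal Gaussian integral of a block-diagonal quadratic form factors as a product — which in $\A(\delta)$ is the disjoint union. This is recorded in \cite{GK,M6} (resp.\ \cite{Les3}).

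The main obstacle is the identity $Z_n\circ\varphi_n=\psi_n$: it needs a surgery formula for $Z$ under a single null borromean surgery that tracks not just the underlying trivalent graph but the equivariant linking data carried by the leaves, followed by a leading-order analysis of the $n$-fold alternating sum showing that all lower-degree contributions cancel and that the degree-$n$ term is precisely the operation defining $\psi_n$. The bookkeeping of signs, of the Hopf links inserted in internal edges, and of the basepoints used to lift leaves to $\tilde X$ is exactly what \cite{M6} carries out, so here it is cited rather than redone.
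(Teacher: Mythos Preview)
The paper does not give its own proof of this theorem: it is stated as a compilation of results from \cite{Les2,Les3,Kri,GK,M6} with no argument beyond the citations. Your proposal is thus not competing with a proof in the paper but rather fleshing out what those citations contain, and as such it is a reasonable outline of how the pieces assemble.

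One small misstep: your paragraph on the second bullet ends with a discussion of augmented diagrams and isolated vertices labelled by primes. That material is not part of Theorem~\ref{thinvariantZ}; it belongs to the next result (Theorem~\ref{thunivinv}), where the invariant is augmented by the $\rho_p$'s and the maps $\varphi_n,\psi_n$ are extended to $\A_n^\aug$. In the present theorem, $\varphi_n$ and $\psi_n$ are the non-augmented maps on $\A_n(\Al,\bl)$, so the isolated-vertex check is simply irrelevant here. Apart from that, your sketch is in line with what the references actually prove.
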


In order to take into account the whole quotient $\G_n(\Al,\bl)$, we extend the invariant~$Z$. 
Define a {\em $\delta$--augmented diagram} as the disjoint union of a $\delta$--colored diagram with finitely many isolated vertices colored by prime integers. The {\em degree} of such a diagram is the number of its vertices. Set:
$$\A_n^\aug(\delta)=\frac{\Q\langle\mbox{$\delta$--augmented diagrams of degree $n$}\rangle}{\Q\langle\mbox{AS, IHX, LE, OR, Hol}\rangle}.$$
Once again, the disjoint union makes $\A^\aug(\delta)=\prod_{n\in\N}\A_n^\aug(\delta)$ a graded algebra. 
The map $\psi_n$ naturally extends to a map $\psi_n:\A_n^\aug(\Al,\bl)\to\A_n^\aug(\delta)$ preserving the isolated vertices. 

We shall complete the invariant $Z$ with degree--$1$ invariants. For each prime integer $p$, we define as follows an invariant $\rho_p$ of $\Q$--spheres: for a $\Q$--sphere $M$, $\rho_p(M)=-v_p(|H_1(M;\Z)|)\,\bullet_p$ where $v_p$ is the $p$--adic valuation. These invariants turn out to be degree--$1$ invariants of $\Q$--spheres with respect to LP--surgeries \cite[Proposition~1.9]{M2}. In turn, they are also degree--$1$ invariants of $\Q\SK$--pairs.
Set:
$$Z^\aug=Z\sqcup\expd\left(\sum_{p\textrm{ prime}}\rho_p\right)$$
and denote $Z_n^\aug$ the degree--$n$ part of $Z^\aug$.

\begin{theorem}[\cite{M7} Theorem 2.10] \label{thunivinv}
 Fix a Blanchfield module $(\Al,\bl)$. Let $\delta$ be the annihilator of $\Al$. The $\Q$--linear extension $Z_n^\aug:F_0(\Al,\bl)\to\A_n^\aug(\delta)$ vanishes on $\F_{n+1}(\Al,\bl)$ and satisfies $Z_n^\aug\circ\varphi_n=\psi_n$.
\end{theorem}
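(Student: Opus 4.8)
The plan is to bootstrap from Theorem~\ref{thinvariantZ}, the analogous statement for the non-augmented invariant $Z$, using two cheap inputs about the correction term $\sum_{p}\rho_p$: it records only $|H_1(\cdot;\Z)|$, which is additive under connected sum and unchanged by borromean surgeries; and $Z_1\equiv0$, since $\A_1(\delta)=0$ because a trivalent graph has an even number of vertices.

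For the vanishing on $\F_{n+1}(\Al,\bl)$, I would expand $Z^\aug=Z\sqcup\expd\bigl(\sum_p\rho_p\bigr)$ into bihomogeneous pieces, $Z^\aug_n=\sum_{a+b=n}Z_a\sqcup\tfrac1{b!}\bigl(\sum_p\rho_p\bigr)^{\sqcup b}$. By Theorem~\ref{thinvariantZ} each $Z_a$ is a finite type invariant of degree at most $a$, each $\rho_p$ has degree at most $1$, and --- a standard property of this filtration --- a product of finite type invariants of degrees at most $a_1,\dots,a_k$ has degree at most $a_1+\dots+a_k$; hence every summand, and so $Z^\aug_n$, is a finite type invariant of degree at most $n$ and kills $\F_{n+1}(\Al,\bl)$.

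For the identity $Z^\aug_n\circ\varphi_n=\psi_n$, I would use the fact, recalled in Section~\ref{subsecvarphi}, that $\A(\Al,\bl)$ is generated by realizable elementary diagrams: thus $\A^\aug_n(\Al,\bl)$ is spanned by the augmented diagrams obtained from a realizable elementary $(\Al,\bl)$--colored diagram $D$ of some degree $d$ by adjoining $r$ isolated vertices colored by primes $p_1,\dots,p_r$, with $d+r=n$. Fix such a diagram, a $\Q\SK$--pair $(M,K)$ with Blanchfield module $(\Al,\bl)$, a realization $\Gamma$ of $D$ giving $d$ disjoint null borromean surgeries $\beta_1,\dots,\beta_d$, and $r$ further surgeries $\sigma_j=\bigl(\tfrac{B_{p_j}}{B^3}\bigr)$ along disjoint standard balls $B^3\subset M\setminus K$ disjoint from $\Gamma$. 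Then $\varphi_n$ of our diagram is represented by $\mathcal B=\bigl[(M,K);\beta_1,\dots,\beta_d,\sigma_1,\dots,\sigma_r\bigr]\in\F_n(\Al,\bl)$, and by the vanishing above it suffices to compute $Z^\aug_n(\mathcal B)$. I would use three facts: (i) surgery along $\sigma_j$ is connected sum away from $K$ with the $\Q$--sphere $N_{p_j}:=S^3\bigl(\tfrac{B_{p_j}}{B^3}\bigr)$, and $Z^\aug$ is multiplicative under connected sum --- this follows from multiplicativity of $Z$ (Theorem~\ref{thinvariantZ}), additivity of $\sum_p\rho_p$ under connected sum, and $\expd(x+y)=\expd(x)\sqcup\expd(y)$; (ii) the $\beta_i$ preserve $|H_1(\cdot;\Z)|$, so $\expd\bigl(\sum_p\rho_p\bigr)$ takes a constant value $E$ on all the partial surgeries $(M,K)\bigl((\beta_i)_{i\in I}\bigr)$; (iii) $\sqcup$ is bilinear. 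Expanding the bracket, these give
$$Z^\aug(\mathcal B)=Z^\aug(\mathcal B_0)\sqcup\prod_{j=1}^{r}\bigl(1-Z^\aug(N_{p_j})\bigr),\qquad \mathcal B_0=\bigl[(M,K);\beta_1,\dots,\beta_d\bigr],$$
with $Z^\aug(\mathcal B_0)=Z(\mathcal B_0)\sqcup E$ by (ii). A degree count then finishes: $\mathcal B_0\in\F_d(\Al,\bl)$ forces $Z^\aug(\mathcal B_0)$ to vanish in degrees $<d$, with degree--$d$ part $Z_d(\mathcal B_0)=\psi_d(D)$ by Theorem~\ref{thinvariantZ}; and each $1-Z^\aug(N_{p_j})$ vanishes in degree $0$ with degree--$1$ part $\bullet_{p_j}$, because $Z_1(N_{p_j})=0$ and the degree--$1$ part of $\expd\bigl(\sum_p\rho_p\bigr)(N_{p_j})$ is $\sum_p\rho_p(N_{p_j})=-\bullet_{p_j}$. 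Since $d+r=n$, the degree--$n$ part of $Z^\aug(\mathcal B)$ is forced to be $\psi_d(D)\sqcup\bullet_{p_1}\sqcup\dots\sqcup\bullet_{p_r}$, which is $\psi_n$ of our diagram because $\psi_n$ preserves isolated vertices. That is the asserted identity.

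The hard part is this last degree count. The factors $1-Z^\aug(N_{p_j})$ carry in positive degree the full invariant $Z$ of the auxiliary $\Q$--spheres $N_{p_j}$, which is in general nontrivial, and one has to be sure none of it pollutes the degree--$n$ part of $Z^\aug(\mathcal B)$. This is exactly what the constraint $d+r=n$ (each $1-Z^\aug(N_{p_j})$ can contribute only in degree $1$) together with $Z_1\equiv0$ (that degree--$1$ contribution is precisely $\bullet_{p_j}$, with no tail) provides; everything else is formal manipulation of $\sqcup$ and of the filtration.
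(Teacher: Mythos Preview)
Your proof is correct. The paper does not actually prove Theorem~\ref{thunivinv}; it is imported from~\cite{M7}. The closest in-paper comparison is the proof of the analogous Theorem~\ref{thtZauguniv} for $\tZ^\aug$, which follows essentially the same strategy but is organized differently: rather than factoring $Z^\aug(\mathcal B)$ via multiplicativity under connected sum as you do, the paper expands $\tZ_n^\aug$ as a sum of monomials $\tZ_k\sqcup\prod_i\frac1{t_i!}(\rho_{p_i})^{t_i}$ and applies to each the general product-on-brackets formula displayed just before that theorem, then argues partition-by-partition which terms survive. Both routes rest on the same inputs (subadditivity of finite type degree under products, invariance of $|H_1|$ under borromean surgery, $\A_1(\delta)=0$, and Theorem~\ref{thinvariantZ}); your packaging via multiplicativity is somewhat more direct and lets the factorization do the bookkeeping, while the paper's product formula is agnostic to multiplicativity but then needs separately (the analogue of the ``Moreover'' clause of Theorem~\ref{thtZFTI}) that the Jacobi part of the invariant vanishes on top-order brackets containing a genus--$0$ surgery.
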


To summarize, both the Kricker invariant and the Lescop invariant give rise to an invariant $Z^\aug$ that fits into the following commutative diagram, where all space and maps are graded.

\begin{center}
\begin{tikzpicture} [xscale=1.5,yscale=0.5]
 \draw (2,2) node {$\G(\Al,\bl)$};
 \draw (2,-2) node {$\A^\aug(\delta)$};
 \draw (0,0) node {$\A^\aug(\Al,\bl)$};
 \draw[->>] (0.7,0.45) -- (1.5,1.6); \draw (1,1.4) node {$\varphi$};
 \draw[->] (0.7,-0.45) -- (1.5,-1.6); \draw (1,-1.5) node {$\psi$};
 \draw[->] (2,1.3) -- (2,-1.3); \draw (2.4,0) node {$Z^\aug$};
\end{tikzpicture}
\end{center}

To reach a full description of the graded space $\G(\Al,\bl)$, we shall construct a lift of the augmented Kricker invariant with values in $\A^\aug(\Al,\bl)$.

\section{Surgery presentations and winding matrices} \label{secwinding}

\subsection{Surgery presentation and Blanchfield module} \label{subsecBlanchfield}
Throughout the article, we fix a trivial knot $\un\subset S^3$. By a {\em surgery link}, we mean a framed link $L=\sqcup_{i=1}^s L_i\subset (S^3\setminus\un)$ whose connected components $L_i$ satisfy $\lk(L_i,\un)=0$. This condition ensures that the pair $(M,K)$ obtained from $(S^3,\un)$ by surgery on $L$ is a $\Q\SK$--pair. Moreover, any $\Q\SK$--pair admits such a surgery presentation (see \cite[Section 2.1]{M1}). 
In this section, we define the equivariant linking matrix of a surgery link and we give a diagrammatic computation of it. 

Let $L\subset S^3\setminus\un$ be a surgery link. Let $\DD$ be a disk in $S^3$ bounded by $\un$, transverse to $L$. An {\em admissible diagram} of $L$ is a projection of $L\cup\DD$ onto a square $[-1,1]^2$ where:
\begin{itemize}
 \item the image of $\DD$ is the segment line $[(0,0),(1,0)]$,
 \item the multiple points of the projection restricted to $L$ are transverse double points disjoint from $\DD$,
 \item the points of $L$ that project onto $[(0,0),(1,0)]$ are the points of $L\cap\DD$.
\end{itemize}

\begin{figure}[htb] 
\begin{center}
\begin{tikzpicture} 
 \draw (-1,-1) -- (1,-1) -- (1,1) -- (-1,1) -- (-1,-1);
 \draw (0,0) node {$\scriptstyle{\bullet}$} -- (1,0); 
 \draw (0.1,-0.3) arc (0:-90:0.3) arc (270:75:0.6);
 \draw (-0.1,0.3) arc (180:90:0.3) arc (90:-105:0.6);
 \draw (0.1,0.4) arc (0:-90:0.2) arc (90:270:0.2);
 \draw (-0.1,-0.4) arc (180:90:0.2) arc (-90:90:0.2);
 \draw (0.1,0.4) arc (0:20:0.2);
 \draw (-0.1,-0.4) arc (180:200:0.2);
\end{tikzpicture}
\end{center} \caption{An admissible diagram of a surgery presentation} \label{figadmproj}
\end{figure}

Let $E$ be the exterior of $\un$ in $S^3$ and let $\tilde{E}$ be the infinite cyclic covering of $E$. Note that~$\tilde{E}$ is homeomorphic to $\DD\times\R$. In particular, the equivariant linking number of knots in $\tilde E$, as defined in Section~\ref{subsecsurgeries}, takes values in $\Ztt$.

Fix an admissible diagram of $L$ and base points $\star_i$ of its components, away from the crossings and the disk $\DD$. Let $\tilde{E}_0\subset\tilde{E}$ be a copy of $E$ cut along $\DD$ and define the lift $\tilde{L_i}$ of $L_i$ in~$\tilde{E}$ by lifting~$\star_i$ in~$\tilde{E}_0$. Note that, since $L_i$ is null-homologous in $S^3\setminus\un$, $\tilde L_i$ is a knot in $\tilde E$. Consider the matrix of equivariant linkings $W_L=\left(\lk_e(\tilde{L}_i,\tilde{L}_j)\right)_{1\leq i,j\leq n}$. 
If the link $L$ is a surgery presentation for a $\Q\SK$--pair $(M,K)$, then the matrix $ ^tW_L$ is a presentation matrix of the Alexander module of $(M,K)$ with generators the classes of meridians $m_i$ of the components~$\tilde L_i$ \cite[Proposition~2.5]{M1}. Moreover, the Blanchfield form is given on these generators by the matrix $-W_L^{-1}$ \cite[Corollary~3.2]{M1}.

\subsection{Winding matrix}
We now give a diagrammatic computation of the matrix $W_L$.

Given an admissible diagram of $L$, define the {\em winding number} $w(L_i,L_j)\in\Ztt$ of $L_i$ and $L_j$ in the following way. For a crossing~$c$ between $L_i$ and $L_j$, denote $\varepsilon_{ij}(c)$ the algebraic intersection number of the disk $\DD$ with the path that goes from $\star_i$ to $c$ along $L_i$ and then from $c$ to $\star_j$ along~$L_j$. If $i=j$, change component at the first occurence of $c$. Set
$$w(L_i,L_j)=\left\lbrace\begin{array}{l l}
                         \displaystyle\frac{1}{2}\sum_c \textrm{sg}(c)\,t^{\varepsilon_{ij}(c)} & \textrm{ if } i\neq j \\ & \\
                         \displaystyle\frac{1}{2}\sum_c \textrm{sg}(c)\,(t^{\varepsilon_{ii}(c)}+t^{-\varepsilon_{ii}(c)}) & \textrm{ if } i=j
                        \end{array}\right.$$
where the sums are over all crossings between $L_i$ and $L_j$, and $\textrm{sg}(c)\in\{\pm1\}$ is the sign of the crossing $c$. Note that $w(L_j,L_i)(t)=w(L_i,L_j)(t^{-1})$. 
\begin{lemma} \label{lemmawdmatrix1}
 The winding numbers are invariant by isotopies that do not allow the base points to pass through the disk $\DD$. 
\end{lemma}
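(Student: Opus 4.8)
The plan is to show that the winding number $w(L_i,L_j)$ defined from an admissible diagram of $L$ is unchanged when we isotope $L$ (keeping the isotopy away from forcing a base point across $\DD$), by checking invariance under the elementary moves that generate such isotopies. Since an admissible diagram is a projection of $L\cup\DD$ onto $[-1,1]^2$ with $\DD$ sent to a fixed segment, the relevant local moves are: the three Reidemeister moves $R_1, R_2, R_3$ performed away from $\DD$; planar isotopies of the diagram; moves that slide a strand of $L$ across the disk $\DD$ (creating or deleting a pair of intersection points of $L$ with $\DD$, i.e. a ``finger move'' through $\DD$); and moves that slide a base point $\star_i$ along its component $L_i$ past a crossing. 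The hypothesis that the base points do not pass through $\DD$ is exactly what forbids the one remaining type of move, so it suffices to verify invariance under each move in this list.

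First I would handle the Reidemeister moves and planar isotopies performed in the complement of $\DD$. For $R_1$, the new crossing $c$ is a self-crossing; following the convention ``change component at the first occurrence of $c$'', the path from $\star_i$ to $c$ and back to $\star_i$ bounds a small disk disjoint from $\DD$, so $\varepsilon_{ii}(c)=0$ and the new crossing contributes $\frac12\,\mathrm{sg}(c)(t^0+t^{-0})=\mathrm{sg}(c)$; but an $R_1$ kink has a sign, and actually what we need is that the contribution is independent of... — more carefully, for $R_1$ the two strands at $c$ are the same component and the loop is null-homotopic in the diagram rel $\DD$, so $\varepsilon_{ii}(c)=0$; the subtlety is that a single $R_1$ move changes the self-crossing count by one crossing of sign $\pm1$ with $\varepsilon=0$, which would change $w(L_i,L_i)$ by $\pm1$. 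The resolution is that $w(L_i,L_i)$ as defined already only uses crossings of $L_i$ with itself, and the framing/writhe issue does not arise because equivariant linking $\lk_e(\tilde L_i,\tilde L_i)$ is genuinely an invariant of the embedded knot in $\tilde E$ — so I should instead argue that $w(L_i,L_j)$ has the same invariance as $\lk_e$. Concretely: for $R_2$ the two new crossings have opposite signs and equal $\varepsilon$ values (the path data to the two crossings differ by a contractible arc disjoint from $\DD$), so their contributions cancel. For $R_3$, a direct check shows the six crossings involved have matching signs and $\varepsilon$-values before and after, since the triangle region is disjoint from $\DD$. Planar isotopy in the complement of $\DD$ changes neither signs, nor $\varepsilon_{ij}(c)$, nor which crossing is ``first''.

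Next I would treat the finger move that pushes an arc of some component $L_k$ across $\DD$: this creates (or deletes) two transverse intersection points of $L$ with $\DD$ and no new crossings of $L$ with itself, so the crossing set is unchanged. However, such a move relocates the ``cut'' and hence can change $\varepsilon_{ij}(c)$ for crossings $c$ whose connecting path now runs through the fingered region. The key point is that pushing $L_k$'s finger across $\DD$ and back is equivalent, up to isotopy in $\tilde E = \DD\times\R$, to acting on $\tilde L_k$ by the deck transformation $\tau^{\pm1}$ on a sub-arc; but because the finger returns to its original side (the net intersection number of the finger with $\DD$ is $0$), the contributions to each $\varepsilon_{ij}(c)$ add in pairs of opposite sign and cancel, leaving $w(L_i,L_j)$ unchanged. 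Equivalently one observes that $\varepsilon_{ij}(c)$ is computed as an algebraic intersection number of a path with $\DD$, and changing the diagram by a $\DD$-crossing finger move changes this path by a loop whose algebraic intersection with $\DD$ is zero.

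Finally I would record the move sliding a base point $\star_i$ along $L_i$ past a crossing $c'$: by hypothesis this slide does not carry $\star_i$ through $\DD$, so it changes the path from $\star_i$ to any crossing $c$ by an arc of $L_i$ disjoint from $\DD$, hence does not change any $\varepsilon_{ij}(c)$; and it does not change which occurrence of a self-crossing is ``first'' because that convention is local to the crossing, not dependent on the base point location as long as no $\DD$-intersection is crossed. The main obstacle in writing this out rigorously is the finger move through $\DD$: one must set up carefully the bookkeeping of how $\varepsilon_{ij}(c)$ changes, and argue that the changes cancel because the finger has trivial algebraic intersection with $\DD$; this is best phrased by interpreting $w(L_i,L_j)$ directly as computing $\lk_e$ in $\tilde E\cong\DD\times\R$ and noting that the listed moves realize precisely the isotopies of the lifted link $\tilde L$ that fix the chosen lifts $\tilde L_i$ (up to base point), whence $w$ inherits the invariance of $\lk_e$. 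I would present the Reidemeister and base-point cases briefly and devote the bulk of the argument to the $\DD$-finger move.
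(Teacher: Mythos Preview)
Your overall strategy---reduce to elementary moves and check each---matches the paper's, but two of your checks contain genuine gaps.

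\textbf{The $R_1$ issue.} You correctly observe that a single $R_1$ kink contributes $\pm1$ to $w(L_i,L_i)$ and then try to argue this away by appealing to the invariance of $\lk_e$; that is circular, since the lemma is precisely what connects the combinatorial $w$ to $\lk_e$. The resolution is much simpler: $L$ is a \emph{framed} surgery link, so the relevant moves are the framed Reidemeister moves, under which the writhe (hence $w(L_i,L_i)$ at $t=1$, and in fact each $t^0$--contribution) is preserved. The paper's proof says exactly this: ``framed Reidemeister moves performed far from the base points and the disks''.

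\textbf{Sliding $\star_i$ past a crossing $c'$.} Your claim that the path from $\star_i$ to any crossing $c$ changes only by a short arc of $L_i$ disjoint from $\DD$ is wrong when $c=c'$. Before the slide, the path from $\star_i$ to (the passed occurrence of) $c'$ is a tiny arc; after the slide it runs almost all the way around $L_i$. The difference is therefore essentially all of $L_i$, and what makes $\varepsilon_{ij}(c')$ unchanged is that the algebraic intersection of $L_i$ with $\DD$ is $\lk(L_i,\un)=0$---the standing hypothesis on surgery links. This is exactly the point the paper invokes: ``preserved when the base point of a component passes through a crossing since the algebraic intersection number of this component with $\DD$ is trivial''. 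Your argument never uses this hypothesis, which is a sign that something is missing.

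Your discussion of the $\DD$--finger move is muddled but not fundamentally wrong; once the two points above are fixed, that part can be cleaned up without new ideas.
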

\begin{proof}
 First note that the winding numbers are preserved when a crossing passes through the disk~$\DD$. It is also preserved when the base point of a component passes through a crossing since the algebraic intersection number of this component with $\DD$ is trivial. Hence it only remains to check invariance with respect to framed Reidemeister moves performed far from the base points and the disks, which is direct. 
\end{proof}

\begin{lemma}
 $W_L=\big(w(L_i,L_j)\big)_{1\leq i,j\leq n}$
\end{lemma}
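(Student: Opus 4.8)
The goal is to show that the equivariant linking matrix $W_L$ of a surgery link $L$, computed via lifts determined by base points, coincides with the matrix of winding numbers $w(L_i,L_j)$ read off an admissible diagram. I would prove this by a localization argument, reducing the equivariant linking number to a signed count of crossings weighted by powers of $t$.

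First I would recall that $\lk_e(\tilde L_i,\tilde L_j)$ can be computed from a $2$--chain $S$ in $\tilde E$ with $\partial S = \tilde L_i$ (here $\delta = 1$ since $\tilde E \cong \DD\times\R$ and the relevant module is $\Ztt$), via $\lk_e(\tilde L_i,\tilde L_j) = \sum_{k\in\Z}\langle S,\tau^k(\tilde L_j)\rangle\, t^k$. The standard way to produce such a chain downstairs: take a Seifert-type surface, or more simply a $2$--chain $F_i$ in $S^3$ bounded by $L_i$ (it exists as $\lk(L_i,\un)=0$), push it off, and lift. The key geometric point is that $\langle S,\tau^k(\tilde L_j)\rangle$ counts intersections of the lift of $F_i$ with the $k$-th translate of $\tilde L_j$, and each such intersection point sits over a crossing of $L_i$ over $L_j$ (or an intersection of $L_j$ with the interior of $F_i$). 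I would choose $F_i$ so that its interior intersects $L_j$ only in a controlled way — the cleanest choice being to take $F_i$ to have a boundary collar along $L_i$ and otherwise avoid $L_j$ except where forced — so that the intersection count becomes exactly $\sum_c \mathrm{sg}(c)$ over crossings $c$ where $L_j$ passes over $L_i$. The exponent of $t$ at each such crossing records which sheet $\tilde E_0, \tau(\tilde E_0), \dots$ the intersection point lies in, which is precisely the algebraic intersection number $\varepsilon_{ij}(c)$ of $\DD$ with the path from $\star_i$ to $c$ to $\star_j$: this is the bookkeeping that translates "which lift" into "which power of $t$".

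Then I would handle the symmetrization. The equivariant linking number satisfies $\lk_e(\tilde L_j,\tilde L_i)(t) = \lk_e(\tilde L_i,\tilde L_j)(t^{-1})$, and one can compute it either by bounding a chain on $L_i$ and counting intersections with $L_j$, or vice versa. Averaging the two computations gives the symmetric formula, and in particular for $i=j$ the self-linking picks up both $t^{\varepsilon_{ii}(c)}$ and $t^{-\varepsilon_{ii}(c)}$ at each self-crossing (the two choices of which strand is "over"), with the $\frac12$ accounting for the double count — exactly matching the definition of $w(L_i,L_j)$ in the $i=j$ case. For $i\ne j$ the factor $\frac12$ similarly reflects averaging the "$F_i$ meets $L_j$" and "$F_j$ meets $L_i$" counts, which agree by the symmetry relation.

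The main obstacle I expect is the careful sign and exponent bookkeeping: verifying that the sign $\mathrm{sg}(c)$ of a crossing matches the local intersection sign of $F_i$ with $L_j$ (depending on orientation conventions for $\partial$ and for intersection numbers — the paper fixes "outward normal first"), and that the sheet index equals $\varepsilon_{ij}(c)$ rather than its negative or an off-by-one shift coming from where exactly $\tilde E_0$ is cut and where the base points lift. I would isolate this in a local model: a single crossing of $L_j$ over $L_i$ sitting near the disk $\DD$, compute both sides explicitly there, and then argue by Lemma~\ref{lemmawdmatrix1} (invariance of winding numbers under isotopies not dragging base points through $\DD$) together with the invariance of $\lk_e$ that the general case reduces to a sum of such local contributions. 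Once the local model is pinned down, the global statement follows by additivity of both sides over crossings.
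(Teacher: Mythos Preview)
Your approach is correct in principle but takes a more circuitous route than the paper. The paper's argument is essentially a two-line observation: first, since $\tilde E\cong\DD\times\R$, one has $\lk_e(\tilde L_i,\tilde L_j)=\sum_{k\in\Z}\lk\big(\tilde L_i,\tau^k(\tilde L_j)\big)\,t^k$; second, cutting the admissible diagram along the image of $\DD$ and stacking $\Z$ copies gives an explicit planar diagram of $\tilde L_i$ together with all translates $\tau^k(\tilde L_j)$, in which a crossing $c$ downstairs with $\varepsilon_{ij}(c)=k$ lifts to a single crossing between $\tilde L_i$ and $\tau^k(\tilde L_j)$ (and, when $i=j$ and $k\neq0$, to two crossings, one with $\tau^k(\tilde L_i)$ and one with $\tau^{-k}(\tilde L_i)$). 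The standard half-the-signed-crossings formula for linking number in the plane then yields $w(L_i,L_j)$ immediately, with no Seifert surface, no averaging step, and no local-model sign check.

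Your surface-based argument would go through --- taking the trace surface of $\tilde L_i$ in the lifted diagram realizes the under-crossings count, and the averaging you describe recovers the $\tfrac12$ --- but the paper's route sidesteps exactly the sign and exponent bookkeeping you flag as the main obstacle, since everything reduces to the ordinary planar linking formula applied to the explicit lifted diagram of Figure~\ref{figliftsurgerylink}.
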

\begin{proof}
First note that, since $\un$ is a trivial knot in $S^3$, its Alexander module is trivial, so that $\lk_e(\tilde L_i,\tilde L_j)=\sum_{k\in\Z}\lk\big(\tilde L_i,\tau^k(\tilde L_j)\big)\,t^k$. 
From the diagram of $L$, we can get a diagram of $\tilde L$ and its translates: cut the diagram along the image of $\DD$ and glue together $\Z$ copies of it, see Figure~\ref{figliftsurgerylink}. A crossing $c$ between $L_i$ and $L_j$ such that $\varepsilon_{ij}(c)=k$ lifts as a crossing between $\tilde L_i$ and $\tau^k(\tilde L_j)$, so that it contributes equally to $w(L_i,L_j)$ and $\lk_e(\tilde L_i,\tilde L_j)$. When $i=j$ and $k\neq0$, the crossing~$c$ lifts as two crossings of $\tilde L_i$, one with $\tau^k(\tilde L_i)$ and one with $\tau^{-k}(\tilde L_i)$.
\end{proof}

\begin{figure}[htb] 
\begin{center}
\begin{tikzpicture} 
\begin{scope} [scale=1.5]
 \draw (-1,-1) -- (1,-1) -- (1,1) -- (-1,1) -- (-1,-1);
 \draw (0,0) node {$\scriptstyle{\bullet}$} -- (1,0); 
 \draw (0.1,-0.3) arc (0:-90:0.3) node {$\star$} arc (270:75:0.6);
 \draw[->] (-0.5,-0.52) -- (-0.55,-0.49) node[below] {$L_2$};
 \draw (-0.1,0.3) arc (180:90:0.3) arc (90:-105:0.6);
 \draw (0.2,-0.6) node {$\star$};
 \draw[->] (0.5,-0.52) -- (0.55,-0.49) node[below] {$L_1$};
 \draw (0.1,0.4) arc (0:-90:0.2) arc (90:270:0.2);
 \draw (-0.1,-0.4) arc (180:90:0.2) arc (-90:90:0.2);
 \draw (0.1,0.4) arc (0:20:0.2);
 \draw (-0.1,-0.4) arc (180:200:0.2);
\end{scope}
\begin{scope} [xshift=3cm,yshift=-3cm,scale=0.6]
 \foreach \x in {0,4} {
 \draw (\x,-0.2) -- (\x,10.2);
 \draw[dashed] (\x,-0.7) -- (\x,-0.2) (\x,10.7) -- (\x,10.2);
 }
 \foreach \y in {0,2.5,5,7.5} {
 \draw (1,\y) .. controls +(0,1) and +(-0.5,0) .. (2,1.5+\y);
 \draw (3,2.5+\y) .. controls +(0,-1) and +(0.5,0) .. (2,1+\y);
 \draww{(3,\y) .. controls +(0,1) and +(0.5,0) .. (2,1.5+\y);}
 \draww{(1,2.5+\y) .. controls +(0,-1) and +(-0.5,0) .. (2,1+\y);}
 }
 \foreach \x in {1,3} {
 \draw (\x,0) -- (\x,-0.2) (\x,10) -- (\x,10.2);
 \draw[dashed] (\x,-0.7) -- (\x,-0.2) (\x,10.7) -- (\x,10.2);
 }
 \foreach \y in {0,5,10} \draw (0,\y) -- (4,\y);
 \draw (2.9,4.3) node {$\star$};
 \draw (2.9,3.2) node {$\star$};
 \draw[->] (2.9,5.7) -- (2.88,5.75) node[right] {$\tilde L_1$};
 \draw[->] (2.9,1.8) -- (2.88,1.75) node[right] {$\tilde L_2$};
 \draw (5,2.5) node {$\tilde E_0$};
 \draw (5,7.5) node {$\tau(\tilde E_0)$};
\end{scope}
 \draw (9,0) node {$w(L_1,L_2)=t+1$};
\end{tikzpicture}
\end{center} \caption{An admissible diagram and its lift} \label{figliftsurgerylink}
\end{figure}

In the sequel, we call $W_L$ the {\em winding matrix} of $L$.
To fully understand the effect of an isotopy on this matrix, we shall describe its modification when a base point passes through the disk $\DD$. Fix a component~$L_i$. Fix an admissible diagram of $L$ with the base point of $L_i$ located ``just before'' the disk, as shown in the first part of Figure \ref{figbasepoint}.
Consider another admissible diagram of $L$ which differs from the previous one only by the position of the base point~$\star_i$, which is as shown 
on the second part of Figure \ref{figbasepoint}. Let $\varepsilon=\pm1$ give the sign of the intersection point of $\DD$ and $L_i$ that the base point passes through. It is easily seen that the winding matrix of the latter diagram is obtained from the winding matrix of the previous one by multiplying the coefficients of the $i$--th column ({\em resp} row) by $t^\varepsilon$ ({\em resp} $t^{-\varepsilon}$).

\begin{figure}[htb] 
\begin{center}
\begin{tikzpicture} [scale=0.5]
\begin{scope}
 \draw (0.5,0) -- (4,0) node[right] {$\DD$};
 \draw (3,2) .. controls +(0,-1) and +(1,1) .. (1,-2);
 \draw[->] (1.44,-1.5) -- (1.36,-1.6) node[right] {$L_i$};
 \draw (2.87,0.8) node {$\star_i$};
\end{scope}
 \draw (7,0) node {$\rightsquigarrow$};
\begin{scope} [xshift=9cm]
 \draw (0.5,0) -- (4,0) node[right] {$\DD$};
 \draw (3,2) .. controls +(0,-1) and +(1,1) .. (1,-2);
 \draw[->] (1.44,-1.5) -- (1.36,-1.6) node[right] {$L_i$};
 \draw (2.09,-0.8) node {$\star_i$};
\end{scope}
\end{tikzpicture}
\caption{Base point passing through the disk} \label{figbasepoint}
\end{center}
\end{figure}

\section{Diagram spaces} \label{secdiagrams}

In this section, we introduce the diagram spaces that are needed in the construction of the invariant $\tZ$, and we define an operation that will play, in our construction, the role of the formal Gaussian integral in the construction of the Kricker invariant.

\subsection{Beaded Jacobi diagrams} \label{subsecdiagramspaces}

For a compact oriented 1--manifold $X$ ({\em resp} a finite set $C$), a {\em Jacobi diagram on $X$} ({\em resp} a {\em Jacobi diagram on $C$}) is a unitrivalent graph whose trivalent vertices are oriented and whose univalent vertices are embedded in the interior of $X$ ({\em resp} labeled by $C$). When relevant, the manifold $X$ is called the {\em skeleton} of the diagram. 
A {\em beaded Jacobi diagram on $X$ or $C$} is a Jacobi diagram on $X$ or $C$ whose graph edges are oriented and labeled by $\Qtt$ (following the terminology of ``diagrams with beads'' from \cite{GK}). A {\em w--beaded Jacobi diagram on $X$} is a beaded Jacobi diagram on $X$ whose skeleton is viewed as a union of edges ---defined by the embedded vertices--- that are labeled by powers of $t$, with the condition that the product of the labels on each component of $X$ is~1. The {\em degree} of a unitrivalent diagram is the number of its trivalent vertices (sometimes called i--degree); the vertices embedded in $X$ are univalent and are not counted. 
Set:
$$\tA(X)=\mathrm{dc}\left(\frac{\Q\langle \textrm{beaded Jacobi diagrams on }X \rangle}{\Q\langle \textrm{AS, IHX, STU, LE, OR, Hol} \rangle}\right),$$
$$\tAw(X)=\mathrm{dc}\left(\frac{\Q\langle \textrm{w--beaded Jacobi diagrams on }X \rangle}{\Q\langle \textrm{AS, IHX, STU, LE, OR, Hol, \holw} \rangle}\right),$$
$$\tA(*_C)=\mathrm{dc}\left(\frac{\Q\langle \textrm{beaded Jacobi diagrams on }C \rangle}{\Q\langle \textrm{AS, IHX, LE, OR, Hol} \rangle}\right),$$
where the relations are defined in Figures \ref{figrelAStoHol} and \ref{figrelSTUHolw}, and $\mathrm{dc}(E)$ is the degree completion of the space $E$. 
In the relation STU, corresponding edges have the same orientation and label.
In the pictures, the skeleton is represented with bold lines and the graph with thin lines. 

\begin{figure}[htb] 
\begin{center}
\begin{tikzpicture} [scale=0.3]
\begin{scope} 
 \draw[very thick,->] (0,0) -- (0,4);
 \draw[->] (0,2) -- (0.8,2) node[above] {1};
 \draw (0.7,2) -- (1.3,2) -- (2.6,3) (1.3,2) -- (2.6,1);
 \draw (3.8,2) node{$=$};
 \draw[very thick,->] (5,0) -- (5,4);
 \draw (5,2.6) -- (7.6,2.6) (5,1.4) -- (7.6,1.4);
 \draw (8.8,2) node {$-$};
 \draw[very thick,->] (10,0) -- (10,4);
 \draw (10,2.6) -- (12.6,1.4) (10,1.4) -- (12.6,2.6);
 \draw (6.3,-1.5) node{STU};
\end{scope}
\begin{scope} [xshift=19cm]
 \draw[very thick,->] (0,0) -- (0,4);
 \draw (0,3) node {$\scriptscriptstyle{\bullet}$} node[left] {$t^i$};
 \draw (0,1) node {$\scriptscriptstyle{\bullet}$} node[left] {$t^j$};
 \draw (0,2) -- (2.6,2);
 \draw[->] (0,2) -- (1.5,2) node[above] {$P$};
 \draw (4,2) node {$=$};
 \draw[very thick,->] (8,0) -- (8,4);
 \draw (8,3) node {$\scriptscriptstyle{\bullet}$} node[left] {$t^{i+1}$};
 \draw (8,1) node {$\scriptscriptstyle{\bullet}$} node[left] {$t^{j-1}$};
 \draw (8,2) -- (10.6,2);
 \draw[->] (8,2) -- (9.5,2) node[above] {$tP$};
 \draw (5,-1.5) node {\holw};
\end{scope}
\begin{scope} [xshift=38cm]
 \draw[very thick] (-1.5,2) circle (1.5);
 \draw[very thick,->] (-1.45,3.5) -- (-1.55,3.5);
 \draw (-3,2) node {$\scriptscriptstyle{\bullet}$} node[left] {$t^i$};
 \draw (0,2) -- (2.6,2);
 \draw[->] (0,2) -- (1.5,2) node[above] {$P$};
 \draw (4,2) node {$=$};
 \draw[very thick] (8,2) circle (1.5);
 \draw[very thick,->] (8.05,3.5) -- (7.95,3.5);
 \draw (6.5,2) node {$\scriptscriptstyle{\bullet}$} node[left] {$t^i$};
 \draw (9.5,2) -- (12.1,2);
 \draw[->] (9.5,2) -- (11,2) node[above] {$tP$};
 \draw (5,-1.5) node {\holw\ (special case)};
\end{scope}
\end{tikzpicture}
\end{center} \caption{Relations STU and \holw\ on Jacobi diagrams} \label{figrelSTUHolw}
\end{figure}

\begin{remark}
  For diagrams in $\tA(X)$, the condition on
  the labels on the skeleton implies that all labels can be pushed off each component of the skeleton using the relation \holw. When the component is an interval, there is a unique way to do so. 
\end{remark}

For a finite set $C$, denote by $\xd_C$ ({\em resp} $\xc_C$) the manifold made of $|C|$ disjoint intervals ({\em resp} circles) indexed by the elements of $C$. The above remark provides an isomorphism between~$\tAw(\xd_C)$ and $\tA(\xd_C)$. In the case of a skeleton with closed components, we need to add a relation to get such an isomorphism.

Given a beaded Jacobi diagram $D$ on $\xc_C$, a label $c\in C$ and an integer $k$, the associated {\em winding relation} identifies $D$ with the diagram obtained from $D$ by {\em pushing $t^k$ at each vertex glued on $\xc_c$}, {\em ie} by multiplying the label of each edge adjacent to a univalent vertex glued on $\xc_c$ by~$t^k$ if the orientation of the edge goes backward the vertex and by $t^{-k}$ otherwise (with the same~$k$ for all vertices), see Figure~\ref{figwindingrelskel}. 
Denote by $\eqw$ the induced equivalence relation. It provides an isomorphism $\tAw(\xc_C)\cong\fract{\tA(\xc_C)}{\eqw}$.
We want to relate this quotient to the space~$\tA(\xd_C)$. The winding relation $\eqw$ is defined as above on $\tA(\xd_C)$. We also define a {\em link relation} on $\tA(\xd_C)$ as follows. Given two beaded Jacobi diagrams $D_1$ and $D_2$ on $\xd_C$, we have $D_1\eql D_2$ if, for an index $c\in C$ and two extra indices $c_1$ and $c_2$ not in $C$, there is a beaded Jacobi diagram $D$ on $\xd_{(C\setminus\{c\})\cup\{c_1,c_2\}}$ such that $D_1$ and $D_2$ are obtained from $D$ by gluing together the skeleton components $\xd_{c_1}$ and $\xd_{c_2}$ in the two possible orders. It is easily checked that~\mbox{$\fract{\tA(\xc_C)}{\eqw}\cong\fract{\tA(\xd_C)}{\eqw,\eql}$.} The link relation $\eql$ is similarly
defined on $\tAw(\xd_C)$. 

\begin{figure}[htb] 
\begin{center}
\begin{tikzpicture} [scale=0.45]
 \foreach \x/\i/\j/\k in {0/2/3/-2,16//4/-1} {
 \begin{scope} [xshift=\x cm]
 \draw[very thick,->] (-5,2) arc (-180:180:2) node[left] {$\scriptstyle{1}$};
 \draw[very thick,->] (7,2) arc (360:0:2) node[right] {$\scriptstyle{2}$};
 \draw[->] (1,3.5) .. controls +(1,0) and +(-0.5,0.3) .. (3.2,3)  (-1.2,3) .. controls +(0.5,0.3) and +(-1,0) .. (1,3.5) node[above] {$\scriptstyle{t^{\i}}$};
 \draw[->] (-1.2,1) .. controls +(0.4,-0.3) and +(-0.8,0) .. (0.2,0.5) node[below] {$\scriptstyle{t^7}$};
 \draw (0.2,0.5) -- (1,0.5) .. controls +(0.2,-0.2) and +(-0.4,-0.2) .. (2,0.5);
 \draw[->] (3.2,1) -- (2,0.5) node[below] {$\scriptstyle{t^{\k}}$};
 \draw[->] (1,0.5) .. controls +(0,0.4) and +(-0.4,-0.2) .. (2,1.5) (3,2) -- (2,1.5) node[above] {$\scriptstyle{t^\j}$};
 \end{scope}}
 \draw (9,2) node {$=$};
\end{tikzpicture}
\end{center} \caption{A winding relation (for $c=2$ and $k=1$)}
\label{figwindingrelskel}
\end{figure}

In \cite[Theorem~8]{BN}, Bar-Natan defines a formal PBW linear isomorphism:
$$\chi_C:\tA(*_C)\iso\tA(\xd_C).$$
For a beaded Jacobi diagram $D$, the image $\chi_C(D)$ is the average of all possible ways to attach the $c$--colored vertices of $D$ on the interval $\xd_c$ for each $c\in C$. The setting of \cite{BN} is not exactly the same, but the argument adapts directly. 
To recover an isomorphism onto $\tA(\xc_C)$, one needs a version of the link relations on $\tA(*_C)$.
These relations were first introduced in \cite[Section 5.2]{AA2}; the ones we use here mainly come from \cite{GK}.

Given a beaded Jacobi diagram $D$ on $C$ and distinct elements $c,\bar c\in C$, we define $\langle D\rangle_{c-\bar c}$ as the sum of all diagrams obtained from $D$ by gluing all $c$--labeled vertices to all $\bar c$--labeled vertices when there are as many $c$ and $\bar c$--labeled vertices in $D$, and as $0$ otherwise. We say that a beaded Jacobi diagram is $(c-\bar c)$--substantial if it contains no strut $\struts{c}{\bar c}{}$. We extend the definition of $\langle D\rangle_{c-\bar c}$ by linearity to infinite series of $(c-\bar c)$--substantial diagrams. We also denote $D_{|c\to ce^h}$ the diagram obtained from $D$ by \emph{pushing $e^h$} on each $c$--labeled vertex of $D$, where pushing $e^h$ is the operation pictured in Figure~\ref{figpushexp}; note that the $h$--ended edges are added on the right when going toward the $c$--labeled vertex. We extend the definition by linearity to infinite series of diagrams.

We define the {\em link relation} $\eql$ on $\tA(*_C)$ as generated by the following: given $G_1,G_2\in\tA(*_C)$, we have $G_1\eql G_2$ if, for some $c\in C$ and some extra vertices labeled by $h,\breve h$, there is an infinite series $G$ of $(h-\breve h)$--substantial beaded Jacobi diagrams on $C\cup\{h,\breve h\}$ such that $G_1=\langle G\rangle_{h-\breve h}$ and $G_2=\langle G_{|c\to ce^h}\rangle_{h-\breve h}$.

\begin{figure}[htb] 
\begin{center}
\begin{tikzpicture} [yscale=0.5]
 \foreach \x in {0,4} {
 \draw[dashed] (\x,-2) -- (\x,-1);
 \draw[-
 ] (\x,-1) -- (\x,0);
 \draw (\x,0) -- (\x,2) node[above] {$c$};}
 \draw (1.3,0) node {$\rightsquigarrow$};
 \draw (2.7,0) node {$\displaystyle\sum_{n\geq0}\frac1{n!}$};
 \foreach \y in {0.2,0.6,1.6} {
 \draw (4,\y) -- (4.5,\y) node[right] {$\scriptscriptstyle h$};}
 \draw (4.2,1.3) node {$\vdots$};
 \draw[decorate,decoration={brace,amplitude=4pt},xshift=-0.1cm] (4,0.2) -- (4,1.6) node [midway,left,xshift=-0.1cm] {$\scriptscriptstyle n$};
\end{tikzpicture}
\caption{Pushing $e^h$ on a $c$--labeled vertex} \label{figpushexp}
\end{center}
\end{figure}

\begin{proposition}[\textup{Garoufalidis--Kricker \cite[Lemma~3.6]{GK}}]\label{proplemmaGK}
 For $G_1,G_2\in\tA(*_C)$, we have $G_1\eql G_2$ if and only if $\chi_C(G_1)\eql\chi_C(G_2)$.
\end{proposition}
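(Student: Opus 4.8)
The statement to prove is Proposition~\ref{proplemmaGK}: for beaded Jacobi diagrams $D_1,D_2$ on $C$, one has $D_1\eql D_2$ in $\tA(*_C)$ if and only if $\chi_C(D_1)\eql\chi_C(D_2)$ in $\tA(\xd_C)$. The plan is to transport the generating move of the link relation through the PBW isomorphism $\chi_C$ and match it, up to the already-known relations (AS, IHX, STU, LE, OR, Hol), with the generating move of $\eql$ on the skeleton side. Since $\chi_C$ is an isomorphism by the adapted version of \cite[Theorem~8]{BN}, it suffices to check that $\chi_C$ maps the subspace spanned by the skein elements $\langle D\rangle_{h-\breve h}-\langle D_{|c\to ce^h}\rangle_{h-\breve h}$ exactly onto the subspace spanned by the skeleton link-relation generators $D'_1-D'_2$ (where $D'_1,D'_2$ come from a diagram on $\xd_{(C\setminus\{c\})\cup\{c_1,c_2\}}$ by gluing $\xd_{c_1},\xd_{c_2}$ in the two orders). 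Concretely one fixes the index $c\in C$ and the auxiliary labels, and does everything ``relative to $c$'', leaving the other colors untouched, so the whole argument reduces to the one-colour situation.

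The key computational step is the interaction between averaging a colour onto an interval and the ``pushing $e^h$'' operation. First I would record the standard fact (a grouplike / coproduct computation in the free cocommutative setting, exactly as in \cite{BN,GK}) that attaching $n$ vertices coloured $c$ at an arbitrary point of the interval $\xd_c$, after averaging, is the same as inserting a copy of the ``exponential of a single strand'' $\exp(e^h)$-type element; equivalently, $\chi$ intertwines pushing $e^h$ on a $c$-labelled vertex with the skeleton move of breaking the strand $\xd_c$ into two sub-intervals $\xd_{c_1},\xd_{c_2}$ and distributing the $h$-ended edges to $c_2$. Using the STU relation to slide and merge vertices along the interval, and the Hol/LE/OR relations to normalise the beads, one checks that applying $\chi_C$ to $\langle D\rangle_{h-\breve h}$ produces the skeleton diagram $D'_1$ obtained by re-gluing $\xd_{c_1},\xd_{c_2}$ in one order, while applying $\chi_C$ to $\langle D_{|c\to ce^h}\rangle_{h-\breve h}$ produces $D'_2$, the regluing in the other order. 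The pairing $\langle\cdot\rangle_{h-\breve h}$ is compatible with $\chi$ because $h,\breve h$ are not being averaged — they are contracted against each other before and after — so one only has to track the $c$-strand. This yields that $\chi_C$ sends a generator of $\eql$ on $*_C$ to a generator of $\eql$ on $\xd_C$; running the computation backwards, using that $\chi_C^{-1}$ has an explicit description of the same combinatorial shape, gives the reverse inclusion and hence the stated equivalence.

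The main obstacle I expect is bookkeeping rather than conceptual: making the ``push $e^h$ on the right when going toward the vertex'' convention precise and checking that, under $\chi_C$, this right-handedness is exactly what is needed so that the two orders of re-gluing $\xd_{c_1}$ and $\xd_{c_2}$ correspond to $D$ and $D_{|c\to ce^h}$ and not to some mismatched pair. In particular one must verify that the ordering of the finitely many $c$-labelled vertices of $D$ along the averaged interval, together with the ordering of the $h$-edges, is handled coherently by the symmetrisation in the definition of $\chi_C$, so that no spurious sign or permutation survives; this is where the STU relation must be invoked carefully to reorder vertices on the skeleton. Once this normalisation lemma is in place, both implications follow immediately from the fact that $\chi_C$ is a linear isomorphism carrying one distinguished subspace isomorphically onto the other.
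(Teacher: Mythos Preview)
The paper does not give its own proof of this proposition: it is cited directly from \cite[Lemma~3.6]{GK}. However, the argument from \cite{GK} is essentially reproduced in the paper in the proof of Lemma~\ref{lemmaGK}, so that is the benchmark to compare against.

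Your overall strategy --- show that $\chi_C$ carries the subspace generated by one family of link-relation generators onto the subspace generated by the other --- is correct and is exactly what \cite{GK} does. But the central computational claim in your proposal is not right as stated, and the tool that actually makes the argument work is missing.

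You write that ``applying $\chi_C$ to $\langle D\rangle_{h-\breve h}$ produces the skeleton diagram $D'_1$'' and similarly for $D'_2$. This cannot be literally true: $\chi_C$ sends a single $*$-diagram to an \emph{average} over orderings, not to a single skeleton diagram, so neither side of your claimed identity is a single diagram. If you mean equality modulo $\eql$, that is what you are trying to prove. The honest content of the computation is more subtle.

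The actual mechanism, visible in the proof of Lemma~\ref{lemmaGK} and in \cite{GK}, runs in the opposite direction and rests on the Baker--Campbell--Hausdorff formula of \cite[Prop.~5.4]{AA2}. Starting from a skeleton link-relation generator (a diagram $D'$ on $\xd_{(C\setminus\{c\})\cup\{c_1,c_2\}}$ glued in two orders), one applies $\chi^{-1}$ and uses the identity
\[
\chi^{-1}\circ m^{c_1c_2}_c\circ\chi_{\{c_1,c_2\}}(\,\cdot\,)=\big\langle \expd\big(\Lambda^{\breve k\breve h}_c\big)\sqcup(\,\cdot\,)\big\rangle_{\substack{h-\breve h\\k-\breve k}},
\]
where $\Lambda$ is the BCH series. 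Swapping the gluing order amounts, after a manipulation with $e^{\breve h}$ and $e^{-\breve h}$ pushed through $\Lambda$ via IHX, to replacing the $n$-legs by $ne^{\breve h}$-legs. One then extracts a diagram $G$ on $C\cup\{h,\breve h\}$ with $G_1=\langle G\rangle_{h-\breve h}$ and $G_2=\langle G_{|c\to ce^{\breve h}}\rangle_{h-\breve h}$, which is exactly a $*$-link-relation generator.

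Your ``grouplike / coproduct computation'' gesture does not substitute for this: the BCH series is the genuine combinatorial content, and its interaction with the $e^h$-pushing is not just bookkeeping. In particular, your expected obstacle (sign/ordering conventions) is not the real one; the real one is producing the intermediate diagram $G$ and identifying the two contractions, which requires the BCH machinery explicitly.
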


Pushing $t^k$ at the $c$--labeled vertices defines as above a winding relation $\eqw$ on $\tA(*_C)$, see Figure~\ref{figwindingrel}. 
Set $\tA(\xxlw_C)=\fract{\tA(*_C)}{\eqw,\eql}$. The following is a corollary of Proposition~\ref{proplemmaGK}.

\begin{figure}[htb] 
\begin{center}
\begin{tikzpicture} [scale=0.45]
\foreach \x/\i/\j in {0/2/3,10//4} {
\begin{scope} [xshift=\x cm]
 \draw[->] (-2,2) -- (-2,3) node[above] {$\scriptstyle{c}$} (-2,1) node[below] {$\scriptstyle{c}$} -- (-2,2) node[left] {$\scriptstyle{t^{-1}}$};
 \draw[->] (-0.5,1) node[below] {$\scriptstyle{e}$} -- (0.3,1) node[above] {$\scriptstyle{1}$};
 \draw[->] (0.3,1) -- (1,1) -- (1.75,0.5) node[below] {$\scriptstyle{t^5}$};
 \draw[->] (1.75,0.5) -- (2.5,0) node[right] {$\scriptstyle{d}$} (1,1) -- (1.75,1.5) (2.5,2) node[right] {$\scriptstyle{c}$} -- (1.75,1.5) node[above] {$\scriptstyle{t^\j}$};
 \draw[->] (1.2,3) -- (2.3,3) node[right] {$\scriptstyle{c}$} (0,3) node[left] {$\scriptstyle{d}$} -- (1.2,3) node[above] {$\scriptstyle{t^{\i}}$};
\end{scope}}
 \draw (5,2) node {$=$};
\end{tikzpicture}
\end{center} \caption{A winding relation on $\tA(*_C)$ (on $c$ for $k=1$)} \label{figwindingrel}
\end{figure}

\begin{proposition}
 The isomorphism $\chi_C:\tA(*_C)\iso\tA(\xd_C)$ descends to an isomorphism $\chi_C:\tA(\xxlw_C)\iso\fract{\tA(\xc_C)}{\eqw}.$
\end{proposition}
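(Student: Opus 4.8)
The strategy is to show that the PBW isomorphism $\chi_C:\tA(*_C)\iso\tA(\xd_C)$ respects both the winding relation $\eqw$ and the link relation $\eql$, and then to combine this with the already-established isomorphisms $\tAw(\xc_C)\cong\fract{\tA(\xc_C)}{\eqw}$ and $\fract{\tA(\xc_C)}{\eqw}\cong\fract{\tA(\xd_C)}{\eqw,\eql}$. The link relation part is exactly Proposition~\ref{proplemmaGK}, so the genuinely new content is the compatibility with $\eqw$.

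\textbf{Step 1: $\chi_C$ respects $\eqw$.} Fix $c\in C$ and $k\in\Z$. On the $*_C$ side, pushing $t^k$ at the $c$-labeled vertices multiplies the label of each edge at a $c$-labeled vertex by $t^{\pm k}$ according to orientation; on the $\xd_C$ side, the analogous move is pushing $t^k$ past each vertex glued on $\xd_c$ (the winding relation $\eqw$ defined on $\tA(\xd_C)$). Since $\chi_C(D)$ is by definition the average over all ways of attaching the $c$-colored vertices to the interval $\xd_c$, and the operation of pushing $t^k$ at each $c$-vertex commutes with this averaging (it acts on each summand term-by-term on the edges incident to the attached vertices, independently of the chosen ordering on $\xd_c$), we get $\chi_C(D_{\eqw}) = \chi_C(D)_{\eqw}$ as elements of $\tA(\xd_c)$. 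More carefully: if $D'$ is obtained from $D$ by pushing $t^k$ at the $c$-vertices, then for each linear ordering $\sigma$ of those vertices the attached diagram $\chi_\sigma(D')$ equals the attached diagram $\chi_\sigma(D)$ with $t^k$ pushed past every vertex on $\xd_c$; summing over $\sigma$ and dividing by the appropriate factorial gives the claim. Hence $\chi_C$ sends the subspace generated by $\eqw$-differences on $*_C$ into the subspace generated by $\eqw$-differences on $\xd_C$, and symmetrically (using $\chi_C^{-1}$, or the surjectivity of $\chi_C$) the two $\eqw$-subspaces correspond.

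\textbf{Step 2: Assemble the quotients.} By Proposition~\ref{proplemmaGK}, $\chi_C$ carries the $\eql$-subspace of $\tA(*_C)$ onto the $\eql$-subspace of $\tA(\xd_C)$. Combined with Step~1, $\chi_C$ carries the subspace generated by $\eqw$ and $\eql$ together onto the corresponding subspace downstairs, so it descends to an isomorphism
$$\chi_C:\tA(\xxlw_C)=\fract{\tA(*_C)}{\eqw,\eql}\iso\fract{\tA(\xd_C)}{\eqw,\eql}.$$
Finally, the excerpt already records $\fract{\tA(\xd_C)}{\eqw,\eql}\cong\fract{\tA(\xc_C)}{\eqw}$ (via gluing intervals into circles, ``easily checked''). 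Composing yields the desired isomorphism $\chi_C:\tA(\xxlw_C)\iso\fract{\tA(\xc_C)}{\eqw}$.

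\textbf{Main obstacle.} The only delicate point is verifying in Step~1 that pushing $t^k$ genuinely commutes with the averaging defining $\chi_C$ — in particular that the bookkeeping of which edges get $t^k$ versus $t^{-k}$ (governed by edge orientation relative to the vertex, and on the $\xd_C$ side by the position of the vertex along the oriented interval) matches up on both sides, and that the relation \holw\ on $\tAw$, used implicitly when we identify pushed diagrams, does not introduce discrepancies. This is a routine but orientation-sensitive check; everything else is a formal consequence of Proposition~\ref{proplemmaGK} and the isomorphisms already in place.
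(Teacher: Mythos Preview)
Your proposal is correct and matches the paper's approach: the paper states this proposition as a direct corollary of Proposition~\ref{proplemmaGK} without further proof, and what you have written is precisely the unpacking of that corollary---Proposition~\ref{proplemmaGK} handles $\eql$, the compatibility of $\chi_C$ with $\eqw$ is immediate since pushing $t^k$ at the $c$-labeled vertices acts identically before and after averaging, and the remaining identifications are the ones already recorded in the text.
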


We finally have the following commutative diagram of diagram spaces.
$$\xymatrix{
\tAw(\xd_C) \ar[r]_\cong \ar@{->>}[d]^(.4){/\eql} & \tA(\xd_C) \ar[r]^{\chi_C^{-1}}_\cong \ar@{->>}[d]^(.4){/\eql,\eqw} & \tA(*_C) \ar@{->>}[d]^(.4){/\eql,\eqw} \\
\tAw(\xc_C) \ar[r]_\cong & \fract{\tA(\xc_C)}{\eqw} \ar[r]^{\chi_C^{-1}}_\cong & \tA(\xxlw_C)
}$$

Given a subset $S$ of a finite set $C$, one can also consider Jacobi diagrams with univalent vertices either labeled by $S$ or embedded in $\xd_{C\setminus S}$ or $\xc_{C\setminus S}$. This provides diagram spaces $\tA(*_S,\xd_{C\setminus S})$ and $\tA(*_S,\xc_{C\setminus S})$, and their quotients. As above, we have an isomorphism $$\chi_S:\tA(*_S,\xd_{C\setminus S})\iso\tA(\xd_C).$$

\subsection{Product and coproduct}

We first define a coproduct on the diagram spaces of the previous subsection. Given a (w--)beaded Jacobi diagram $D$ on $X$ or $C$, denote by $\dddot{D}$ the diagram $D$ with its skeleton removed, and write $\dddot D=\sqcup_{i\in I}\dddot{D}_i$, where the underlying graph of each $\dddot{D}_i$ is connected. For $J\subset I$, define a beaded Jacobi diagram $D_J=D\setminus(\sqcup_{i\in I\setminus J}\dddot{D}_i)$ where the label of an edge of the skeleton is the product of the labels appearing in $D$ on this edge (an edge of the skeleton of $D_J$ may correspond to a union of edges of $D$); all other data (vertex orientation, edge orientation, labels) are the same as in $D$; see Figure~\ref{figCoproduct}. The coproduct of a diagram $D$ is defined by $$\Delta(D)=\sum_{J\subset I}D_J\otimes D_{I\setminus J},$$
and we extend this definition to infinite series of diagrams by linearity.
Note that the different relations on beaded Jacobi diagrams respect the coproduct. This provides a notion of {\em group-like} elements, {\em ie} elements $G$ such that $\Delta(G)=G\otimes G$. Also, the isomorphisms $\chi$ of the previous subsection preserve the coproduct. 

\begin{figure}[htb] 
\begin{center}
\begin{tikzpicture} [xscale=0.8,yscale=0.25]
 \begin{scope} [yshift=1cm]
 \foreach \x in {0,3}
 \draw[->,very thick] (\x,0) -- (\x,12.5);
 \draw (0,10) -- (3,10) (0,8) -- (1.5,10) (0,6) -- (3,8) (3,6) -- (2,4) -- (3,2) (2,4) -- (3,4);
 \foreach \y/\k in {3/2,7/-1,9/3,11/-4}
 \draw (0,\y) node {$\scriptscriptstyle{\bullet}$} node[left] {$\scriptstyle{t^{\k}}$};
 \foreach \y/\k in {1/,3/5,5/-2,7/3,9/-1,11/-6}
 \draw (3,\y) node {$\scriptscriptstyle{\bullet}$} node[right] {$\scriptstyle{t^{\k}}$};
 \draw[->] (0.7,10) -- (0.8,10) node[above] {$\scriptstyle{t^2}$};
 \draw[->] (2.3,10) -- (2.4,10) node[above] {$\scriptstyle{t}$};
 \draw[->] (0.9,9.2) -- (0.6,8.8) node[below] {$\scriptstyle{t^3}$};
 \draw[->] (1.5,7) -- (1.2,6.8) node[below] {$\scriptstyle{t^{-1}}$};
 \draw[->] (2.8,4) -- (2.7,4) node[above] {$\scriptstyle{t}$};
 \draw[->] (2.4,4.8) -- (2.5,5) node[above] {$\scriptstyle{t^2}$};
 \draw[->] (2.5,3) -- (2.6,2.8) node[below] {$\scriptstyle{t^3}$};
 \draw (1.5,-1) node {$D=D_{\{1,2,3\}}$};
\end{scope}
\begin{scope} [yshift=-7cm]
 \foreach \x in {0,3}
 \draw[->,very thick] (\x,0) -- (\x,2.5);
 \draw (1.5,-1) node {$D_{\emptyset}$};
 \draw (0,1) node {$\scriptscriptstyle{\bullet}$} node[left] {$\scriptstyle{1}$};
 \draw (3,1) node {$\scriptscriptstyle{\bullet}$} node[right] {$\scriptstyle{1}$};
\end{scope}
\begin{scope} [xshift=5cm,yshift=1cm]
 \foreach \x in {0,3}
 \draw[->,very thick] (\x,6) -- (\x,12.5);
 \draw (0,10) -- (3,10) (0,8) -- (1.5,10) ;
 \foreach \y/\k in {7/,9/3,11/-4}
 \draw (0,\y) node {$\scriptscriptstyle{\bullet}$} node[left] {$\scriptstyle{t^{\k}}$};
 \foreach \y/\k in {8/6,11/-6}
 \draw (3,\y) node {$\scriptscriptstyle{\bullet}$} node[right] {$\scriptstyle{t^{\k}}$};
 \draw[->] (0.7,10) -- (0.8,10) node[above] {$\scriptstyle{t^2}$};
 \draw[->] (2.3,10) -- (2.4,10) node[above] {$\scriptstyle{t}$};
 \draw[->] (0.9,9.2) -- (0.6,8.8) node[below] {$\scriptstyle{t^3}$};
 \draw (1.5,5) node {$D_{\{1\}}$};
\end{scope}
\begin{scope} [xshift=10cm,yshift=5cm]
 \foreach \x in {0,3}
 \draw[->,very thick] (\x,4) -- (\x,8.5);
 \draw (0,6) -- (3,6);
 \foreach \y/\k in {5/2,7/-2}
 \draw (0,\y) node {$\scriptscriptstyle{\bullet}$} node[left] {$\scriptstyle{t^{\k}}$};
 \foreach \y/\k in {5/7,7/-7}
 \draw (3,\y) node {$\scriptscriptstyle{\bullet}$} node[right] {$\scriptstyle{t^{\k}}$};
 \draw[->] (1.5,6) -- (1.2,6) node[below] {$\scriptstyle{t^{-1}}$};
 \draw (1.5,3) node {$D_{\{2\}}$};
\end{scope}
\begin{scope} [xshift=15cm,yshift=3cm]
 \foreach \x in {0,3}
 \draw[->,very thick] (\x,2) -- (\x,10.5);
 \draw (3,8) -- (2,6) -- (3,4) (2,6) -- (3,6);
 \draw (0,6) node {$\scriptscriptstyle{\bullet}$} node[left] {$\scriptstyle{1}$};
 \foreach \y/\k in {3/,5/5,7/-2,9/-4}
 \draw (3,\y) node {$\scriptscriptstyle{\bullet}$} node[right] {$\scriptstyle{t^{\k}}$};
 \draw[->] (2.8,6) -- (2.7,6) node[above] {$\scriptstyle{t}$};
 \draw[->] (2.4,6.8) -- (2.5,7) node[above] {$\scriptstyle{t^2}$};
 \draw[->] (2.5,5) -- (2.6,4.8) node[below] {$\scriptstyle{t^3}$};
 \draw (1.5,1) node {$D_{\{3\}}$};
\end{scope}
\begin{scope} [xshift=15cm,yshift=-11cm]
 \foreach \x in {0,3}
 \draw[->,very thick] (\x,4) -- (\x,12.5);
 \draw (0,10) -- (3,10) (0,8) -- (1.5,10) (0,6) -- (3,8);
 \foreach \y/\k in {5/2,7/-1,9/3,11/-4}
 \draw (0,\y) node {$\scriptscriptstyle{\bullet}$} node[left] {$\scriptstyle{t^{\k}}$};
 \foreach \y/\k in {6/7,9/-1,11/-6}
 \draw (3,\y) node {$\scriptscriptstyle{\bullet}$} node[right] {$\scriptstyle{t^{\k}}$};
 \draw[->] (0.7,10) -- (0.8,10) node[above] {$\scriptstyle{t^2}$};
 \draw[->] (2.3,10) -- (2.4,10) node[above] {$\scriptstyle{t}$};
 \draw[->] (0.9,9.2) -- (0.6,8.8) node[below] {$\scriptstyle{t^3}$};
 \draw[->] (1.5,7) -- (1.2,6.8) node[below] {$\scriptstyle{t^{-1}}$};
 \draw (1.5,3) node {$D_{\{1,2\}}$};
\end{scope}
\begin{scope} [xshift=10cm,yshift=-9cm]
 \foreach \x in {0,3}
 \draw[->,very thick] (\x,2) -- (\x,12.5);
 \draw (0,10) -- (3,10) (0,8) -- (1.5,10) (3,8) -- (2,6) -- (3,4) (2,6) -- (3,6);
 \foreach \y/\k in {7/,9/3,11/-4}
 \draw (0,\y) node {$\scriptscriptstyle{\bullet}$} node[left] {$\scriptstyle{t^{\k}}$};
 \foreach \y/\k in {3/,5/5,7/-2,9/2,11/-6}
 \draw (3,\y) node {$\scriptscriptstyle{\bullet}$} node[right] {$\scriptstyle{t^{\k}}$};
 \draw[->] (0.7,10) -- (0.8,10) node[above] {$\scriptstyle{t^2}$};
 \draw[->] (2.3,10) -- (2.4,10) node[above] {$\scriptstyle{t}$};
 \draw[->] (0.9,9.2) -- (0.6,8.8) node[below] {$\scriptstyle{t^3}$};
 \draw[->] (2.8,6) -- (2.7,6) node[above] {$\scriptstyle{t}$};
 \draw[->] (2.4,6.8) -- (2.5,7) node[above] {$\scriptstyle{t^2}$};
 \draw[->] (2.5,5) -- (2.6,4.8) node[below] {$\scriptstyle{t^3}$};
 \draw (1.5,1) node {$D_{\{1,3\}}$};
\end{scope}
\begin{scope} [xshift=5cm,yshift=-7cm]
 \foreach \x in {0,3}
 \draw[->,very thick] (\x,0) -- (\x,10.5);
 \draw (0,6) -- (3,8) (3,6) -- (2,4) -- (3,2) (2,4) -- (3,4);
 \foreach \y/\k in {3/2,8/-2}
 \draw (0,\y) node {$\scriptscriptstyle{\bullet}$} node[left] {$\scriptstyle{t^{\k}}$};
 \foreach \y/\k in {1/,3/5,5/-2,7/3,9/-7}
 \draw (3,\y) node {$\scriptscriptstyle{\bullet}$} node[right] {$\scriptstyle{t^{\k}}$};
 \draw[->] (1.5,7) -- (1.2,6.8) node[below] {$\scriptstyle{t^{-1}}$};
 \draw[->] (2.8,4) -- (2.7,4) node[above] {$\scriptstyle{t}$};
 \draw[->] (2.4,4.8) -- (2.5,5) node[above] {$\scriptstyle{t^2}$};
 \draw[->] (2.5,3) -- (2.6,2.8) node[below] {$\scriptstyle{t^3}$};
 \draw (1.5,-1) node {$D_{\{2,3\}}$};
\end{scope}
\end{tikzpicture}
\caption{The diagrams $D_J$ defined from a w--beaded Jacobi diagram $D$}
\label{figCoproduct}
\end{center}
\end{figure}

We now define a Hopf algebra structure on $\tA(*_C)$. Define the product of two diagrams as the disjoint union. 
The unit $e:\Q\to\tA(*_C)$ is defined by $e(1)=\varnothing$ and the counit $\varepsilon:\tA(*_C)\to\Q$ is given by $\varepsilon(D)=0$ if $D\neq\varnothing$ and $\varepsilon(\varnothing)=1$. The antipode is given by $D\mapsto(-1)^{s}D$, where $s$ is the number of connected components in $D$. We finally have a structure of graded Hopf algebra on $\tA(*_C)$, where the grading is given by the degree. It is known that an element in a graded Hopf algebra is group-like if and only if it is the exponential of a {\em primitive} element, 
{\em ie} an element $G$ such that $\Delta(G)=1\otimes G+G\otimes 1$. 

\begin{lemma} \label{lemmaPrimitive}
 The primitive elements of $\tA(*_C)$ are the series of connected diagrams.
\end{lemma}
\begin{proof}
 Let $A$ be the rational vector space generated by beaded Jacobi diagrams on $C$, and let $R$ be the subspace of $A$ generated by the relations AS, IHX, LE, OR, Hol, so that $\tA(*_C)$ is the degree completion of $\tA=\fract AR$.
 We have a Hopf algebra structure on $A$ and $\tA$, given by the same definitions as for $\tA(*_C)$.
 In $\tA$ and in $A$, it is immediate that linear combination of connected diagrams are primitive. In $A$, the converse is also clear since, in $\Delta(D)$ for some linear combination $D$ of beaded Jacobi diagrams, a term $D_1\otimes D_2$ can only arise from the term $D_1\sqcup D_2$ in $D$, and no cancellation can occur in $A\otimes A$.

 To see that this also holds in $\tA$, we consider a complementary space $S$ of $R$ in $A$ such that, for every $\sum_i \alpha_i D_i\in S$, all subdiagrams of all $D_i$ are also in $S$. This can be done as follows, noting that the relations defining $R$ all act on a single connected component of a diagram: pick a maximal free family of connected diagrams $(d_i)$ such that no non trivial linear combination of the $d_i$'s lies in $R$, and define $S$ as the subspace generated by the $d_i$'s and their products. Now let $G$ be a primitive element in $\tA$, $\hat G$ its unique representative in $S\subset A$, and $\hat H=\Delta(\hat G)-(1\otimes \hat G+\hat G\otimes 1)$. By construction, $\hat H\in S\otimes S$; and since $G$ is primitive, we also have $\hat H\in A\otimes R+R\otimes A$. It follows that $\hat H=0$, meaning that $\hat G$ is primitive in $A$, hence a linear combination of connected diagrams, and thus so is $G$.

 By completion, the result then holds for $\tA(*_C)$.
\end{proof}

\subsection{Group-like elements}
As usual for similar constructions, it will appear that our invariant takes values in the set of group-like elements, which makes this property important to us.

\begin{lemma}
 Every group-like element $G\in\tA(*_{\{1,\dots,n\}})$ can be uniquely decomposed as $G= \expd\big(\frac{1}{2}W(t)\big)\sqcup H$, where $W(t)\in\tA(*_{\{1,\dots,n\}})$ is associated to a hermitian matrix of size $n$ with coefficients in $\Qtt$, also denoted $W(t)$, and $H$ is group-like and substantial.
\end{lemma}
\begin{proof}
 Since $G$ is group-like, it can be written as $G=\expd P$, where $P\in\tA(*_{\{1,\dots,n\}})$ is primitive. By Lemma~\ref{lemmaPrimitive}, $P$ is a series of connected diagrams. Write $P=S+T$ where $S$ is a series of struts and $T$ is substantial. Further write $S=\sum_{1\leq i\leq j\leq n}\struts ij{s_{ij}}$. For $i,j\in\{1,\dots,n\}$, we must have $s_{ij}=W_{ij}+\overline W_{ji}=2W_{ij}$. Set also $H=\expd T$. Then $G=\expd(\frac{1}{2}W)\sqcup H$, $W$ is hermitian, and $H$ is group-like and substantial.
 Now $P$ is the sum of all connected terms in $G$, so that $G$ fully determines $P$, and subsequently $S$, $T$, $W$ and $H$. 
\end{proof}

A group-like element $G= \expd\big(\frac{1}{2}W(t)\big)\sqcup H$ in $\tA(*_{\{1,\dots,n\}})$ is {\em non-degenerate} if $\det(W(t))\neq0$. In this case, we set $\omega(G)=\omega_{\frac12W}(H)$.

We define a {\em group-like link relation} by restricting the definition of the link relation. The relation $\eqlgl$ on $\tA(*_C)$ is generated by the following: given group-like elements $G_1,G_2\in\tA(*_C)$, we have $G_1\eqlgl G_2$ if, for some $c\in C$ and some extra vertices labeled by $h,\breve h$, there is an element $G\in\tA\left(*_{C\sqcup\{h,\breve h\}}\right)$, group-like and $(h-\breve h)$--substantial, such that $G_1=\langle G\rangle_{h-\breve h}$ and $G_2=\langle G_{|c\to ce^h}\rangle_{h-\breve h}$. 

We adapt similarly the link relation on $\tA(\xd_C)$ to define a {\em group-like link relation} on $\tA(\xd_C)$. Proposition~\ref{proplemmaGK} applies to these relations: for $G_1,G_2\in\tA(*_C)$, we have $G_1\eqlgl G_2$ if and only if $\chi_C(G_1)\eqlgl\chi_C(G_2)$. Note that the reverse implication of this equivalence will be reviewed in the proof of Lemma~\ref{lemmaGK}.

\subsection{Operation $\omega$}

This part is devoted to the definition of an operation on $\tA(*_{\{1,\dots,n\}})$ that will play, in our refinement of the Kricker invariant, the role of the formal Gaussian integration that was introduced in \cite[Section~2.2]{AA2} and used in the construction of the Kricker invariant~\cite{GK}. 

Let $W(t)$ be a hermitian matrix with coefficients in $\Qtt$ such that $\det\big(W(1)\big)\neq0$, where {\em hermitian} means that $^tW=\overline W$. Note that $W$ is invertible within the matrices with coefficients in $\Q(t)$. We shall associate with $W$ a Blanchfield module $(\Al,\bl)$ (see \cite{M1} for details). Define $\Al$ as a $\Qtt$--module with presentation matrix $^tW$: $\Al$ is given by generators $x_1,\dots,x_n$ and relations $\sum_{j=1}^nW_{ij}x_j=0$ for~$1\leq i\leq n$. Define the Blanchfield form on $\Al$ by $\bl(x_i,x_j)=-(W^{-1})_{ij}(t)\ mod\ \Qtt$. Given a beaded Jacobi diagram $D$ on $\{1,\dots,n\}$, we define $\omega_W(D)\in\A(\Al,\bl)$ as the class of the $(\Al,\bl)$--colored diagram obtained from $D$ by replacing the label $i$ on univalent vertices of $D$ by $x_i$ for each $i$, and fixing $f_{vv'}(t)=-(W^{-1})_{ij}(t)$ if the univalent vertices $v$ and $v'$ of $D$ are labeled by $i$ and $j$ respectively. 

A \emph{strut} is an isolated edge in a graph. 
To a square matrix $W$ of size $n$, we associate the sum of struts $\sum_{1\leq i,j\leq n}\struts ij{W_{ij}}\in\tA(*_{\{1,\dots,n\}})$; abusing notation, we denote again by $W$ this element of $\tA(*_{\{1,\dots,n\}})$. We say that a beaded Jacobi diagram on some finite set is {\em substantial} if it has no strut.

\begin{notation}
 For a Jacobi diagram $D$ on some finite set $C$, the subscript $D_{|x\to y}$ means that the label~$x\in C$ on univalent vertices of $D$ is replaced by the label $y\in C$. This should not be confused with the notation $D_{|c\to ce^h}$ defined before Proposition~\ref{proplemmaGK}.
\end{notation}

\begin{proposition} \label{propkey}
 Let $G_1=\expd\left(\frac{1}{2}W_1(t)\right)\sqcup H_1$ and  $G_2=\expd\left(\frac{1}{2}W_2(t)\right)\sqcup H_2$ be non-degenerate group-like elements in $\tA(*_{\{1,\dots,n\}})$.
 \begin{itemize}
  \item If $G_1\eqw G_2$, then $\omega(G_1)=\omega(G_2)$.
  \item If $G_1\eqlgl G_2$, then $W_1(t)=W_2(t)$ and $\omega(G_1)=\omega(G_2)$.
 \end{itemize}
\end{proposition}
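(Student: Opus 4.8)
I would reduce each of the two statements to a single generating relation and then match its image under $\omega$ with relations that are already imposed on $(\Al,\bl)$--colored diagrams. Throughout, $\omega(G)$ is to be read as an element of $\A(\Al,\bl)$, which depends only on the isomorphism class of the Blanchfield module; this is what makes the equalities meaningful even when the strut matrices of $G_1$ and $G_2$ differ, as happens for $\eqw$. Note first that both relations preserve "being a non-degenerate Gaussian": the winding move sends struts to struts (with unchanged value at $t=1$), and for $\eql$ the hypothesis already says $G_2$ is such a Gaussian; so $\omega(G_2)$ is always defined.

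\textbf{The winding relation.} Since $\eqw$ is generated by the elementary moves "push $t^k$ at the $c$--labeled vertices", and compositions are handled by induction, it suffices to treat one such move, for a fixed index $c$ and a fixed $k\in\Z$. This move multiplies by $1$ every self-strut on $c$ and by $t^{\pm1}$ every strut between $c$ and some $j\neq c$, hence changes the strut matrix as $W_1\mapsto W_2=DW_1D^*$ for the diagonal matrix $D$ with entry $t^k$ in the $c$-th slot and $1$ elsewhere; one checks $W_2$ is again hermitian and $D(1)=\mathrm{Id}$, and $D$ induces an isomorphism $\phi$ between the Blanchfield modules of $W_1$ and $W_2$. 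On the substantial part, the move multiplies by $t^{\pm1}$ the label of each edge incident to a $c$--labeled vertex. I would then push these powers of $t$ off the edges and onto the colours by repeated use of relation EV, which simultaneously scales the relevant $f_{vv'}$; using $f_{vv'}^{G_2}=-((\tfrac12 W_2)^{-1})_{ij}$ and $(DW_1D^*)^{-1}=\bar D^{-1}W_1^{-1}D^{-1}$, the bookkeeping shows that $\omega(G_2)$ thereby becomes the image under $\phi$ of $\omega(G_1)$, so $\omega(G_1)=\omega(G_2)$ in $\A(\Al,\bl)$.

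\textbf{The link relation.} By Proposition~\ref{proplemmaGK} together with the isomorphism $\chi$, it suffices to treat a single link relation, which after relabelling we may take to use the index $n$. Lemma~\ref{lemmaGK} now does the bulk of the work: it gives at once $W_1=W_2=:W$ (so the Blanchfield module is literally unchanged, which is the first assertion), and it produces a non-degenerate Gaussian $G=\expd(\tfrac12 W')\sqcup H'$ on $\{1,\dots,n,h,\breve h\}$ whose strut matrix has the block form $\begin{pmatrix}W&\zeta&0\\ {}^t\zeta&\lambda&0\\0&0&0\end{pmatrix}$, with $G_1=\langle G\rangle_{h-\breve h}$ and $G_2=\langle G_{|n\to ne^{\breve h}}\rangle_{h-\breve h}$. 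It remains to show $\omega_{\frac12 W}(H_1)=\omega_{\frac12 W}(H_2)$ in $\A(\Al,\bl)$. The point is that $\omega_{\frac12 W}$ amounts to "integrating out" the struts of $G$, recording them in the fractions $f_{vv'}$ which encode $-(W^{-1})_{ij}$ up to the factor $\tfrac12$; since the $\breve h$--row and column of $W'$ vanish, every $\breve h$--leg must be paired off by the contraction $\langle\cdot\rangle_{h-\breve h}$, and the resulting sum over the ways of gluing the $\zeta$--struts and the $\lambda$--self-strut can be rewritten, using relation LV (linearity in the colours) and relation LD — which is precisely the move inserting a $1$--labelled arc between two univalent vertices while modifying $f_{vv'}$ by the label of that arc — as $\omega_{\frac12 W}$ of the substantial part of $\langle G\rangle_{h-\breve h}$; the extra shift $|n\to ne^{\breve h}$ in $G_2$ contributes exactly the additional LD--terms on the $n$--coloured legs, so both sides coincide.

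\textbf{Main obstacle.} The delicate part is this last step: making the identification of the Wick-type contraction $\langle\cdot\rangle_{h-\breve h}$ with the relations LV and LD completely explicit, i.e. verifying that the block structure of $W'$ forces exactly the cancellations and telescopings needed. This is the diagrammatic heart of the argument and runs parallel to the proof of \cite[Lemma~3.6]{GK}; by contrast the EV--bookkeeping in the $\eqw$ case is comparatively routine.
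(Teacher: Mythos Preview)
Your treatment of the winding relation is essentially the paper's: conjugation of $W$ by a diagonal power of $t$, followed by the EV relation to transport the resulting Blanchfield isomorphism onto the vertex labels. That part is fine.

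For the link relation, the strategy is right up to invoking Lemma~\ref{lemmaGK}, but two genuine issues remain. First, the intermediate Gaussian $G$ produced by that lemma is \emph{degenerate}: its strut matrix has a zero last row and column, so $\omega$ is not defined on it. The paper handles this by perturbing to an invertible matrix $\hW$ (inserting $-1$'s in the $(h,\breve h)$ slots), checking that $\hW$ presents the \emph{same} Blanchfield module as $W_1$, and then proving separately that $\omega(G_1)=\omega_{\hW}(H)$ and $\omega(G_2)=\omega_{\hW}(H')$, where $H'$ is the substantial part of $G_{|n\to ne^{\breve h}}$. Your sketch skips this entirely; without it, there is no well-defined $\omega$ on the intermediate object, and the ``Wick-type'' reasoning you allude to has no place to live.

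Second, even granting the perturbation, the equality $\omega_{\hW}(H)=\omega_{\hW}(H')$ is not a matter of ``exactly the additional LD--terms'' cancelling by inspection. In the paper this is a multi-page computation: one expands $H'=\expd J\sqcup H_{|n\to ne^{\breve h}}$ with $J=\tfrac12 W_{|n\to ne^{\breve h}}-\tfrac12 W$, applies $\omega_{\hW}$, uses LV to produce $0$--labeled vertices carrying the relation $\sum_i\hW_{ni}x_i=0$, then iterates LD to glue these vertices, and finally shows via a multinomial/Stirling-type identity that all the resulting coefficients vanish except those reproducing $\omega_{\hW}(H)$. Your proposal identifies the right tools (LV and LD) but not the mechanism; the cancellation is far from tautological and is the actual content of the proposition.
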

The proof of the second point is based on technical lemmas that are postponed to Section~\ref{subsec:tech}.
\begin{proof}
 Assume $G_2$ is obtained from $G_1$ by pushing $t^k$ on the $i$--labeled vertices. Denote by $\textrm{Diag}_i(t)$ the diagonal matrix with a $t$ at the $i^{th}$ position and $1$'s elsewhere. We have $W_2(t)=\textrm{Diag}_i(t^k).W_1(t).\textrm{Diag}_i(t^{-k})$. Hence the map $\frac{\Qtt^n}{^tW_1\Qtt^n}\to\frac{\Qtt^n}{^tW_2\Qtt^n}$ that maps $x_i^{\scriptscriptstyle{(1)}}$ to $t^{-k}x_i^{\scriptscriptstyle{(2)}}$ and $x_j^{\scriptscriptstyle{(1)}}$ to $x_j^{\scriptscriptstyle{(2)}}$ is an isomorphism of Blanchfield modules. Pushing a $t^k$ on the $i$--labeled vertices of $H$ precisely applies this isomorphism to the univalent vertices of $\omega(G_1)$, thanks to the relation~EV. Hence $\omega(G_1)=\omega(G_2)$.
 
 Now assume $G_2$ is obtained from $G_1$ by a single group-like link relation on the $n$--labeled vertices.
 Lemma~\ref{lemmaGK} gives the equality $W_2=W_1$ and provides a group-like element $G=\expd\left(\frac12W\right)\sqcup H$ in $\tA(*_{\{1,\dots,n,h,\breve h\}})$, with $W=\begin{pmatrix}W_1&\bar\zeta&0\\ ^t\zeta&\lambda&0\\0&0&0\end{pmatrix}$, such that $G_1=\langle G\rangle_{h-\breve h}$ and $G_2=\langle G_{|n\to ne^{\breve h}}\rangle_{h-\breve h}$. 
 We wish to prove $\omega(G_1)=\omega(G_2)$ using the expressions of $G_1$ and $G_2$ in terms of $G$. Since the matrix $W$ is singular, the operation $\omega_W$ is not defined. To overcome this, we introduce another matrix $\hW=\begin{pmatrix}W_1&\bar\zeta&0\\ ^t\zeta&\lambda&-1\\0&-1&0\end{pmatrix}$. 
 Let us show that the Blanchfield modules $(\widehat\Al,\widehat\bl)$ and $(\Al_1,\bl_1)$ defined by the matrices $\hW$ and $W_1$ respectively are isomorphic. The module $\widehat\Al$ associated to $\hW$ has a presentation with generators $x_1,\dots,x_n,x_{h},x_{\breve h}$ and relations given by the rows of $\hW$.
 The last two rows of $\hW$ give $x_{h}=0$ and $x_{\breve h}=\sum_{i=1}^n\zeta_ix_i$. It follows that $\widehat\Al$ is generated by $x_1,\dots,x_n$, which satisfy the relations given by the rows of $W_1$. This provides an isomorphism between $\widehat\Al$ and $\Al_1$. Moreover, since $\hW^{-1}=\begin{pmatrix}W_1^{-1}&0&W_1^{-1}\bar\zeta\\ 0&0&-1\\ ^t\zeta W_1^{-1}&-1& ^t\zeta W_1^{-1}\bar\zeta-\lambda\end{pmatrix}$, this isomorphism identifies the Blanchfield forms $\widehat\bl$ and~$\bl_1$.

 We have $G_1=\langle \expd(\frac{1}{2}W)\sqcup H\rangle_{h-{\breve h}}$, so Lemma~\ref{lemma:tech1} gives $\omega(G_1)=\omega_{\hW}(H)$. 
 Also, we have $G_2=\left\langle G_{|n\to ne^h}\right\rangle_{\breve h-h}$, where $G_{|n\to ne^{\breve h}}=\Big(\expd\left(\frac12W\right)\sqcup H\Big)_{|n\to ne^{\breve h}}=\expd\left(\frac12W_{|n\to ne^{\breve h}}\right)\sqcup H_{|n\to ne^{\breve h}}.$ Now $\frac12W_{|n\to ne^{\breve h}}=\frac12W+J$ where $J$ is substantial. It follows that $G_{|n\to ne^{\breve h}}=\expd\left(\frac12W\right)\sqcup H'$, where $H'=\expd J\sqcup H_{|n\to ne^{\breve h}}$ is substantial. By Lemma~\ref{lemma:tech1}, we conclude that $\omega(G_2)=\omega_{\hW}(H')$.
 Now, Lemma~\ref{lemma:tech2} gives $\omega_{\hW}(H)=\omega_{\hW}(H')$, so that finally $\omega(G_1)=\omega(G_2)$.
\end{proof}

\subsection{Technical lemmas} \label{subsec:tech}

This section gathers technical lemmas on the value of $\omega$ for group-like elements satisfying certain relations. Lemmas~\ref{lemmaGK}, \ref{lemma:tech1} and~\ref{lemma:tech2} are needed in the proof of Proposition~\ref{propkey} and Lemmas~\ref{lemmaK2omega1} and~\ref{lemmaK2omega2} will be used in the proof of Proposition~\ref{prop:invariance}.

\begin{notation}
  We define the following exponential notation on diagrams: $$\raisebox{-0.1cm}{
\begin{tikzpicture} [scale=0.25]
 \draw (0,0) -- (5,0);
 \draw[dashed] (-2,0) -- (0,0) (5,0) -- (7,0);
 \foreach \x in {1,4} \draw (\x,0) -- (\x,1.5);
 \draw (2.5,0.8) node {$\scriptstyle \dots$}; \draw (2.5,2.5) node {$\scriptstyle {e^h}$};
\end{tikzpicture}}
=\sum_{s\geq0} \frac1{s!} \raisebox{-0.1cm}{
\begin{tikzpicture} [scale=0.25]
 \draw (0,0) -- (5,0);
 \draw[dashed] (-2,0) -- (0,0) (5,0) -- (7,0);
 \foreach \x in {1,4} \draw (\x,0) -- (\x,1.5) (\x,1.2) node[above] {$\scriptstyle h$};
 \draw (2.5,0.8) node {$\scriptstyle \dots$}; \draw[decorate,decoration={brace,amplitude=5pt}] (0.6,2.8) -- (4.4,2.8) node [midway,above,yshift=0.1cm] {$\scriptscriptstyle {s\text{ times}}$};
\end{tikzpicture}}.$$ 
 A term $e^h-1$ instead of $e^h$ means that the sum is over $s>0$, whereas a term $e^{-h}$ means that the $h$--ended edges are added on the other side of the supporting edge. Thanks to the relation AS, this latter variant is equivalent to replacing the $\frac1{s!}$ factor by $\frac{(-1)^s}{s!}$; note in particular that consecutive notations $e^h$ and $e^{-h}$ cancel each other, namely just amount to the supporting edge without $h$--ended edge.
\end{notation}

We start with two lemmas that give identities involving this exponential notation, which will be useful in the proof of Lemma~\ref{lemmaGK}.

\begin{lemma} \label{lemmaSTUexp}
\begin{tikzpicture} [scale=0.25]
 \draw[very thick,->] (0,0) -- (10,0);
 \foreach \x in {1,3,4,6,7,9} \draw (\x,0) -- (\x,1);
 \foreach \x/\l in {2/h,5/k,8/-h} {
 \draw (\x,0.5) node {$\scriptstyle \dots$};
 \draw (\x,1.8) node {$\scriptstyle e^{\l}$};}
\end{tikzpicture}
$=$
\begin{tikzpicture} [scale=0.25]
 \draw[very thick,->] (0,0) -- (10,0);
 \foreach \x in {3,8} {
 \draw (\x,0) -- (\x,4);
 \foreach \y in {1,3} \draw (\x,\y) -- (\x-1,\y);
 \draw (\x-0.5,2.3) node {$\scriptstyle \vdots$};
 \draw (\x-1.7,2) node {$\scriptstyle e^{h}$};}
 \draw[decorate,decoration={brace,amplitude=4pt}] (2.8,4.1) -- (8.2,4.1) node [midway,above,yshift=0.1cm] {$\scriptstyle e^{k}$};
 \draw (4.3,1.5) node {$\dots$};
\end{tikzpicture}
\end{lemma}
\begin{proof}
 We start with the relation STU 
\begin{tikzpicture} [scale=0.25]
 \draw[very thick,->] (-1,0) -- (3,0);
 \draw (1,0) -- (1,2) node[above] {$\scriptstyle k$};
 \draw (0,1) node[left] {$\scriptstyle h$} -- (1,1);
\end{tikzpicture}
 $=$
\begin{tikzpicture} [scale=0.25]
 \draw[very thick,->] (-1,0) -- (3,0);
 \foreach \x/\n in {0/h,1.7/k} 
 \draw (\x,0) -- (\x,2) node[above] {$\scriptstyle \n$};
\end{tikzpicture}
 $-$
\begin{tikzpicture} [scale=0.25]
 \draw[very thick,->] (-1,0) -- (3,0);
 \foreach \x/\n in {0/k,1.7/h} 
 \draw (\x,0) -- (\x,2) node[above] {$\scriptstyle \n$};
\end{tikzpicture}.
 Iterating this relation, we get 
\begin{tikzpicture} [scale=0.25]
 \draw[very thick,->] (0,0) -- (5,0);
 \draw (3,0) -- (3,4) node[above] {$\scriptstyle k$};
 \foreach \y in {1,3} \draw (3,\y) -- (2,\y) node[left] {$\scriptstyle h$};
 \draw (2.5,2.3) node {$\scriptstyle \vdots$};
 \draw[decorate,decoration={brace,amplitude=2pt}] (3.5,3) -- (3.5,1) node [midway,right] {$\scriptstyle {s}$};
\end{tikzpicture}
 $\displaystyle =\sum_{t=0}^s(-1)^t\binom st$
\raisebox{-0.7cm}{
\begin{tikzpicture} [scale=0.25]
 \draw[very thick,->] (0,0) -- (8,0);
 \foreach \x/\n in {1/h,3/h,4/k,5/h,7/h}
 \draw (\x,0) -- (\x,2) node[above] {$\scriptstyle \n$};
 \foreach \x/\n in {2/s-t,6/t} {
 \draw (\x,1) node {$\scriptstyle \dots$};
 \draw[decorate,decoration={brace,amplitude=2pt}] (\x+1,-0.5) -- (\x-1,-0.5) node [midway,below] {$\scriptstyle {\n}$};}
\end{tikzpicture}},
 which gives 
\begin{tikzpicture} [scale=0.25]
 \draw[very thick,->] (0,0) -- (5,0);
 \draw (3,0) -- (3,4) node[above] {$\scriptstyle k$};
 \foreach \y in {1,3} \draw (3,\y) -- (2,\y);
 \draw (2.5,2.3) node {$\scriptstyle \vdots$};
 \draw (1.3,2) node {$\scriptstyle e^{h}$};
\end{tikzpicture}
 $=$
\begin{tikzpicture} [scale=0.25]
 \draw[very thick,->] (0,0) -- (8,0);
 \foreach \x in {1,3,4,5,7}
 \draw (\x,0) -- (\x,2);
 \foreach \x/\n in {2/e^h,4/k,6/\,e^{-h}}
 \draw (\x,2) node[above] {$\scriptstyle {\n}$};
 \foreach \x in {2,6}
 \draw (\x,1) node {$\scriptstyle \dots$};
\end{tikzpicture}. 
 Thanks to the equality  
\begin{tikzpicture} [scale=0.25]
 \draw[very thick,->] (0,0) -- (7,0);
 \foreach \x in {1,3,4,6}
 \draw (\x,0) -- (\x,2);
 \foreach \x/\n in {2/e^h,5/\,e^{-h}}
 \draw (\x,2) node[above] {$\scriptstyle {\n}$};
 \foreach \x in {2,5}
 \draw (\x,1) node {$\scriptstyle \dots$};
\end{tikzpicture}
 $=$
\begin{tikzpicture} [scale=0.25]
 \draw[very thick,->] (0,0) -- (2,0);
\end{tikzpicture},
 this implies 
\raisebox{-0.6cm}{
\begin{tikzpicture} [scale=0.25]
 \draw[very thick,->] (0,0) -- (10,0);
 \foreach \x in {3,8} {
 \draw (\x,0) -- (\x,4) node[above] {$\scriptstyle k$};
 \foreach \y in {1,3} \draw (\x,\y) -- (\x-1,\y);
 \draw (\x-0.5,2.3) node {$\scriptstyle \vdots$};
 \draw (\x-1.7,2) node {$\scriptstyle e^{h}$};}
 \draw (4.3,1.5) node {$\dots$};
 \draw[decorate,decoration={brace,amplitude=4pt}] (8,-0.5) -- (3,-0.5) node [midway,below,yshift=-0.1cm] {$\scriptstyle s$};
\end{tikzpicture}}
 $=$
\raisebox{-0.5cm}{
\begin{tikzpicture} [scale=0.25]
 \draw[very thick,->] (0,0) -- (10,0);
 \foreach \x in {1,3,4,6,7,9}
 \draw (\x,0) -- (\x,2);
 \foreach \x/\n in {2/e^h,4/k,6/k,8/\,e^{-h}}
 \draw (\x,2) node[above] {$\scriptstyle {\n}$};
 \foreach \x in {2,5,8}
 \draw (\x,1) node {$\scriptstyle \dots$};
 \draw[decorate,decoration={brace,amplitude=2pt}] (6,-0.5) -- (4,-0.5) node [midway,below] {$\scriptstyle s$};
\end{tikzpicture}},
which finally leads to the desired relation.
\end{proof}

\begin{lemma} \label{lemmaIHXexp}
 \newcommand{\armexp}{
 \draw (0,0) -- (0,-4);
 \foreach \y in {-1,-3}
 \draw (0,\y) -- (1,\y);
 \draw (1.7,-2) node {$\scriptstyle {e^h}$};
 \foreach \y in {-1.5,-2,-2.5}
 \draw (0.5,\y) node {$\scriptstyle \cdot$};}
 \raisebox{-0.4cm}{
\begin{tikzpicture} [scale=0.2]
 \foreach \x in {0,12,24}
 \draw (\x,0) -- (\x,-4);
 \foreach \y in {-1,-3} 
 \draw (0,\y) -- (-1,\y);
 \draw (-1,-2) node[left] {$\scriptstyle {e^h}$};
 \draw (-0.5,-1.5) node {$\scriptstyle \vdots$};
 \foreach \t in {120,240}
 \draw[rotate=\t] (0,0) -- (0,-4);
 \draw (6,-1.5) node {$=$};
 \draw (18,-1.5) node {$+$};
 \foreach \x/\t in {24/120,12/240} {
 \begin{scope} [xshift=\x cm,rotate=\t]
  \armexp
 \end{scope}}
 \foreach \x/\t in {12/120,24/240} 
 \draw[xshift=\x cm,rotate=\t] (0,0) -- (0,-4);
\end{tikzpicture}}
\end{lemma}
\begin{proof}
 We start with the relation IHX 
\raisebox{-0.2cm}{
\begin{tikzpicture} [scale=0.5]
 \foreach \x in {0,3,6.5} {
 \begin{scope} [xshift=\x cm]
  \foreach \t in {0,120,240}
  \draw[rotate=\t] (0,0) -- (0,-1);
 \end{scope}}
 \draw[rotate=240] (0,-0.5) -- (0.5,-0.5) node[left] {$\scriptstyle{h}$};
 \draw[xshift=3cm,rotate=120] (0,-0.5) -- (0.5,-0.5) node[above] {$\scriptstyle{h}$};
 \draw (6.5,-0.5) -- (6,-0.5) node[left] {$\scriptstyle{h}$};
 \draw (1.5,-0.2) node {$+$} (4.5,-0.2) node {$=$};
\end{tikzpicture}
}. Iterating, this gives 
\newcommand{\armh}{
 \draw (0,0) -- (0,-4);
 \foreach \y in {-1,-3} {
 \draw (0,\y) -- (1,\y);
 \draw (1.7,\y) node {$\scriptstyle h$};}
 \foreach \y in {-1.5,-2,-2.5}
 \draw (0.5,\y) node {$\scriptstyle \cdot$};
 \draw[decorate,decoration={brace,amplitude=2pt}] (-0.4,-3) -- (-0.4,-1);}
\[
\begin{tikzpicture} [scale=0.25]
 \foreach \x in {0,17}
 \draw (\x,0) -- (\x,-4);
 \foreach \y in {-1,-3} 
 \draw (0,\y) -- (-1,\y) node[left] {$\scriptstyle h$};
 \draw (-0.5,-1.7) node {$\scriptstyle \vdots$};
 \draw[decorate,decoration={brace,amplitude=2pt}] (0.5,-1) -- (0.5,-3) node [midway,right] {$\scriptstyle {s}$};
 \foreach \t in {120,240}
 \draw[rotate=\t] (0,0) -- (0,-4);
 \draw (8,-1) node {$\displaystyle =\sum_{t=0}^s\binom st$};
 \foreach \t/\e in {120/s-t,240/s} {
 \begin{scope} [xshift=17cm,rotate=\t]
  \armh
  \draw (-1.3,-2) node {$\scriptstyle {\e}$};
 \end{scope}}
\end{tikzpicture},
\]
from which we get the desired relation.
\end{proof}

\begin{lemma}\label{lemmaGK}
 Let $G_1=\expd(\frac{1}{2}W_1)\sqcup H_1$ and
 $G_2=\expd(\frac{1}{2}W_2)\sqcup H_2$ be non-degenerate group-like elements in $\tA(*_{\{1,\dots,n\}})$. Assume $\chi_{\{1,\dots,n\}} (G_1)$ and $\chi_{\{1,\dots,n\}} (G_2)$ are related by one group-like link relation on $\xd_i$. Then $W_1=W_2$ and $G_1\eqlgl G_2$. More precisely, there is a group-like element $G=\expd(\frac{1}{2}W)\sqcup H$ in $\tA(*_{\{1,\dots,n,h,\breve h\}})$ such that $G_1=\langle G\rangle_{h-\breve h}$ and $G_2=\langle G_{|i\to ie^{\breve h}}\rangle_{h-\breve h}$, with $W$ of the form $\begin{pmatrix}W_1&\bar\zeta&0\\ ^t\zeta&\lambda&0\\0&0&0\end{pmatrix}$ where $\bar\zeta(t)=\zeta(t^{-1})$.
\end{lemma}
\begin{proof}
\newcommand{\hairbox}[2]{
\begin{tikzpicture} [scale=0.25]
 \draw (0,0) -- (7,0) -- (7,2) -- (0,2) -- (0,0);
 \draw (3.5,1) node {$\scriptstyle D_j^{#1}$};
 \foreach \x in {1,3,4,6} \draw (\x,0) -- (\x,-1);
 \foreach \x/\l in {2/k,5/#2} {
 \draw (\x,-0.5) node {$\scriptstyle \dots$};
 \draw (\x,-1.5) node {$\scriptscriptstyle {\l}$};}
\end{tikzpicture}}
\newcommand{\hairexp}[2]{
\begin{tikzpicture} [scale=0.25]
 \draw[very thick,->] (0,0) -- (4,0) node[below] {$\scriptstyle #1$};
 \foreach \x in {1,3} \draw (\x,0) -- (\x,1);
 \draw (2,0.5) node {$\scriptstyle \dots$};
 \draw (2,1.8) node {$\scriptstyle e^{\breve #2}$};
\end{tikzpicture}}
\newcommand{\hairexpd}[2]{
\begin{tikzpicture} [scale=0.25]
 \draw[very thick,->] (0,0) -- (7,0) node[below] {$\scriptstyle n$};
 \foreach \x in {1,3,4,6} \draw (\x,0) -- (\x,1);
 \foreach \x/\l in {2/#1,5/#2} {
 \draw (\x,0.5) node {$\scriptstyle \dots$};
 \draw (\x,1.8) node {$\scriptstyle e^{\breve \l}$};}
\end{tikzpicture}}
\newcommand{\hairexpq}{
\begin{tikzpicture} [scale=0.25]
 \draw[very thick,->] (0,0) -- (13,0) node[below] {$\scriptstyle n$};
 \foreach \x in {1,3,4,6,7,9,10,12} \draw (\x,0) -- (\x,1);
 \foreach \x/\l in {2/\breve h,5/\breve k,8/-\breve h,11/\breve h} {
 \draw (\x,0.5) node {$\scriptstyle \dots$};
 \draw (\x,1.8) node {$\scriptstyle e^{\l}$};}
\end{tikzpicture}}
\newcommand{\hairexpc}{
\begin{tikzpicture} [scale=0.25]
 \draw[very thick,->] (0,0) -- (12,0) node[below] {$\scriptstyle n$};
 \foreach \x in {3,8} {
 \draw (\x,0) -- (\x,4);
 \foreach \y in {1,3} \draw (\x,\y) -- (\x-1,\y);
 \draw (\x-0.5,2.3) node {$\scriptstyle \vdots$};
 \draw (\x-1.7,2) node {$\scriptstyle e^{\breve h}$};}
 \draw[decorate,decoration={brace,amplitude=4pt}] (2.8,4.1) -- (8.2,4.1) node [midway,above,yshift=0.1cm] {$\scriptstyle e^{\breve k}$};
 \foreach \x in {9,11} \draw (\x,0) -- (\x,1);
 \draw (4.3,1.5) node {$\dots$};
 \draw (10,0.5) node {$\scriptstyle \dots$};
 \draw (10,1.8) node {$\scriptstyle e^{\breve h}$};
\end{tikzpicture}}
\newcommand{\hairexpv}[1]{\hspace{-0.5ex}
\begin{tikzpicture} [scale=0.2]
 \draw (0,0) node[below] {$\scriptscriptstyle n$} -- (0,4) node[above] {$\scriptscriptstyle \breve h#1$};
 \foreach \y in {1,3} \draw (0,\y) -- (-1,\y);
 \draw (-0.5,2.5) node {$\scriptstyle \vdots$};
 \draw (-1.8,2) node {$\scriptstyle e^{\breve h}$};
\end{tikzpicture}}
\renewcommand{\strut}[1]{\hspace{-0.5ex}
\begin{tikzpicture} [scale=0.2]
\draw (0,0) node[below] {$\scriptstyle n$} -- (0,4) node[above] {$\scriptstyle\breve #1$};
\end{tikzpicture}}
\newcommand{\sstrut}[1]{\hspace{-0.7ex}
\begin{tikzpicture} [scale=0.1]
\draw (0,0) node[below] {$\scriptscriptstyle n$} -- (0,4) node[above] {$\scriptscriptstyle\breve #1$};
\end{tikzpicture}}
\newcommand{\y}{
\begin{tikzpicture} [scale=0.2]
\draw (0,0) node[below] {$\scriptstyle n$} -- (0,2) -- (-2,4) node[above] {$\scriptstyle\breve k$} (0,2) -- (2,4) node[above] {$\scriptstyle\breve h$};
\end{tikzpicture}}
\newcommand{\yd}[1]{
\begin{tikzpicture} [scale=0.2]
\draw (0,0) node[below] {$\scriptstyle n$} -- (0,2) -- (-2,4) node[above] {$\scriptstyle\breve #1$} (0,2) -- (2,4) node[above] {$\scriptstyle\breve h$} (1,3) -- (0,4) node[above] {$\scriptstyle\breve k$};
\end{tikzpicture}}
 The proof follows very closely that of \cite[Lemma~3.6]{GK}. Up to relabeling, we assume that $i=n$. There is a group-like element $D\in\tA\left(\xd_{\{1,\dots,n-1,n_1,n_2\}}\right)$ such that $\chi_{\{1,\dots,n\}} (G_1)=m_n^{n_2n_1}(D)$ and $\chi_{\{1,\dots,n\}} (G_2)=m_n^{n_1n_2}(D)$, where, on each diagram, $m^{n_2n_1}_n$ glues the head of $\xd_{n_2}$ to the tail of $\xd_{n_1}$ to form $\xd_n$, and $m_n^{n_1n_2}$ is defined similarly. Hence $\chi_{\{n\}}(G_1)=m_n^{n_2n_1}\circ\chi^{-1}_{\{1,\dots,n-1\}}(D)$ and $\chi_{\{n\}}(G_2)=m_n^{n_1n_2}\circ\chi^{-1}_{\{1,\dots,n-1\}}(D)$. Writing $\chi_{\{1,\dots,n-1,n_1,n_2\}}^{-1}(D)_{\left|\substack{n_1\to h\\n_2\to k}\right.}=\sum \alpha_j D_j$ with, for each~$j$, $\alpha_j\in\Q$ and $D_j$ a beaded Jacobi diagram on $\{1,\dots,n-1,h,k\}$, we have:
 \[
 \chi^{-1}_{\{1,\dots,n-1\}}(D)=\chi_{\{n_1,n_2\}}\circ \chi_{\{1,\dots,n-1,n_1,n_2\}}^{-1}(D)
 =\left\langle\sum\alpha_j\raisebox{-0.6cm}{\hairbox{}{h}}\ \raisebox{-0.5cm}{\hairexp{n_2}{k}\hairexp{n_1}{h}}\right\rangle_{\substack{h-\breve h\\k-\breve k}},
 \]
 and hence
 \[
 \hspace{-.5cm}
 \chi_{\{n\}} (G_1)=\left\langle\sum\alpha_j\raisebox{-0.6cm}{\hairbox{}{h}}\ \raisebox{-0.5cm}{\hairexpd{k}{h}}\right\rangle_{\substack{h-\breve h\\k-\breve k}}.
 \]
 The bracket in the right hand side replaces the diagram $D_j$ with the sum over all possible ways of gluing the $h$--labeled vertices of $D_j$ on the ``head half'' of $\xd_n$ and the $k$--labeled vertices of $D_j$ on the ``tail half'' of $\xd_n$. So basically, we have
 \[
 \chi_{\{n\}} (G_1)=m^{kh}_n\circ\chi_{\{k,h\}}\left(\sum\alpha_j\raisebox{-0.6cm}{\hairbox{}{h}}\right).
 \]
Then, we can use \cite[Prop. 5.4]{AA2} to write
\[
G_1=\left\langle\expd\Big(\Lambda^{\breve k\breve h}_n\Big)\sqcup \sum\alpha_j\raisebox{-0.6cm}{\hairbox{}{h}}\,\right\rangle_{\substack{h-\breve
h\\k-\breve k}}
\]
where $\Lambda^{\breve k\breve h}_n$ is the Baker--Campbell--Hausdorff sum
\begin{equation} \tag{$*$}
\Lambda^{\breve k\breve h}_n=\raisebox{-0.7cm}{\strut k}+\raisebox{-0.7cm}{\strut h}+\frac12 \raisebox{-0.7cm}{\y}+\frac1{12}\raisebox{-0.6cm}{\yd k}-\frac1{12}\raisebox{-0.7cm}{\yd h}+\cdots.
\label{eq:BCH}
\end{equation}
The strut $\struts n{\breve h}{}$ is the only term in $\Lambda^{\breve k\breve h}_n$ with no $\breve k$--labeled vertex. For this reason, we will see when working out an expression for $G_2$ that it behaves differently, and we put it apart.
From the above expression for $G_1$, we obtain
\[
G_1=
\left\langle\expd\bigg(\Lambda^{\breve k\breve
    h}_n-\raisebox{-0.45cm}{\sstrut h}\bigg)\sqcup
  \expd\bigg(\raisebox{-0.45cm}{\sstrut {h'}}\bigg)\sqcup
  \sum_{j,p}\alpha_j\raisebox{-0.6cm}{\hairbox{p}{{h,h'}}}\,\right\rangle_{\substack{h'-\breve h'\\h-\breve h\\k-\breve k}},
\]
where, for each $j$, the $D_j^p$ are all the diagrams obtained from $D$ by labeling each $h$--labeled vertex of $D$ with either $h$ or $h'$.
Setting
\[
  G=
  \left\langle\expd\bigg(\Lambda^{\breve k\breve
    h}_n-\raisebox{-0.45cm}{\sstrut h}\bigg)\sqcup
  \expd\bigg(\raisebox{-0.45cm}{\sstrut {h'}}\bigg)\sqcup
  \sum_{j,p}\alpha_j\raisebox{-0.6cm}{\hairbox{p}{{h,h'}}}\,\right\rangle_{\substack{h'-\breve h'\\k-\breve k}},
\]
we obtain $G_1=\langle G\rangle_{h-\breve h}$.

Reviewing the construction of $G$, we see that it is group-like: $D$ is group-like, so that the sum $\sum_j\alpha_j\raisebox{-0.6cm}{\hairbox{}{h}}\,$, and further the sum $\sum_{j,p}\alpha_j\raisebox{-0.6cm}{\hairbox{p}{{h,h'}}}\,$, are group-like, and finally $G$ is group-like. 
Moreover, observe that $G$ contains no strut with an $\breve h$--labeled end. 
Hence we can write $G=\expd(\frac{1}{2}W)\sqcup H$ with $W=\begin{pmatrix}W_0&\bar\zeta&0\\ ^t\zeta&\lambda&0\\0&0&0\end{pmatrix}$. 
Now $G_1=\left\langle \expd\left(\frac{1}{2}W\right)\sqcup H\right\rangle_{h-\breve h}$ gives 
$$G_1=\left\langle \expd\left(\frac{1}{2}W_0\right)\sqcup\expd\left(\sum_{j=1}^n\struts hj{\zeta_j}+\frac12\struts hh\lambda\right)\sqcup H\right\rangle_{h-\breve h}=\expd\left(\frac{1}{2}W_0\right)\sqcup H'$$
with $H'$ substantial. It follows that $W_0=W_1$, which finally provides the required expression for~$G$.

We now consider $G_2$. As for $G_1$, we have
\[
\chi_{\{n\}} (G_2)=\left\langle\sum\alpha_j\raisebox{-0.6cm}{\hairbox{}{h}}\ \raisebox{-0.5cm}{\hairexpd hk}\right\rangle_{\substack{h-\breve h\\k-\breve k}}.
\]
Further, we have by lemma~\ref{lemmaSTUexp}:
\[
\raisebox{-0.5cm}{\hairexpd hk}=\raisebox{-0.5cm}{\hairexpq}=\raisebox{-0.5cm}{\hairexpc},
\]
so
\begin{align*}
 \chi_{\{n\}} (G_2)= &
 \left\langle\sum\alpha_j\raisebox{-0.6cm}{\hairbox{}{h}}\ \raisebox{-0.8cm}{\hairexpc}\right\rangle_{\substack{h-\breve h\\k-\breve k}} \\
 = & 
 \left\langle\sum\alpha_j\raisebox{-0.6cm}{\hairbox{\breve h}{h}}\ \raisebox{-0.5cm}{\hairexpd{k}{h}}\right\rangle_{\substack{h-\breve h\\k-\breve k}}
\end{align*}
where $D_j^{\breve h}$ is obtained from $D_j$ by pushing $e^{\breve h}$ on each $k$--labeled vertex. As above, we get then
\[
G_2=\left\langle\expd\Big(\Lambda^{\breve k\breve h}_n\Big)\sqcup \sum\alpha_j\raisebox{-0.6cm}{\hairbox{\breve h}{h}}\,\right\rangle_{\substack{h-\breve
h\\k-\breve k}}.
\]
Now, the $e^{\breve h}$ next to the $k$--labeled vertices can be pushed on the $\breve k$--labeled vertices of $\expd\left(\Lambda_n^{\breve k\breve h}\right)$, where they become $e^{-\breve h}$. Some $e^{-\breve h}$ can also be freely added next to the $\breve h$--labeled vertices of
$\expd\left(\Lambda_n^{\breve k\breve h}\right)$ as all the diagrams corresponding to the non trivial terms in $e^{-\breve h}$ will vanish thanks to the relation AS. All these $e^{-\breve h}$ can then be pushed down through $\Lambda^{\breve k\breve h}$ thanks to Lemma~\ref{lemmaIHXexp}, so that
\[
G_2=\left\langle \expd\left(\widetilde\Lambda^{\breve k\breve h}_n\right)\sqcup \sum\alpha_j\raisebox{-0.6cm}{\hairbox{}{h}}\,\right\rangle_{\substack{h-\breve
h\\k-\breve k}}
\]
where $\widetilde \Lambda^{\breve k\breve h}_n$ is obtained from $\Lambda^{\breve k\breve h}_n$ by pushing $e^{\breve h}$ on each $n$--labeled vertex. This can be rewritten as
\[
G_2=\Bigg\langle \bigg\langle \expd\bigg(\widetilde\Lambda^{\breve k\breve h}_n-\raisebox{-0.6cm}{\hairexpv{}}\bigg)\sqcup\expd\bigg(\raisebox{-0.6cm}{\hairexpv{'}}\bigg)\sqcup \sum\alpha_j\raisebox{-0.6cm}{\hairbox{p}{{h,h'}}}\,\bigg\rangle_{\substack{h'-\breve
h'\\k-\breve k}}\Bigg\rangle_{h-\breve h},
\]
that is $G_2=\left\langle G_{|n\to ne^{\breve h}}\right\rangle_{h-\breve h}$.

From $G=\expd\left(\frac{1}{2}W\right)\sqcup H$, we get $G_{|n\to ne^{\breve h}}=\expd\left(\frac{1}{2}W\right)\sqcup H''$ with $H''$ substantial. Hence the above proof of $W_0=W_1$ applies again and gives $W_0=W_2$. Thus $W_1=W_2$.
\end{proof}

\begin{lemma} \label{lemma:tech1}
 Assume $G_1=\expd(\frac{1}{2}W_1)\sqcup H_1$ is a group-like element in $\tA(*_{\{1,\dots,n\}})$ and $G=\expd(\frac{1}{2}W)\sqcup H$ is a group-like element in $\tA(*_{\{1,\dots,n,h,\breve h\}})$ such that $G_1=\langle G\rangle_{h-{\breve h}}$ and $W=\begin{pmatrix} W_1&\bar\zeta&0\\^t\zeta&\lambda&0\\0&0&0 \end{pmatrix}$. Set $\hW=\begin{pmatrix} W_1&\bar\zeta&0\\^t\zeta&\lambda&-1\\0&-1&0 \end{pmatrix}$. Then $\omega(G_1)=\omega_{\hW}(H)$.
\end{lemma}
\begin{proof}
 We have $G_1=\langle \expd(\frac{1}{2}W)\sqcup H\rangle_{h-{\breve h}}$, which gives:
 \begin{align*}
  G_1&=\left\langle \expd\left(\frac{1}{2}W_1\right)\sqcup\expd\left(\sum_{i=1}^n \struts hi{\zeta_i}+\frac12 \struts hh\lambda\right) \sqcup H\right\rangle_{h-{\breve h}}\\
  &=\expd\left(\frac{1}{2}W_1\right)\sqcup\left\langle\expd\left(\sum_{i=1}^n \struts hi{\zeta_i}+\frac12 \struts hh\lambda\right) \sqcup H\right\rangle_{h-{\breve h}}.
 \end{align*}
 Hence $H_1=\left\langle\expd\left(\sum_{i=1}^n \struts hi{\zeta_i}+\frac12 \struts hh\lambda\right) \sqcup H\right\rangle_{h-{\breve h}}$, and we need to prove the equality $\omega_{W_1}(H_1)=\omega_{\hW}(H)$. The operation $\omega$, as well as the contraction, are applied on each diagram of a series. Hence it suffices to prove the result for each summand, so we assume here that $H$ is a single diagram.
 Recall that, when applying the operation $\omega_{\hW}$, the linkings are given by the matrix $-\hW^{-1}=\begin{pmatrix}-W_1^{-1}&0&-W_1^{-1}\bar\zeta\\ 0&0&1\\ -^t\zeta W_1^{-1}&1& \lambda-{ }^t\zeta W_1^{-1}\bar\zeta\end{pmatrix}$.
 
 Let $V$ be the set of univalent vertices of $\omega_{\hW}(H)$, and denote $V_h,V_{\breve h}\subset V$ its subsets of vertices labeled by $x_h=0$ and $x_{\breve h}$ respectively. We apply generalized relations LD at each vertex in~$V_{\breve h}$, using the equality $x_{\breve h}=0+\sum_{i=1}^n\zeta_ix_i$. It shows that $\omega_{\hW}(H)$ is the sum of all diagrams obtained from $\omega_{\hW}(H)$ by labeling the vertices in $V_{\breve h}$ by $0$ or $\zeta_ix_i$ for some $i=1,\dots,n$, and setting the following linkings for $v\in V_{\breve h}$ and $w\in V$:
 \begin{itemize}
  \item if $v$ is $0$--labeled,
  \begin{itemize}
   \item $f_{vw}=1$ if $w\in V_h$,
   \item $f_{vw}=\lambda$ if $w\in V_{\breve h}$ is $0$--labeled,
   \item $f_{vw}=0$ otherwise,
  \end{itemize}
  \item if $v$ is $\zeta_ix_i$--labeled,
  \begin{itemize}
   \item $f_{vw}=0$ if $w\in V_h$ or $w\in V_{\breve h}$ is $0$--labeled,
   \item $f_{vw}=-(W_1^{-1})_{ij}\zeta_i\bar\zeta_j$ if $w\in V_{\breve h}$ is $\zeta_jx_j$--labeled,
   \item $f_{vw}=-(W_1^{-1})_{ij}\zeta_i$ if $w\in V\setminus(V_h\cup V_{\breve h})$ is $x_j$--labeled. 
  \end{itemize}
 \end{itemize}
 Then we apply a relation EV at each $\zeta_ix_i$--labeled vertex $v\in V_{\breve h}$, so that the label of the vertex becomes $x_i$. At this stage, for all pair of vertices $v,w\in V$ labeled by $x_i$ and $x_j$ respectively, we have $f_{vw}=-(W_1^{-1})_{ij}$. Finally, we apply relations LD at each pair of $0$--labeled vertices $v,w\in V_h\cup V_{\breve h}$ such that $f_{vw}=0$, until there is no more such pairs of vertices. 
 
 If there are initially more $h$--labeled vertices than $\breve h$--labeled ones in $H$, then some $0$--labeled vertex trivially linked to all other vertices remains in each diagram of the sum, and the relation LV shows that $\omega_{\hW}(H)=0$. In this case, we also have $\omega_{W_1}(H_1)=0$.
 
 Otherwise, we obtain that $\omega_{\hW}(H)$ equals the sum of all diagrams obtained from $\omega_{\hW}(H)$ by:
 \begin{itemize}
  \item pairing each vertex in $V_h$ with a vertex in $V_{\breve h}$,
  \item pairing some vertices in $V_{\breve h}$ together, adding a $\lambda$ on the involved edge,
  \item labeling the remaining vertices in $V_{\breve h}$ by $x_i$ for some $i=1,\dots,n$, adding a $\zeta_i$ on the adjacent edge,
  \item setting $f_{vw}=-(W_1^{-1})_{ij}$ if $v$ is labeled by $x_i$ and $w$ by $x_j$,
 \end{itemize}
which is exactly $\omega_{W_1}(H_1)$.
\end{proof}

\begin{lemma} \label{lemma:tech2}
 Let $H$ and $H'$ be substantial elements in $\tA(*_{\{1,\dots,n,h,\breve h\}})$. Let $W_1$ be a non-singular matrix of size $n$ with coefficients in $\Qtt$. Set $W=\begin{pmatrix} W_1&\bar\zeta&0\\^t\zeta&\lambda&0\\0&0&0 \end{pmatrix}$, $\hW=\begin{pmatrix} W_1&\bar\zeta&0\\^t\zeta&\lambda&-1\\0&-1&0 \end{pmatrix}$, and $J=\frac12W_{|n\to ne^{\breve h}}-\frac12W$. Assume $H'=\expd J\sqcup H_{|n\to ne^{\breve h}}$. Then $\omega_{\hW}(H)=\omega_{\hW}(H')$.
\end{lemma}
\begin{proof}
 By definition of $J$, we have
 $$J=\sum_{1\leq i\leq n}
 \raisebox{-0.1cm}{
 \begin{tikzpicture} [scale=0.25]
 \draw (0,0) node[left] {$\scriptstyle n$} -- (8,0) node[right] {$\scriptstyle i$};
 \foreach \x in {1,4} \draw (\x,0) -- (\x,1.5);
 \draw (2.5,0.8) node {$\scriptstyle \dots$}; \draw (2.5,2.5) node {$\scriptstyle {e^{\breve h}-1}$};
 \draw[->] (5,0) -- (6,0) node[above] {$\scriptstyle {W_{ni}}$};
 \end{tikzpicture}}
 +\frac12
 \raisebox{-0.8cm}{
 \begin{tikzpicture} [scale=0.25]
 \draw (0,0) node[left] {$\scriptstyle n$} -- (12,0) node[right] {$\scriptstyle n$};
 \foreach \x/\s in {1/1,4/1,8/-1,11/-1} \draw (\x,0) -- (\x,1.5*\s);
 \foreach \x/\s in {2.5/1,9.5/-1} {\draw (\x,0.8*\s) node {$\scriptstyle \dots$}; \draw (\x,2.5*\s) node {$\scriptstyle {e^{\breve h}-1}$};}
 \draw[->] (5,0) -- (6,0) node[above] {$\scriptstyle {W_{nn}}$};
 \end{tikzpicture}}
 +
 \raisebox{-0.1cm}{
 \begin{tikzpicture} [scale=0.25]
 \draw (0,0) node[left] {$\scriptstyle n$} -- (8,0) node[right] {$\scriptstyle h$};
 \foreach \x in {1,4} \draw (\x,0) -- (\x,1.5);
 \draw (2.5,0.8) node {$\scriptstyle \dots$}; 
 \draw (2.5,2.5) node {$\scriptstyle {e^{\breve h}-1}$};
 \draw[->] (5,0) -- (6,0) node[above] {$\scriptstyle {\bar\zeta_n}$};
 \end{tikzpicture}}
 .
 $$
Writing $H=\sum_\kappa \alpha_\kappa\raisebox{-0.6cm}{
\begin{tikzpicture} [scale=0.2]
 \draw (0,0) -- (5,0) -- (5,8) -- (0,8) -- (0,0);
 \draw (2.5,4) node {$D_\kappa$};
 \foreach \y in {1.5,6.5} {
 \draw (5,\y) -- (8,\y) node[right] {$\scriptstyle n$};}
 \draw (6.5,4.5) node {$\vdots$};
\end{tikzpicture}}$, where the $D_\kappa$ are substantial beaded Jacobi diagrams on $\{1,\dots,n,h,\breve h\}$, we get
\[
 H'=\sum\frac{\alpha_\kappa}{s!r_h!\prod_{i=1}^n r_i!} \raisebox{-1.5cm}{
 \begin{tikzpicture} [scale=0.25]
 \draw (0,0) -- (5,0) -- (5,8) -- (0,8) -- (0,0);
 \draw (2.5,4) node {$D_\kappa$};
 \foreach \y in {1,7} {
 \draw (5,\y) -- (10,\y) node[right] {$\scriptstyle n$};
 \foreach \x in {6,9} \draw (\x,\y) -- (\x,\y-1.5);
 \draw (7.5,\y-0.8) node {$\scriptstyle \dots$};
 \draw (7.5,\y-2.2) node {$\scriptstyle e^{\breve h}$};
 }
 \draw (7.5,3) node {$\vdots$};
 \foreach \z/\j in {7/1,-1/n} {
\begin{scope} [xshift=15cm,yshift=\z cm]
 \draw (-1.5,0.8) node {\Huge (};
 \draw (9.5,0.8) node {\Huge )};
 \draw (10.5,2) node {$\scriptstyle r_\j$};
 \draw (0,0) node[left] {$\scriptstyle n$} -- (8,0) node[right] {$\scriptstyle \j$};
 \foreach \x in {1,4} \draw (\x,0) -- (\x,1.5);
 \draw (2.5,0.8) node {$\scriptstyle \dots$}; \draw (2.5,2.5) node {$\scriptstyle {e^{\breve h}-1}$};
 \draw[->] (5,0) -- (6,0) node[above] {$\scriptstyle {W_{n\j}}$};
\end{scope}}
 \draw (19,4.5) node {$\vdots$};
\begin{scope} [xshift=30cm,yshift=7cm]
 \draw (-1.5,0.5) node {\Huge (};
 \draw (14,0.5) node {\Huge )};
 \draw (14.5,2) node {$\scriptstyle s$};
 \draw (0,0) node[left] {$\scriptstyle n$} -- (12,0) node[right] {$\scriptstyle n$};
 \foreach \x/\s in {1/1,4/1,8/-1,11/-1} \draw (\x,0) -- (\x,1.5*\s);
 \foreach \x/\s in {2.5/1,9.5/-1} {\draw (\x,0.8*\s) node {$\scriptstyle \dots$}; 
 \draw (\x,2.5*\s) node {$\scriptstyle {e^{\breve h}-1}$};}
 \draw[->] (5,0) -- (6,0) node[above] {$\scriptstyle {\frac12 W_{nn}}$};
\end{scope}
\begin{scope} [xshift=32cm,yshift=-1cm]
 \draw (-1.5,0.8) node {\Huge (};
 \draw (9.5,0.8) node {\Huge )};
 \draw (10.2,2) node {$\scriptstyle r_h$};
 \draw (0,0) node[left] {$\scriptstyle n$} -- (8,0) node[right] {$\scriptstyle h$};
 \foreach \x in {1,4} \draw (\x,0) -- (\x,1.5);
 \draw (2.5,0.8) node {$\scriptstyle \dots$}; \draw (2.5,2.5) node {$\scriptstyle {e^{\breve h}-1}$};
 \draw[->] (5,0) -- (6,0) node[above] {$\scriptstyle {\bar\zeta_n}$};
\end{scope}
\end{tikzpicture}
},
\]
where the sum is over all $\kappa$ and all $r_1,\dots,r_n,r_h,s\geq0$.
Applying $\omega_{\hW}$ and EV leads to the following.
\[
 \omega_{\hW}(H')=\sum\frac{\alpha_\kappa}{s!r_h!\prod_{i=1}^n r_i!} \raisebox{-1.5cm}{
 \begin{tikzpicture} [scale=0.25]
 \draw (0,0) -- (5,0) -- (5,8) -- (0,8) -- (0,0);
 \draw (2.5,4) node {$D_\kappa$};
 \foreach \y in {1,7} {
 \draw (5,\y) -- (10,\y) node[right] {$\scriptstyle x_n$};
 \foreach \x in {6,9} \draw (\x,\y) -- (\x,\y-1.5);
 \draw (7.5,\y-0.8) node {$\scriptstyle \dots$};
 \draw (7.5,\y-2.2) node {$\scriptstyle e^{(x_{\breve h})}$};
 }
 \draw (7.5,3) node {$\vdots$};
 \foreach \z/\j in {7/1,-1/n} {
\begin{scope} [xshift=15cm,yshift=\z cm]
 \draw (-2,0.8) node {\Huge (};
 \draw (9.5,0.8) node {\Huge )};
 \draw (10.5,2) node {$\scriptstyle r_\j$};
 \draw (0,0) node[left] {$\scriptstyle x_n$} -- (5,0) node[right] {$\scriptstyle W_{n\j}x_\j$};
 \foreach \x in {1,4} \draw (\x,0) -- (\x,1.5);
 \draw (2.5,0.8) node {$\scriptstyle \dots$}; \draw (2.5,2.5) node {$\scriptstyle {e^{(x_{\breve h})}-1}$};
\end{scope}}
 \draw (19,4.5) node {$\vdots$};
\begin{scope} [xshift=29cm,yshift=7cm]
 \draw (-2,0.5) node {\Huge (};
 \draw (14,0.5) node {\Huge )};
 \draw (14.5,2) node {$\scriptstyle s$};
 \draw (0,0) node[left] {$\scriptstyle x_n$} -- (12,0) node[right] {$\scriptstyle x_n$};
 \foreach \x/\s in {1/1,4/1,8/-1,11/-1} \draw (\x,0) -- (\x,1.5*\s);
 \foreach \x/\s in {2.5/1,9.5/-1} {\draw (\x,0.8*\s) node {$\scriptstyle \dots$}; 
 \draw (\x,2.5*\s) node {$\scriptstyle {e^{(x_{\breve h})}-1}$};}
 \draw[->] (5,0) -- (6,0) node[above] {$\scriptstyle {\frac12 W_{nn}}$};
\end{scope}
\begin{scope} [xshift=32cm,yshift=-1cm]
 \draw (-2,0.8) node {\Huge (};
 \draw (8.5,0.8) node {\Huge )};
 \draw (9.2,2) node {$\scriptstyle r_h$};
 \draw (0,0) node[left] {$\scriptstyle x_n$} -- (5,0) node[right] {$\scriptstyle \bar\zeta_n x_h$};
 \foreach \x in {1,4} \draw (\x,0) -- (\x,1.5);
 \draw (2.5,0.8) node {$\scriptstyle \dots$}; 
 \draw (2.5,2.5) node {$\scriptstyle {e^{(x_{\breve h})}-1}$};
\end{scope}
\end{tikzpicture}
}
\]
We claim that:
\[
 \omega_{\hW}(H')=\sum_{\substack{r,s\geq0\\\kappa}}\frac{\alpha_\kappa}{2^ss!r!} \raisebox{-1.5cm}{
 \begin{tikzpicture} [scale=0.25]
 \draw (0,0) -- (5,0) -- (5,8) -- (0,8) -- (0,0);
 \draw (2.5,4) node {$D_\kappa$};
 \foreach \y in {1,7} {
 \draw (5,\y) -- (10,\y) node[right] {$\scriptstyle x_n$};
 \foreach \x in {6,9} \draw (\x,\y) -- (\x,\y-1.5);
 \draw (7.5,\y-0.8) node {$\scriptstyle \dots$};
 \draw (7.5,\y-2.2) node {$\scriptstyle e^{(x_{\breve h})}$};
 }
 \draw (7.5,3) node {$\vdots$};
\begin{scope} [xshift=16cm,yshift=3cm]
 \draw (-2,0.8) node {\Huge (};
 \draw (6.5,0.8) node {\Huge )};
 \draw (7,2) node {$\scriptstyle r$};
 \draw (0,0) node[left] {$\scriptstyle x_n$} -- (5,0) node[right] {$\scriptstyle 0$};
 \foreach \x in {1,4} \draw (\x,0) -- (\x,1.5);
 \draw (2.5,0.8) node {$\scriptstyle \dots$}; \draw (2.5,2.5) node {$\scriptstyle {e^{(x_{\breve h})}-1}$};
\end{scope}
\begin{scope} [xshift=28cm,yshift=3cm]
 \draw (-2,0.5) node {\Huge (};
 \draw (14,0.5) node {\Huge )};
 \draw (14.5,2) node {$\scriptstyle s$};
 \draw (0,0) node[left] {$\scriptstyle x_n$} -- (12,0) node[right] {$\scriptstyle x_n$};
 \foreach \x/\s in {1/1,4/1,8/-1,11/-1} \draw (\x,0) -- (\x,1.5*\s);
 \foreach \x/\s in {2.5/1,9.5/-1} {\draw (\x,0.8*\s) node {$\scriptstyle \dots$}; 
 \draw (\x,2.5*\s) node {$\scriptstyle {e^{(x_{\breve h})}-1}$};}
 \draw[->] (5,0) -- (6,0) node[above] {$\scriptstyle {W_{nn}}$};
\end{scope}
\end{tikzpicture}
},
\]
where the linkings are as follows: if $v$ a $0$--labeled vertex, then $f_{vw}=-1$ if $w$ is labeled by $x_n$, $f_{vw}=-W_{nn}$ if $w$ is labeled by $0$, and $f_{vw}=0$ otherwise; the other linkings are as prescribed by~$-\hW^{-1}$.
To see this, apply a relation LV at each $0$--labeled vertex of the diagrams in the above sum, using the relation $\sum_{i=1}^nW_{ni}x_i+\bar\zeta_nx_h=0$ given by the $n$--th line of $\hW$.

\begin{center}
\begin{figure}
\begin{tikzpicture} [scale=0.25]
 \draw (0,0) -- (5,0) -- (5,10) -- (0,10) -- (0,0);
 \draw (2.5,5) node {$D_\kappa$};
 \foreach \y/\i in {1/p,9/1} {
 \draw (5,\y) -- (18,\y) node[right] {$\scriptstyle x_n$};
 \foreach \x in {6,8,10,12,15,17} \draw (\x,\y) -- (\x,\y-1.5);
 \foreach \x in {7,11,16} 
 \draw (\x,\y-0.8) node {$\scriptstyle \dots$};
 \draw (7,\y-2.2) node {$\scriptstyle e^{(x_{\breve h})}$};
 \foreach \x in {11,16} 
 \draw (\x,\y-2.2) node {$\scriptstyle {e^{(x_{\breve h})}-1}$};
 \draw (13.5,\y-1) node {$\scriptstyle \dots$};
 \draw[decorate,decoration={brace,amplitude=0.2cm}] (18,\y-3) -- (9,\y-3) node [midway,below,yshift=-0.2cm] {$\scriptstyle a_\i$};
 \draw (11,3.5) node {$\vdots$};
 }
\foreach \y/\i/\j in {-1/2u-1/2u,9.5/1/2} {
\begin{scope} [xshift=24cm,yshift=\y cm]
 \draw (0,0) node[left] {$\scriptstyle x_n$} -- (20,0) node[right] {$\scriptstyle x_n$};
 \foreach \x/\s in {1/1,3/1,6/1,8/1,12/-1,14/-1,17/-1,19/-1} \draw (\x,0) -- (\x,1.5*\s);
 \foreach \x/\s in {2/1,7/1,13/-1,18/-1} {\draw (\x,0.8*\s) node {$\scriptstyle \dots$}; \draw (\x,2.5*\s) node {$\scriptstyle {e^{(x_{\breve h})}-1}$};}
 \draw[->] (9,0) -- (10,0) node[above] {$\scriptstyle{W_{nn}}$};
 \foreach \x/\s in {4.5/1,15.5/-1} \draw (\x,\s) node {$\scriptstyle \dots$};
 \draw[decorate,decoration={brace,amplitude=0.2cm}] (20,-3.3) -- (11,-3.3) node [midway,below,yshift=-0.2cm] {$\scriptstyle b_{\j}+1$};
 \draw[decorate,decoration={brace,amplitude=0.2cm}] (0,3.3) -- (9,3.3) node [midway,above,yshift=0.2cm] {$\scriptstyle b_{\i}+1$};
\end{scope}
}
 \draw (34,4.5) node {$\vdots$};
\foreach \y/\i in {6/1,-7/q} {
\begin{scope} [xshift=50cm,yshift=\y cm]
 \draw (-1,-1) .. controls +(-1,2) and +(-1,-2) .. (-1,10);
 \draw (7.6,-1) .. controls +(1,2) and +(1,-2) .. (7.6,10);
 \draw (8.6,9.5) node {$\scriptstyle\zeta_\i$};
 \draw (-1,0) -- (-1,9) arc (180:0:0.5) -- (0,0) arc (0:-180:0.5);
 \foreach \z in {1,3,6,8}
 \draw (0,\z) -- (1.5,\z);
 \foreach \y in {1.7,2,2.3,6.7,7,7.3}
 \draw (0.8,\y) node {$\scriptstyle\cdot$};
 \foreach \y in {4.2,4.5,4.8}
 \draw (1,\y) node {$\scriptstyle\cdot$};
 \foreach \y in {2.3,7.3}
 \draw (4,\y) node {$\scriptstyle {e^{(x_{\breve h})}-1}$};
 \draw[decorate,decoration={brace,amplitude=0.2cm}] (6,9) -- (6,0) node [midway,right,xshift=0.1cm] {$\scriptstyle c_{\i}$};
\end{scope}
}
\draw (52,4) node {$\vdots$};
\end{tikzpicture}
\caption{The diagram $D$} \label{figgros}
\end{figure}
\end{center}

We now apply a relation LD for each pair of vertices labeled one by $0$ and the other by $0$ or~$x_n$. Each of these relations produces a diagram where the linking between the two involved vertices is zero and a diagram where these two vertices have been glued together to produce an edge which inherits a factor $-1$ or $-W_{nn}$; in both cases the sign can be put into the coefficient of the diagram using the relation~LE. In the sum we obtain, each diagram with a remaining $0$--labeled vertex is trivial thanks to the relation LV. This provides the following expression: 
$$\omega_{\hW}(H')=\displaystyle{\sum\frac{(-1)^{\sum_{i=1}^pa_i+\sum_{i=1}^{2u}b_i+\sum_{i=1}^qc_i\zeta_i}\alpha_\kappa}{2^u\displaystyle\prod_{i=1}^q\zeta_i!c_i^{\zeta_i}}\sum_{s=0}^u\frac{(-1)^{u-s}}{s!(u-s)!}}D,$$
where the sum runs over all $\kappa$, all $u\geq s\geq0$, all $p,q,a_1,\dots,a_p,b_1,\dots,b_{2u}\geq0$, all $c_1,\dots,c_q,$ $\zeta_1,\dots,\zeta_q>0$ and $D$ is the diagram in Figure~\ref{figgros}. The variable $r$ of the previous expression is equal to $r=2(u-s)+\sum_{i=1}^pa_i+\sum_{i=1}^{2u}b_i+\sum_{i=1}^qc_i\zeta_i$. 
To obtain the coefficient, we start with the coefficient $\frac{\alpha_\kappa}{2^ss!r!}$ from the previous expression. The factor $(-1)^{r-u+s}$ comes from the $r-u+s$ relations LD followed by~LE. Setting $t=u-s$, the choice of the $2t$ components
\raisebox{-1ex}{
\begin{tikzpicture} [scale=0.2]
 \draw (0,0) node[left] {$\scriptstyle x_n$} -- (5,0) node[right] {$\scriptstyle 0$};
 \foreach \x in {1,4} \draw (\x,0) -- (\x,1.5);
 \draw (2.5,0.8) node {$\scriptstyle \dots$}; \draw (2.5,2.5) node {$\scriptstyle {e^{(x_{\breve h})}-1}$};
\end{tikzpicture}} to be glued pairwise along the $0$--labeled vertices gives a factor~$\binom{r}{2t}$; there are $\frac{(2t)!}{2^tt!}$ possible ways to pair them. 
The remaining $r-2t$ components
\raisebox{-1ex}{
\begin{tikzpicture} [scale=0.2]
 \draw (0,0) node[left] {$\scriptstyle x_n$} -- (5,0) node[right] {$\scriptstyle 0$};
 \foreach \x in {1,4} \draw (\x,0) -- (\x,1.5);
 \draw (2.5,0.8) node {$\scriptstyle \dots$}; \draw (2.5,2.5) node {$\scriptstyle {e^{(x_{\breve h})}-1}$};
\end{tikzpicture}}
have been distributed into subsets of size $a_1$ to $a_p$, $b_1$ to $b_{2u}$, $c_1\zeta_1$ to $c_q\zeta_q$, which gives a factor $\frac{(r-2t)!}{\prod_{i=1}^pa_i!\prod_{i=1}^{2u}b_i!\prod_{i=1}^q(c_i\zeta_i)!}$. For each $i=1,\dots,p$, the $a_i$ corresponding components have been attached in all possible orders; this gives a factor $\prod_{i=1}^pa_i!$. Similarly, we get a factor $\prod_{i=1}^{2u}b_i!$. Finally, there are $\prod_{i=1}^q\frac{(c_i\zeta_i)!}{\zeta_i!c_i^{\zeta_i}}$ possible ways to glue together $c_i\zeta_i$ components
\raisebox{-1ex}{
\begin{tikzpicture} [scale=0.2]
 \draw (0,0) node[left] {$\scriptstyle x_n$} -- (5,0) node[right] {$\scriptstyle 0$};
 \foreach \x in {1,4} \draw (\x,0) -- (\x,1.5);
 \draw (2.5,0.8) node {$\scriptstyle \dots$}; \draw (2.5,2.5) node {$\scriptstyle {e^{(x_{\breve h})}-1}$};
\end{tikzpicture}}
in order to get $\zeta_i$ loops as prescribed. Multiplying all these factors gives the indicated coefficient. Since this coefficient vanishes for $u>0$, we get:
$$\omega_{\hW}(H')=\displaystyle{\sum\frac{(-1)^r\alpha_\kappa}{\displaystyle\prod_{i=1}^q\zeta_i!c_i^{\zeta_i}}}
\raisebox{-2.2cm}{
\begin{tikzpicture} [scale=0.25]
 \draw (0,0) -- (5,0) -- (5,10) -- (0,10) -- (0,0);
 \draw (2.5,5) node {$D_\kappa$};
 \foreach \y/\i in {1/p,9/1} {
 \draw (5,\y) -- (18,\y) node[right] {$\scriptstyle x_n$};
 \foreach \x in {6,8,10,12,15,17} \draw (\x,\y) -- (\x,\y-1.5);
 \foreach \x in {7,11,16} 
 \draw (\x,\y-0.8) node {$\scriptstyle \dots$};
 \draw (7,\y-2.2) node {$\scriptstyle e^{(x_{\breve h})}$};
 \foreach \x in {11,16} 
 \draw (\x,\y-2.2) node {$\scriptstyle {e^{(x_{\breve h})}-1}$};
 \draw (13.5,\y-1) node {$\scriptstyle \dots$};
 \draw[decorate,decoration={brace,amplitude=0.2cm}] (18,\y-3) -- (9,\y-3) node [midway,below,yshift=-0.2cm] {$\scriptstyle a_\i$};
 \draw (11,3.5) node {$\vdots$};
 }
\foreach \x/\i in {23.5/1,38/q} {
\begin{scope} [xshift=\x cm,yshift=0 cm]
 \draw (-1,-1) .. controls +(-1,2) and +(-1,-2) .. (-1,10);
 \draw (7.6,-1) .. controls +(1,2) and +(1,-2) .. (7.6,10);
 \draw (8.6,9.5) node {$\scriptstyle\zeta_\i$};
 \draw (-1,0) -- (-1,9) arc (180:0:0.5) -- (0,0) arc (0:-180:0.5);
 \foreach \z in {1,3,6,8}
 \draw (0,\z) -- (1.5,\z);
 \foreach \y in {1.7,2,2.3,6.7,7,7.3}
 \draw (0.8,\y) node {$\scriptstyle\cdot$};
 \foreach \y in {4.2,4.5,4.8}
 \draw (1,\y) node {$\scriptstyle\cdot$};
 \foreach \y in {2.3,7.3}
 \draw (4,\y) node {$\scriptstyle {e^{(x_{\breve h})}-1}$};
 \draw[decorate,decoration={brace,amplitude=0.2cm}] (6,9) -- (6,0) node [midway,right,xshift=0.1cm] {$\scriptstyle c_{\i}$};
\end{scope}
}
\draw (34.3,4) node {$\dots$};
\end{tikzpicture}}
$$
where the sum runs over all $\kappa$, all $p,q,a_1,\dots,a_p\geq0$, all $c_1,\dots,c_q,$ $\zeta_1,\dots,\zeta_q>0$, and $r=\sum_{i=1}^pa_i+\sum_{i=1}^qc_i\zeta_i$.
We can rewrite this expression as follows:
\begin{center}
\begin{tikzpicture} [scale=0.25]
 \draw (-8.5,4.5) node {$\displaystyle{\omega_{\hW}(H')=\sum\alpha_\kappa\Lambda(\underline k,\underline \ell)}$};
 \draw (0,0) -- (5,0) -- (5,10) -- (0,10) -- (0,0);
 \draw (2.5,5) node {$D_\kappa$};
 \foreach \y/\i in {1/p,9/1} {
 \draw (5,\y) -- (10,\y) node[right] {$\scriptstyle x_n$};
 \foreach \x in {6,9} \draw (\x,\y) -- (\x,\y-1.5) node[below] {$\scriptstyle x_{\breve h}$};
 \draw (7.5,\y-0.8) node {$\scriptstyle \dots$};
 \draw[decorate,decoration={brace,amplitude=0.1cm}] (9.7,\y-3) -- (5.3,\y-3) node [midway,below,yshift=-0.1cm] {$\scriptstyle {k_\i}$};
 }
 \draw (7.5,3) node {$\vdots$};
\foreach \x/\y/\i/\j in {15/6/1/1,26/6/1/{\zeta_1},15/-4/q/1,26/-4/q/{\zeta_q}} {
\begin{scope} [xshift=\x cm,yshift=\y cm]
 \draw (-1,0) -- (-1,5) arc (180:0:0.5) -- (0,0) arc (0:-180:0.5);
 \foreach \z in {1,4}
 \draw (0,\z) -- (1.5,\z) node [right] {$\scriptstyle {x_{\breve h}}$};
 \foreach \y in {2.2,2.5,2.8}
 \draw (0.8,\y) node {$\scriptstyle\cdot$};
 \draw[decorate,decoration={brace,amplitude=0.1cm}] (4,5) -- (4,0) node [midway,right,xshift=0.1cm] {$\scriptstyle \ell_{\i\j}$};
\end{scope}
}
\foreach \y in {-1.5,8.5}
\draw (23.2,\y) node {$\dots$};
\draw (23,4) node {$\vdots$};
\end{tikzpicture}
\end{center}
where $\underline k=(k_1,\dots,k_p)$, $\underline \ell=(\ell_{11},\dots,\ell_{1\zeta_1},\dots,\ell_{q1},\dots,\ell_{q\zeta_q})$, and the sum is over all $ k_1,\dots,k_p\geq0$ and all $\ell_{11},\dots,\ell_{1\zeta_1},\dots,\ell_{q1},\dots,\ell_{q\zeta_q}>0$.
We now compute the coefficient $\Lambda(\underline k,\underline \ell)$. 
We have
\[ \Lambda(\underline k,\underline \ell)=\sum\frac{(-1)^r}{
\prod_{i=1}^q\zeta_i!c_i^{\zeta_i}
\prod_{i=1}^p\prod_{j=0}^{a_i}k_i^{(j)}!
\prod_{i=1}^q\prod_{\iota=1}^{\zeta_i}\prod_{j=1}^{c_i}\ell_{i\iota}^{(j)}!}
\]
where the sum is over the set of integers
\[
  \left\{
    \begin{array}{c}
  k_1^{(0)},\ldots,k_p^{(0)}\geq0\\[.2cm]k_i^{(j)},\ell_{i\iota}^{(j)}>0
    \end{array}
    \left|
  \begin{array}{c}
    k_i^{(0)}+\cdots+k_i^{(a_i)}=k_i \quad \forall i=1,\dots,p\\ 
    \ell_{i\iota}^{(1)}+\cdots+\ell_{i\iota}^{(c_i)}=\ell_{i\iota}\quad \forall i=1,\dots,q,\ \forall\iota=1,\dots,\zeta_i
  \end{array} 
  \right.
  \right\} 
  \]
and $r=\sum_{i=1}^pa_i+\sum_{i=1}^qc_i\zeta_i$. 
We have $$\Lambda(\underline k,\underline \ell)=
\prod_{i=1}^pA_i
\prod_{i=1}^q\left(\frac1{\zeta_i!}\prod_{\iota=1}^{\zeta_i}C_{i\iota}\right)$$ where 
$$A_i=\sum_{a_i=0}^{k_i}\sum_{\substack{k_i^{(0)}+\cdots+k_i^{(a_i)}=k_i\\k_i^{(0)}\geq0,\ k_i^{(1)},\ldots,k_i^{(a_i)}>0}}\frac{(-1)^{a_i}}{\prod_{j=0}^{a_i}k_i^{(j)}!}\qquad\text{and}\qquad C_{i\iota}=\sum_{c_i=1}^{\ell_{i\iota}}\sum_{\substack{\ell_{i\iota}^{(1)}+\cdots+\ell_{i\iota}^{(c_i)}=\ell_{i\iota}\\\ell_{i\iota}^{(1)},\ldots,\ell_{i\iota}^{(c_i)}>0}}\frac{(-1)^{c_i}}{c_i\prod_{j=1}^{c_i}\ell_{i\iota}^{(j)}!}.
$$
By Lemmas~\ref{lemma:calcul} and~\ref{lemma:calculbis}, $A_i=0$ if $k_i>0$, and $C_{i\iota}=0$ if $\ell_{i\iota}>1$. Now, if some $\ell_{i\iota}$ equals $1$, the relation AS shows that the corresponding diagram, including a term
\raisebox{-1.5ex}{
\begin{tikzpicture} [scale=0.25]
 \draw (-1,0) -- (-1,2) arc (180:0:0.5) -- (0,0) arc (0:-180:0.5);
 \draw (0,1) -- (1.5,1) node [right] {$\scriptstyle {x_{\breve h}}$};
\end{tikzpicture}},
is trivial. Finally, in the last expression for $\omega_{\hW}(H')$, the only non trivial terms are those with $q=0$, $k_i=0$ for all~$i$, and the associated coefficient $\Lambda(\underline0,\emptyset)$ is equal to one. Hence $\omega_{\hW}(H')=\omega_{\hW}(H)$.
\end{proof}

\begin{lemma} \label{lemmaK2omega1}
 Let $G\in\tA(*_{\{1,\dots,n\}})$ be a group-like element. 
 Then $\omega(G_{|i\to i+j})=\omega(G)$.
\end{lemma}
\begin{proof}
 Write $G=\expd\left(\frac12 W\right)\sqcup H$. Then $G_{|i\to i+j}=\expd\left(\frac12 W_{|i\to i+j}\right)\sqcup H_{|i\to i+j}$. Set $W'=W_{|i\to i+j}$. Note that $W'=(I+E_{ji})W(I+E_{ij})$, where $I$ is the identity matrix and $E_{k\ell}$ is the matrix whose single non trivial coefficient is a $1$ at the $k$--th line and $\ell$--th column, and thus $(W')^{-1}=(I-E_{ij})W^{-1}(I-E_{ji})$. Hence, if $x_k$ ({\em resp} $x_k'$) are the generators of the Blanchfield module associated to $W$ ({\em resp} $W'$), the map defined by $x_i\mapsto x_i'+x_j'$ and $x_k\mapsto x_k'$ for $k\neq i$ is an isomorphism of Blanchfield modules (in particular, it respects the Blanchfield form). It follows that, first, $\omega(G)$ and $\omega(G_{|i\to i+j})$ indeed live in the same diagram space, and, second, $\omega(G_{|i\to i+j})=\omega_{W'}(H_{|i\to i+j})=\omega_W(H)=\omega(G)$.
\end{proof}

\begin{lemma} \label{lemmaK2omega2}
 Let $G=\expd(\frac12 W)\sqcup H$ and $G'=\expd(\frac12W')\sqcup H'$ be group-like elements in $\tA(*_{\{1,\dots,n\}})$. Assume that
 \[G'=\left\langle\expd\big(\Lambda_{j}^{\breve j'\breve i'}\big)\sqcup G_{\left|\substack{i\to i+i'\\j\to j'\phantom{+i}}\right.}\right\rangle_{\substack{i'-\breve i'\\j'-\breve j'}},\]
 where $\Lambda_{j}^{\breve j'\breve i'}$ is the Campbell--Baker--Hausdorff sum (see (\ref{eq:BCH}) in the proof of Lemma~\ref{lemmaGK}).
 Then $W'=W_{|i\to i+j}$ and $\omega(G')=\omega(G_{|i\to i+j})$.
\end{lemma}
\begin{proof}
 We set $\expd^\bullet\big(\Lambda_{j}^{\breve j'\breve i'}\big)=\expd \left(\Lambda_{j}^{\breve j'\breve i'}-\struts{j}{\breve j'}{}-\struts{j}{\breve i'}{}\right)$. Then the above formula for $G'$ becomes
\begin{eqnarray*}
  G'&=&\left\langle\expd^\bullet\big(\Lambda_{j}^{\breve j'\breve i'}\big)\sqcup\expd\left(\struts{j}{\breve j'}{}+\struts{j}{\breve i'}{}\right)\sqcup G_{\left|\substack{i\to i+i'\\j\to j'\phantom{+i}}\right.}\right\rangle_{\substack{i'-\breve i'\\j'-\breve j'}}\\
    &=&
        \left(\left\langle\expd^\bullet\big(\Lambda_{j}^{\breve j'\breve i'}\big)\sqcup G_{\left|{\substack{i\to i+i'+i''\\j\to j'+j''\phantom{+i}}}\right.}\right\rangle_{\substack{i'-\breve i'\\j'-\breve j'}}\right)_{\left|{\substack{i''\to j\\j''\to j}}\right.}
    \ =\ 
        \left\langle\expd^\bullet\big(\Lambda_{j}^{\breve j'\breve i'}\big)\sqcup G_{\left|{\substack{i\to i+i'+j\\j\to j+j'\phantom{+i'}}}\right.}\right\rangle_{\substack{i'-\breve i'\\j'-\breve j'}},
\end{eqnarray*}
the indices $i''$ and $j''$ corresponding to the vertices glued to $\expd\left(\struts{j}{\breve j'}{}+\struts{j}{\breve i'}{}\right)$.

Hence we have $W'=W_{|i\to i+j}$ and 
  \[
  H'=\left\langle\expd^\bullet\big(\Lambda_{j}^{\breve j'\breve i'}\big)\sqcup\expd\left(\frac12\bigg(W_{\left|{\substack{i\to i+i'+j\\j\to j+j'\phantom{+i}}}\right.}-W_{|i\to i+j}\bigg)\right)\sqcup H_{\left|{\substack{i\to i+i'+j\\j\to j+j'\phantom{+i}}}\right.}\right\rangle_{\substack{i'-\breve i'\\j'-\breve j'}}.
\]
Recall from the previous lemma that $W'=(I+E_{ji})W(I+E_{ij})$; in particular $W'_{ii}=W_{ii}$ and $W'_{ji}=W_{ji}+W_{ii}$. We set $W''=W(I+E_{ij})=(I-E_{ji})W'$ so that, for every $k$ and $\ell$,
\[
  W_{k\ell}+\delta_{\ell j}W_{ki}=W''_{k\ell}=W'_{k\ell}-\delta_{kj}W'_{i\ell}.
  \]
Note that the matrix $W''$ is not hermitian in general. 

We prove now that $\omega_{W'}(H')=\omega_{W'}(H_{|i\to i+j})$. 
 A direct computation gives 
 \[W_{\left|{\substack{i\to i+i'+j\\j\to j+j'\phantom{+i}}}\right.}-W_{|i\to i+j}=
 \sum_{k=1}^n\struts{i'}{k}{W'_{ik}}+
 \sum_{\ell=1}^n\struts{j'}{\ell}{W''_{j\ell}}+\struts{i'}{j'}{W_{ij}}+\frac12\struts{i'}{i'}{W_{ii}}+\frac12\struts{j'}{j'}{W_{jj}}.
 \]
 Writing $\expd^\bullet\big(\Lambda_{j}^{\breve j'\breve i'}\big)=\sum_{\lambda}D_\lambda$ and $H=\sum_hD_h$, we have 
  \[
 H'=\sum\alpha_\lambda\beta_h\raisebox{-2.3cm}{
 \begin{tikzpicture} [scale=0.25]
 \draw (0,0) -- (22,0) -- (22,9) -- (0,9) -- (0,0) (11,4.5) node {$D_\lambda$};
 \draw (25,0) -- (37,0) -- (37,9) -- (25,9) -- (25,0) (31,4.5) node {$D_h$};
 \foreach \y in {1,6} {
 \draw[->] (0,\y) node[right] {$\scriptscriptstyle{\breve j'}$} .. controls +(-2.5,0) and +(0,-0.7) .. (-3,1+\y);
 \draw (-3,1+\y) node[left] {$\scriptstyle W_{ji}$} .. controls +(0,0.7) and +(-2.5,0) .. (0,2+\y) node[right] {$\scriptscriptstyle{\breve i'}$};}
 \foreach \y/\i in {1/j,4/j,5/i,8/i}
 \draw (22,\y) node[left] {$\scriptscriptstyle{\breve \i'}$} -- (25,\y) node[right] {$\scriptscriptstyle{\i}$};
 \foreach \x/\y in {-1/5,23.5/3,23.5/7}
 \draw (\x,\y) node {$\vdots$};
 \foreach \y/\s/\i/\p/\q in {0/-1/j/above/below,9/1/i/below/above} {
 \foreach \x in {2,7} {
 \draw (\x,\y) node[\p] {$\scriptscriptstyle{\breve \i'}$} .. controls +(0,2.5*\s) and +(-0.7,0) .. (1+\x,3*\s+\y) node[\q] {$\scriptstyle W_{\i\i}$} .. controls +(0.7,0) and +(0,2.5*\s) .. (2+\x,\y) node[\p] {$\scriptscriptstyle{\breve \i'}$};}
 \foreach \x in {12,19} {
 \draw[->] (\x,\y) node[\p] {$\scriptscriptstyle{\breve \i'}$} -- (\x,1.5*\s+\y);
 \draw (\x,1.5*\s+\y) -- (\x,3*\s+\y);}
 }
 \foreach \x/\y/\z/\i/\k/\e in {12/-4/-1.5/j/\ell_1/'',19/-4/-1.5/j/\ell_s/'',12/13/10.5/i/k_1/',19/13/10.5/i/k_r/'} {
 \draw (\x,\z) node[right] {$\scriptstyle W^{\e}_{\i{\k}}$};
 \draw (\x,\y) node {$\scriptstyle \k$};}
 \foreach \y in {-1.2,10.2} \foreach \x in {5.5,17}
 \draw (\x,\y) node {$\dots$};
 \foreach \x/\i in {27/i,30/i,32/j,35/j} 
 \draw (\x,9) node[below] {$\scriptscriptstyle i$} -- (\x,12) node[above] {$\scriptstyle \i$};
 \foreach \x in {29.5,32.5}
 \draw (\x,0) node[above] {$\scriptscriptstyle j$} -- (\x,-3) node[below] {$\scriptstyle j$};
 \foreach \x/\y in {28.5/10,33.5/10,31/-2}
 \draw (\x,\y) node {$\dots$};
 \end{tikzpicture}
 },
 \]
 where the sum runs over all $\lambda$, all $h$, all ways of partitioning the sets of $\breve i'$-- and $\breve j'$--labeled vertices of $D_\lambda$ each into four subsets, all ways of partitioning the set of $i$--labeled ({\em resp} $j$--labeled) vertices of $D_h$ into three ({\em resp} two) subsets, and all choices of $k_1,\dots,k_r,\ell_1,\dots,\ell_s\in\{1,\dots,n\}$. The $\breve i'$, $\breve j'$, $i$ and $j$ in the boxes are here to recall which gluing each arc comes from. Note indeed that the coefficient of each product of struts coming from the exponential cancels with the number of choices of gluing a given strut; and that the edges with diagonal labels $W_{ii}$ or $W_{jj}$ come with a factor $1=2.\frac12$ since, for each strut, there are two ways to glue it.

 Let $x_1,\dots,x_n$ be the generators of the Blanchfield module $\Al'$ associated to $W'$, and apply $\omega_{W'}$ to $H'$. Thanks to the relation EV, we get: 
 \[
 \omega_{W'}(H')=\sum\alpha_\lambda\beta_h\raisebox{-2.3cm}{
 \begin{tikzpicture} [scale=0.25]
 \draw (0,0) -- (22,0) -- (22,9) -- (0,9) -- (0,0) (11,4.5) node {$D_\lambda$};
 \draw (25,0) -- (37,0) -- (37,9) -- (25,9) -- (25,0) (31,4.5) node {$D_h$};
 \foreach \y in {1,6} {
 \draw[->] (0,\y) node[right] {$\scriptscriptstyle{\breve j'}$} .. controls +(-2.5,0) and +(0,-0.7) .. (-3,1+\y);
 \draw (-3,1+\y) node[left] {$\scriptstyle W_{ji}$} .. controls +(0,0.7) and +(-2.5,0) .. (0,2+\y) node[right] {$\scriptscriptstyle{\breve i'}$};}
 \foreach \y/\i in {1/j,4/j,5/i,8/i}
 \draw (22,\y) node[left] {$\scriptscriptstyle{\breve \i'}$} -- (25,\y) node[right] {$\scriptscriptstyle{\i}$};
 \foreach \x/\y in {-1/5,23.5/3,23.5/7}
 \draw (\x,\y) node {$\vdots$};
 \foreach \y/\s/\i/\p/\q in {0/-1/j/above/below,9/1/i/below/above} {
 \foreach \x in {2,7} {
 \draw (\x,\y) node[\p] {$\scriptscriptstyle{\breve \i'}$} .. controls +(0,2.5*\s) and +(-0.7,0) .. (1+\x,3*\s+\y) node[\q] {$\scriptstyle W_{\i\i}$} .. controls +(0.7,0) and +(0,2.5*\s) .. (2+\x,\y) node[\p] {$\scriptscriptstyle{\breve \i'}$};}
 \foreach \x in {13,19} {
 \draw (\x,\y) node[\p] {$\scriptscriptstyle{\breve \i'}$} -- (\x,3*\s+\y);}
 }
 \foreach \x/\y/\z/\i/\k/\e in {13/-4/-1.5/j/\ell_1/'',19/-4/-1.5/j/\ell_s/'',13/13/10.5/i/k_1/',19/13/10.5/i/k_r/'} {
 \draw (\x,\y) node {$\scriptstyle W^{\e}_{\i{\k}}x_{\k}$};}
 \foreach \y in {-1.2,10.2} \foreach \x in {5.5,16}
 \draw (\x,\y) node {$\dots$};
 \foreach \x/\i in {27/i,30/i,32/j,35/j} 
 \draw (\x,9) node[below] {$\scriptscriptstyle{i}$} -- (\x,12) node[above] {$\scriptstyle{x_{\i}}$};
 \foreach \x in {29.5,32.5}
 \draw (\x,0) node[above] {$\scriptscriptstyle j$} -- (\x,-3) node[below] {$\scriptstyle x_j$};
 \foreach \x/\y in {28.5/10,33.5/10,31/-2}
 \draw (\x,\y) node {$\dots$};
 \end{tikzpicture}
 }.
 \]
 The relations defining $\Al'$ are given by $W'\begin{pmatrix} x_1 \\ \vdots \\ x_n \end{pmatrix}=0$. It implies that $W''\begin{pmatrix} x_1 \\ \vdots \\ x_n \end{pmatrix}=0$, so that $\sum_{k=1}^n W'_{ik}x_k=0$ and $\sum_{\ell=1}^n W''_{j\ell}x_\ell=0$.
 Successive applications of the relation LV then give
 \[
 \omega_{W'}(H')=\sum\alpha_\lambda\beta_h\raisebox{-2.3cm}{
 \begin{tikzpicture} [scale=0.25]
 \draw (0,0) -- (22,0) -- (22,9) -- (0,9) -- (0,0) (11,4.5) node {$D_\lambda$};
 \draw (25,0) -- (37,0) -- (37,9) -- (25,9) -- (25,0) (31,4.5) node {$D_h$};
 \foreach \y in {1,6} {
 \draw[->] (0,\y) node[right] {$\scriptscriptstyle{\breve j'}$} .. controls +(-2.5,0) and +(0,-0.7) .. (-3,1+\y);
 \draw (-3,1+\y) node[left] {$\scriptstyle W_{ji}$} .. controls +(0,0.7) and +(-2.5,0) .. (0,2+\y) node[right] {$\scriptscriptstyle{\breve i'}$};}
 \foreach \y/\i in {1/j,4/j,5/i,8/i}
 \draw (22,\y) node[left] {$\scriptscriptstyle{\breve \i'}$} -- (25,\y) node[right] {$\scriptscriptstyle{\i}$};
 \foreach \x/\y in {-1/5,23.5/3,23.5/7}
 \draw (\x,\y) node {$\vdots$};
 \foreach \y/\s/\i/\p/\q in {0/-1/j/above/below,9/1/i/below/above} {
 \foreach \x in {2,7} {
 \draw (\x,\y) node[\p] {$\scriptscriptstyle{\breve \i'}$} .. controls +(0,2.5*\s) and +(-0.7,0) .. (1+\x,3*\s+\y) node[\q] {$\scriptstyle W_{\i\i}$} .. controls +(0.7,0) and +(0,2.5*\s) .. (2+\x,\y) node[\p] {$\scriptscriptstyle{\breve \i'}$};}
 \foreach \x in {13,19} {
 \draw (\x,\y) node[\p] {$\scriptscriptstyle{\breve \i'}$} -- (\x,3*\s+\y);}
 }
 \foreach \x/\y/\i in {13/-4/j,19/-4/j,13/13/i,19/13/i}
 \draw (\x,\y) node {$\scriptstyle 0_{\i}$};
 \foreach \y in {-1.2,10.2} \foreach \x in {5.5,16}
 \draw (\x,\y) node {$\dots$};
 \foreach \x/\i in {27/i,30/i,32/j,35/j} 
 \draw (\x,9) node[below] {$\scriptscriptstyle{i}$} -- (\x,12) node[above] {$\scriptstyle{x_{\i}}$};
 \foreach \x in {29.5,32.5}
 \draw (\x,0) node[above] {$\scriptscriptstyle j$} -- (\x,-3) node[below] {$\scriptstyle x_j$};
 \foreach \x/\y in {28.5/10,33.5/10,31/-2}
 \draw (\x,\y) node {$\dots$};
 \end{tikzpicture}
 }.
 \]
In this picture, $0_{j}$ and $0_i$ are $0$--labels which differ by their linkings with other vertices.
The $0_j$--labeled vertices are linked by
\begin{itemize}
 \item $-\sum_{\ell}W''_{j\ell}W'^{-1}_{\ell k}=-\sum_{\ell}W'_{j\ell}W'^{-1}_{\ell k}+\sum_{\ell}W'_{i\ell}W'^{-1}_{\ell k}=-\delta_{kj}+\delta_{ki}$ to $x_k$--labeled vertices;
 \item $-\sum_{k,\ell}W''_{j\ell}W'^{-1}_{\ell k}W'_{ki}=-\sum_\ell \delta_{i\ell}W''_{j\ell}=-W''_{ji}=-W_{ji}$ to $0_i$--labeled vertices;
  \item $-\sum_{k,\ell}W''_{j\ell}W'^{-1}_{\ell k}\overline{W''_{jk}}=\sum_k(\delta_{ki}-\delta_{kj})\overline{W''_{jk}}=\overline{W''_{ji}}-\overline{W''_{jj}}=-W_{jj}$ to $0_j$--labeled vertices.
\end{itemize}
Similarly, $0_i$--labeled vertices are linked by
\begin{itemize}
\item $-\sum_{\ell}W'_{i\ell}W'^{-1}_{\ell k}=-\delta_{ki}$ to $x_k$--labeled vertices;
\item $-\sum_{k,\ell}W'_{i\ell}W'^{-1}_{\ell k}W'_{ki}=-\sum_k\delta_{ki}W'_{ki}=-W'_{ii}=-W_{ii}$ to $0_i$--labeled
  vertices.
\end{itemize}
Now, we apply relations LD on all pairs of vertices labeled by $0_j$ and $x_i$, $0_j$ and $x_j$, $0_j$ and $0_j$, $0_j$ and $0_i$, $0_i$ and $x_i$, or $0_i$ and $0_i$. After applying all these relations, if a $0_i$-- or $0_j$--labeled vertex remains in a diagram, then it is linked by $0$ to all other vertices, so that the diagram vanishes thanks to the relation LV. Finally, $\omega_{W'}(H')$ is a linear combination of diagrams as follows.
\[
 \begin{tikzpicture} [scale=0.25]
 \draw (0,0) -- (22,0) -- (22,9) -- (0,9) -- (0,0) (11,4.5) node {$D_\lambda$};
 \draw (25,0) -- (37,0) -- (37,9) -- (25,9) -- (25,0) (31,4.5) node {$D_h$};
 \foreach \y in {1,6} {
 \draw[->] (0,\y) node[right] {$\scriptscriptstyle{\breve j'}$} .. controls +(-2.5,0) and +(0,-0.7) .. (-3,1+\y);
 \draw (-3,1+\y) node[left] {$\scriptstyle W_{ji}$} .. controls +(0,0.7) and +(-2.5,0) .. (0,2+\y) node[right] {$\scriptscriptstyle{\breve i'}$};}
 \foreach \y/\i in {1/j,4/j,5/i,8/i}
 \draw (22,\y) node[left] {$\scriptscriptstyle{\breve \i'}$} -- (25,\y) node[right] {$\scriptscriptstyle{\i}$};
 \foreach \x/\y in {-1/5,23.5/3,23.5/7}
 \draw (\x,\y) node {$\vdots$};
 \foreach \y/\s/\i/\p/\q in {0/-1/j/above/below,9/1/i/below/above} {
 \foreach \x in {2,7} {
 \draw (\x,\y) node[\p] {$\scriptscriptstyle{\breve \i'}$} .. controls +(0,2.5*\s) and +(-0.7,0) .. (1+\x,3*\s+\y) node[\q] {$\scriptstyle W_{\i\i}$} .. controls +(0.7,0) and +(0,2.5*\s) .. (2+\x,\y) node[\p] {$\scriptscriptstyle{\breve \i'}$};}
 }
 \foreach \x in {12,17} {
 \draw (\x,9) node[below] {$\scriptscriptstyle{j}$} .. controls +(0,2.5) and +(-0.7,0) .. (1+\x,12) .. controls +(0.7,0) and +(0,2.5) .. (2+\x,9) node[below] {$\scriptscriptstyle{\breve j'}$};}
 \foreach \x/\y in {5.5/-1.2,5.5/10.2,15.5/10.2}
 \draw (\x,\y) node {$\dots$};
 \draw (14,0) node[above] {$\scriptscriptstyle{\breve j'}$} .. controls +(0,-4) and +(-2,0) .. (23,-4) .. controls +(2,0) and +(0,-4) .. (32,0) node[above] {$\scriptscriptstyle{i}$};
 \draw (17,0) node[above] {$\scriptscriptstyle{\breve j'}$} .. controls +(0,-2) and +(-1,0) .. (23,-2) .. controls +(1,0) and +(0,-2) .. (29,0) node[above] {$\scriptscriptstyle{i}$};
 \draw (23,-2.7) node {$\vdots$};
 \end{tikzpicture}
\]
We consider a given diagram in this sum and compute its coefficient. Denote by $n_{ii}$,  $n_{jj}$, $n_{ji}$ respectively the number of $W_{ii}$--, $W_{jj}$--, $W_{ji}$--labeled edges. These come either from the gluing of struts in first step, or from the relations LD in the second step. Denote by $k_{ii}$, $k_{jj}$, $k_{ji}$ respectively the number of those coming from the second step, hence with a factor $-1$.
Similarly, denote by $n_i$ and $n_j$ respectively the number of edges relating a $\breve i'$ ({\em resp} $\breve j'$) from $D_\lambda$ to an $i$ ({\em resp} $j$) from~$D_h$, and denote by $k_i$ ({\em resp} $k_j$) the number of those coming from the relations LD, hence with a factor $-1$. 
Also, denote by $m_{ij}$ the number of edges relating a $\breve j'$ from $D_\lambda$ to an $i$ from~$D_h$. These all come from relations LD, applied to a pair of vertices labeled by $0_j$ and $x_i$ or by $0_j$ and~$x_j$; denote by $\ell_{ij}$ the number of the second kind, which come with a factor $-1$. Finally, denote by $m_j$ the number of edges relating a $\breve j'$ to a $j$, both from $D_\lambda$; these all come from the relations LD and bring a factor $-1$.
With these notations, the coefficient of the above diagram is
$$\alpha_\lambda\beta_h\sum(-1)^{k_j}\binom{n_j}{k_j}(-1)^{k_{i}}\binom{n_{i}}{k_{i}}(-1)^{k_{jj}}\binom{n_{jj}}{k_{jj}}(-1)^{k_{ji}}\binom{n_{ji}}{k_{ji}}(-1)^{k_{ii}}\binom{n_{ii}}{k_{ii}}(-1)^{\ell_{ij}}\binom{m_{ij}}{\ell_{ij}}(-1)^{m_j},$$
where the sum is over all $k_i$, $k_j$, $k_{ii}$, $k_{jj}$, $k_{ji}$, $\ell_{ij}$ such that $0\leq k_i\leq n_i$, $0\leq k_j\leq n_j$, $0\leq k_{ii}\leq n_{ii}$, $0\leq k_{ji}\leq n_{ji}$, $0\leq k_{jj}\leq n_{jj}$, $0\leq \ell_{ij}\leq m_{ij}$.
This coefficient vanishes if $n_i+n_j+n_{jj}+n_{ji}+n_{ii}+m_{ij}>0$. 
Hence, if the coefficient is non trivial, then $D_\lambda$ contains no $\breve i'$--labeled vertex, which implies that $D_\lambda=\emptyset$. 
It follows that 
\[\omega(G')=\omega_{W'}(H')
=\omega_{W'}\left(H_{\left|i\to i+j\right.}\right)
=\omega\left(G_{\left|i\to i+j\right.}\right).\qedhere\]
\end{proof}

\section{Construction of the invariant $\tZ$} \label{secinvariant}

\subsection{Invariant of a surgery presentation}

We use the functor $Z$ defined in \cite{CHM}, which is a renormalization of the Le--Murakami functor \cite{LM1,LM2}. 

The domain of this functor is the category $\tang$ with objects the non-associative words in the letters $(+,-)$ and morphisms the $q$--tangles. Composition is given by vertical juxtaposition. We also define a tensor product by horizontal juxtaposition. 

\begin{figure}[htb] 
\begin{center}
\begin{tikzpicture} [scale=0.4]
 \draw (6,8) .. controls +(0,-2) and +(1,0) .. (5.2,4.7);
 \draww{(3.5,2) .. controls +(0,1) and +(0,-2) .. (6,4) .. controls +(0,0.7) and +(1,0) .. (5.2,5.3);}
 \draw (5.2,5.3) .. controls +(-1,0) and +(1,0) .. (4.7,3);
 \draww{(5.2,4.7) .. controls +(-0.8,0) and +(0,-1) .. (4.5,6) .. controls +(0,1) and +(0,-1) .. (4,8);}
 \draww{(4,0) .. controls +(0,1) and +(0,-1) .. (5.5,2) .. controls +(0,0.5) and +(0.8,0) .. (4.7,3.5);}
 \draww{((4.7,3.5) .. controls +(-0.8,0) and +(0,0.5) .. (4.5,2) .. controls +(0,-1) and +(0,1) .. (2,0);}
 \draww{(4.7,3) .. controls +(-1,0) and +(0,-3) .. (2,8);}
 \draww{(6,0) .. controls +(0,2) and +(0,-1) .. (3.5,2);}
 \draw (0,0) -- (0,8) -- (8,8) -- (8,0) -- (0,0);
 \draw (2,8) node[above] {$(-$} (4,8) node[above] {$+)$} (6,8) node[above] {$-$};
 \draw (2,0) node[below] {$+$} (4,0) node[below] {$(-$} (6,0) node[below] {$-)$};
 \draw[->] (5.8,0.8) -- (5.7,0.9); \draw[->] (4.5,0.83) -- (4.6,0.9); \draw[->] (6.01,6.9) -- (6.01,7);
\end{tikzpicture}
\end{center} \caption{Diagram of a $q$--tangle} \label{figtang}
\end{figure}

Define a category $\tA$ whose objects are associative words in the letters $(+,-)$ and whose sets of morphisms are $\tA(v,u)=\oplus_X\tA(X)$, where $X$ runs over all homeomorphism classes of compact oriented 1--manifolds with boundary identified with the set of letters of $u$ and $v$, with the following sign convention: for~$u$, a ``$+$'' when the orientation of $X$ goes towards the boundary point and a ``$-$'' when it goes backward, 
and the converse for~$v$. Composition is given by vertical juxtaposition, where the label of a created edge is the product of the labels on the initial two edges. The tensor product given by disjoint union defines a strict monoidal structure on $\tA$.

We recall in Figure \ref{figfunctorZ} the definition of $Z$ on the elementary $q$--tangles, where $\nu\in\A(\xc)\cong\A(\xd)$ is the value of the 
Kontsevich integral on the zero framed unknot, $\Phi\in\A(\xdop\xdop\xdop)$ is a Drinfeld associator with rational coefficients and 
$\Delta^{+++}_{u_1,u_2,u_3}:\A(\xdop\xdop\xdop)\to\A(\xdop_{\{u_1,u_2,u_3\}})$ is obtained by applying $(|u_i|-1)$ times the coproduct $\Delta$ on the $i$-th factor.

\begin{figure}[htb] 
$$Z\left(\hspace{-1ex}\raisebox{-0.8cm}{
\begin{tikzpicture} [scale=0.6]
 \draw[->] (0,1) node[above] {$\scriptstyle{(+}$} -- (1,0) node[below] {$\scriptstyle{+)}$};
 \draww{[->] (1,1) node[above] {$\scriptstyle{+)}$} -- (0,0) node[below] {$\scriptstyle{(+}$};}
\end{tikzpicture}}
\right)=\exp\left(\frac{1}{2}\raisebox{-0.5cm}{
\begin{tikzpicture} [scale=0.6]
 \draw[->,very thick] (0,2) -- (0,1) -- (1,0);
 \draw[->,very thick] (1,2) -- (1,1) -- (0,0);
 \draw (0,1.5) -- (1,1.5);
\end{tikzpicture}}\,\right)\in\A\left(\hspace{-0.7ex}\raisebox{-0.2cm}{
\begin{tikzpicture} [scale=0.6]
 \draw[->] (0,1) -- (1,0);
 \draw[->] (1,1) -- (0,0);
\end{tikzpicture}}\right)
\qquad
Z\left(\hspace{-1ex}\raisebox{-0.8cm}{
\begin{tikzpicture} [scale=0.6]
 \draw[->] (1,1) node[above] {$\scriptstyle{+)}$} -- (0,0) node[below] {$\scriptstyle{(+}$};
 \draww{[->] (0,1) node[above] {$\scriptstyle{(+}$} -- (1,0) node[below] {$\scriptstyle{+)}$};}
\end{tikzpicture}}
\right)=\exp\left(-\frac{1}{2}\raisebox{-0.5cm}{
\begin{tikzpicture} [scale=0.6]
 \draw[->,very thick] (0,2) -- (0,1) -- (1,0);
 \draw[->,very thick] (1,2) -- (1,1) -- (0,0);
 \draw (0,1.5) -- (1,1.5);
\end{tikzpicture}}\,\right)\in\A\left(\hspace{-0.7ex}\raisebox{-0.2cm}{
\begin{tikzpicture} [scale=0.6]
 \draw[->] (0,1) -- (1,0);
 \draw[->] (1,1) -- (0,0);
\end{tikzpicture}}\right)$$
$$Z\left(\hspace{-1ex}\raisebox{-0.4cm}{
\begin{tikzpicture} [scale=0.6]
 \draw[->] (1,0) node[below] {$\scriptstyle{-)}$} .. controls +(0,1) and +(0,1) .. (0,0) node[below] {$\scriptstyle{(+}$};
\end{tikzpicture}}
\right)=\raisebox{-0.1cm}{
\begin{tikzpicture} [scale=0.6]
 \draw[->,very thick] (2,0) .. controls +(0,0.5) and +(0.5,0) .. (1.5,1) -- (0.5,1) .. controls +(-0.5,0) and +(0,0.5) .. (0,0);
 \draw[fill=white] (1,1) circle (0.3);
 \draw (1,1) node {$\nu$};
\end{tikzpicture}}\,\in\A\left(\hspace{-0.7ex}\raisebox{-0.05cm}{
\begin{tikzpicture} [scale=0.6]
 \draw[->] (1,0) .. controls +(0,1) and +(0,1) .. (0,0);
\end{tikzpicture}}\right)
\qquad
Z\left(\hspace{-1ex}\raisebox{-0.5cm}{
\begin{tikzpicture} [scale=0.6]
 \draw[->] (0,0) node[above] {$\scriptstyle{(+}$} .. controls +(0,-1) and +(0,-1) .. (1,0) node[above] {$\scriptstyle{-)}$};
\end{tikzpicture}}
\right)=\raisebox{-0.3cm}{
\begin{tikzpicture} [scale=0.7]
 \draw[->,very thick] (0,0) .. controls +(0,-1) and +(0,-1) .. (1,0);
\end{tikzpicture}}\,\in\A\left(\hspace{-0.7ex}\raisebox{-0.25cm}{
\begin{tikzpicture} [scale=0.6]
 \draw[->] (0,0) .. controls +(0,-1) and +(0,-1) .. (1,0);
\end{tikzpicture}}\right)$$
$$Z\left(\hspace{-1ex}\raisebox{-0.7cm}{
\begin{tikzpicture} [scale=0.6]
 \draw[->] (0,1) node[above] {$\scriptstyle{(u}$} -- (0,0) node[below] {$\scriptstyle{((u}\ $};
 \draw[->] (1.5,1) node[above] {$\scriptstyle{(v}\,$} -- (0.5,0) node[below] {$\,\scriptstyle{v)}$};
 \draw[->] (2,1) node[above] {$\ \scriptstyle{w))}$} -- (2,0) node[below] {$\scriptstyle{w)}$};
\end{tikzpicture}}\right)=\Delta^{+++}_{u,v,w}(\Phi)\,\in\A(\xdop_{\{u,v,w\}})$$
\caption{The functor $Z:\tang\to\A$} \label{figfunctorZ}
\end{figure}

Let $L$ be a surgery presentation of a $\Q$\SK--pair. Fixing an admissible diagram of $L$ (as defined in Section~\ref{subsecBlanchfield}), one can view the surgery presentation as a $q$--tangle with empty top and bottom words and write it as the product of two $q$--tangles $\gamma_t$ and $\gamma_b$, see Figure~\ref{figdecouptangle}.
The word at the top of $\gamma_b$ and at the bottom of $\gamma_t$ is a product $(v)(w)$, where $w$ corresponds to the part of the tangle which meets the disk $\DD$. Set:
$$\Zp(L)=Z(\gamma_b)\circ(I_{v}\otimes G_{w})\circ Z(\gamma_t)\ \in\tA(\xc_{\pi_0(L)}),$$
where $I_v$ is the identity on the word $v$ and $G_{w}$ is obtained from $I_{w}$ by adding a label $t$ ({\em resp} $t^{-1}$) on skeleton components associated with 
a $-$ sign ({\em resp} a $+$ sign), see Figure~\ref{figIvGv}. 
The invariance of $\Zp$, with respect to isotopy of $L$ and to the decomposition of the admissible diagram as the product of $\gamma_t$ and $\gamma_b$, is due to the invariance of the functor $Z$ and the following observation of Kricker \cite[Lemma~3.2.4]{Kri}.

\begin{figure}[htb] 
\begin{center}
\begin{tikzpicture} 
\begin{scope}
 \draw (-1,-1) -- (1,-1) -- (1,1) -- (-1,1) -- (-1,-1);
 \draw[dashed] (-1,0) -- (0,0); \draw (0,0) node {$\scriptstyle{\bullet}$} -- (1,0); \draw (1,0) node[right] {$(v)(w)$};
 \draw (0,-0.5) node {$\gamma_b$};
 \draw (0,+0.5) node {$\gamma_t$};
\end{scope}
 \draw (4.5,0) node {Ex:};
\begin{scope}[xshift=6.3cm]
 \draw (-1,-1) -- (1,-1) -- (1,1) -- (-1,1) -- (-1,-1);
 \draw (0,0) node {$\scriptstyle{\bullet}$} -- (1,0); 
 \draw (0.1,-0.3) arc (0:-90:0.3) arc (270:75:0.6);
 \draw (-0.1,0.3) arc (180:90:0.3) arc (90:-105:0.6);
 \draw (0.1,0.4) arc (0:-90:0.2) arc (90:270:0.2);
 \draw (-0.1,-0.4) arc (180:90:0.2) arc (-90:90:0.2);
 \draw (0.1,0.4) arc (0:20:0.2);
 \draw (-0.1,-0.4) arc (180:200:0.2);
 \draw[->] (0.22,0.6) -- (0.2,0.6);
 \draw[->] (-0.22,0.6) -- (-0.2,0.6);
\end{scope}
 \draw (8,0) node {$\rightsquigarrow$};
\begin{scope}[xshift=11cm,yshift=0.7cm]
 \draw (-1.5,0.5) node {$\gamma_t=$};
 \draw (-1,0) -- (1,0) -- (1,1) -- (-1,1) -- (-1,0);
 \draw (-0.8,0) arc (180:75:0.6);
 \draw (-0.1,0.3) arc (180:90:0.3) arc (90:0:0.6);
 \draw (0.1,0.4) arc (0:-90:0.2) arc (90:180:0.2);
 \draw (0.3,0) arc (0:90:0.2);
 \draw (0.1,0.4) arc (0:20:0.2);
 \draw[->] (0.22,0.6) -- (0.2,0.6);
 \draw[->] (-0.22,0.6) -- (-0.2,0.6);
\end{scope}
\begin{scope}[xshift=11cm]
 \draw (-0.5,0) node {$(v)(w)=(-+)(+-)$};
\end{scope}
\begin{scope}[xshift=11cm,yshift=-0.7cm]
 \draw (-1.5,-0.5) node {$\gamma_b=$};
 \draw (-1,-1) -- (1,-1) -- (1,0) -- (-1,0) -- (-1,-1);
 \draw (0.1,-0.3) arc (0:-90:0.3) arc (270:180:0.6);
 \draw (0.8,0) arc (0:-105:0.6);
 \draw (-0.3,0) arc (180:270:0.2);
 \draw (-0.1,-0.4) arc (180:90:0.2) arc (-90:0:0.2);
 \draw (-0.1,-0.4) arc (180:200:0.2);
 \draw[->] (0.2,-0.6) -- (0.22,-0.6);
 \draw[->] (-0.2,-0.6) -- (-0.22,-0.6);
\end{scope}
\end{tikzpicture}
\end{center} \caption{Decomposing a surgery presentation as the product of two $q$--tangles} \label{figdecouptangle}
\end{figure}

\begin{figure}[htb] 
\begin{center}
\begin{tikzpicture} [xscale=0.6,yscale=0.5]
 \draw (0,1) node {$I_{\scriptscriptstyle--+-}=$};
 \foreach \x in {2,3,5} {\draw[->,very thick] (\x,0) -- (\x,2);}
 \draw[<-,very thick] (4,0) -- (4,2);
\begin{scope} [xshift=10cm]
 \draw (0,1) node {$G_{\scriptscriptstyle--+-}=$};
 \foreach \x in {2,3,5} {\draw[->,very thick] (\x,0) -- (\x,2);}
 \draw[<-,very thick] (4,0) -- (4,2);
 \foreach \x in {2,3,4,5} {\draw (\x,1) node {$\scriptscriptstyle{\bullet}$};}
 \foreach \x in {2,3,5} {\draw (\x,1) node[right] {$\scriptstyle{t}$};}
 \draw (3.9,1.1) node[right] {$\scriptstyle{t^{-1}}$};
\end{scope}
\end{tikzpicture}
\end{center} \caption{The diagrams $I_v$ and $G_v$ for $v=--+-$} \label{figIvGv}
\end{figure}

\begin{lemma}
  For a beaded Jacobi diagram $D\in\tA(w,v)$, we have $G_v\circ D=D\circ G_w$.
\end{lemma}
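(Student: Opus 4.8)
The plan is to transform $D\circ G_w$ into $G_v\circ D$ by sliding the inserted beads along the skeleton, using the relation \holw\ to move a bead past a univalent vertex and the relation Hol to absorb the edge‑label changes this produces. By multilinearity I would assume $D$ is a single beaded Jacobi diagram. Note that $G_w$ and $G_v$ only insert beads near the endpoints of the interval components of the skeleton, and closed components receive none, so the latter are irrelevant. The crucial normalisation to record first is that every bead inserted by $G_w$ or by $G_v$ reads $t$ in the geometrically upward direction, regardless of the orientation of the strand it sits on: indeed $G$ puts $t$ on a $-$ strand (oriented upward) and $t^{-1}$ on a $+$ strand (oriented downward), and in the second case reversing the orientation turns $t^{-1}$ into $t$.

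First I would record the local move: sliding such a bead along a strand is unobstructed except at a univalent vertex, where \holw\ (together with OR) applies and multiplies the adjacent graph edge label by $t$ if that edge points away from the vertex and by $t^{-1}$ if it points toward it, the exponent depending only on the graph‑edge orientation and not on the skeleton orientation, precisely because of the ``reads $t$ upward'' normalisation. Then I would carry out the transformation of $D\circ G_w$. On a component joining a bottom endpoint to a top endpoint, push the bead from the bottom endpoint to the top endpoint, where it lands exactly in the position prescribed by $G_v$. On a component with both endpoints at the bottom, push its two beads toward each other; reading labels along the strand orientation one checks that their product is trivial, so they cancel, matching the fact that $G_v\circ D$ carries no bead there. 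On a component with both endpoints at the top one runs the same move in reverse: a canceling pair of beads may be introduced near the two top endpoints, and these are exactly the beads of $G_v\circ D$. After all this, each univalent vertex of $D$ has been crossed exactly once, each crossing multiplying the incident ``away'' graph edge label by a fixed power $t^{\pm1}$.

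It remains to check that the accumulated edge‑label changes are trivial in $\tA$. A graph edge joining two univalent vertices is multiplied by $t^{+1}$ at one end and $t^{-1}$ at the other, hence is unchanged; a graph edge joining a univalent and a trivalent vertex picks up a single factor $t^{\pm1}$; a graph edge joining two trivalent vertices picks up nothing. Applying the relation Hol with a fixed exponent ($\pm1$) at every trivalent vertex then cancels exactly these residual factors: on an edge between two trivalent vertices the two Hol contributions cancel each other, and on an edge to a univalent vertex the single Hol contribution cancels the single accumulated factor. The required exponents are forced to be constant along each connected component of the sub‑graph spanned by the trivalent vertices (and may be taken to be anything, say $0$, on a component containing no univalent vertex), so the choice is consistent, and one obtains $D\circ G_w=G_v\circ D$. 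The hard part will be this last orientation‑and‑sign bookkeeping — verifying that the cumulative \holw\ residue is precisely a product of Hol relations and that the Hol exponents can be chosen globally consistently — together with the care needed for the components whose two endpoints lie on the same side; everything else is routine, and the argument follows Kricker's treatment in \cite[Lemma~3.2.4]{Kri}.
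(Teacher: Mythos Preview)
Your approach is correct and is essentially the paper's one-line proof --- apply Hol at every trivalent vertex and \holw\ at every univalent vertex --- unpacked into a sequential bead-sliding picture followed by a Hol cleanup. If you apply these relations simultaneously rather than sequentially, each graph edge receives a factor $t$ at its tail and $t^{-1}$ at its head, so the labels cancel automatically and no case analysis or consistency check on Hol exponents is needed (the exponent is simply $+1$ everywhere).
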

\begin{proof}
 Apply the relations Hol and \holw\ at all vertices of the diagram.
\end{proof}

\begin{lemma} \label{lemmagrouplike}
 For any surgery presentation $L$ of a $\Q$--sphere, $\Zp(L)$ is group-like.
\end{lemma}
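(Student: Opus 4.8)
The plan is to reduce the group-like property of $\Zp(L)$ to the known group-like property of the Le--Murakami functor $Z$ on $q$--tangles, using the multiplicativity of the coproduct with respect to the operations involved in the definition of $\Zp$.

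First I would recall that the coproduct $\Delta$ is defined on all the diagram spaces $\tA(X)$ (and more generally on the morphism spaces of the category $\tA$) by splitting off connected graph components, and that, as noted in the subsection on product and coproduct, all the relations imposed on beaded Jacobi diagrams respect $\Delta$; moreover composition and tensor product of morphisms in $\tA$ are compatible with $\Delta$ in the sense that $\Delta(\beta\circ\alpha)=\Delta(\beta)\circ\Delta(\alpha)$ and $\Delta(\alpha\otimes\alpha')=\Delta(\alpha)\otimes\Delta(\alpha')$ (with the obvious identification of $(\tA\otimes\tA)$-morphisms). The key input is that the renormalized Le--Murakami functor $Z$ of \cite{CHM} takes group-like values on every $q$--tangle: this is standard for the Kontsevich integral and its functorial versions, and in particular $Z(\gamma_t)$ and $Z(\gamma_b)$ are group-like. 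One also checks directly that $I_v$ is group-like (it is the empty graph on its skeleton, so $\Delta(I_v)=I_v\otimes I_v$) and that $G_w$ is group-like for the same reason, since $G_w$ differs from $I_w$ only by beads on the skeleton and carries no graph component. Hence $I_v\otimes G_w$ is group-like as a morphism in $\tA$.

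Then I would simply compute:
\[
\Delta\big(\Zp(L)\big)=\Delta\big(Z(\gamma_b)\big)\circ\Delta\big(I_v\otimes G_w\big)\circ\Delta\big(Z(\gamma_t)\big)
=\big(Z(\gamma_b)\otimes Z(\gamma_b)\big)\circ\big((I_v\otimes G_w)\otimes(I_v\otimes G_w)\big)\circ\big(Z(\gamma_t)\otimes Z(\gamma_t)\big),
\]
and, reorganizing the two tensor factors (using that composition in $\tA$ is performed componentwise on a tensor product), this equals $\Zp(L)\otimes\Zp(L)$. This gives $\Delta(\Zp(L))=\Zp(L)\otimes\Zp(L)$, i.e. $\Zp(L)$ is group-like.

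The main obstacle, and the only point requiring real care, is the bookkeeping in the reorganization step: one must make sure that the identification of $\tA$-morphisms used to turn $\Delta$ of a composite into a composite of $\Delta$'s is consistent, in particular that the labels created on concatenated skeleton edges behave correctly under $\Delta$ (the label of a glued edge is the product of the two labels, and this is compatible with the splitting defining $\Delta$, since $\Delta$ does not touch the skeleton). A secondary point is to be sure that $Z$ on $q$--tangles is indeed group-like in the present normalization; this is exactly the statement that the values of $Z$ on the elementary $q$--tangles in Figure~\ref{figfunctorZ} (the exponential of a single strut, the element $\nu$, the associator $\Phi$) are all group-like, which is classical, together with the fact that group-likeness is preserved under composition and tensor product. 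Once these compatibilities are in place, the computation above is immediate.
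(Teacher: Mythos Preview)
Your proof is correct and follows essentially the same approach as the paper's: the paper's proof simply cites \cite[Theorem~5.1]{LM3} for the group-likeness of $Z(\gamma)$ on $q$--tangles, notes that the $G_v$ are obviously group-like, and concludes since the coproduct commutes with composition. Your version spells out in more detail the compatibility of $\Delta$ with composition and tensor product and the explicit computation, but the argument is the same.
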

\begin{proof}
 The fact that $Z(\gamma)$ is group-like for a $q$--tangle $\gamma$ follows from \cite[Theorem~5.1]{LM3}. This concludes since the $G_v$ are obviously group-like and the coproduct commutes with the composition. 
\end{proof}

    \subsection{Invariant of $\Q\SK$--pairs}

Set:
\[
  \Zc(L)=\chi^{-1}\left(\nu^{\otimes\pi_0(L)}\sharp_{\pi_0(L)}\Zp(L)\right)\ \in\tA\big(\xxlw_{\pi_0(L)}\big)
\]
where the connected sum means that a copy of $\nu$ is summed to each component of $L$. Note that $\Zc(L)$ is group-like since $\Zp(L)$ and $\nu$ are group-like and $\chi$ preserves the coproduct.

We now define a specific lift $\overline{\Zc(L)}\in\tA(*_{\pi_0(L)})$ of $\Zc(L)$.
For that, fix an admissible diagram of $L$ and base points~$\star_i$ on each component $L_i$ of $L$. Construct $\overline{\Zc(L)}$ following the construction from the beginning of Section \ref{secinvariant} for this diagram, with the skeleton components corresponding to the components of~$L$ defined as intervals by cutting each component $L_i$ at the base point~$\star_i$. 

\begin{lemma} \label{lemmawdmatrix}
 The lift $\overline{\Zc(L)}$ is group-like and we have:
 $$\overline{\Zc(L)}=\expd\left(\frac{1}{2}W_L\right)\sqcup H,$$ 
 where $W_L$ is the winding matrix associated with our choice of diagram and base points and $H$ is substantial. 
\end{lemma}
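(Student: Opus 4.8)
The statement has two parts: that $\overline{\Zc(L)}$ is group-like, and that its strut part is exactly $\expd(\frac12 W_L)$ while the rest is substantial. The group-like claim is already essentially in hand: by Lemma~\ref{lemmagrouplike}, $\Zp(L)$ is group-like; $\nu$ is group-like (it is an exponential of connected diagrams, being the Kontsevich integral of the unknot); connected sum of group-like elements along skeleton components is group-like; and $\chi^{-1}$ preserves the coproduct by the discussion in Section~\ref{secdiagrams}. Hence $\Zc(L)$ is group-like, and since any lift $\overline{\Zc(L)}$ in $\tA(*_{\pi_0(L)})$ maps onto it under the (coproduct-preserving) quotient, one checks that one may choose the lift group-like — or rather, the construction from the beginning of Section~\ref{secinvariant} produces a specific lift, and the same chain of observations (the functor $Z$ on $q$--tangles is group-like by \cite[Theorem~5.1]{LM3}, the $G_v$ are group-like, composition and tensor product respect the coproduct, and $\chi^{-1}$ does too) shows directly that $\overline{\Zc(L)}$ is group-like. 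Since a group-like element of the graded Hopf algebra $\tA(*_{\pi_0(L)})$ is the exponential of a series of connected diagrams, we may write $\overline{\Zc(L)}=\expd(S)$ where $S$ is a series of connected beaded Jacobi diagrams on $\pi_0(L)$. The only connected diagrams of i--degree $0$ are struts, so $S = \sigma + H'$ where $\sigma$ is a linear combination of struts and $H'$ has no strut; then $\overline{\Zc(L)}=\expd(\sigma)\sqcup\expd(H')$ and $H:=\expd(H')$ is substantial. It remains to identify $\sigma$ with $\frac12 W_L$.

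**Computing the strut part.** The strut part is a degree~$0$ (i--degree~$0$), so it is governed entirely by the "linking" information carried by $Z$ in degree corresponding to a single chord between two skeleton strands (or a strand and itself). The standard computation — going back to Kricker \cite{Kri} and to the behaviour of the Le--Murakami functor / Kontsevich integral — is that the coefficient of the strut joining component $L_i$ to component $L_j$ (with a given power of $t$ recording the winding) in $\Zc(L)$ equals the equivariant linking number $\lk_e(\tilde L_i,\tilde L_j)$, with the diagonal struts carrying $\lk_e(\tilde L_i,\tilde L_i)$, and the factor $\frac12$ coming from the symmetry of struts (each unordered pair $\{i,j\}$, $i\neq j$, is counted twice in $\sum_{i,j}$, and the self-struts likewise). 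Concretely, I would argue as follows: the functor $Z$ assigns to each positive (resp. negative) crossing a factor $\exp(\pm\frac12\,\text{chord})$ on the two strands involved, and the beads $G_w$ insert the powers of $t$ that record passage through the disk $\DD$; chasing these through the decomposition $\gamma_b\circ(I_v\otimes G_w)\circ\gamma_t$ and through $\chi^{-1}$ (which on the strut level is the identity, as $\chi$ only averages attachment points and struts have a single attachment point on each end) shows that the strut coefficient between $L_i$ and $L_j$ is precisely $\frac12\sum_c \text{sg}(c)\,t^{\varepsilon_{ij}(c)}$ for $i\neq j$, and $\frac12\sum_c \text{sg}(c)(t^{\varepsilon_{ii}(c)}+t^{-\varepsilon_{ii}(c)})$ for $i=j$ — the self-crossing contributing symmetrically because in $\chi^{-1}(\nu^{\otimes}\sharp\Zp)$ a self-chord on a strand, once the strand is cut at $\star_i$ into an interval, can sit in either of two cyclic positions. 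By the definition of the winding number $w(L_i,L_j)$ in Section~\ref{secwinding} and the Lemma identifying $W_L = (w(L_i,L_j))$, this says exactly $\sigma = \frac12 W_L$.

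**Main obstacle.** The genuinely delicate point is the bookkeeping in the strut-level computation: one must verify that the only contributions to i--degree~$0$ connected diagrams in $Z(\gamma_b)\circ(I_v\otimes G_w)\circ Z(\gamma_t)$ come from the chords created at crossings (the associator $\Phi$ contributes only in i--degree $\geq 2$ to connected pieces, so it does not affect struts, and $\nu$ likewise has no strut part), and that the power of $t$ attached to such a chord is exactly $t^{\varepsilon_{ij}(c)}$ — this is where the placement of base points and the cutting at $\DD$ must be tracked carefully, and it is the content of Kricker's observation \cite[Lemma~3.2.4]{Kri} already invoked for well-definedness. I would present this as: (i) reduce to i--degree~$0$ using group-likeness; (ii) observe $\Phi$ and $\nu$ carry no struts; (iii) compute the strut coefficient crossing-by-crossing using the value of $Z$ on elementary tangles and the bead insertions, matching the definition of $\varepsilon_{ij}(c)$; (iv) account for the factor $\frac12$ and the diagonal-symmetrization; (v) invoke the Lemma $W_L=(w(L_i,L_j))$ to conclude. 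The self-linking symmetrization and the factor-of-two conventions are the easiest places to make a sign or counting error, so I would state the conventions explicitly before doing the crossing count.
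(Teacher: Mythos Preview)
Your proposal is correct and follows essentially the same approach as the paper: group-likeness via Lemma~\ref{lemmagrouplike} and coproduct-preservation of $\chi^{-1}$, then the exponential-of-connected-diagrams structure, then the observation that $\Phi$ and $\nu$ contribute no struts so only crossings matter, and finally the crossing-by-crossing identification with $w(L_i,L_j)$. The paper carries out step (iii)--(iv) slightly more explicitly by writing down $\chi^{-1}$ applied to a single chord (including the self-crossing case, where a small ``lollipop'' term appears but does not contribute to the strut part), but the logic is the same as yours.
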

\begin{proof}
 The same argument as in Lemma \ref{lemmagrouplike} shows that $\overline{\Zc(L)}$ is group-like. We have to compute the part of $\overline{\Zc(L)}$ made of struts. The group-like property implies that $\overline{\Zc(L)}$ is the exponential of a series of connected diagrams. Since $\nu$ and the associator $\Phi$ have no terms with exactly two vertices, the only contributions to the strut part 
 come from the crossings between components of~$L$. For $i\neq j$, the definition of $Z$ and the \holw\ relation show that the contribution of a crossing $c$ between $L_i$ and $L_j$ is 
 $\chi^{-1}\left(\frac{1}{2}\textrm{sg}(c)\hspace{-1ex}
 \raisebox{-0.75cm}{
 \begin{tikzpicture} [xscale=0.6,yscale=0.5]
 \draw[->,very thick] (0,0) node[below] {$\scriptstyle{L_i}$} -- (0,2); \draw[->,very thick] (2,0) node[below] {$\scriptstyle{L_j}$} -- (2,2); 
 \draw[->] (0,1) -- (1,1); \draw (1,1) node[above] {$\scriptstyle{t^{\varepsilon_{ij}(c)}}$} -- (2,1);
 \end{tikzpicture}}\right)$. 
 Hence the contribution of all crossings between $L_i$ and $L_j$ is \raisebox{-1cm}{
 \begin{tikzpicture} [xscale=0.7,yscale=0.5]
 \begin{scope}
  \draw[->] (0,1) node[right] {$\scriptstyle{(W_L)_{ij}}$} -- (0,2) node[above] {$\scriptstyle{L_j}$} (0,0) node[below] {$\scriptstyle{L_i}$} -- (0,1);
 \end{scope}
  \draw (2.3,0.7) node {$=$};
 \begin{scope} [xshift=3.3cm]
  \draw[->] (0,0) -- (0,1); \draw (0,1) -- (0,2);
  \draw (0,0) node[below] {$\scriptstyle{L_j}$}; \draw (0,2) node[above] {$\scriptstyle{L_i}$}; \draw (0,1) node[right] {$\scriptstyle{(W_L)_{ji}}$};
 \end{scope}
 \end{tikzpicture}}.
 For $i=j$, the contribution of a self-crossing of $L_i$ is:
 $$\chi^{-1}\left(\frac{1}{2}\textrm{sg}(c)
 \raisebox{-0.9cm}{
 \begin{tikzpicture} [xscale=0.7,yscale=0.7]
  \draw[->,very thick] (0,0) -- (0,2); \draw (0,0) node[below] {$\scriptstyle{L_i}$};
  \draw[->] (0,0.5) arc (-90:0:0.5); \draw (0.5,1) arc (0:90:0.5); \draw (0.5,1) node[right] {$\scriptstyle{t^{\varepsilon_{ii}(c)}}$};
 \end{tikzpicture}}\right)
 =\textrm{sg}(c)\left(\raisebox{-0.9cm}{
 \begin{tikzpicture} [xscale=0.7,yscale=0.5]
  \draw[->] (0,0) -- (0,1); \draw (0,1) -- (0,2);
  \draw (0,0) node[below] {$\scriptstyle{L_i}$}; \draw (0,2) node[above] {$\scriptstyle{L_i}$}; \draw (0,1) node[right] {$\scriptstyle{t^{\varepsilon_{ii}(c)}}$};
 \end{tikzpicture}}
 +\frac{1}{2}\raisebox{-0.9cm}{
 \begin{tikzpicture} [xscale=0.5,yscale=0.5]
  \draw (0,0) -- (0,1); \draw[->] (0,1) arc (-90:90:0.5); \draw (0,2) arc (90:270:0.5);
  \draw (0,0) node[below] {$\scriptstyle{L_i}$}; \draw (0,2) node[above] {$\scriptstyle{t^{\varepsilon_{ii}(c)}}$};
 \end{tikzpicture}}\right).
 $$ 
 Summed over all self-crossings of $L_i$, we get as strut part:
 $$\sum_c\frac{1}{2}\textrm{sg}(c)\raisebox{-0.9cm}{
 \begin{tikzpicture} [xscale=0.7,yscale=0.5]
  \draw[->] (0,0) -- (0,1); \draw (0,1) -- (0,2);
  \draw (0,0) node[below] {$\scriptstyle{L_i}$}; \draw (0,2) node[above] {$\scriptstyle{L_i}$}; \draw (0,1) node[right] {$\scriptstyle{t^{\varepsilon_{ii}(c)}}$};
 \end{tikzpicture}}
 =\frac{1}{2}\raisebox{-0.9cm}{
 \begin{tikzpicture} [xscale=0.7,yscale=0.5]
  \draw[->] (0,0) -- (0,1); \draw (0,1) -- (0,2);
  \draw (0,0) node[below] {$\scriptstyle{L_i}$}; \draw (0,2) node[above] {$\scriptstyle{L_i}$}; \draw (0,1) node[right] {$\scriptstyle{(W_L)_{ii}}$};
 \end{tikzpicture}}.$$
 Hence $\overline{\Zc(L)}=\expd\left(\frac{1}{2}W_L\right)\sqcup H$ where $H\in\tA(*_{\pi_0(L)})$ is substantial.
\end{proof}
The matrix $W_L(1)$ is the linking matrix of the link $L$, hence it is a presentation matrix for the first homology group of a $\Q$--sphere. 
Thus $\det\big(W_L(1)\big)\neq0$ and $\overline{\Zc(L)}$ is a non-degenerate group-like element.

\begin{lemma} \label{lemma:omega}
 The diagram $\omega\left(\overline{\Zc(L)}\right)\in\A(\Al,\bl)$ does not depend on the basepoints $\star_i$ chosen to construct the lift $\overline{\Zc(L)}\in\tA\left(*_{\pi_0(L)}\right)$ of $\Zc(L)\in\tA\left(\xxlw_{\pi_0(L)}\right)$.
\end{lemma}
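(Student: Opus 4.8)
The plan is to derive this from Proposition~\ref{propkey}. A lift $\overline{\Zc(L)}$ is by definition an element of $\tA(*_{\pi_0(L)})$ mapping to $\Zc(L)$ under the quotient map $\tA(*_{\pi_0(L)})\twoheadrightarrow\tA(\xxlw_{\pi_0(L)})=\fract{\tA(*_{\pi_0(L)})}{\eqw,\eql}$, so any two lifts are related by a finite sequence of winding and link relations. By Lemma~\ref{lemmawdmatrix}, every lift produced by the construction --- from a choice of admissible diagram and base points --- is a non-degenerate Gaussian $\expd\big(\frac12 W_L\big)\sqcup H$ (the non-degeneracy coming from $\det(W_L(1))\neq0$ as noted after that lemma), and the same holds for any group-like lift since $\chi$ preserves group-likeness. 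Hence it suffices to prove that $\omega$ takes the same value on any two non-degenerate Gaussian lifts of $\Zc(L)$.

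First I would make the elementary moves relating two such lifts explicit. An isotopy of the admissible diagram leaves $\Zp(L)$, hence the canonical element $\nu^{\otimes\pi_0(L)}\sharp_{\pi_0(L)}\Zp(L)$ on the closed skeleton, unchanged; so the only choice reflected in the lift is how each circle component is cut into an interval at its base point. Pushing a base point across $\DD$ with sign $\varepsilon$ conjugates $W_L$ by $\textrm{Diag}_i(t^\varepsilon)$ (as recorded near Figure~\ref{figbasepoint}) while pushing $t^\varepsilon$ on the $i$--labelled vertices, i.e.\ it is a single winding move, and it preserves non-degeneracy because $\det(W_L(1))$ is unchanged. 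Re-cutting a component at another point replaces one interval presentation of the same closed-skeleton diagram by another, and the two resulting lifts --- both obtained from the same circle diagram, hence both non-degenerate Gaussians --- are related by a single link relation on the corresponding label. Thus any two non-degenerate Gaussian lifts of $\Zc(L)$ are joined by a chain of elementary $\eqw$-- and $\eql$--moves all of whose terms are non-degenerate Gaussians.

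It then remains to apply Proposition~\ref{propkey} step by step: its first bullet gives invariance of $\omega$ under a winding move and its second bullet under a link move, the accompanying equality $W_1=W_2$ --- which rests on Lemma~\ref{lemmaGK} --- being precisely what ensures that no step leaves the class of non-degenerate Gaussians. Concatenating along the chain yields $\omega(\overline{\Zc(L)})$ independent of the lift. The point requiring the most care is the claim that re-cutting a component is realized by a genuine single link relation between two non-degenerate Gaussians, rather than by an $\eql$--identity that would only be visible through intermediate non-Gaussian diagrams; this is where Lemma~\ref{lemmaGK}, with its explicit auxiliary Gaussian and its conclusion $W_1=W_2$, carries the argument, the remainder being bookkeeping.
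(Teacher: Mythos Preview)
Your proof is correct and follows the paper's approach: the paper's entire proof is the single line ``Proposition~\ref{propkey} implies the following lemma,'' and you have simply unpacked what that implication means, checking that the constructed lifts are non-degenerate Gaussians and that the moves between them are single winding and link relations to which the proposition applies step by step. Your extra care in ensuring the chain of moves stays within non-degenerate Gaussians---by routing it through constructed lifts---is a legitimate elaboration of a point the paper leaves implicit.
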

\begin{proof}
 Fix $i\in\{1,\dots,n\}$ and let $\overline{\Zc(L)}'$ be another lift of $\Zc(L)$, constructed with the same basepoints $\star_j$ for $j\neq i$ but a different basepoint $\star_i'$. Let $D$ be the diagram obtained by following the construction of $\overline{\Zc(L)}$, but with the $i-th$ component of the skeleton cut in two parts, along the two basepoints $\star_i$ and $\star_i'$. By the same argument as for $\overline{\Zc(L)}$, we see that $D$ is group-like. Thus it defines a group-like relation between $\overline{\Zc(L)}$ and $\overline{\Zc(L)}'$. We conclude with Proposition~\ref{propkey}.
\end{proof}

This lemma allows to set:
$$\omega\big(\Zc(L)\big)=\omega\left(\overline{\Zc(L)}\right)\quad\in\A(\Al,\bl).$$

\begin{proposition} \label{prop:invariance}
 Let $U_{\pm}$ be a trivial knot with framing $\pm1$, unlinked with $\un\subset S^3$. For a $\Q\SK$--pair $(M,K)$ with an admissible surgery presentation $L$ and Blanchfield module $(\Al,\bl)$, we denote $\sigma_\pm(L)$ the number of positive/negative eigenvalues of the linking matrix of $L$. Then
 $$\tZ(M,K)=\Zc(U_+)^{-\sigma_+(L)}\sqcup \Zc(U_-)^{-\sigma_-(L)}\sqcup \omega\big(\Zc(L)\big)\quad\in\A(\Al,\bl)$$
 defines an invariant $\tZ$ of $\Q\SK$--pairs.
\end{proposition}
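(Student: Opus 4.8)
The plan is to prove that the right-hand side of the displayed formula is independent of the chosen admissible surgery presentation $L$ (and of the auxiliary choices of admissible diagram and base points), so that it depends only on $(M,K)$. By \cite{M1} every $\Q\SK$--pair has an admissible surgery presentation, and by Kirby calculus for links in $S^3\setminus\un$ --- noting that each of the three moves below preserves the condition $\lk(L_i,\un)=0$ --- any two admissible surgery presentations of the same $\Q\SK$--pair are related by a finite sequence of: (i) ambient isotopies of $L$ in $S^3\setminus\un$, which subsume the changes of admissible diagram and the base-point moves of Section~\ref{secwinding}; (ii) stabilisations, i.e.\ adding or deleting a $\pm1$--framed unknot $U_\pm$ split from $L\cup\un$; and (iii) handle slides of one component of $L$ over another one. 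Since $U_\pm$ is fixed and $\sigma_\pm(L)$ is just the signature of the linking matrix $W_L(1)$, it suffices to examine the effect of each move on $\Zc(L)$, on $\omega(\Zc(L))$ and on $\sigma_\pm(L)$.

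Move (i) and the auxiliary choices are cheap. The element $\Zc(L)$ lives in $\tA(\xxlw_{\pi_0(L)})$, a quotient in which the relations $\eqw$ and $\eql$ have been imposed precisely so that no choice of base points enters; and $\Zc(L)$ is invariant under ambient isotopy of $L$ in $S^3\setminus\un$ and under the choice of admissible diagram and of the splitting $\gamma=\gamma_b\circ\gamma_t$, by invariance of the functor $Z$ together with the identity $G_v\circ D=D\circ G_w$. Hence $\Zc(L)$, and therefore $\omega(\Zc(L))\in\A(\Al,\bl)$ (well defined by the lemma preceding the statement), depend only on the framed isotopy class of $L$ in $S^3\setminus\un$, while $\sigma_\pm(L)$ is plainly an isotopy invariant. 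The implicit identification of the Blanchfield module presented by $W_L$ with the abstract $(\Al,\bl)$ does not matter, since $\A(\Al,\bl)$ is a quotient by the action of $Aut(\Al,\bl)$.

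For move (ii), put $L'=L\sqcup U_+$. Monoidality of $Z$ and the split position give $\Zp(L')=\Zp(L)\sqcup\Zp(U_+)$, hence $\Zc(L')=\Zc(L)\sqcup\Zc(U_+)$, and a lift is $\overline{\Zc(L')}=\overline{\Zc(L)}\sqcup\overline{\Zc(U_+)}$, whose winding matrix is block diagonal with blocks $W_L$ and the $1\times1$ block $(1)$; this presents the same Blanchfield module and Blanchfield form as $W_L$, the new generator being annihilated by $1$ and orthogonal to everything. Using this block form, the relations EV and LV let the $U_+$--summand, whose univalent vertices carry the trivial generator, split off, so that $\omega(\Zc(L'))=\omega(\Zc(L))\sqcup\omega(\Zc(U_+))$; since $\sigma_+(L')=\sigma_+(L)+1$ and $\sigma_-(L')=\sigma_-(L)$, the extra $\omega(\Zc(U_+))$ is cancelled when $\Zc(U_+)^{-\sigma_+(L)}$ becomes $\Zc(U_+)^{-\sigma_+(L)-1}$, and $\tZ$ is unchanged. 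The case $L'=L\sqcup U_-$ is symmetric. Here one uses that $\omega(\Zc(U_\pm))$ is the series of trivalent graphs given by the usual Gaussian-integration normalisation, which lies in $\A(\emptyset)\subseteq\A(\Al,\bl)$, where disjoint union is defined.

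The hard part is invariance under a handle slide (iii); this is the analogue, in our setting, of invariance of the formal Gaussian integral under a linear change of variables. Sliding $L_i$ over $L_j$ replaces $W_L$ by a hermitian congruent matrix $W_{L'}=E\,W_L\,{}^{t}\bar E$, where $E$ is an elementary matrix with a single off-diagonal entry $P(t)\in\Qtt$ recording how the slicing band wraps around $\un$; in particular $\det W_{L'}(1)=\det W_L(1)\neq0$, the presented Blanchfield module is isomorphic to $(\Al,\bl)$ through the $E$--induced isomorphism, and $\sigma_\pm$ is preserved by Sylvester's law of inertia. On the diagrammatic side, the behaviour of the $\nu$--normalised Kontsevich integral under a handle slide --- the very reason a copy of $\nu$ is connect--summed onto each component in the definition of $\Zc$ --- shows, exactly as in \cite{Kri,GK}, that $\Zc(L')$ is obtained from $\Zc(L)$ by the corresponding diagrammatic slide of the skeleton, up to a single link relation $\eql$ coming from the framing ambiguity of the band. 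That ambiguity is absorbed by Proposition~\ref{propkey}, so everything comes down to the identity $\omega(\Zc(L'))=\omega(\Zc(L))$ in $\A(\Al,\bl)$, for the $E$--induced identification of Blanchfield modules (the remaining ambiguity in that identification being immaterial, again by the $Aut(\Al,\bl)$--quotient). I would prove this identity by the diagram-by-diagram bookkeeping used in the link-relation case of Proposition~\ref{propkey}: in the Blanchfield module presented by $W_{L'}$ the new generators are the $E$--images of the old ones, so the relations OR, LE, EV and LV translate the slide of the beading on the skeleton into the prescribed change of colourings at the univalent vertices, while the strut parts match after the usual combinatorial cancellations. This step carries essentially the whole weight of the Proposition; granting it, $\tZ(M,K)$ is independent of all choices, which is the assertion.
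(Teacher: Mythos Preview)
Your overall architecture is right and matches the paper's: reduce to Kirby moves (plus orientation and auxiliary choices), dispose of isotopy/base points via the built-in $\eqw,\eql$ quotients and Proposition~\ref{propkey}, handle stabilisation by the block-diagonal winding matrix and the $\sigma_\pm$ normalisation, and then face the handle slide. There are two points where your write-up falls short of a proof.

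First, a minor omission: you do not check independence from the orientation of the components of $L$. The paper does this separately (it is not a Kirby move), using \cite[Theorem~4]{LM2} to see that reversing $L_i$ flips the sign of every diagram with an odd number of $L_i$--legs, while simultaneously $W_L$ has its $i$-th row and column multiplied by $-1$ and the meridian $m(L_i)$ changes sign, so that LV cancels all signs after applying $\omega$.

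Second, and this is the real gap: for the handle slide you only \emph{announce} that ``the diagram-by-diagram bookkeeping used in the link-relation case of Proposition~\ref{propkey}'' would do it, and then explicitly ``grant'' the identity $\omega(\Zc(L'))=\omega(\Zc(L))$. But this is precisely where the work lies, and it is not a repetition of Proposition~\ref{propkey}. The paper's argument is: (a) invoke \cite[Proposition~1]{LMMO} to express the effect of sliding $L_j$ over $L_i$ on $\chi(\Zc)$ as a doubling $\Delta_i^{ii'}$ followed by a concatenation $m_{j'}^{ji'}$; (b) commute $\chi$ past these and rewrite via the Baker--Campbell--Hausdorff sum $\Lambda_{j'}^{\breve j\breve i'}$ of \cite[\S5]{AA2}, obtaining $G'=G_{|i\to i+j',\,j\to j'}\sqcup R$ with $R$ substantial; (c) compute $\omega_{W'}(R)$ explicitly, turning each leg of $R$ into either a strut-contraction or an $H$-attachment, then use LV to collapse the resulting labels to $0$'s with prescribed linkings, and LD to glue; the upshot is a binomial cancellation $(1-1)^{n_j+n_i+n_{jj}+n_{ji}+n_{ii}}=0$ showing $\omega_{W'}(R)=\emptyset$; (d) conclude via the meridian identity $m(L_i)=m(L_{i'})+m(L_{j'})$ (Figure~\ref{figK2mer}) that $\omega(G)=\omega(G_{|i\to i+j',\,j\to j'})$. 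Step (c) is a genuine computation, not a formal consequence of the relations, and your sketch gives no indication of the mechanism (the BCH remainder, the $0$-labeled vertices with controlled linkings, the binomial cancellation) that makes it work. Without it, the invariance under handle slides is asserted rather than proved.
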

\begin{proof}
 We have to check that $\tZ(M,K)$ does not depend on the surgery presentation. We first consider the orientation of the components of $L$. Let $L'$ be the surgery link that differs from $L$ only by the orientation of the $i$--th component. By \cite[Theorem~4]{LM2}, $\Zp(L')$ (and thus $\Zc(L')$) is obtained from $\Zp(L)$ ({\em resp} $\Zc(L)$) by flipping the sign of all diagrams with an odd number of univalent vertices glued to the $i$--th skeleton component ({\em resp} labeled by $L_i$). Now, the winding matrix $W_{L'}$ is obtained from $W_L$ by multiplying the $i$--th row and column by $-1$, and the meridian $m(L_j')$ equals $m(L_j)$ if $j\neq i$ and $-m(L_i)$ if $j=i$. Hence, using the relation~LV, we see that the signs cancel and we get $\omega\big(\Zc(L')\big)=\omega\big(\Zc(L)\big)$. It concludes since the signature is unmodified.
 
 \begin{figure}[htb] 
 \begin{center}
 \begin{tikzpicture} [scale=0.3]
  \draw (0,0) -- (0,-6);
  \draw[dashed] (0,2) -- (0,0) (0,-6) -- (0,-8);
  \draw (-4,-4.5) arc (-90:90:1.5);
  \draw[dashed] (-4,-1.5) arc (90:270:1.5);
  \draw (-4,-0.5) node{$L_i$};
  \draw (0,-1) node[right] {$L_j$};
  \draw[<->,line width=1.5pt] (7.5,-3) -- (9.5,-3);
 \begin{scope} [xshift=19.5cm]
  \draw[rounded corners=2pt] (-4,-4.8) arc (-90:-16:1.8) -- (0,-3.5) -- (0,-6) (-4,-1.2) arc (90:16:1.8) -- (0,-2.5) -- (0,0);
  \draw[dashed] (0,2) -- (0,0) (0,-6) -- (0,-8);
  \draw (-4,-4.5) arc (-90:90:1.5);
  \draw[dashed] (-4,-1.5) arc (90:270:1.5) (-4,-1.2) arc (90:270:1.8);
 \end{scope}
 \end{tikzpicture} \caption{Kirby II move.} \label{figK2}
 \end{center}
 \end{figure}

 If $L$ and $L'$ are two surgery links, a theorem of Habiro and Widmer \cite[Theorem~3.1]{HW} ---building on results of Kirby \cite{Kir} and Fenn--Rourke \cite{FennRourke}--- applied in our setting asserts that $L$ and $L'$ represent homeomorphic $\Q$\SK--pairs if and only if they are related by a sequence of Kirby moves: the Kirby~I move is the addition or removal of a $\pm1$--framed unknot unlinked with the other components, and the Kirby~II move is the sliding of a component over another one, see Figure~\ref{figK2}.
 
\begin{figure}[htb] 
\begin{center}
\begin{tikzpicture} [scale=0.4]
 \foreach \y in {0,5} \draw[very thick,dashed] (1,\y) -- (2,\y);
 \draw[very thick,rounded corners=10pt,->] (2,0) -- (4,0) -- (4,3.1) (2,5) -- (4,5) -- (4,3) node[left] {$L_j$};
 \draw (3,0) node {\scalebox{1.5}{$\star$}} node[below right] {$\scriptstyle j$};
 \draw[very thick,rounded corners=10pt,->] (5,1.9) -- (5,5) -- (10,5) -- (10,0) -- (5,0) -- (5,2) node[right] {$L_i$};
 \draw (5,3.8) node {\scalebox{1.5}{$\star$}} node[below right] {$\scriptstyle i$};
 \foreach \x in {6.4,7,8.6} \draw (\x,5) -- (\x,8);
 \draw (7.8,6) node {$\dots$};
 \draw[line width=2pt,->] (12.5,3)--(14,3);
 \begin{scope} [xshift=16cm]
  \foreach \y in {-0.2,5.2} \draw[very thick,dashed] (1,\y) -- (2,\y);
 \draw[very thick,rounded corners=10pt,->] (3.9,5.2) -- (6,5.2) (9,5.2) -- (10.2,5.2) -- (10.2,-0.2) -- (2,-0.2) (2,5.2) -- (4,5.2);
 \draw (3,-0.2) node {\scalebox{1.5}{$\star$}} node[below right] {$\scriptstyle{j'}$};
 \draw[very thick,rounded corners=8pt,->] (5,1.9) -- (5,4.8) -- (6,4.8) (9,4.8) -- (9.8,4.8) -- (9.8,0.2) -- (5,0.2) -- (5,2);
 \draw (5,3.8) node {\scalebox{1.5}{$\star$}} node[below right] {$\scriptstyle i$};
 \foreach \x in {6.4,7,8.6} \draw (\x,5.6) -- (\x,8);
 \draw (7.8,6) node {$\dots$};
 \draw[gray,thick] (6,5.6) -- (9,5.6) -- (9,4.4) -- (6,4.4) -- (6,5.6);
 \draw (7.5,5) node {$\Delta$};
 \end{scope}
\end{tikzpicture}
\caption{Effect of a KII move on $\chi\big(\Zc(L)\big)$\\{\footnotesize The $\Delta$--box stands for the sum over all possibilities to attach each thin line to one of the two skeleton components.}} \label{figK2Kon}
\end{center}
\end{figure}

 The normalization term $\Zc(U_+)^{-\sigma_+(L)}\sqcup \Zc(U_-)^{-\sigma_-(L)}$ ensures independance with respect to the Kirby I move as usual. Independance with respect to the Kirby II move is based on a result of Le--Murakami--Murakami--Ohtsuki \cite[Proposition 1]{LMMO} which expresses the effect of a Kirby II move on the Kontsevich integral. Set $L'=(L\setminus L_j)\cup L_{j'}$, where $L_{j'}$ is obtained from $L_j$ by sliding it over $L_i$. Then $\chi\big(\Zc(L')\big)$ is deduced from $\chi\big(\Zc(L)\big)$ by the operation described on Figure~\ref{figK2Kon} applied to each diagram. Let $G=\overline{\Zc(L)}$ and $G'=\overline{\Zc(L')}$ be the lifts constructed choosing the basepoints as indicated on Figure~\ref{figK2Kon}. We shall prove that $\omega(G')=\omega(G)$. Denoting each element of the $\pi_0$ of a link by the index of the corresponding component, the expression of $\chi(G')$ in terms of $\chi(G)$ can be explicitely written as follows. For a Jacobi diagram $D$ on~$\xd_{\pi_0(L)}$, $\Delta_i^{ii'}(D)$ is defined by duplicating the $i$--th skeleton component (index by $i'$ the created component) and taking the sum of all ways of distributing the univalent vertices initially glued on $L_i$ between $L_i$ and $L_{i'}$. For a Jacobi diagram $D$ on $\xd_{\pi_0(L\sqcup L_{i'})}$, define $m_{j'}^{ji'}(D)$ by gluing the head of $\xd_j$ to the tail of $\xd_{i'}$ to
form $\xd_{j'}$. With these notations, we have by \cite[Proposition~1]{LMMO}:
\[
  G'=\chi^{-1}\circ m_{j'}^{ji'}\circ \Delta_i^{ii'}\circ \chi(G).
  \]
 The maps $\Delta_i^{ii'}$ and $\chi$ commute in the sense that $\Delta_i^{ii'}\circ\chi(G)=\chi\left(G_{|i\to i+i'}\right)$. By \cite[Proposition~5.4]{AA2}, this gives:
 \[
   G'=\chi^{-1}\circ m_{j'}^{ji'}\circ\chi\left(G_{|i\to i+i'}\right)=\left\langle\expd\big(\Lambda_{j'}^{\breve j\breve i'}\big)\sqcup G_{|i\to i+i'}\right\rangle_{\substack{j-\breve j\\i'-\breve i'}},
   \]
 where $\Lambda_{j'}^{\breve j\breve i'}$ is the Campbell--Baker--Hausdorff sum (see (\ref{eq:BCH}) in the proof of Lemma~\ref{lemmaGK}). Thanks to Lemma~\ref{lemmawdmatrix}, $G$ and $G'$ are group-like, hence Lemmas~\ref{lemmaK2omega1} and~\ref{lemmaK2omega2} imply $\omega(G')=\omega(G)$. Lemma~\ref{lemmaK2omega2} also gives $W'=W_{|i\to i+j}$, so that the signature is preserved.
\end{proof}

Note that the effect on the winding matrix can be recovered topologically. As shown on Figure~\ref{figK2mer}, we have in homology $m(L_{j})=m(L_{j'})$ and $m(L_i)=m(L_{i'})+m(L_{j'})$.

\begin{figure}[htb] 
\begin{center}
\begin{tikzpicture} [scale=0.4]
\draw (0,0) -- (0,-6);
\draw[dashed] (0,2) -- (0,0) (0,-6) -- (0,-8);
\draw (-4,-4.5) arc (-90:90:1.5);
\draw[dashed] (-4,-1.5) arc (90:270:1.5);
\draw[->] (-3.9,-1.5) -- (-4,-1.5) node[above] {$L_i$};
\draw[->] (0,0) -- (0,-1) node[right] {$L_j$};
\draw[white] (-3,-2) node{$\bullet$};
\draw (-3.45,-1.8) .. controls +(-0.4,-0.5) and +(-0.5,-0.5) .. (-2.95,-1.92) .. controls +(0.5,0.5) and +(0.5,0.5) .. (-3.2,-1.5);
\draw[->] (-2.7,-1.5) -- (-2.7,-1.4);
\draw (-1.8,-0.8) node {$m(L_i)$};
\draw[white] (0,-5.5) node{$\bullet$};
\draw (-0.2,-5) .. controls +(-0.5,0) and +(-0.5,-0) .. (0,-5.45) .. controls +(0.5,0) and +(0.5,0) .. (0.2,-5);
\draw[->] (0.48,-5.2) -- (0.48,-5.25) node[right] {$m(L_j)$};
\draw[->,line width=1.5pt] (5,-3) -- (6,-3);
\begin{scope} [xshift=15cm]
\draw (0,0) -- (0,-2.3) -- (-2.35,-2.3) (-2.35,-3.7) -- (0,-3.7) -- (0,-6);
\draw[dashed] (0,2) -- (0,0) (0,-6) -- (0,-8);
\draw (-4,-4.5) arc (-90:90:1.5);
\draw[dashed] (-4,-1.5) arc (90:270:1.5);
\draw (-4,-4.8) arc (-90:90:1.8);
\draw[dashed] (-4,-1.2) arc (90:270:1.8);
\draw[color=white,line width=3pt] (-2.26,-2.33) -- (-2.26,-3.68);
\draw[->] (-3.9,-1.5) -- (-4,-1.5);
\draw[->] (0,0) -- (0,-1) node[right] {$L_{j'}$};
\draw[white] (0,-5.5) node{$\bullet$};
\draw (-0.2,-5) .. controls +(-0.5,0) and +(-0.5,0) .. (0,-5.45) .. controls +(0.5,0) and +(0.5,0) .. (0.2,-5);
\draw[->] (0.48,-5.2) -- (0.48,-5.25) node[right] {$m(L_{j'})$};
\draw[white] (-2.5,-3.3) node{$\bullet$};
\draw (-2.7,-2.8) .. controls +(-0.5,0) and +(-0.5,-0) .. (-2.5,-3.25) .. controls +(0.5,0) and +(0.5,0) .. (-2.35,-2.8);
\draw[->] (-2.05,-3) -- (-2.05,-2.9) node[right] {$m(L_{i'})$};
\draw[color=white,line width=5pt] (-3,-2) -- (-2,-1);
\draw (-3.45,-1.8) .. controls +(-0.4,-0.5) and +(-0.5,-0.5) .. (-2.8,-1.8) .. controls +(0.5,0.5) and +(0.5,0.4) .. (-3.05,-1.3);
\draw[->] (-2.55,-1.3) -- (-2.55,-1.2) node[above] {$m(L_i)$};
\end{scope}
\end{tikzpicture}
\caption{KII move and meridians} \label{figK2mer}
\end{center}
\end{figure}

\subsection{Recovering the Kricker invariant}

We now explicit the fact that our invariant $\tZ$ is a refinement of the Kricker invariant $Z^\Kri$. 

The construction of the invariant $Z^\Kri$ is the same as for $\tZ$ until the last step. Instead of applying the operation $\omega$, Garoufalidis and Kricker apply a formal Gaussian integral which merges the strut part $\expd\left(\frac{1}{2}W_L\right)$ and the substantial part $H$ by summing all possible ways to glue all vertices of $\expd\left(-\frac{1}{2}W_L^{-1}\right)$ with all vertices of $H$ that have the same label. This defines an invariant with values in the space $\A(\delta)$. Now, applying the operation $\omega$ first and the map $\psi:\A(\Al,\bl)\to\A(\delta)$ then has the same effect as applying the formal Gaussian integral.

\begin{proposition} \label{proptZKri}
 For any $\Q\SK$--pair $(M,K)$, we have $Z^\Kri(M,K)=\psi\circ\tZ(M,K)$.
\end{proposition}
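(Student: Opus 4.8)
The plan is to compare the two invariants on a fixed admissible surgery presentation $L$ of $(M,K)$; since both $\tZ$ and $Z^\Kri$ are already known to be invariants, it suffices to verify the equality for one such presentation. By the construction recalled just above, $Z^\Kri(M,K)$ and $\tZ(M,K)$ are assembled from exactly the same ingredients: the functor $Z$ applied to the pieces of the $q$--tangle, the closing operation with the $\nu$'s, the data $G_w$, the signature corrections attached to the split unknots $U_\pm$, and above all the non-degenerate Gaussian $\overline{\Zc(L)}=\expd\!\left(\frac12 W_L\right)\sqcup H$ of Lemma~\ref{lemmawdmatrix}. The sole point where the two recipes diverge is the last one: $\tZ$ applies the operation $\omega$, while $Z^\Kri$ applies the Garoufalidis--Kricker formal Gaussian integration, which replaces $\expd\!\left(\frac12 W_L\right)\sqcup H$ by the sum over all ways of gluing every leg of $\expd\!\left(-\frac12 W_L^{-1}\right)$ onto a leg of $H$ carrying the same label. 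Hence it is enough to prove, for every admissible surgery presentation (including those of the form $L\sqcup U_\pm$), the single diagrammatic identity
$$\psi\big(\omega(\Zc(L))\big)=\int\!dX\;\Big(\expd\!\big(\tfrac12 W_L\big)\sqcup H\Big)\qquad\text{in }\A(\delta);$$
granting this, comparing the two assembly recipes term by term gives $Z^\Kri(M,K)=\psi\circ\tZ(M,K)$, the signature corrections being handled by the instance of the identity for the $U_\pm$, whose winding matrices are the $1\times1$ matrices $(\pm1)$.

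To prove the identity I would unwind both sides and match them diagram by diagram. On the left, $\omega(\Zc(L))=\omega_{\frac12 W_L}(H)$ is obtained from $H$ by relabelling each $i$--colored leg with the generator $x_i$ and prescribing $f_{vv'}(t)$ to be the $(i,j)$--entry of $-\big(\frac12 W_L\big)^{-1}$ whenever $v,v'$ carry the labels $x_i,x_j$; applying $\psi$ (Figure~\ref{figgroup}) then means summing, over all perfect pairings of the legs of $H$, the diagrams in which each pair $(v,v')$ is collapsed to one beaded edge with bead $P(t)\,Q(t^{-1})\,f_{vv'}(t)$, where $P,Q$ are the beads of the edges of $H$ incident to $v,v'$. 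On the right, a term of $\int\!dX$ arises by taking the degree--$m$ part of $\expd\!\left(-\frac12 W_L^{-1}\right)$ (with $2m$ the number of legs of $H$) and gluing its $2m$ endpoints bijectively to the legs of $H$; threading an edge through one of these struts, between an $x_i$--leg with incident bead $P$ and an $x_j$--leg with incident bead $Q$, produces, by the rule for composing beads and the relation OR for the orientation flip, precisely a collapsed edge whose bead is built from $P$, $Q$ and the $(i,j)$--entry of the inverse winding matrix. The remaining point is the bookkeeping of numerical factors: a fixed perfect pairing of the legs of $H$ is realised by the struts of $\expd\!\left(-\frac12 W_L^{-1}\right)$ in a number of ways (permutations of the struts, the two attachments of each strut, and the two summands joining labels $i$ and $j$) that, using the hermitian symmetry $(W_L^{-1})_{ji}(t)=(W_L^{-1})_{ij}(t^{-1})$, exactly cancels the factorials and the powers of $\frac12$ appearing in $\expd$ and reproduces the bead and sign prescribed by $\psi\circ\omega$. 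This is the classical equivalence, at the heart of the Garoufalidis--Kricker formalism (and more generally of the Aarhus integral), between formal Gaussian integration and the ``glue $\expd(-\frac12 W^{-1})$ onto $H$'' recipe; rather than reprove it I would quote it from \cite{GK} and merely translate it into the present setting.

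The step I expect to be the real obstacle is exactly this translation of conventions: one must check that the beaded decoration $P(t)Q(t^{-1})f_{vv'}(t)$ dictated by the map $\psi$ of Figure~\ref{figgroup} coincides, orientation for orientation and factor for factor, with the bead obtained by threading an edge through a strut of $\expd(-\frac12 W_L^{-1})$, keeping careful track of the normalisation built into $\omega(G)=\omega_{\frac12 W}(H)$ and of the Gaussian--integration conventions of \cite{GK}. Once this dictionary is in place, the term-by-term matching of the two assembly recipes, together with the instance of the identity for the $U_\pm$--normalisation, gives $Z^\Kri(M,K)=\psi\circ\tZ(M,K)$.
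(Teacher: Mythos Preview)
Your approach is essentially the same as the paper's. The paper does not give a detailed proof either: the paragraph preceding the statement simply observes that the construction of $Z^\Kri$ coincides with that of $\tZ$ up to the last step, and asserts that applying $\omega$ followed by $\psi$ has the same effect as the formal Gaussian integration. Your proposal spells out exactly this, adding the bookkeeping of the pairing combinatorics and the handling of the $U_\pm$--normalisation, and defers the core identity to \cite{GK} just as the paper implicitly does.

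One small caveat: you copy the formula $\omega(G)=\omega_{\frac12 W}(H)$ from the paper's definition and then write $f_{vv'}$ as the $(i,j)$--entry of $-(\tfrac12 W_L)^{-1}$. If you trace through the paper's own use of $\omega$ in the proof of Proposition~\ref{propkey} (where $\omega(G_1)$ is written as $\omega_{W_1}(H_1)$, not $\omega_{\frac12 W_1}(H_1)$), you will see the intended convention is $f_{vv'}=-(W_L^{-1})_{ij}$, consistent with the Blanchfield form being $-W_L^{-1}$. This does not affect your argument---the same combinatorial cancellation goes through---but you should align the factor of $2$ with whichever convention you adopt when you do the explicit matching.
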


\begin{remark} \label{remLMO}
 The formal Gaussian integration was initially introduced by Bar-Natan, Garoufalidis, Rozansky and Thurston to define the Aarhus integral, which recovers the LMO invariant. This version of the LMO invariant is constructed as the Kricker invariant, forgetting the knots in the $3$--manifolds and the beads on the diagrams.
\end{remark}

\begin{remark}
 Since $Z^\Kri$ can be deduced from $\tZ$, the invariance of $Z^\Kri$ and its behaviour with respect to null LP--surgeries stem from that of $\tZ$.
\end{remark}

\subsection{Behaviour of $\tZ$ under connected sum}

By construction, the invariant $\tZ$ behaves well under connected sum.

\begin{lemma} \label{lemma:connectedsum}
 Let $(M_1,K_1)$ and $(M_2,K_2)$ be $\Q\SK$--pairs. Let $(\Al_1,\bl_1)$ and $(\Al_2,\bl_2)$ denote their Blanchfield modules. The invariant $\tZ$ is given on their connected sum by:  
 $$\tZ\big((M_1,K_1)\sharp(M_2,K_2)\big)=\tZ(M_1,K_1)\sqcup \tZ(M_2,K_2)\in\A\big((\Al_1,\bl_1)\oplus(\Al_2,\bl_2)\big).$$
\end{lemma}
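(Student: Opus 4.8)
The plan is to prove the connected sum formula by choosing compatible surgery presentations and tracing the formula for $\tZ$ through each step of its construction. The key point is that connected sum of $\Q\SK$--pairs is realized by a very simple operation on surgery presentations, and all the ingredients of $\tZ$ ($\Zp$, the normalization by $\Zc(U_\pm)$, the functor $\chi^{-1}$, and the operation $\omega$) are well behaved with respect to disjoint union.

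First I would fix admissible surgery presentations $L_1\subset S^3\setminus\un$ and $L_2\subset S^3\setminus\un$ for $(M_1,K_1)$ and $(M_2,K_2)$ respectively. Then $(M_1,K_1)\sharp(M_2,K_2)$ admits the surgery presentation $L=L_1\sqcup L_2$ obtained by placing $L_1$ and $L_2$ side by side in disjoint balls, both disjoint from $\un$ and arranged so that the disk $\DD$ meets them as prescribed; one has to check this is indeed a surgery presentation for the connected sum, which is standard (connected sum of the pairs corresponds to ``tensoring'' the trivial-knot-in-$S^3$ pictures). With this choice, the linking matrix $W_L$ is block diagonal with blocks $W_{L_1}$ and $W_{L_2}$, hence $\sigma_\pm(L)=\sigma_\pm(L_1)+\sigma_\pm(L_2)$, and the Blanchfield module presented by $^tW_L$ is $(\Al_1,\bl_1)\oplus(\Al_2,\bl_2)$. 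Since $\Zp$ is defined through the monoidal functor $Z$ applied to a $q$--tangle that is literally a horizontal juxtaposition of the pieces coming from $L_1$ and from $L_2$ (once the disk parts are arranged compatibly), and since the connected sum with $\nu$ is performed componentwise, we get $\Zc(L)=\Zc(L_1)\sqcup\Zc(L_2)$ under the identification $\pi_0(L)=\pi_0(L_1)\sqcup\pi_0(L_2)$; here I would invoke that $\chi^{-1}$ and the coproduct are compatible with disjoint union, as recorded in Subsection on product and coproduct.

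Next I would pass to a lift $\overline{\Zc(L)}=\overline{\Zc(L_1)}\sqcup\overline{\Zc(L_2)}$ and use Lemma~\ref{lemmawdmatrix} to write each factor as $\expd\big(\frac12 W_{L_k}\big)\sqcup H_k$, so that $\overline{\Zc(L)}=\expd\big(\frac12 W_L\big)\sqcup(H_1\sqcup H_2)$ with $W_L$ block diagonal. Applying $\omega$, the matrix $W_L^{-1}$ is also block diagonal, so the colorings $f_{vv'}$ vanish whenever $v$ and $v'$ lie in different blocks; hence $\omega_{\frac12 W_L}(H_1\sqcup H_2)=\omega_{\frac12 W_{L_1}}(H_1)\sqcup\omega_{\frac12 W_{L_2}}(H_2)$ in $\A\big((\Al_1,\bl_1)\oplus(\Al_2,\bl_2)\big)$, that is $\omega(\Zc(L))=\omega(\Zc(L_1))\sqcup\omega(\Zc(L_2))$. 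Finally, taking $U_\pm$ split from everything, $\Zc(U_\pm)$ is the same element regardless of which ambient presentation it is split from, so the normalization factors multiply: $\Zc(U_+)^{-\sigma_+(L)}=\Zc(U_+)^{-\sigma_+(L_1)}\sqcup\Zc(U_+)^{-\sigma_+(L_2)}$ and likewise for $U_-$. Combining, $\tZ(M_1,K_1)\sqcup\tZ(M_2,K_2)=\tZ\big((M_1,K_1)\sharp(M_2,K_2)\big)$.

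The main obstacle I anticipate is the bookkeeping at the level of $q$--tangles: one must arrange the admissible diagrams of $L_1$ and $L_2$ so that their disk-parts, once juxtaposed, still form an admissible diagram for $L$ with a single disk $\DD$, and then verify that the decomposition $\gamma=\gamma_b\circ(I_v\otimes G_w)\circ\gamma_t$ for $L$ is the horizontal product of the corresponding decompositions for $L_1$ and $L_2$; this uses the monoidality of $Z$ and the fact that $G_w$ is a tensor product of elementary pieces. Everything else is a routine consequence of block-diagonality and of the multiplicativity properties already established (compatibility of $\chi$, $\Delta$ and $\sqcup$, and Lemma~\ref{lemmawdmatrix}). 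Since $\tZ$ being well defined does not depend on the chosen presentation, choosing this particular compatible presentation is legitimate.
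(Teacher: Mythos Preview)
Your proposal is correct and follows essentially the same approach as the paper: the paper's proof simply says that a surgery link for the connected sum is obtained by stacking $L_1$ and $L_2$, and that an isotopy then separates them (illustrated in a figure), leaving all the verifications you spell out (block-diagonality of $W_L$, additivity of signatures, factorization of $\Zp$, $\chi^{-1}$, and $\omega$) implicit. Your write-up is thus a fleshed-out version of the paper's one-line argument; the only cosmetic difference is that the paper phrases the combination as vertical ``stacking'' followed by an isotopy rather than ``side by side'', which is exactly the bookkeeping issue you flag at the end.
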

\begin{proof}
 If $L_1$ and $L_2$ are surgery links for $(M_1,K_1)$ and $(M_2,K_2)$ respectively, a surgery link for $(M_1,K_1)\sharp(M_2,K_2)$ is obtained by stacking $L_1$ and $L_2$.
 Then $L_1$ and $L_2$ can be separated by an isotopy, see Figure~\ref{figstack}.
\end{proof}

\begin{figure}[htb] 
 \begin{center}
 \begin{tikzpicture} [scale=0.5]
 \begin{scope} 
  \draw (0,7.5) .. controls +(1,0) and +(0,1) .. (1,6) .. controls +(0,-2) and +(0,1) .. (-1,3);
  \draw (0,6) .. controls +(-1,0) and +(0,1) .. (-2,4) .. controls +(0,-4) and +(0,-3) .. (1,3);
  \draww{(0,7.5) .. controls +(-1,0) and +(0,1) .. (-1,6) .. controls +(0,-2) and +(0,1) .. (1,3);}
  \draww{(0,6) .. controls +(1,0) and +(0,1) .. (2,4) .. controls +(0,-4) and +(0,-3) .. (-1,3);}
  \draww{(0,3) arc (-90:90:2);}
  \draww{(0,2) arc (-90:90:3);}
  \draww{(0,8) .. controls  +(-4,0) and +(-1,0) .. (-2.5,4.5);}
  \draww{(0,3) .. controls  +(-3,0) and +(1,0) .. (-2.5,5.5);}
  \draww{(0,2) .. controls  +(-4,0) and +(-1,0) .. (-2.5,5.5);}
  \draww{(0,7) .. controls  +(-3,0) and +(1,0) .. (-2.5,4.5);}
  \draw (-4,0.5) rectangle (4,9.5);
  \draw (0,5) node {$\bullet$} -- (4,5);
  \draw[->] (2.24,7) -- (2.14,7.1) node[right] {$L_1$};
  \draw[->] (1.4,1.5) -- (1.45,1.6) node[right] {$L_2$};
 \end{scope}
  \draw (7,5) node {$\sim$};
 \begin{scope} [xshift=14cm] 
 \begin{scope} [scale=0.6,yshift=3.3cm] 
  \draw (0,7.5) .. controls +(1,0) and +(0,1) .. (1,6) .. controls +(0,-2) and +(0,1) .. (-1,3);
  \draw (0,6) .. controls +(-1,0) and +(0,1) .. (-2,4) .. controls +(0,-4) and +(0,-3) .. (1,3);
  \draww{(0,7.5) .. controls +(-1,0) and +(0,1) .. (-1,6) .. controls +(0,-2) and +(0,1) .. (1,3);}
  \draww{(0,6) .. controls +(1,0) and +(0,1) .. (2,4) .. controls +(0,-4) and +(0,-3) .. (-1,3);}
 \end{scope}
 \begin{scope} [xscale=1.1,yscale=1.3,xshift=0.2cm,yshift=-1.4cm] 
  \draww{(0,3) arc (-90:90:2);}
  \draww{(0,2) arc (-90:90:3);}
  \draww{(0,8) .. controls  +(-4,0) and +(-1,0) .. (-2.5,4.5);}
  \draww{(0,3) .. controls  +(-3,0) and +(1,0) .. (-2.5,5.5);}
  \draww{(0,2) .. controls  +(-4,0) and +(-1,0) .. (-2.5,5.5);}
  \draww{(0,7) .. controls  +(-3,0) and +(1,0) .. (-2.5,4.5);}
 \end{scope}
  \draw (-4,0.5) rectangle (4,9.5);
  \draw (0,5) node {$\bullet$} -- (4,5);
  \draw[->] (2.5,7.5) -- (2.4,7.6) node[right] {$L_1$};
  \draw[->] (1.2,4.2) -- (1.2,4.3); \draw (1.05,4.3) node[right] {$L_2$};
 \end{scope}
 \end{tikzpicture}
 \end{center}
 \caption{Stacking admissible diagrams.} \label{figstack}
\end{figure}

\section{Universality}
\label{secuniversality}

We want to describe the behaviour of the invariant $\tZ$ under null LP--surgeries. For this, we fix an abstract Blanchfield module $(\Al,\bl)$ and we restrict to $\Q\SK$--pairs whose Blanchfield module is isomorphic to $(\Al,\bl)$. For such a $\Q\SK$--pair $(M,K)$, in order to see $\tZ(M,K)$ in the diagram space $\A(\Al,\bl)$, we need to fix an isomorphism from the Blanchfield module of $(M,K)$ to~$(\Al,\bl)$. However, the relation Aut implies that the value of $\tZ(M,K)\in\A(\Al,\bl)$ does not depend on the chosen isomorphism, so that we will ignore it in the sequel.

\subsection{Preliminaries: the LMO invariant}
\label{subsecLMO}

We recall here some properties of the LMO invariant we will need below. This invariant $Z^\LMO$ of $\Q$--spheres, constructed by Le--Murakami--Ohtsuki in \cite{LMO}, is valued in the graded space $\A(\emptyset)$ of trivalent diagrams with oriented trivalent vertices, quotiented out by the relations AS and IHX. The degree of a diagram is the number of its vertices; in particular, $\A_n=0$ when $n$ is odd. Finiteness properties for this invariant were established by Le \cite{Le} with respect to borromean surgeries and generalized by Massuyeau \cite{Mas} to LP--surgeries. It follows that the LMO invariant induces a map on the graded space $\G$ associated to finite type invariants of $\Q$--spheres with respect to LP--surgeries. Further, the map $\varphi:\A(\emptyset)\to\G$ constructed in \cite{GGP} is be defined as in Subsection~\ref{subsecvarphi} (without univalent vertices to deal with).

\begin{theorem}[\cite{Le,Mas}] \label{thLMO}
 The LMO invariant induces a map $Z^\LMO:\G\to\A(\emptyset)$ and the composition $Z^\LMO\circ\varphi$ is the identity on $\A(\emptyset)$.
\end{theorem}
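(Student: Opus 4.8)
The statement to prove is Theorem~\ref{thLMO}: the LMO invariant induces a map $Z^\LMO:\G\to\A(\emptyset)$ and $Z^\LMO\circ\varphi$ is the identity on $\A(\emptyset)$.

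The first part — that $Z^\LMO$ descends to a map on $\G$ — is a finiteness statement: one must check that $Z^\LMO$, extended $\Q$-linearly to $\F_0$, sends the filtration subspace $\F_{n+1}$ (in the $\Q$-sphere, no-knot setting) into the part of $\A(\emptyset)$ of degree $> n$, equivalently that the degree-$n$ part $Z^\LMO_n$ vanishes on $\F_{n+1}$. For borromean surgeries this is Le's theorem, and Massuyeau extended it to all LP-surgeries; so for this half I would simply quote \cite{Le,Mas} and assemble the graded pieces: each $Z^\LMO_n:\F_0\to\A_n(\emptyset)$ kills $\F_{n+1}$, hence factors through $\G_n=\F_n/\F_{n+1}$, and taking the direct sum over $n$ gives the graded map $Z^\LMO:\G\to\A(\emptyset)$.

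The substance is the second part, $Z^\LMO\circ\varphi=\mathrm{id}$. Since $\varphi:\A(\emptyset)\to\G$ is built (as in Subsection~\ref{subsecvarphi}, but without univalent vertices) from realizations of diagrams by null Y-links — here "null" is automatic since there is no knot — it suffices to check the identity on a generating family of diagrams, namely connected trivalent diagrams of each degree $n$ realized by a family of $n$ disjoint Y-graphs in a fixed $\Q$-sphere, giving a bracket $[M;(\Gamma_i)_i]\in\F_n$. The plan is: (1) compute $Z^\LMO$ on such a bracket; the key input is the behaviour of the LMO (equivalently Aarhus) integral under a single borromean/Y-surgery, which replaces the surgered handlebody's contribution by the standard Y-clasper diagram plus higher-degree terms; (2) expand the alternating sum over $I\subset\{1,\dots,n\}$ defining the bracket — all terms where some $\Gamma_i$ is not "activated" cancel in pairs, so the degree-$\le n$ part of $Z^\LMO([M;(\Gamma_i)_i])$ is exactly the degree-$n$ diagram obtained by assembling the $n$ Y-pieces according to how the leaves of the $\Gamma_i$ are linked, which by construction of the realization reproduces the original diagram $D$; (3) conclude $Z^\LMO_n\circ\varphi_n(D)=D$. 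One then invokes Remark~\ref{remLMO}: the LMO invariant is the knot-and-bead-free specialization of the Kricker construction, so the strut-killing / Gaussian-integration combinatorics that underlie the diagrammatic surgery formula are exactly the ones already developed (Proposition~\ref{propkey}, Lemma~\ref{lemmaGK}) but in the simpler unbeaded setting, and the argument is the template for the knotted case treated later in Section~\ref{secuniversality}.

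The main obstacle is step (1)–(2): making precise the surgery formula for $Z^\LMO$ under a Y-graph surgery and controlling the lower-degree terms well enough to see the clean cancellation in the bracket. This is where one genuinely uses the Aarhus-integral description of the LMO invariant — that surgery along the six-component link $L$ associated to a Y-graph (Figure~\ref{figborro4}) multiplies the pre-integral by an explicit Gaussian whose formal integration, at leading order, produces the tripod/Y diagram — together with the fact that a diagram of degree $<n$ built from $<n$ of the claspers appears with a coefficient that is an alternating binomial sum and hence vanishes, exactly as in the computation at the end of the proof of Proposition~\ref{propkey}. Since all of this is classical (it is essentially the content of \cite{GGP,Le,Mas}), in the write-up I would state the surgery formula as a cited lemma and give the cancellation bookkeeping in a few lines rather than rederiving the Aarhus machinery.
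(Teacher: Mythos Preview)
The paper does not prove this theorem: it is stated as a result of Le and Massuyeau with citation \cite{Le,Mas} and no argument is given, since it serves only as background input for the paper's own construction. Your sketch is a faithful outline of how those cited references establish the result, and the overall strategy --- quote the finiteness property from \cite{Le,Mas} for the first half, then compute $Z^\LMO$ on a Y-link bracket via the clasper surgery formula and observe the alternating-sum cancellation for the second --- is exactly the classical one.

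One small point of orientation: you invoke Proposition~\ref{propkey} and Lemma~\ref{lemmaGK} as the underlying combinatorics, but the logical dependence runs the other way. Those results concern the operation $\omega$, which is the paper's \emph{refinement} of formal Gaussian integration; the LMO/Aarhus case is the unbeaded prototype that predates them and does not rely on them. In a self-contained write-up you would cite the Aarhus papers \cite{AA2} and Le's computation (the paper's Theorem~\ref{thLe}) directly, rather than the paper's later machinery. You do acknowledge this direction (``the argument is the template for the knotted case treated later''), so this is a matter of presentation, not a mathematical gap.
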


\subsection{Elementary surgeries}

To understand the behaviour of our invariant under null LP--surgeries, we will work on a restricted set of surgeries which generate all of them. 

Given a positive integer $d$, we define a \emph{$d$--torus} as a $\Q$--torus $T_d$ satisfying, for some simple closed curves $\alpha$ and $\beta$ on $\partial T$: 
\begin{itemize}\itemsep=0cm
 \item $H_1(\partial T_d;\mathbb{Z})=\mathbb{Z} \alpha \oplus \mathbb{Z}\beta$, with $\langle\alpha,\beta\rangle=1$,
 \item $d\alpha=0$ in $H_1(T_d;\mathbb{Z})$,
 \item $\beta=d\gamma$ in $H_1(T_d;\mathbb{Z})$, where $\gamma$ is a curve in $T_d$,
 \item $H_1(T_d;\mathbb{Z})=\fract{\Z}{d\Z} \alpha \oplus \mathbb{Z} \gamma$.
\end{itemize}
We define a {\em $d$--surgery} as an LP--replacement of a solid torus by a $d$--torus. Finally, we define an \emph{elementary surgery} as an LP--surgery among the following ones:
\begin{itemize}\itemsep=0cm
 \item connected sum (genus 0),
 \item $d$--surgery (genus 1),
 \item borromean surgery (genus 3).
\end{itemize}

\begin{remark}
In terms of diagrams, the borromean surgeries will correspond to Jacobi diagrams, while connected sums will correspond to isolated vertices. Although it does not seem possible to remove the genus--$1$ elementary surgeries, they don't have a diagrammatic counterpart. 
\end{remark}

\begin{theorem}[\cite{M2} Theorem 1.15] \label{thelsur}
 If $A$ and $B$ are two $\mathbb{Q}$--handlebodies with LP--identified boundaries, then $B$ can be obtained from $A$ by a finite sequence of elementary surgeries and their inverses in the interior of the $\mathbb{Q}$--handlebodies.
\end{theorem}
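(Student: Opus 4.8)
The plan is to produce a normal form for genus--$g$ $\Q$--handlebodies with parametrised boundary, under the equivalence relation $\approx$ generated by elementary surgeries and their inverses performed in the interior, and to check that this normal form depends only on the data that an LP--identification of boundaries remembers. Fix $g\geq 0$. Using the homeomorphism $h$, identify $\partial A$ and $\partial B$ with one fixed genus--$g$ surface $\Sigma$ so that $\mathcal{L}_A$ and $\mathcal{L}_B$ become a single Lagrangian $\mathcal{L}\subset H_1(\Sigma;\Q)$. It then suffices to show that any two $\Q$--handlebodies inducing this same pair $(\Sigma,\mathcal{L})$ on their boundary are $\approx$--equivalent; since every move will be an LP--surgery supported in the interior, the boundary $\Sigma$ and the Lagrangian $\mathcal{L}$ are untouched throughout.

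\emph{Step 1: peeling off the torsion of $H_1$.} For a $\Q$--handlebody $A$ one has $H_1(A;\Z)\cong\Z^g\oplus T_A$ with $T_A$ finite, and Lefschetz duality puts a nondegenerate linking pairing $\lambda_A$ on $T_A$. I would show that $A$ is obtained, by a finite sequence of genus--$0$ elementary surgeries (connected sums with $\Q$--balls) and genus--$1$ elementary surgeries ($d$--surgeries) carried out inside a ball in the interior of $A$, from a genus--$g$ $\Z$--handlebody $\tilde A$ with the same boundary $(\Sigma,\mathcal{L})$. Concretely, decompose $(T_A,\lambda_A)$ into standard pieces --- cyclic summands $\Z/d$ with pairing $a/d$, and the $2$--primary exceptional pieces on $(\Z/2^k)^2$ --- realise each cyclic piece by a $d$--surgery along a suitably framed $d$--torus, and the remaining pieces (after stabilising by cyclic summands if needed) by connected sums with suitable $\Q$--balls; since such a $\tilde A$ exists (take the standard handlebody $H_g$ with its boundary re--parametrised by a homeomorphism of $\Sigma$ carrying the standard Lagrangian onto $\mathcal{L}$), running this construction backwards gives $A\approx\tilde A$. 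The same applies to $B$, producing $B\approx\tilde B$ for a $\Z$--handlebody $\tilde B$ with boundary $(\Sigma,\mathcal{L})$.

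\emph{Step 2: matching $\Z$--handlebodies by borromean surgeries.} It remains to see that $\tilde A\approx\tilde B$. Since $H^2(\tilde A;\Z)=0$ when $H_1(\tilde A;\Z)$ is torsion free, the inclusion induces a surjection $H_1(\Sigma;\Z)\twoheadrightarrow H_1(\tilde A;\Z)$ whose kernel is the primitive sublattice $\mathcal{L}\cap H_1(\Sigma;\Z)$; the same is true for $\tilde B$, so both inclusion maps are canonically identified with the single quotient map $H_1(\Sigma;\Z)\to H_1(\Sigma;\Z)/(\mathcal{L}\cap H_1(\Sigma;\Z))$. Thus $\tilde A$ and $\tilde B$ are $\Z$--handlebodies with the same parametrised boundary and the same $H_1$--inclusion, and the handlebody version of Matveev's theorem --- that such a $\Z$--handlebody is obtained from $H_g$ by borromean surgeries in the interior --- applies to both; running the two resulting sequences, one forwards and one backwards, yields $\tilde A\approx\tilde B$. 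Combining with Step 1, $A\approx\tilde A\approx\tilde B\approx B$, with every move an elementary surgery or its inverse performed in the interior, which is the assertion.

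\emph{Expected main obstacle.} The technical heart is Step 1: one must verify that every linking pairing which actually occurs on the torsion of $H_1$ of a genus--$g$ $\Q$--handlebody can be assembled from, and hence dismantled by, $d$--surgeries and $\Q$--ball connected sums alone. The cyclic summands are straightforward, but the $2$--primary exceptional forms are not cyclic, so a genus--$1$ surgery cannot produce them directly; one has to exploit the stable relations among linking pairings (adding cyclic summands converts an exceptional piece into a sum of cyclic ones) or realise it by an appropriate $\Q$--ball connected sum, and then check that each such move can be localised in a ball in the interior without disturbing $\Sigma$ or $\mathcal{L}$, and that the result is again a $\Q$--handlebody. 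Step 2 should then be routine, resting on Matveev's cut--and--reglue description of borromean surgeries and the handlebody analogue of his classification theorem.
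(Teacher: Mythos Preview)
The paper does not prove this theorem: it is quoted verbatim as Theorem~1.15 of \cite{M2} and used as a black box, so there is no proof here to compare your proposal against.

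That said, your two-step strategy---first strip off the torsion of $H_1$ using genus--$0$ and genus--$1$ elementary surgeries to reduce to $\Z$--handlebodies, then connect two $\Z$--handlebodies with the same boundary data by borromean surgeries---is indeed the architecture of the proof in \cite{M2}, and you have correctly located the delicate point in Step~1 (the $2$--primary non-cyclic linking forms). One caution about Step~2: the ``handlebody version of Matveev's theorem'' you invoke is not Matveev's original statement, which concerns closed $3$--manifolds; the relevant input is rather that any $\Z$LP--surgery between $\Z$--handlebodies can be realised by borromean surgeries in the interior, which is the content of \cite[Lemma~4.11]{AL} (also cited in Section~\ref{secZpairs} of the present paper). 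With that replacement your outline matches the literature.
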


In the proof of this theorem, arbitrary $d$--tori are used, so that we can reduce the genus--$1$ elementary surgeries to that defined by a fixed $d$--torus for each positive integer $d$. Here we will use the $d$--torus obtained from a standard solid torus by Dehn surgery on the link $J_1\cup J_2$ described in Figure~\ref{figdtorus}; we denote it by $T_d$ in the sequel. Note that $T_1$ is the standard solid torus.

\begin{figure}[htb] 
\begin{center}
\begin{tikzpicture}
 \foreach \s in {-1,1} 
 \draw (0,0) .. controls +(0,\s) and +(-1,0) .. (3,1.5*\s) .. controls +(1,0) and +(0,\s) .. (6,0);
 \draw[yshift=0.3cm] (2,0) ..controls +(0.5,-0.25) and +(-0.5,-0.25) .. (4,0);
 \draw[yshift=0.3cm] (2.3,-0.1) ..controls +(0.6,0.2) and +(-0.6,0.2) .. (3.7,-0.1);
 \foreach \x in {0,1,...,4} 
 \draw[rounded corners=5pt,thick] (1.6+0.6*\x,-0.6) -- (1.8+0.6*\x,-0.4) -- (2.1+0.6*\x,-0.7) (1.5+0.6*\x,-0.7) -- (1.8+0.6*\x,-1) -- (2+0.6*\x,-0.8);
 \draw[thick] (1.5,-0.7) .. controls +(-0.4,0.4) and +(-2,0) .. (3,-0.1) .. controls +(2,0) and +(0.4,0.4) .. (4.6,-0.6);
 \draw[thick] (1.4,-0.8) .. controls +(-0.6,-0.6) and +(-3,0) .. (3,1) .. controls +(3,0) and +(0.6,-0.6) .. (4.5,-0.7);
 \draw (3,-1.2) node {\small{$2d$ crossings}};
 \draw (4.5,0.1) node {$J_2$} (5.2,0.5) node {$J_1$};
\end{tikzpicture}
\caption{A $d$--torus constructed by Dehn surgery} \label{figdtorus}
\end{center}
\end{figure}

\subsection{Behaviour of $\tZ$ with respect to elementary surgeries}

A first step is to describe the behaviour of $\tZ$ under connected sum. The idea is the same as in Lemma~\ref{lemma:connectedsum}, but we now connect-sum with a $\Q$--sphere instead of a $\Q\SK$--pair. 

\begin{lemma} \label{lemma:elsurgenus0}
 Let $(M,K)$ be a $\Q\SK$--pair with Blanchfield module $(\Al,\bl)$. Let $N$ be a $\Q$--sphere. The invariant $\tZ$ satisfies:  
 $$\tZ(M\sharp N,K)=\tZ(M,K)\sqcup Z^\LMO(N)\quad\in\A(\Al,\bl).$$
\end{lemma}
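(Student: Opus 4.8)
The strategy is to compare surgery presentations for $(M\sharp N,K)$ on one side and for $(M,K)$ and $N$ separately on the other, and to trace the effect on each ingredient of the construction of $\tZ$: the functor $Z$, the passage $\Zp\rightsquigarrow\Zc$, the choice of lift, the Gaussian form, and finally the operation $\omega$. Concretely, if $L$ is an admissible surgery presentation of $(M,K)$ with link exterior in $S^3\setminus\un$ and $L_N$ is a surgery presentation of $N$ (an ordinary framed link in a ball, disjoint from $\un$ and from the disk $\DD$), then a surgery presentation of $(M\sharp N,K)$ is obtained by taking $L\sqcup L_N$ with $L_N$ split from $L\cup\un\cup\DD$, exactly as in Lemma~\ref{lemma:connectedsum} and Figure~\ref{figstack}. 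The first step is to observe, as there, that $L_N$ can be isotoped to be split; then by multiplicativity of the functor $Z$ under disjoint union (horizontal juxtaposition of $q$--tangles), $\Zp(L\sqcup L_N)=\Zp(L)\sqcup\Zp(L_N)$, and since the $L_N$--components never meet $\DD$, the insertion of the $G_w$ factor only concerns the $L$--part. Correspondingly $\Zc(L\sqcup L_N)=\Zc(L)\sqcup\Zc(L_N)$ after the $\chi^{-1}$ and the $\nu$--connect-sums, which respect disjoint union.

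Next I would choose the lift $\overline{\Zc(L\sqcup L_N)}=\overline{\Zc(L)}\sqcup\overline{\Zc(L_N)}$, using base points on each component; by Lemma~\ref{lemmawdmatrix} this is a non-degenerate Gaussian whose strut part is $\expd(\tfrac12 W_{L\sqcup L_N})$ with winding matrix $W_{L\sqcup L_N}=\begin{pmatrix}W_L&0\\0&W_{L_N}\end{pmatrix}$ — block-diagonal because the components of $L_N$ have zero linking with $\un$ and zero winding with every component of $L$ (they lie in a ball disjoint from $\DD$, so all exponents $\varepsilon$ are $0$ and $W_{L_N}$ has integer, $t$-independent entries equal to the ordinary linking matrix of $L_N$). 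The key point is then that the operation $\omega$ is multiplicative with respect to this block splitting when one block carries no univalent labels interacting with the other: the Blanchfield module of $\begin{pmatrix}W_L&0\\0&W_{L_N}\end{pmatrix}$ is $(\Al,\bl)\oplus(0,0)=(\Al,\bl)$ (since $W_{L_N}(t)=W_{L_N}(1)$ is invertible over $\Q$, the $L_N$-block contributes nothing to $\Al$), the inverse matrix is block-diagonal, so the cross linkings $f_{vv'}$ between an $L$-labeled vertex and an $L_N$-labeled vertex vanish. Hence in applying $\omega$ the $L_N$-part decouples entirely: $\omega\big(\Zc(L\sqcup L_N)\big)=\omega\big(\Zc(L)\big)\sqcup \big(\text{Aarhus/formal Gaussian integral of }\Zc(L_N)\big)$, where the latter is exactly the Aarhus construction applied to the surgery presentation $L_N$ of the $\Q$--sphere $N$, i.e.\ the LMO invariant $Z^\LMO(N)$ as recalled in Remark~\ref{remLMO} (the beadless, knotless specialization of $\tZ$).

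Finally I would assemble the signature correction terms. The positive/negative signatures are additive: $\sigma_\pm(L\sqcup L_N)=\sigma_\pm(L)+\sigma_\pm(L_N)$, since the linking matrix of $L\sqcup L_N$ is block-diagonal. Thus
\[
\tZ(M\sharp N,K)=\Zc(U_+)^{-\sigma_+(L)-\sigma_+(L_N)}\sqcup\Zc(U_-)^{-\sigma_-(L)-\sigma_-(L_N)}\sqcup\omega\big(\Zc(L)\big)\sqcup Z^\LMO_{\mathrm{raw}}(L_N),
\]
and regrouping the $U_\pm$ factors according to the $L$- and $L_N$-contributions gives exactly $\tZ(M,K)\sqcup\big(\Zc(U_+)^{-\sigma_+(L_N)}\sqcup\Zc(U_-)^{-\sigma_-(L_N)}\sqcup Z^\LMO_{\mathrm{raw}}(L_N)\big)$; the second factor is the normalized Aarhus integral, which is precisely $Z^\LMO(N)$. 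This yields $\tZ(M\sharp N,K)=\tZ(M,K)\sqcup Z^\LMO(N)$ in $\A(\Al,\bl)$, where $Z^\LMO(N)\in\A(\emptyset)$ is viewed inside $\A(\Al,\bl)$ via the inclusion of diagrams with no univalent vertex. The main obstacle I anticipate is the bookkeeping in the previous paragraph: showing cleanly that the $\omega$-operation on the block-diagonal Gaussian factors as the disjoint union of $\omega$ on the $\Al$-block with the plain formal Gaussian integration on the invertible $\Q$-block, and identifying that latter integration with the Aarhus/LMO recipe. This is morally immediate from the definition of $\omega_W$ (it only uses $W^{-1}$, which is block-diagonal, so the relations LV, EV, LD never mix the two blocks), but it needs to be spelled out; everything else is a routine transport of the $q$--tangle multiplicativity already used in Lemma~\ref{lemma:connectedsum}.
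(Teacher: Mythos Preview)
Your proposal is correct and follows essentially the same route as the paper: split the surgery presentation as $L\sqcup L_N$ with $L_N$ away from $\DD$, observe that the winding matrix is block-diagonal with the $L_N$-block constant in $t$ and invertible over $\Q$ (hence contributing trivially to the Blanchfield module), and conclude that the $L_N$-part of $\omega$ reduces to the Aarhus/formal Gaussian integral, i.e.\ $Z^\LMO(N)$. Your treatment is in fact more explicit than the paper's on two points---the additivity of the signature normalization and the precise mechanism by which the $L_N$-block decouples---whereas the paper phrases the latter tersely as ``the univalent vertices labeled by zero can be removed using LV'' (which really requires LD as well, exactly as you anticipate in your bookkeeping remark).
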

\begin{proof}
 If $L$ and $J$ are surgery links for $(M,K)$ and $N$ respectively, a surgery link for $(M\sharp N,K)$ is obtained by stacking $L$ and $J$, see Figure~\ref{figstackbis}. At each step of the construction of the invariant~$\tZ$, we have a disjoint union of two series of diagrams associated to $L$ and $J$ respectively. Moreover, the winding matrix of $L\sqcup J$ is a bloc diagonal matrix with blocs $W_L$ and $W_J$. Since the components of $J$ do not meet the disk bounded by the unknot of the surgery presentation, $W_J$ has all its coefficients in $\Z$. This implies that the Blanchfield module of $(M\sharp N,K)$ is again $(\Al,\bl)$. Further, the diagrams in $\tZ(M\sharp N,K)$ coming from $J$ have all their univalent vertices labeled by zero, so that these univalent vertices can be removed using the relation LV. By construction, this part of $\tZ(M\sharp N,K)$ coming from $J$ is the LMO invariant of $N$ computed with the Aarhus method.
\end{proof}
 
\begin{figure}[htb] 
 \begin{center}
 \begin{tikzpicture} [scale=0.5]
 \begin{scope} [scale=0.9] 
  \draww{(0,3) arc (-90:90:2);}
  \draww{(0,2) arc (-90:90:3);}
  \draww{(0,8) .. controls  +(-4,0) and +(-1,0) .. (-2.5,4.5);}
  \draww{(0,3) .. controls  +(-3,0) and +(1,0) .. (-2.5,5.5);}
  \draww{(0,2) .. controls  +(-4,0) and +(-1,0) .. (-2.5,5.5);}
  \draww{(0,7) .. controls  +(-3,0) and +(1,0) .. (-2.5,4.5);}
 \end{scope}
 \begin{scope} [xshift=-6cm,yshift=2cm,scale=0.7] 
  \draw (0,7.5) .. controls +(1,0) and +(0,1) .. (1,6) .. controls +(0,-2) and +(0,1) .. (-1,3);
  \draw (0,6) .. controls +(-1,0) and +(0,1) .. (-2,4) .. controls +(0,-4) and +(0,-3) .. (1,3);
  \draww{(0,7.5) .. controls +(-1,0) and +(0,1) .. (-1,6) .. controls +(0,-2) and +(0,1) .. (1,3);}
  \draww{(0,6) .. controls +(1,0) and +(0,1) .. (2,4) .. controls +(0,-4) and +(0,-3) .. (-1,3);}
 \end{scope}
  \draw (-8,1) rectangle (4,9);
  \draw[dashed] (-4,1) -- (-4,9);
  \draw (0,5) node {$\bullet$} -- (4,5);
  \draw[->] (2.24,6) -- (2.14,6.1) node[right] {$L$};
  \draw[->] (-6.1,7.25) -- (-6,7.25) node[above] {$J$};
 \end{tikzpicture}
 \end{center}
 \caption{Stacking diagrams for a $\Q$--sphere and a $\Q\SK$--pair} \label{figstackbis}
\end{figure}

Our second step is to describe the behaviour of $\tZ$ under $d$--surgeries. 

\begin{proposition} \label{propelsurgenus1}
 Let $(M,K)$ be a $\Q\SK$--pair. Fix a positive integer $d$. Consider a $d$--surgery $\left(\frac{T_d}{T_1}\right)$ on $(M,K)$. Denote $J_1\sqcup J_2^d$ the surgery link defined on Figure~\ref{figdtorus}. Let $L$ be an admissible surgery presentation of $(M,K)$. Then $L_1=L\sqcup J_1\sqcup J_2^1$ and $L_d=L\sqcup J_1\sqcup J_2^d$ are admissible surgery presentations of $(M,K)$ and $(M,K)\left(\frac{T_d}{T_1}\right)$ respectively, and $\tZ\left((M,K)\left(\frac{T_d}{T_1}\right)\right)-\tZ(M,K)$ is a series of diagrams of degree at least $1$ containing a univalent vertex associated to $J_2$.
\end{proposition}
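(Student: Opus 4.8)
The plan is to analyze how adding the two-component link $J_1\sqcup J_2^d$ to the surgery presentation $L$ affects each step of the construction of $\tZ$, and to track the dependence on $d$ carefully. First I would verify the setup: since $\lk(J_i,\un)=0$ (the link in Figure~\ref{figdtorus} is split from $\un$, or at least has zero linking with it), $L_1$ and $L_d$ are indeed admissible surgery presentations, and by Theorem~\ref{thelsur} (and the choice of $T_d$ made just before the statement) they present $(M,K)$ and $(M,K)\left(\frac{T_d}{T_1}\right)$ respectively. Because $J_1\sqcup J_2^d$ is split from $L\cup\un$, the winding matrix $W_{L_d}$ is block-diagonal with blocks $W_L$ and a $2\times 2$ block $W_{J_1\cup J_2^d}$ whose entries lie in $\Z$ (the components do not meet the disk $\DD$); in particular the Blanchfield module is unchanged and $\tZ\left((M,K)\left(\frac{T_d}{T_1}\right)\right)$ lives in the same space $\A(\Al,\bl)$.

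Next I would argue that $\Zc(L_d)$ splits as a disjoint union $\Zc(L)\sqcup \Zc(J_1\sqcup J_2^d)$ of the contribution of $L$ and the contribution of the split sublink, exactly as in the proof of Lemma~\ref{lemma:elsurgenus0} and Lemma~\ref{lemma:connectedsum}: the functor $Z$ is monoidal, the links can be separated by an isotopy, and $\chi$ together with the connected sum with $\nu$ respect disjoint union. All univalent vertices carried by the $J_1,J_2$ skeleton components have label in $\{$meridians of $J_1,J_2\}$, which map to $0$ in $\Al$ (the handlebody $T_1$ is null, so these meridians bound in $M\setminus K$, or more directly the corresponding generators of the presentation are killed). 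Applying $\omega$ and using the relations LV and LE to absorb the $0$-labels, one sees that the $d$-independent part is precisely $\tZ(M,K)$ times a constant (the signature normalization terms are handled as in the earlier propositions), so $\tZ\left((M,K)\left(\frac{T_d}{T_1}\right)\right)-\tZ(M,K)$ is exactly the collection of diagrams in which at least one univalent vertex is glued to the $J_2^d$-component and is \emph{not} removed by LV, i.e. is linked nontrivially to something. Any such surviving diagram has at least one trivalent vertex (a strut-only contribution would be absorbed into the Gaussian part $W$ and the normalization), hence degree at least $1$, and by construction it retains a univalent vertex associated to $J_2$.

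The step I expect to be the main obstacle is showing that every diagram in the difference genuinely \emph{keeps} a $J_2$-labeled univalent vertex after the full application of $\omega$ — that is, that $\omega$ does not silently eliminate all $J_2$-legs from a given diagram. The subtlety is that $\omega$ joins legs using the relations LD, LV, EV, and a leg labeled by the (zero) meridian of $J_2$ could in principle be contracted away; I would need the identity $x_{J_2}=0$ in $\Al$ to be used only to simplify \emph{coefficients} $f_{vv'}$ and edge labels, not to delete the vertex itself, together with the fact that the vertex labeled by a meridian of $J_2$ sits on a diagram whose remaining structure forces a nontrivial $f_{vv'}$ with some other vertex — precisely the statement that $W_{J_1\cup J_2^d}^{-1}$ has a $d$-dependent off-diagonal entry. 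Concretely I would compute the $2\times2$ block $W_{J_1\cup J_2^d}$ from Figure~\ref{figdtorus} (its determinant and inverse depend on $d$), show that the strut $\struts{J_2}{J_2}{W_{J_1J_2}}$ or $\struts{J_1}{J_2}{}$ contributes linkings $f_{vv'}$ that are genuinely non-zero and $d$-sensitive, and conclude that the lowest-degree term of the difference — the one obtained by gluing a single $J_2$-leg of $H$ back via $\omega$ — is non-trivial, which both certifies ``degree at least $1$'' and pins down the marking by $J_2$. The rest is bookkeeping parallel to the proofs of Lemmas~\ref{lemma:connectedsum} and~\ref{lemma:elsurgenus0} and the independence proof of $\tZ$.
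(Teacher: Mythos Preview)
Your argument rests on the claim that $J_1\sqcup J_2^d$ is split from $L\cup\un$, and this is where it breaks down. The solid torus $T_1$ is an arbitrary null $\Q$--handlebody embedded in $M\setminus K$; its core may be knotted and may link the components of $L$ nontrivially once everything is pulled back to $S^3\setminus\un$. Concretely, $J_1$ is the longitudinal curve in Figure~\ref{figdtorus}, so in an admissible diagram of $L_d$ it will in general cross components of $L$. The winding matrix is therefore \emph{not} block-diagonal: it has the shape
\[
W_d=\begin{pmatrix} W_L & \zeta & 0 \\ {}^t\bar\zeta & \lambda & d \\ 0 & d & 0 \end{pmatrix},
\]
with $\zeta$ recording the (generally nonzero, $d$--independent) linking of $J_1$ with $L$. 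From the last two rows one reads off $y_1^d=0$ but $y_2^d=-\tfrac{1}{d}\sum_j\bar\zeta_j x_j$, so the generator attached to $J_2$ is typically \emph{nonzero} in $\Al$. Both of your subsequent reductions --- that $\Zc(L_d)$ factors as $\Zc(L)\sqcup\Zc(J_1\sqcup J_2^d)$, and that the $J_1,J_2$--labels are killed by LV --- fail for this reason.

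The paper's route avoids splitting altogether and instead isolates the $d$--dependence. The only place $d$ enters the picture is the block of $2d$ crossings between $J_1$ and $J_2$; its contribution to $\Zp(L_d)$ is an exponential of $d$ chords on the two parallel strands. Under $\chi^{-1}$ this exponential produces the struts that account for the $d$--entries in $W_d$ together with correction terms, each carrying at least one trivalent vertex and a $J_2$--labeled leg. One then compares the two presentations via the explicit isomorphism $f_d$ of Blanchfield modules (sending $x_i^1\mapsto x_i^d$, $y_1^1\mapsto y_1^d$, $y_2^1\mapsto d\,y_2^d$), after which $\tZ\big((M,K)_d\big)-\tZ(M,K)$ is exactly the image of those correction terms: diagrams of degree at least $1$ with a univalent vertex labeled by $y_2^d$. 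Your instinct that the $J_2$--leg is the marker of the difference is right, but the mechanism is comparison along $f_d$, not a disjoint-union factorisation.
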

\begin{proof}
 The winding matrix of $L_d$ is of the form
 $W_d=\begin{pmatrix} W_L & \zeta & 0 \\ ^t\bar\zeta & \lambda & d \\ 0 & d & 0 \end{pmatrix}$, where $W_L$ is the winding matrix of $L$ and $\zeta,\lambda$ do not depend on $d$. Hence the Blanchfield form of the pair $(M,K)_d=(M,K)\left(\frac{T_d}{T_1}\right)$ is given by the matrix 
 $W_d^{-1}=\begin{pmatrix} W_L^{-1} & 0 & \eta_d \\ 0 & 0 & \frac1d \\ ^t\bar\eta_d & \frac1d & \mu_d \end{pmatrix}$, where $\eta_d=-\frac1dW_L^{-1}\zeta$ and $\mu_d=\frac1{d^2}\left(^t\bar\zeta W_L^{-1}\zeta-\lambda\right)$. If $x_1^d,\dots,x_n^d,y_1^d,y_2^d$ are the associated generators of the Alexander module, then an automorphism $f_d$ between the Blanchfield modules of $(M,K)$ and $(M,K)_d$ is given by \mbox{$x_i^1\mapsto x_i^d$,} $y_1^1\mapsto y_1^d$ and $y_2^1\mapsto dy_2^d$; we may note that $y_1^d=0$ and $y_2^d$ is a $\Qtt$--linear combination of the~$x_i^d$.
 
 \begin{figure}[htb] 
\begin{center}
\begin{tikzpicture} 
\begin{scope} [rotate=-90]
 \foreach \x in {0,1,...,4} 
 \draw[rounded corners=5pt,thick] (1.6+0.6*\x,-0.6) -- (1.8+0.6*\x,-0.4) -- (2.1+0.6*\x,-0.7) (1.5+0.6*\x,-0.7) -- (1.8+0.6*\x,-1) -- (2+0.6*\x,-0.8);
 \draw[thick,->] (1.5,-0.7) .. controls +(-0.4,0.4) and +(-2,0) .. (3,0) node[right] {$J_2$};
 \draw[thick] (3,0) .. controls +(2,0) and +(0.4,0.4) .. (4.6,-0.6);
 \draw[thick,->] (1.4,-0.8) .. controls +(-0.2,-0.2) and +(0.2,0) .. (1,-1) (4.5,-0.7) .. controls +(0.2,-0.2) and +(-0.2,0) .. (5,-1);
 \draw[thick,dashed] (1,-1) -- (0.5,-1) (5,-1) -- (5.5,-1);
 \draw (3,-2) node {\small{$2d$ crossings}};
 \draw (5,-1.3) node {$J_1$};
\end{scope}
\begin{scope} [xshift=3cm,yshift=-3.5cm,scale=0.6]
 \draw[gray!50,line width=3pt] (0,0) rectangle (3.5,2);
 \draw[->,very thick] (2,3) -- (2,-1);
 \draw[->,very thick] (3,3) -- (3,-1);
 \draw (2,1) -- (3,1);
 \draw (1,1) node {$\expd d$};
\end{scope}
\end{tikzpicture}
\caption{The local difference on the surgery presentation and on $\Zp$} \label{figsurgerypres}
\end{center}
\end{figure}

 In the computation of $\Zp(L_d)$, the only part depending on $d$ occurs at the level of the crossings between $J_1$ and $J_2$, which can be presented as in the left hand side of Figure~\ref{figsurgerypres}; the right hand side represents its contribution to $\Zp(L_d)$. When applying $\chi^{-1}$ to the right hand side of Figure~\ref{figsurgerypres}, we get some struts which contribute to the winding matrix $W_d$ and ``correction terms'' with at least one univalent vertex labeled by $J_2$ and joined to a trivalent vertex. The latter contribute to $H_d$ in $\overline{\Zc(L_d)}=\expd\left(\frac12 W_d\right)\sqcup H_d$. We get then $\tZ\big((M,K)_d\big)=\omega_{W_d}(H_d)$. To compare $\tZ\big((M,K)_d\big)$ to $\tZ(M,K)$, we apply the above automorphism $f_d$ to $\omega_{W_1}(H_1)$. The diagrams in the difference $\tZ\big((M,K)_d\big)-\tZ(M,K)$ come from the above correction terms; they have degree at least one and one univalent vertex labeled by $y_2^d$. 
\end{proof}

The last step is to describe the behaviour of $\tZ$ under borromean surgeries. Once again, we have to deal with a local modification of the surgery presentation. To a Y--graph is associated a six-component surgery link, which can be turned to a trivial surgery link by separating the central three components, see Figure~\ref{figborromeandiff}. The effect of such a difference on the invariant $Z$ has been computed by Le in \cite{Le}.

\begin{figure}[htb] 
\begin{center}
\begin{tikzpicture} [scale=0.15]
\begin{scope}[xshift=1000]
 \newcommand{\bras}[1]{
 \draw[rotate=#1] (3,-8) arc (0:180:3);
 \draw[rotate=#1,dashed] (3,-8) arc (0:-180:3);
 \draw[rotate=#1,white,line width=6pt] (-1,-5) -- (-1,-5.7);
 \draw[rotate=#1] (-1,-4.6) -- (-1,-5.7) arc (-180:0:1) (-1,-4.6) arc (180:0:1);}
 \bras{0}
 \bras{120}
 \bras{-120}
\end{scope}
\begin{scope}
 \newcommand{\bras}[1]{
 \draw[rotate=#1] (0,-1.5) circle (2.5);
 \draw [rotate=#1,white,line width=8pt] (-0.95,-4) -- (0.95,-4);
 \draw[rotate=#1] (1,-3.9) -- (1,- 4.6);
 \draw[rotate=#1] (3,-8) arc (0:180:3);
 \draw[rotate=#1,dashed] (3,-8) arc (0:-180:3);
 \draw[rotate=#1,white,line width=6pt] (-1,-5) -- (-1,-5.7);
 \draw[rotate=#1] {(-1,-3.9) -- (-1,-5.7) arc (-180:0:1)};}
 \bras{0}
 \draw [white,line width=6pt,rotate=120] (0,-1.5) circle (2.5);
 \bras{120}
 \draw [rotate=-120,white,line width=6pt] (-1.77,0.27) arc (135:190:2.5);
 \draw [rotate=-120,white,line width=6pt] (1.77,0.27) arc (45:90:2.5);
 \bras{-120}
 \draw [white,line width=6pt] (-1.77,0.27) arc (135:190:2.5);
 \draw [white,line width=6pt] (1.77,0.27) arc (45:90:2.5);
 \draw (-1.77,0.27) arc (135:190:2.5);
 \draw (1.77,0.27) arc (45:90:2.5);
\end{scope}
\end{tikzpicture}
\end{center}
\caption{Surgery link associated to a Y--graph and corresponding trivial surgery}\label{figborromeandiff}
\end{figure}

\begin{theorem}[Le] \label{thLe}
$$Z\left(\raisebox{-1cm}{
\begin{tikzpicture} [scale=0.7]
 \foreach \t in {0,120,240} 
 \draw[rotate=\t] (-1,-1.5) .. controls +(0,0.5) and +(0,-0.3) .. (-0.5,-0.4) .. controls +(0,0.1) and +(-0.2,-0.1) .. (-0.35,0.1) (-0.1,0.2) .. controls +(0.3,0) and +(0,0.3) .. (0.5,-0.2) (0.5,-0.4) .. controls +(0,-0.3) and +(0,0.5) .. (1,-1.5);
\end{tikzpicture}}
\right)-Z\left(\raisebox{-1cm}{
\begin{tikzpicture} [scale=0.7]
 \foreach \t in {0,120,240} {
 \begin{scope} [rotate=\t]
  \draw (-1,-1.5) .. controls +(0,0.5) and +(-0.8,0) .. (0,-1) .. controls +(0.8,0) and +(0,0.5) .. (1,-1.5);
 \end{scope}}
\end{tikzpicture}}
\right)=\raisebox{-1cm}{
\begin{tikzpicture} [scale=0.7]
 \foreach \t in {0,120,240} {
 \begin{scope} [rotate=\t]
  \draw (0,0) -- (0,-1);
  \draw[very thick] (-1,-1.5) .. controls +(0,0.5) and +(-0.8,0) .. (0,-1) .. controls +(0.8,0) and +(0,0.5) .. (1,-1.5);
 \end{scope}}
\end{tikzpicture}}
+\text{ higher degree terms}
$$
\end{theorem}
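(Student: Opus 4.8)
The plan is to compute $Z$ of the two pictured tangles directly, up to internal degree $1$ (the internal degree being the number of trivalent vertices), and to check that the difference has vanishing internal--degree--$0$ part, a one--dimensional internal--degree--$1$ part, and that the surviving term is the tripod with coefficient $1$. Call the two tangles $B$ (the Borromean one) and $U$ (the trivial union of three caps). Since $B$ and $U$ have the same critical points, the $\nu$--factors that $Z$ inserts at maxima and minima occur identically in $Z(B)$ and $Z(U)$ and enter only in internal degree $\ge 2$; in particular they cancel in $Z(B)-Z(U)$ up to that degree, so it suffices to control $Z(B)-Z(U)$ in internal degrees $0$ and $1$.

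First I would observe that $Z(B)-Z(U)$ has no internal--degree--$0$ part. Indeed, that part of $Z$ of any tangle is a sum of struts (chords on the skeleton) whose coefficients are the pairwise linking numbers of the strands, plus critical--point contributions; for $B$ the linking numbers all vanish by the Borromean property (any two of the three strands are unlinked) and agree with those of $U$, while the critical--point contributions are identical. Hence $Z(B)-Z(U)$ starts in internal degree $1$, where it is a sum of ``$Y$''--diagrams: one trivalent vertex whose three legs are attached to the skeleton. To see that only the tripod $Y_{123}$ with exactly one leg on each strand can occur, I would use the compatibility of $Z$ with deleting a strand: deleting any one of the three strands of $B$ turns it into a trivial two--strand tangle, isotopic to the corresponding deletion of $U$, so $Z(B)-Z(U)$ vanishes after any single deletion; since the deletion map on diagrams kills precisely those with a leg on the deleted strand and is the identity on the rest, the internal--degree--$1$ part cannot be supported on one or two strands. (Equivalently, one may invoke the general identification of the tree part of the Kontsevich integral of a string link with its Milnor $\bar\mu$--invariants, which expresses the internal--degree--$1$ part directly as $\sum_{i<j<k}\bar\mu_{ijk}\,Y_{ijk}$.) As the space spanned by $Y_{123}$ is one--dimensional modulo the AS relation, this gives $Z(B)-Z(U)=c\,Y_{123}+(\text{internal degree}\ge 2)$ for some $c\in\Q$.

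It remains to identify $c$, and this is the only step that is a genuine computation rather than a structural argument. The coefficient $c$ does not depend on the chosen Drinfeld associator or on the framings, since those affect $Z$ only in internal degree $\ge 2$ on a three--strand tangle; it is therefore a universal rational attached to the Borromean configuration, and pairing with a degree--$2$ weight system identifies it, up to a fixed nonzero constant, with the triple linking number $\bar\mu_{123}$ of the three strands, which is $\pm 1$ for the configuration drawn. Evaluating the relevant iterated integral — or invoking the Milnor--invariant formula and checking the sign against the drawn chirality — yields $c=1$, which is exactly the computation carried out by Le in \cite{Le}; feeding this back through the reductions above gives the stated identity. The main obstacle is precisely this normalization of the leading term: all the preceding reductions are formal, whereas fixing $c$ amounts to computing the first nontrivial coefficient of a solution of the Knizhnik--Zamolodchikov equations, equivalently the first Milnor invariant of the Borromean rings.
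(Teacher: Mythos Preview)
The paper does not prove this theorem; it is quoted as a result of Le \cite{Le} and used as a black box. Your outline is the standard route to this computation and is correct in substance: the Brunnian property (vanishing pairwise linking numbers, and triviality after deleting any strand) kills the strut part of $Z(B)-Z(U)$ and forces the internal--degree--$1$ part to be a multiple of the tripod $Y_{123}$; the coefficient is then identified with the triple Milnor number $\bar\mu_{123}$, which is $\pm1$ for the Borromean tangle, and the sign is fixed by a direct check.

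One imprecision worth flagging: the claim that the Drinfeld associator ``affects $Z$ only in internal degree $\ge 2$'' is not literally true, since the lowest nontrivial term of any associator is $\frac{1}{24}[t_{12},t_{23}]$, which via STU is exactly a $Y$ of internal degree $1$. This does not damage your argument, because what you actually need is much less: $c$ is well-defined simply because $Z$ (for any fixed associator) is a tangle invariant, and its independence from the choice of associator follows from the Habegger--Masbaum identification of the tree part of the Kontsevich integral with Milnor invariants, which you already invoke. You could drop the associator remark entirely and the logic would still close.
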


\subsection{Finiteness properties of $\tZ$}

The above computations provide us with the finiteness properties we seeked for the invariant~$\tZ$.

\begin{theorem} \label{thtZFTI}
 The degree--$n$ part $\tZ_n$ of the invariant $\tZ$ is a degree--$n$ finite type invariant of $\Q\SK$--pairs with respect to null LP--surgeries. Moreover, $\tZ_n$ vanishes on any order--$n$ bracket containing a genus--$0$ elementary surgery.
\end{theorem}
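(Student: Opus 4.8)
The plan is to deduce finiteness from the behaviour of $\tZ$ under the three kinds of elementary surgeries, using Theorem~\ref{thelsur} to reduce an arbitrary null LP--surgery to a sequence of elementary ones. First I would recall that, by Theorem~\ref{thelsur}, any $\Q$--handlebody $B$ with LP--identified boundary with $A$ is obtained from $A$ by a sequence of connected sums, $d$--surgeries and borromean surgeries (and their inverses). One must check that when such surgeries are performed inside a $\Q$--handlebody $A$ that is null in $M\setminus K$, the resulting elementary moves are themselves null LP--surgeries on $(M,K)$; this is immediate since the supporting handlebodies sit inside $A$, hence also have trivial image in $H_1(M\setminus K;\Q)$. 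Therefore it suffices to analyse a bracket $[(M,K);(\tfrac{B_i}{A_i})_{1\le i\le n}]$ where each $(\tfrac{B_i}{A_i})$ is a single elementary surgery, since any order--$n$ bracket expands $\Q$--linearly into such brackets (after inserting the elementary decomposition of each $B_i/A_i$, the bracket becomes a $\Z$--linear combination of brackets of at least $n$ elementary surgeries, and showing $\tZ_m$ vanishes on brackets of $>n$ elementary surgeries for $m\le n$ is the same statement at a larger length).

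Next I would treat the three cases separately, using the lemmas just proved. Lemma~\ref{lemma:elsurgenus0} says $\tZ(M\sharp N,K)=\tZ(M,K)\sqcup Z^\LMO(N)$; since $Z^\LMO(N)$ has no degree--$0$ part other than the empty diagram when $N$ is obtained by a nontrivial surgery contributing to a bracket, the operator ``connected-sum with $N$'' acts on $\tZ$ as $\sqcup\,(Z^\LMO(N)-\varnothing)$ plus identity, i.e. it increases degree by at least $1$; hence applying it $n$ times to form an order--$n$ bracket yields only diagrams of degree $\ge n$, and in degree exactly $n$ each such connected-sum factor must contribute its degree--$1$ part, which however is $\rho_p$-type and carries an isolated vertex — but more to the point, for the refined statement, if one of the $n$ surgeries is a genus--$0$ (connected sum) surgery, then the corresponding factor $Z^\LMO(N_i)-\varnothing$ has \emph{no} degree--$0$ term, so the full alternating sum of the bracket, collected in degree $n$, picks out a product of $n$ factors of total degree $n$ with one factor forced to have positive degree coming from an $\A(\emptyset)$--diagram which has no degree--$1$ part (as $\A_1(\emptyset)=0$); thus that factor has degree $\ge 2$, forcing the remaining $n-1$ factors to carry degree $\le n-2$, an impossibility, so $\tZ_n$ vanishes. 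For $d$--surgeries, Proposition~\ref{propelsurgenus1} shows that $\tZ\big((M,K)(\tfrac{T_d}{T_1})\big)-\tZ(M,K)$ is a series of diagrams of degree $\ge 1$; and for borromean surgeries, Theorem~\ref{thLe} together with the construction of $\tZ$ via $\omega$ shows that the difference of $\tZ$ across a borromean surgery is again a series of diagrams of degree $\ge 1$ (the leading term being the $Y$--graph). In all three cases the ``surgery difference operator'' raises degree by at least $1$.

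The argument is then a standard multilinearity/filtration estimate: for a bracket over $n$ elementary surgeries, expanding $\sum_{I}(-1)^{|I|}(M,K)((\tfrac{B_i}{A_i})_{i\in I})$ and iterating the commuting difference operators $\Delta_i=\tZ(\,\cdot\,(\tfrac{B_i}{A_i}))-\tZ(\,\cdot\,)$, one sees $\tZ$ of the bracket equals $\Delta_1\cdots\Delta_n$ applied to $\tZ(M,K)$, and since each $\Delta_i$ strictly raises the diagrammatic degree, the composite lands in degree $\ge n$; hence $\tZ_m$ of the bracket vanishes for $m<n$, i.e. $\tZ_n(\F_{n+1})=0$, proving $\tZ_n$ has degree $\le n$. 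To see the degree is \emph{exactly} $n$ (not less), one invokes that $\tZ$ refines $Z^\Kri$ via $\psi$ (Proposition~\ref{proptZKri}) and $Z^\Kri$ — equivalently $Z$ — is known to have nonzero degree--$n$ part, or more directly that $\psi\circ\tZ_n\circ\varphi_n=Z_n\circ\varphi_n=\psi_n\ne 0$ by Theorem~\ref{thinvariantZ}, so $\tZ_n\not\equiv 0$ on $\F_n(\Al,\bl)$. The refined vanishing statement about genus--$0$ surgeries follows from the case analysis above: the genus--$0$ difference operator raises degree by $\ge 2$ in the range relevant here because $\A_1(\emptyset)=0$, so a single genus--$0$ surgery among $n$ elementary ones already pushes $\Delta_1\cdots\Delta_n\,\tZ(M,K)$ into degree $\ge n+1$, killing $\tZ_n$.

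The main obstacle will be making the ``commuting difference operators'' argument rigorous in the non-additive setting: $\tZ$ is valued in a completed diagram algebra and the surgeries do not literally act by a single fixed operator — the formula $\tZ$ of a bracket $=\Delta_1\cdots\Delta_n\,\tZ(M,K)$ requires checking that disjoint elementary surgeries affect disjoint parts of the surgery presentation (disjoint sublinks, or disjoint $Y$--graphs), so that the local-difference computations of Lemma~\ref{lemma:elsurgenus0}, Proposition~\ref{propelsurgenus1}, and Theorem~\ref{thLe} can be applied independently and their effects commute inside $\omega$. This is where one must be careful that $\omega$ is compatible with performing several independent local modifications, and that the ``correction terms'' from different surgeries multiply (via $\sqcup$) rather than interfere; granting the structure of $\omega$ established in Section~\ref{secdiagrams} (in particular its behaviour under the relations LV, LD, EV used throughout the proofs of Proposition~\ref{propkey} and Proposition~\ref{propelsurgenus1}), this compatibility should follow, but it is the technical heart of the verification.
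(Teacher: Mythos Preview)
Your approach is essentially the same as the paper's: reduce to elementary surgeries via Theorem~\ref{thelsur}, then use Lemma~\ref{lemma:elsurgenus0}, Proposition~\ref{propelsurgenus1}, and Theorem~\ref{thLe} to show each elementary surgery difference contributes at least one degree (at least two for genus~$0$, since $\A_1(\emptyset)=0$). The paper organises this as a clean induction on $n$ rather than via iterated difference operators: for an order--$n$ bracket with a genus--$0$ surgery $c_n$, it factors $\tZ\big([(M,K);c_1,\dots,c_n]\big)=\tZ\big([(M,K);c_1,\dots,c_{n-1}]\big)\sqcup\big(\emptyset-Z^\LMO(N)\big)$ directly from Lemma~\ref{lemma:elsurgenus0}, and then invokes the inductive hypothesis on the first factor. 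This sidesteps the ``commuting operators'' issue you correctly flag as the technical heart --- there is no need to argue that all $n$ surgeries factor multiplicatively at once, only that one genus--$0$ surgery peels off as a $\sqcup$--factor and the remaining order--$(n{-}1)$ bracket is handled by induction. For brackets involving only genus--$1$ and genus--$3$ surgeries, the paper is as terse as you are (``the local contributions\dots\ combine''), so your worry there is legitimate but shared. Finally, your paragraph on the degree being \emph{exactly} $n$ is not needed: the statement only asserts degree at most $n$, and non-triviality is established later via Proposition~\ref{proptZcircphi}.
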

\begin{proof}
 We proceed by induction on $n$. The image of $\tZ_0$ on any $\Q\SK$--pair is the empty diagram with coefficient $1$, so that $\tZ_0(\F_1)=0$. Fix $n>0$. 
 It is easily deduced from Theorem~\ref{thelsur} that $\F_{n+1}$ is generated by the brackets defined by elementary surgeries (see \cite[Corollary 5.5]{M7}). 
 Consider a bracket $[(M,K);c_1,\dots,c_n]$ where the surgery $c_n$ is the connected sum with some $\Q$--sphere $N$. We have
 $$[(M,K);c_1,\dots,c_n]=[(M,K);c_1,\dots,c_{n-1}]-[(M\sharp N,K);c_1,\dots,c_{n-1}],$$
 so that, by Lemma~\ref{lemma:elsurgenus0},
 $$\tZ\big([(M,K);c_1,\dots,c_n]\big)=\tZ\big([(M,K);c_1,\dots,c_{n-1}]\big)\sqcup\big(\emptyset-Z^\LMO(N)\big).$$
 In the right hand side of the latter equality, the first term contains only diagrams of degree at least $n-1$, by induction, and the second term contains only diagrams of degree at least $2$ (see Theorem~\ref{thLMO}). Hence $\tZ\big([(M,K);c_1,\dots,c_n]\big)$ is made of diagrams of degree at least $n+1$. 
 Further, if an order--$n$ bracket is defined by elementary surgeries of genus $1$ and $3$, the local contributions explicited in Proposition~\ref{propelsurgenus1} and Theorem~\ref{thLe} combine, so that the image of this bracket by $\tZ$ contains only diagrams of degree at least $n$. 
\end{proof}

This result implies that $\tZ$ defines a map $\tZ:\G(\Al,\bl)\to\A(\Al,\bl)$.

We shall prove that $\tZ_n$ also vanishes on order--$n$ brackets containing a genus--$1$ elementary surgery. Given a $\Q$--torus $T$, a {\em meridian} of $T$ is a simple closed curve on $\partial T$ that generates the Lagrangian of $T$; it is well-defined up to isotopy. A {\em longitude} of $T$ is a simple closed curve on $\partial T$ that intersects the meridian exactly once. 
A {\em framed $\Q$--torus} is a $\Q$--torus with a fixed oriented longitude. Note that any two framed $\Q$--tori have a canonical LP--identification of their boundaries, which identifies the fixed longitudes. We define finite type invariant of framed $\Q$--tori with respect to LP--surgeries as we defined finite type invariants of $\Q\SK$--pairs with respect to null LP--surgeries. 

\begin{proposition}[\textup{\cite[Corollary 5.10]{M2}}] \label{cortori}
 For each prime integer $p$, let $M_p$ be a $\Q$--sphere such that $|H_1(M_p;\Z)|=p$. If $\mu$ is a degree 1 invariant of framed $\Q$--tori, such that $\mu(T_0)=0$ and $\mu(T_0\sharp M_p)=0$ for any prime $p$, then $\mu=0$.
\end{proposition}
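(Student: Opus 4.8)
The statement to prove is Proposition~\ref{cortori}, which asserts a rigidity property: a degree--$1$ invariant of framed $\Q$--tori that vanishes on the trivial framed torus $T_0$ and on all the connected sums $T_0\sharp M_p$ must be identically zero. Since this is quoted from \cite[Corollary 5.10]{M2}, the expected proof is short and structural. The plan is to exploit the classification of framed $\Q$--tori up to LP--surgeries together with the degree--$1$ hypothesis, which forces $\mu$ to be determined by its values on a very small set of ``elementary'' configurations.

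First I would recall that a framed $\Q$--torus is classified up to LP--surgery by the order of $H_1(T;\Z)$, equivalently by the isomorphism class of the linking pairing on the finite group $H_1(T;\Z)\cong\Z/d\Z$ together with the longitude data (which is canonical). Thus the filtration $\F_\bullet$ on the $\Q$--vector space generated by framed $\Q$--tori splits as a direct sum over the invariant $d$, and it suffices to analyze $\mu$ on each piece. The degree--$1$ hypothesis $\mu(\F_2)=0$ means that $\mu$ is additive with respect to the bracket operation: for any framed $\Q$--torus $T$ and any LP--surgery datum $(B/A)$ with $A$ a solid torus in $T$, we have $\mu(T(B/A))=\mu(T)+\mu([\,T;(B/A)\,])$, and the bracket term $\mu([\,T;(B/A)\,])$ is \emph{independent of $T$} (it only depends on the surgery datum, since a second bracket would land in $\F_2$). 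Concretely, this identifies $\mu$ with the value of $\mu$ on $T_0$ plus a sum of ``local contributions,'' one for each elementary surgery used to build $T$ from $T_0$. By Theorem~\ref{thelsur} (applied to $\Q$--handlebodies of genus~$1$), every framed $\Q$--torus is obtained from $T_0$ by a finite sequence of elementary surgeries and their inverses, and the only elementary surgeries available inside a genus--$1$ handlebody are connected sums (genus~$0$) and $d$--surgeries (genus~$1$); borromean surgeries require genus~$3$ and are unavailable.

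Next I would show that each of these local contributions is itself computed by a degree--$1$ invariant of $\Q$--spheres, and hence by a known classification. For a connected-sum surgery $T\mapsto T\sharp N$ with $N$ a $\Q$--sphere, the bracket contribution is $-Z(N)$-type data; since $N$ is itself, up to LP--surgery of $\Q$--spheres, built from connected sums with the $M_p$'s, and since a degree--$1$ invariant of $\Q$--spheres with respect to LP--surgeries is completely determined by its values on the $M_p$ (this is the content of \cite[Proposition~1.9]{M2}, invoked in the excerpt around the definition of $\rho_p$), the hypothesis $\mu(T_0\sharp M_p)=0$ for all primes $p$ forces every genus--$0$ local contribution to vanish. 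For a $d$--surgery $T_1\rightsquigarrow T_d$, the bracket contribution is again $T$--independent; evaluating it on $T=T_0$ (so that the result is $\mu(T_0\sharp(\text{something realizing the }d\text{-surgery}))$, which, up to further elementary reductions, is an integer combination of $\mu(T_0)$ and the $\mu(T_0\sharp M_p)$, both zero by hypothesis) shows this contribution vanishes as well. Combining, every local contribution vanishes, so $\mu(T)=\mu(T_0)=0$ for all framed $\Q$--tori $T$, and since a finite type invariant of degree~$1$ is determined by its values on such generators together with the bracket data, $\mu=0$.

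\textbf{Main obstacle.} The delicate point is the second step: carefully tracking that the $d$--surgery local contribution, although genuinely of genus~$1$ and with ``no diagrammatic counterpart'' (as the remark after the definition of elementary surgeries warns), is nonetheless forced to zero purely by the two scalar conditions in the hypothesis. This requires expressing a $d$--surgery bracket, after passing through the connected-sum structure and the classification of linking pairings on cyclic groups, as a $\Q$--linear combination of the allowed generators $T_0$ and $T_0\sharp M_p$; the bookkeeping of which primes $p$ can appear (those dividing $d$) and with what multiplicities is where one must be careful. Everything else — the splitting of the filtration by $d$, the $T$--independence of brackets for a degree--$1$ invariant, and the reduction to elementary surgeries — is formal and follows immediately from Theorem~\ref{thelsur} and the definition of the filtration recalled in Section~\ref{secbackground}.
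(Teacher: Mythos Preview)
The paper does not prove this proposition; it merely cites \cite[Corollary~5.10]{M2}. So there is no ``paper's own proof'' to compare against, and your proposal must stand or fall on its own.

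There is a genuine gap. Your claim that ``borromean surgeries require genus~$3$ and are unavailable'' inside a genus--$1$ $\Q$--handlebody is false. A borromean surgery is performed along a Y--graph whose regular neighborhood is a genus--$3$ handlebody, but that neighborhood only needs to \emph{embed} in the interior of the ambient $\Q$--torus---and any $3$--manifold with nonempty interior contains embedded genus--$3$ handlebodies (take a small Y--graph in a ball). Borromean surgeries are therefore very much in play when invoking Theorem~\ref{thelsur}, and your reduction to genus~$0$ and genus~$1$ elementary surgeries collapses. Indeed, borromean surgeries are precisely what changes the diffeomorphism type while preserving the homology and linking pairing (Matveev), so they cannot be ignored.

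Your treatment of the $d$--surgery contribution is also too loose to be a proof. You assert that the bracket $\mu\big([T_0;\tfrac{T_d}{T_1}]\big)$ ``up to further elementary reductions, is an integer combination of $\mu(T_0)$ and the $\mu(T_0\sharp M_p)$,'' but you do not specify which solid torus in $T_0$ is being replaced, nor do you actually carry out the promised reduction; this is exactly the step that, in \cite{M2}, requires a careful analysis of the structure of degree--$1$ invariants (and it is there that the primes dividing~$d$ enter via the $p$--adic valuations $\rho_p$). The argument you sketch for genus--$0$ surgeries, using that degree--$1$ invariants of $\Q$--spheres are $\Q$--linear combinations of the $\rho_p$, is on the right track, but it does not by itself close the other two cases.
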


\begin{corollary} \label{corZgenus1surgery}
 For all $n>0$, the invariant $\tZ_n$ vanishes on any order--$n$ bracket containing a genus--$1$ elementary surgery.
\end{corollary}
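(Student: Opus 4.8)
The plan is to reduce to a statement about a single genus--$1$ elementary surgery by the standard multilinearity of brackets, and then to invoke Proposition~\ref{cortori} after isolating the ``degree--$1$ in the $d$--surgery variable'' part of $\tZ_n$. Concretely, fix an order--$n$ bracket $[(M,K);c_1,\dots,c_n]$ in which, say, $c_n$ is a genus--$1$ elementary surgery, {\em ie} a $d$--surgery $\left(\frac{T_d}{T_1}\right)$ for some fixed $d>0$. Expanding the bracket over whether $c_n$ is performed or not gives
\[
[(M,K);c_1,\dots,c_n]=[(M,K);c_1,\dots,c_{n-1}]-[(M,K)\!\left(\tfrac{T_d}{T_1}\right);c_1,\dots,c_{n-1}],
\]
so by Theorem~\ref{thtZFTI} it suffices to show that the two inner brackets, which both lie in $\F_{n-1}$, have the same image of degree $n-1$ under $\tZ$; equivalently, that $\tZ_{n-1}$ is unchanged (modulo degree $\geq n$) under performing the $d$--surgery $c_n$ inside each term. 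Since $\tZ_{n-1}$ is a degree--$(n-1)$ finite type invariant, both of these $(n-1)$--brackets are already detected only by their degree--$(n-1)$ parts, and the difference of those parts is controlled by a single application of the $d$--surgery. The point is therefore to understand how $\tZ_{n-1}$ changes under one $d$--surgery, modulo $\F_n$.

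The key input is Proposition~\ref{propelsurgenus1}: performing a $d$--surgery on a $\Q\SK$--pair replaces $\tZ$ by an invariant differing from $\tZ$ by a series of diagrams of degree $\geq1$, each carrying a univalent vertex colored by the class $y_2^d$ coming from $J_2$, with the family of surgery links $L_d=L\sqcup J_1\sqcup J_2^d$ admissible and varying in $d$ only through the crossing block of $J_1\cup J_2$. The plan is to view the whole construction as giving, for fixed $(M,K)$ and a fixed background configuration $c_1,\dots,c_{n-1}$, an invariant of framed $\Q$--tori: the assignment $T\mapsto\tZ_{n-1}$ of the pair obtained by gluing $T$ in place of $T_1$ in the bracketed configuration, composed with the canonical LP--identification of framed $\Q$--tori. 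By Theorem~\ref{thtZFTI} (applied with $n$ replaced by $n$) and the multilinearity above, the map $\mu$ sending a framed $\Q$--torus $T$ to the degree--$n$ part of the order--$n$ bracket with $c_n$ the $T$--surgery is a degree--$1$ invariant of framed $\Q$--tori with values in the degree--$n$ part of $\A(\Al,\bl)$ (finiteness of $\tZ_n$ gives vanishing on order--$2$ brackets of such tori, hence degree $\leq1$; the degree--$0$ part is killed because the bracket itself already vanishes to order $n$ in the other variables). So the task is reduced to checking the two hypotheses of Proposition~\ref{cortori}: $\mu(T_0)=0$ and $\mu(T_0\sharp M_p)=0$ for every prime $p$.

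The vanishing $\mu(T_0)=0$ holds because $T_0$ is the standard solid torus (a trivial LP--replacement), so the $c_n$ surgery does nothing and the $n$--bracket collapses to an $(n-1)$--bracket, which has no degree--$n$ part by the induction underlying Theorem~\ref{thtZFTI}. For $\mu(T_0\sharp M_p)$, I would use Lemma~\ref{lemma:elsurgenus0}: connect-summing $M_p$ into the $\Q\SK$--pair multiplies $\tZ$ by $Z^\LMO(M_p)$, which is a disjoint union of closed diagrams of degree $\geq2$ and which is moreover constant in the remaining bracket variables. Hence in the order--$n$ bracket $[\,\cdot\,;c_1,\dots,c_{n-1},c_n=T_0\sharp M_p\text{--surgery}]$ the factor $Z^\LMO(M_p)$ factors out of the whole bracket up to terms that telescope: the bracket equals $(\,\text{$(n-1)$--bracket}\,)\sqcup(\emptyset - Z^\LMO(M_p))$, whose degree--$n$ part is (degree--$(n-1)$ part of the $(n-1)$--bracket) $\sqcup$ (degree--$1$ part of $\emptyset-Z^\LMO(M_p)$) plus lower skeleton contributions; since $Z^\LMO(M_p)$ has no degree--$1$ part and the $(n-1)$--bracket has no degree--$(n-1)$ part, this is $0$. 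Therefore $\mu$ satisfies the hypotheses of Proposition~\ref{cortori}, so $\mu\equiv0$, which is exactly the claim that $\tZ_n$ vanishes on any order--$n$ bracket containing a genus--$1$ elementary surgery.

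\medskip

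\noindent\textbf{Main obstacle.} The delicate point is the bookkeeping that turns ``$\tZ_n$ is finite type'' plus ``one $d$--surgery changes $\tZ$ by degree--$\geq1$ diagrams with a $y_2^d$--colored leg'' into a genuine \emph{degree--$1$ invariant of framed $\Q$--tori} to which Proposition~\ref{cortori} applies: one must check that the dependence on the glued torus $T$, with all other surgeries $c_1,\dots,c_{n-1}$ and the ambient pair fixed, is itself a well-defined invariant (independent of the admissible presentation, compatible with the canonical LP--identification of framed tori), that its degree--$0$ and degree--$1$ structure is as claimed, and that the colored-leg/LV-relation arguments from Proposition~\ref{propelsurgenus1} and Lemma~\ref{lemma:elsurgenus0} really do force the vanishing of the two test values rather than merely bounding degrees. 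Isolating exactly the degree--$n$ part and verifying these compatibilities is where the real work lies; everything else is formal.
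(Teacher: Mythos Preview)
Your approach is essentially the paper's: define $\mu(T)=\tZ_n\big([(M,K);c_1,\dots,c_{n-1},\tfrac{T}{T_0}]\big)$, check it is a degree--$1$ invariant of framed $\Q$--tori (an order--$2$ bracket on $T$ becomes an order--$(n+1)$ bracket on $(M,K)$, killed by Theorem~\ref{thtZFTI}), verify $\mu(T_0)=0$ and $\mu(T_0\sharp M_p)=0$, and apply Proposition~\ref{cortori}.

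A few remarks on the presentation. Your first paragraph and the invocation of Proposition~\ref{propelsurgenus1} are detours: the argument never needs the explicit description of the $y_2^d$--colored leg, only the finite-type bound from Theorem~\ref{thtZFTI}. For $\mu(T_0\sharp M_p)=0$, the paper simply observes that replacing $T_0$ by $T_0\sharp M_p$ is a genus--$0$ elementary surgery $\big(\tfrac{B_p}{B^3}\big)$, so the bracket is an order--$n$ bracket containing a genus--$0$ surgery and vanishes directly by the second clause of Theorem~\ref{thtZFTI}; there is no need to unpack the LMO factor. Your own unpacking contains a slip: the assertion that ``the $(n-1)$--bracket has no degree--$(n-1)$ part'' is false in general. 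The vanishing of the degree--$n$ part of $\tZ\big([(M,K);c_1,\dots,c_{n-1}]\big)\sqcup(\emptyset-Z^\LMO(M_p))$ comes instead from the fact that $\emptyset-Z^\LMO(M_p)$ has trivial degree--$0$ and degree--$1$ parts, while the first factor has no terms of degree $\leq n-2$. Finally, the ``main obstacle'' you flag is not one: $\mu$ is defined via the invariant $\tZ_n$ and an LP--replacement, so well-definedness as an invariant of framed $\Q$--tori is immediate.
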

\begin{proof}
 Consider a bracket $[(M,K);c_1,\dots,c_n]$ where $c_n=\left(\frac{T_d}{T_0}\right)$ is a genus--$1$ elementary surgery. Fix an oriented longitude of $T_0$. For any framed $\Q$--torus $T$, set 
 $$\lambda(T)=\tZ_n\left(\left[(M,K);c_1,\dots,c_{n-1},\frac{T}{T_0}\right]\right).$$ Then $\lambda$ is a degree 1 invariant of framed $\Q$--tori: if $s_1,s_2$ are two disjoint LP--surgeries on $T$, 
 $$\lambda([T;s_1,s_2])=\tZ_n\left(-\left[(M,K)\left(\frac{T}{T_0}\right);c_1,\dots,c_{n-1},s_1,s_2\right]\right)= 0.$$
 We have $\lambda(T_0)=0$. Moreover, if $M_p$ is a $\Q$--sphere such that $|H_1(M_p;\Z)|=p$ and $B_p$ is the $\Q$--ball obtained from $M_p$ by removing an open $3$--ball, then 
 $$\lambda(T_0\sharp M_p) = \tZ_n\left( \left[(M,K);c_1,\dots,c_{n-1},\frac{B_p}{B^3}\right]\right) =0,$$
 thanks to Theorem~\ref{thtZFTI}. Finally, by Proposition~\ref{cortori}, $\lambda=0$.
\end{proof}

We finally explicit the behaviour of $\tZ_n$ on brackets defined by borromean surgeries. We start with a standard lemma.

\begin{lemma} \label{lemmaTrivialSurgery}
 Let $J_1$ and $J_2$ be disjoint knots in a $3$--manifold $M$ such that $J_2$ is a meridian of $J_1$ with $0$ framing. Then the surgery on $J_1\sqcup J_2$ preserves the manifold $M$ and a meridian of $J_1$, and changes a meridian of $J_2$ into a longitude of $J_1$.
\end{lemma}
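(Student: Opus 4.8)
The statement is a classical fact about surgery on a Rolfsen-type pair (a knot together with a zero-framed meridian), and the plan is to verify it by an explicit Kirby-calculus / Mayer--Vietoris argument. First I would set up notation: write $N(J_1)$ and $N(J_2)$ for disjoint tubular neighbourhoods of $J_1$ and $J_2$, with $J_2\subset \partial N(J_1)$ a meridian of $J_1$; let $m_i,\ell_i$ denote the meridian and the chosen longitude of $J_i$. The hypothesis ``$J_2$ is a meridian of $J_1$ with $0$ framing'' means $J_2$ is isotopic (in $\partial N(J_1)$, hence in $M\setminus J_1$) to $m_1$, and its framing longitude $\ell_2$ is a pushoff of $m_1$ inside $\partial N(J_1)$, so $\ell_2$ bounds a disk in $N(J_1)$ meeting $J_1$ once. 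The surgery under consideration is the $0$-framed surgery on both components, i.e.\ we glue back solid tori $V_1,V_2$ so that $\partial D^2\times\{*\}$ goes to $\ell_1$ and to $\ell_2$ respectively.

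The key computation is local, inside the genus-2 piece $P = N(J_1)\cup N(J_2) \subset M$ (or rather $N(J_1)$ with a small $2$-handle-like neighbourhood of $J_2$ attached along $\partial N(J_1)$). I would argue as follows. Doing $0$-surgery on $J_2$ first: since $\ell_2$ is a meridian disk boundary of $N(J_1)$, regluing $V_2$ along $\ell_2$ amounts to attaching a $2$-handle to $N(J_1)$ along a curve bounding a disk in $N(J_1)$ and then capping the resulting $S^2$; a standard handle-cancellation (the new $2$-handle cancels against the part of $N(J_1)$ that $\ell_2$ already bounds) shows this does not change the ambient manifold $M$, and it does not move $J_1$. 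Under this regluing, a meridian $m_2$ of $J_2$ — which is a small linking circle of $J_2$, hence isotopic in $\partial N(J_1)$ to the core direction transverse to $\ell_2\simeq m_1$, i.e.\ to $\ell_1$ — becomes (an isotopic copy of) $\ell_1$, the longitude of $J_1$. This is the content of the last clause. Finally, because $m_1$ is disjoint from the disk that $\ell_2$ bounds, it survives unchanged, giving the ``preserves a meridian of $J_1$'' clause. Then performing the remaining $0$-surgery on $J_1$: but by the previous step $J_1$ still sits in $M$ exactly as before and nothing about the pair has changed, so this is just the original surgery on $J_1$; in particular for the statement as phrased (surgery on $J_1\sqcup J_2$) the net effect is that $M$ and the relevant meridian are preserved and $m_2$ has been carried to $\ell_1$. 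A clean way to organise all of this is to draw the two surgery curves in $\partial P$ and run Rolfsen twists / $2$-handle slides, or equivalently to note that the pair $(J_1, J_2)$ with these framings is precisely a ``clasp'' that expands a solid torus back to itself, which is exactly the mechanism already used in Lemma~\ref{lemmaTrivialSurgery}'s intended applications (cf.\ the $d$-torus picture and Proposition~\ref{propelsurgenus1}).

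The main obstacle, and the only place where care is genuinely needed, is the bookkeeping of framings and of which curve on the genus-$1$ boundary is which after the gluing: one must check that ``$0$ framing on the meridian $J_2$'' is exactly the condition that makes $\ell_2$ bound a disk in $N(J_1)$ (rather than a disk meeting $J_1$ several times or with a twist), and then track $m_2$ through the reglued solid torus to confirm it lands on $\ell_1$ and not on $\ell_1 + k m_1$ for some $k$. I would handle this by working in $H_1$ of the genus-$1$ surface $\partial N(J_1)$ with the basis $(m_1,\ell_1)$, writing $\ell_2 = m_1$, $m_2 = \ell_1$ (up to sign), and checking the gluing homeomorphism of $V_2$ sends the meridian disk to $\ell_2=m_1$, which then forces $m_2=\ell_1$ to become a meridian-disk \emph{boundary complement}, i.e.\ a longitude of the reconstituted $N(J_1)=$ core neighbourhood; the homological statement then upgrades to the isotopy statement since all curves involved are simple closed curves on a surface. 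Everything else is routine Kirby calculus.
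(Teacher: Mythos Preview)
Your proposal contains a genuine gap in the first step. You claim that performing $0$--surgery on $J_2$ alone leaves $M$ unchanged, arguing by a ``handle-cancellation'' since $\ell_2$ bounds a disk in $N(J_1)$. This is false: $J_2$ is a $0$--framed unknot in $M$ (it bounds the meridian disk of $J_1$), and $0$--surgery on a $0$--framed unknot produces $M\mathbin{\#}(S^1\times S^2)$, not $M$. The disk that $\ell_2$ bounds lives inside $N(J_1)$, but you are not removing $N(J_1)$ at this stage, so there is nothing for the putative $2$--handle to cancel against; the intuition you invoke belongs to $4$--dimensional Kirby calculus with dotted circles, not to the $3$--dimensional surgery being performed here. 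The error then propagates to your last step: having (incorrectly) returned to $M$ with $J_1$ intact, you perform surgery on $J_1$ and declare the net effect to be trivial --- but surgery on $J_1$ alone certainly changes $M$ in general, so your argument does not actually reach the conclusion you announce.

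The paper avoids this by treating the two surgeries simultaneously rather than sequentially. One takes $T=\nu(J_1)\cup\nu(J_2)$, observes it is a solid torus whose meridian is a meridian of $J_1$, and then checks that after both surgeries the union $T'=T_1\cup T_2$ is again a solid torus with the \emph{same} meridian on $\partial T'=\partial T$. The point is that the annulus $\nu(J_1)\cap\nu(J_2)$ has core $m_1=\ell_2$, which after surgery is simultaneously a longitude of $T_1$ and a meridian of $T_2$; this asymmetry is exactly what makes $T'$ a solid torus, and it is destroyed if you perform only one of the two surgeries (after $0$--surgery on $J_2$ alone, the annulus core is a meridian of both pieces, and $N(J_1)\cup T_2$ has $H_1\cong\Z^2$, consistent with the extra $S^1\times S^2$ summand). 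Your framing bookkeeping for $m_2\mapsto\ell_1$ is fine, but it needs to be set inside this correct local picture.
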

\begin{proof}
 Let $\nu(J_1)$ and $\nu(J_2)$ be tubular neighborhoods of $J_1$ and $J_2$ such that $\nu(J_1)\cap\nu(J_2)$ is an annulus whose core is simultaneously a meridian of $J_1$ and a longitude of $J_2$. The union $T=\nu(J_1)\cup\nu(J_2)$ is a solid torus which shares a meridian with $J_1$. For $i=1,2$, the surgery replaces $\nu(J_i)$ by a torus $T_i$ whose meridian is a former longitude of $J_i$. Hence the core of $\nu(J_1)\cap\nu(J_2)$ is a longitude of $T_1$ and a meridian of $T_2$, so that $T'=T_1\cup T_2$ is again a solid torus with the same meridian as~$T$. Further, a former meridian of $J_2$ can be slid over a meridian disk of~$T_1$, so that it is isotopic to a former longitude of $J_1$.
\end{proof}

\begin{proposition} \label{proptZcircphi}
 For all $n\geq0$, the composition $\tZ_n\circ\varphi_n$ is the identity of $\A_n(\Al,\bl)$.
\end{proposition}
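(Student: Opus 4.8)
The strategy is to evaluate $\tZ_n$ on the brackets that generate $\G_n(\Al,\bl)$, namely the brackets associated to realizations of elementary diagrams, and to match the outcome with $\varphi_n$. By Theorem~\ref{thtZFTI} and Corollary~\ref{corZgenus1surgery}, $\tZ_n$ kills every order-$n$ bracket containing a genus-$0$ or genus-$1$ elementary surgery; combined with Theorem~\ref{thelsur} (in the form that $\F_{n+1}$ is generated by brackets of elementary surgeries) this reduces the computation of $\tZ_n$ on $\G_n(\Al,\bl)$ to brackets $[(M,K);c_1,\dots,c_n]$ where every $c_i$ is a borromean surgery. Such a bracket is exactly the image under $\varphi_n$ of a realizable elementary diagram $D$ of degree $n$, so it suffices to show $\tZ_n\big(\varphi_n(D)\big)=D$ in $\A_n(\Al,\bl)$ for every realizable elementary diagram $D$.

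\textbf{Key steps.} First I would fix a $\Q\SK$--pair $(M,K)$ with Blanchfield module $(\Al,\bl)$ and a realization $\Gamma$ of $D$, built from the $n$ disjoint null Y--graphs attached at the trivalent vertices of $D$ with leaves $\ell_v$ chosen so that $\tilde\ell_v\sim\gamma_v$ and $\lk_e(\tilde\ell_v,\tilde\ell_{v'})=f_{vv'}$. Choose an admissible surgery presentation $L_0$ of $(M,K)$ and enlarge it to an admissible surgery presentation $L$ of $(M,K)(\Gamma)$ by adjoining the six-component surgery links of the $n$ Y--graphs (Figure~\ref{figborro4}), arranged so the correction between $L$ and the trivial completion is governed by Theorem~\ref{thLe}. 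Then I would track the construction of $\tZ$ through the bracket: the alternating sum over subsets $I\subset\{1,\dots,n\}$ of surgeries, combined multiplicatively with the behaviour under disjoint union (Lemma~\ref{lemma:connectedsum}) and the Le difference formula of Theorem~\ref{thLe}, produces at the level of $\overline{\Zc(\cdot)}$ a sum whose only term of degree exactly $n$ is the product of the $n$ ``tripod'' contributions $\struts{}{}{}$-free Jacobi pieces attached along the edges of $D$, all higher-degree terms being absorbed into the filtration degree bound of Theorem~\ref{thtZFTI}. The winding matrix of $L$ splits as a block matrix with an $L_0$--block and blocks coming from the Hopf links inserted in the internal edges of $D$ (Figure~\ref{figrempla4}): those Hopf links produce precisely the struts that $\omega$ contracts, reconstructing the edge labels $t^k$ of $D$ via the relation Hol, while the leaves $\ell_v$ contribute the univalent-vertex colors $\gamma_v$ and, through the off-diagonal entries of $W^{-1}$ forced by $\lk_e(\tilde\ell_v,\tilde\ell_{v'})=f_{vv'}$, the prescribed $f_{vv'}$ data. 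Applying $\omega$ and then reading off the degree-$n$ part yields exactly the colored diagram $D$, so $\tZ_n\circ\varphi_n(D)=D$.

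\textbf{Main obstacle.} The delicate point is the bookkeeping of the degree-$n$ part: one must verify that when the $n$ borromean-surgery differences are multiplied together inside the bracket, the leading term is precisely the single connected/colored diagram $D$ and not some sum of lower-complexity diagrams or diagrams with extra struts that $\omega$ would then recombine in an unexpected way. This requires carefully combining Theorem~\ref{thLe} (which gives the tripod plus higher-degree terms for one Y--graph) with the block structure of $W_L$ and the fact that $\omega$ only contracts struts, checking that the Hopf-link struts on internal edges are exactly what is needed to turn the ``opened'' tripods of Theorem~\ref{thLe} into the edges of $D$ with the correct $t$-powers, and that no spurious strut-contraction between distinct Y--graph contributions occurs at degree $n$. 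The relations LV, EV, LD and the no-strut property of $H$ in Lemma~\ref{lemmawdmatrix} are the tools that make this rigorous; once the leading term is identified with $D$, the surjectivity of $\varphi_n$ from Theorem~\textup{\cite[Theorem 2.7]{M7}} upgrades the identity on generators to the identity of $\A_n(\Al,\bl)$.
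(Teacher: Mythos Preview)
Your overall strategy---reduce to borromean brackets via Theorem~\ref{thtZFTI}, Corollary~\ref{corZgenus1surgery} and Theorem~\ref{thelsur}, then compute the leading degree-$n$ term using the Le difference formula of Theorem~\ref{thLe}---is the same as the paper's. But there is a genuine gap at the identification step, and you miss a simplification that the paper uses to avoid exactly the bookkeeping you flag as the main obstacle.

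\textbf{The gap.} After Theorem~\ref{thLe}, the tripod legs land on the three \emph{clasper} components $k_i$ of the six-component Y--surgery link (the components that link the borromean core to the leaves), not on the leaves $\ell_v$. When you apply $\omega$, the univalent vertex coming from the leg on $k_i$ gets labelled by the generator $x_i$, which by definition of $\omega$ is the class of a meridian $m(\tilde k_i)$ in the Alexander module, and the associated linking is $-(W^{-1})_{k_ik_j}=\lk_e\big(m(\tilde k_i),m(\tilde k_j)\big)$. You assert that these equal $\gamma_v$ and $f_{vv'}$, but this is not automatic: one must show that $m(\tilde k_i)$ is isotopic to $\tilde\ell_i$. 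That is precisely Lemma~\ref{lemmaTrivialSurgery}, applied in the trivial-surgery picture (right of Figure~\ref{figborromeandiff}) where $k_i$ is a $0$--framed meridian of $\ell_i$; the lemma then turns the meridian of $k_i$ into a longitude of $\ell_i$. You never invoke this lemma, and without it the label and linking identifications are unjustified. Your sentence ``the off-diagonal entries of $W^{-1}$ forced by $\lk_e(\tilde\ell_v,\tilde\ell_{v'})=f_{vv'}$'' is circular for the same reason: those entries are indexed by the $k_i$, not by the leaves.

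\textbf{The missed simplification.} The paper first uses the relation LD to cut every trivalent--to--trivalent edge of $D$, reducing to the case where $D$ is a disjoint union of tripods. This eliminates the Hopf-link insertions of Figure~\ref{figrempla4} altogether, so there are no internal-edge struts and no risk of ``spurious strut-contraction'': the degree-$n$ leading term is then immediately the disjoint union of $n$ tripods, with labels and linkings determined as above. This sidesteps the entire ``main obstacle'' you describe.

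\textbf{A minor correction.} The operation $\omega$ does not ``contract struts''---that is the formal Gaussian integral used for $Z^\Kri$. Rather, $\omega$ strips off the strut part as the matrix $W$ and records $-W^{-1}$ as the $f_{vv'}$ data on the substantial part; no pairing of univalent vertices occurs.
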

\begin{proof}
 It is enough to prove it on the elementary diagrams introduced in Subsection~\ref{subsecvarphi} to define the map $\varphi$. Let $D$ be such an elementary diagram, of degree $n$. The trivalent edges can be cut open using the relation LD to insert two univalent vertices labeled by zero (with the notations of Figure~\ref{figrelLVEVLD}, one can set $f_{v_1v_2}^{D'}=0$, so that the diagram $D'$ is trivial), thus we can assume that $D$ is a union of graphs \raisebox{-3ex}{
 \begin{tikzpicture} 
 \foreach \t/\i/\p in {0/3k-2/right,120/3k-1/right,240/3k/above} 
 \draw[rotate=\t] (0,0) -- (0,-0.5) node[\p] {$\gamma_{\scriptscriptstyle{\i}}$};
 \end{tikzpicture}} for $k=1,\dots,n$, with fixed linkings $f_{ij}$ between the vertices labeled $\gamma_i$ and $\gamma_j$ respectively. 
 Let $\Gamma$ be an associated null Y--link in $M\setminus K$ following the rules of Subsection~\ref{subsecvarphi}. The $6n$ components  associated to $\Gamma$ in a surgery link are the $3n$ leaves $\ell_i$ corresponding to the labels $\gamma_i$ and $3n$ components $k_i$ corresponding to the edges of $\Gamma$, with coherent numbering. 
 Thanks to Theorem~\ref{thLe}, in $\tZ_n\circ\varphi_n(D)$, the only degree--$n$ term is the union of the $n$ graphs \raisebox{-3ex}{
 \begin{tikzpicture} 
 \foreach \t/\i/\p in {0/3k-2/right,120/3k-1/right,240/3k/above} 
 \draw[rotate=\t] (0,0) -- (0,-0.5) node[\p] {$x_{\scriptscriptstyle{\i}}$};
 \end{tikzpicture}}, 
 where $x_i$ is the generator of $\Al$ associated with $k_i$. 
 By definition of the operation $\omega$, $x_i$ equals in~$\Al$ the homology class of a meridian $m(\tilde k_i)$ of a lift $\tilde k_i$ in the infinite cyclic covering of $M\setminus K$, and the fixed linking associated to the vertices labeled $x_i$ and $x_j$ is the equivariant linking $\lk_e\big(m(\tilde k_i),m(\tilde k_j)\big)$. Here, we work in $(M,K)$, so that the $k_i$ are as represented on the right hand side of Figure~\ref{figborromeandiff}. Hence we can apply Lemma~\ref{lemmaTrivialSurgery} to conclude that $m(\tilde k_i)$ is isotopic to $\tilde\ell_i$, so that $x_i=\gamma_i$ and $\lk_e\big(m(\tilde k_i),m(\tilde k_j)\big)=f_{ij}$. 
 Eventually, we get $\tZ_n\circ\varphi_n(D)=D$.
\end{proof}

\subsection{Augmented invariant and universality}

The degree--$1$ invariants $\rho_p$ can be merged with $\tZ$ into a universal finite type invariant of $\Q\SK$--pairs by setting, for a $\Q\SK$--pair $(M,K)$: $$\tZ^\aug(M,K)=\tZ(M,K)\sqcup\expd\left(\sum_{p\text{ prime}}\rho_p(M)\right).$$

The following formula is classical and holds for any objects and any invariants with values in some ring (see for instance \cite[Lemma 6.2]{M2}).
$$\left(\prod_{j=1}^n\lambda_j\right)\big(\left[\left(M,K\right);(c_i)_{i\in I}\right]\big)
=\sum_{\emptyset=J_0\subset\dots\subset J_n=I}\prod_{j=1}^n\lambda_j\left(\left[(M,K)\left(\left(c_i\right)_{i\in J_{j-1}}\right);\left(c_i\right)_{i\in J_j\setminus J_{j-1}}\right]\right)$$
It follows that the degree of finite type invariants is subadditive under multiplication. Moreover, the degree--$n$ part of $\tZ^\aug$ is given by 
$$\tZ_n^\aug=\sum_{k=0}^n\sum_{\substack{p_1<\dots<p_s \\ \textrm{ prime integers }}}
  \sum_{\substack{t_1+\dots+t_s=n-k\\t_i>0}}\tZ_k \sqcup\left(\coprod_{i=1}^s\frac{1}{t_i!}(\rho_{p_i})^{t_i}\right).$$
We deduce that $\tZ_n^\aug$ is a finite type invariant of degree $n$, so that the invariant $\tZ^\aug$ induces a map $\tZ^\aug:\G(\Al,\bl)\to\A^\aug(\Al,\bl)$.

\begin{theorem} \label{thtZauguniv}
 The invariant $\tZ^\aug$ induces the inverse of the map $\varphi:\A^\aug(\Al,\bl)\to\G(\Al,\bl)$.
\end{theorem}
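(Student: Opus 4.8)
The plan is to prove that $\tZ^\aug$ and $\varphi$ are mutually inverse graded $\Q$--linear maps between $\A^\aug(\Al,\bl)$ and $\G(\Al,\bl)$. One direction is essentially already in hand: by Proposition~\ref{proptZcircphi} we know $\tZ_n\circ\varphi_n=\mathrm{id}$ on the Jacobi part $\A_n(\Al,\bl)$, and we must upgrade this to $\tZ^\aug\circ\varphi=\mathrm{id}$ on all of $\A^\aug(\Al,\bl)$, taking into account the isolated vertices labeled by primes. For an isolated vertex labeled $p$, the map $\varphi$ produces the surgery $\left(\tfrac{B_p}{B^3}\right)$ with $|H_1(B_p;\Z)|=p$, i.e.\ connected sum with a $\Q$--sphere $M_p$ of that cardinality. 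By Lemma~\ref{lemma:elsurgenus0}, $\tZ$ on such a connected sum multiplies by $Z^\LMO(M_p)$, while the $\rho_p$--factor contributes $\expd(\rho_p(M_p))=\expd(-\bullet_p)$; using the low-degree behaviour of the LMO invariant and $Z^\LMO\circ\varphi=\mathrm{id}$ (Theorem~\ref{thLMO}) together with the explicit formula for $\tZ_n^\aug$, one checks that the augmented bookkeeping matches the isolated-vertex bookkeeping on $\A^\aug$, so $\tZ^\aug\circ\varphi=\mathrm{id}_{\A^\aug(\Al,\bl)}$.

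For the other direction, $\varphi\circ\tZ^\aug=\mathrm{id}_{\G(\Al,\bl)}$, the key point is that $\varphi$ is already known to be \emph{surjective} (the cited \cite[Theorem 2.7]{M7}), so it suffices to combine surjectivity of $\varphi$ with the identity $\tZ^\aug\circ\varphi=\mathrm{id}$: indeed if $\varphi$ is onto and has a right inverse $\tZ^\aug$, then for any $\alpha\in\G(\Al,\bl)$ write $\alpha=\varphi(\beta)$, so $\varphi\circ\tZ^\aug(\alpha)=\varphi\circ\tZ^\aug\circ\varphi(\beta)=\varphi(\beta)=\alpha$. Thus the two relations $\tZ^\aug\circ\varphi=\mathrm{id}$ and surjectivity of $\varphi$ together force $\varphi$ to be an isomorphism with inverse $\tZ^\aug$. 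This is the cleanest route and avoids having to analyze $\G(\Al,\bl)$ directly.

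Before that argument goes through, I must make sure that $\tZ^\aug$ genuinely descends to a well-defined map $\G(\Al,\bl)\to\A^\aug(\Al,\bl)$, i.e.\ that $\tZ^\aug_n$ kills $\F_{n+1}(\Al,\bl)$. This is exactly the content of the finiteness results already established: Theorem~\ref{thtZFTI} shows $\tZ_n$ is finite type of degree $n$ and vanishes on brackets containing a genus--$0$ elementary surgery; Corollary~\ref{corZgenus1surgery} handles genus--$1$ elementary surgeries; Theorem~\ref{thelsur} (via \cite[Corollary 5.5]{M7}) reduces $\F_{n+1}$ to brackets built from elementary surgeries; and the multiplicativity formula quoted just before the theorem, together with subadditivity of degree under $\sqcup$, shows $\tZ^\aug_n$ is finite type of degree $n$. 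So $\tZ^\aug$ induces the stated map, and one records that it is graded because each $\tZ_k$ raises degree by exactly $k$ on the relevant brackets (the lower bounds in Theorem~\ref{thtZFTI}) while on $\varphi$ of a degree-$n$ diagram the degree-$n$ part is recovered exactly.

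The main obstacle is the careful matching of the augmentation data. On the $\A^\aug$ side the isolated vertices are formal symbols subject to the relation Aut and to nothing that mixes them with Jacobi diagrams, whereas on the invariant side the prime-labeled data enters through both $\rho_p$ and through the genus--$0$ part of $Z^\LMO$ (which may a priori have contributions in every even degree). One must verify that, after forming $\tZ^\aug=\tZ\sqcup\expd(\sum_p\rho_p)$, the extra $Z^\LMO$--contributions in degrees $\geq2$ are exactly cancelled so that $\tZ^\aug\circ\varphi$ really is the identity and not merely the identity plus higher-degree noise; concretely this amounts to the fact, already used in the degree-$1$ normalization and in \cite{M2,M7}, that $\rho_p$ and the LMO invariant of $M_p$ are tied together precisely by $Z^\LMO(M_p)=\expd(\bullet_p)\sqcup(\text{higher order corrections that die in the graded setting})$. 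Assembling these pieces in the right order — (i) $\tZ^\aug$ well-defined on $\G$, (ii) $\tZ^\aug\circ\varphi=\mathrm{id}$ on $\A^\aug$, (iii) conclude via surjectivity of $\varphi$ — gives the theorem.
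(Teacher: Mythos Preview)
Your overall architecture is correct and matches the paper: reduce to $\tZ^\aug\circ\varphi=\mathrm{id}$ on $\A^\aug(\Al,\bl)$ and conclude via surjectivity of $\varphi$. The treatment of well-definedness of $\tZ^\aug$ on $\G(\Al,\bl)$ is also fine.

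The gap is in your handling of the isolated vertices. You try to compute $\tZ^\aug$ on $(M\sharp M_p,K)$ via Lemma~\ref{lemma:elsurgenus0} and then argue about cancellations between $Z^\LMO(M_p)$ and $\expd(\rho_p(M_p))$. This is the wrong object: you need $\tZ^\aug_n$ on the \emph{bracket} $\varphi_n(D)$, not on a single pair. Moreover, your assertion ``$Z^\LMO(M_p)=\expd(\bullet_p)\sqcup(\text{higher order corrections})$'' is meaningless as written, since $Z^\LMO$ takes values in $\A(\emptyset)$, which contains no isolated vertices; there is no such identity to invoke, and no cancellation mechanism of the kind you describe.

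The paper's argument avoids this entirely. Write $D=D_J\sqcup D_\bullet$ and apply the multiplicative bracket formula to the product expansion of $\tZ^\aug_n$ on $\varphi_n(D)=[(M,K);c_1,\dots,c_n]$. For a term to survive, each factor must be evaluated on a sub-bracket of order exactly its degree. Now use the second assertion of Theorem~\ref{thtZFTI}: $\tZ_k$ \emph{vanishes} on any order--$k$ bracket containing a genus--$0$ surgery. Hence the $\tZ_k$ factor can only be paired with the sub-bracket coming from $D_J$. Conversely, each $\rho_p$ is a degree--$1$ invariant that is zero on borromean brackets (these preserve $H_1$) and gives $\bullet_p$ precisely on the bracket $\varphi(\bullet_p)$ and $0$ on $\varphi(\bullet_q)$ for $q\neq p$. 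The partition is therefore forced: $\tZ_k$ eats $D_J$, the $\rho_p$'s eat $D_\bullet$, and one reads off
\[
\tZ^\aug_n\circ\varphi_n(D)=\big(\tZ_k\circ\varphi_k(D_J)\big)\sqcup D_\bullet = D_J\sqcup D_\bullet = D,
\]
the last step by Proposition~\ref{proptZcircphi}. No analysis of $Z^\LMO(M_p)$ is needed, and there is no ``higher-degree noise'' to cancel: the vanishing in Theorem~\ref{thtZFTI} does the decoupling directly.
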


\begin{proof}
 We know from \cite[Theorem 2.7]{M7} that $\varphi$ is surjective, so that it is enough to prove that $\tZ^\aug\circ\varphi$ is the identity. Let $D$ be an $(\Al,\bl)$--augmented diagram of degree~$n$. Write $D$ as the disjoint union of its Jacobi part $D_J$ 
 and its $0$--valent part $D_{\bullet}$. 
 Apply the above formula, noting that for a term in the right hand side of the obtained equality to be non trivial:
 \begin{itemize}
  \item the order of each bracket must be exactly the degree of the corresponding invariant, 
  \item each invariant $\rho_p$ must be evaluated on a bracket associated to the diagram $\bullet_p$,
  \item the invariant $\tZ_k$ must be evaluated on a bracket associated to a diagram without isolated vertices.
 \end{itemize}
 It follows that $\tZ_n^\aug\circ\varphi_n(D)=\big(\tZ_k\circ\varphi_k(D_J)\big)\sqcup D_{\bullet}=D_J\sqcup D_{\bullet}=D$, where the third equality is due to Proposition~\ref{proptZcircphi}.  
\end{proof}

\subsection{Kricker invariant {\em versus} Lescop invariant}

The description of the graded space $\G(\Al,\bl)$ allows us to prove that the Kricker invariant and the Lescop invariant induce the same map on~$\G(\Al,\bl)$. 

\begin{proposition} \label{prop:eqKriLes}
 For a given Blanchfield module $(\Al,\bl)$ with annihilator $\delta$, the map induced by both the Kricker invariant and the Lescop invariant on the graded space $\G(\Al,\bl)$ is the map $\psi\circ\tZ:\G(\Al,\bl)\to\A(\delta)$.
\end{proposition}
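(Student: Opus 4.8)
The plan is to deduce the statement formally from the universality results already at hand, namely that $\varphi$ is an isomorphism with two-sided inverse $\tZ^\aug$ (Theorem~\ref{thtZauguniv}) and that, for $Z$ equal to \emph{either} the Kricker or the Lescop invariant, one has $Z^\aug_n\circ\varphi_n=\psi_n$ (Theorem~\ref{thunivinv}, which applies to both because both satisfy the hypotheses of Theorem~\ref{thinvariantZ}). Since $Z^\aug_n$ vanishes on $\F_{n+1}(\Al,\bl)$ it induces a map $\G_n(\Al,\bl)\to\A^\aug_n(\delta)$, and composing the relation $Z^\aug_n\circ\varphi_n=\psi_n$ on the right with $\tZ^\aug_n=\varphi_n^{-1}$ gives $Z^\aug_n=\psi_n\circ\tZ^\aug_n$ on $\G_n(\Al,\bl)$. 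The right-hand side does not mention which of the two invariants $Z$ is, so already at the augmented level the Kricker and the Lescop invariants induce the same map on $\G(\Al,\bl)$.

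Next I would pass from the augmented to the genuine invariants. Recall $Z^\aug=Z\sqcup\expd\bigl(\sum_{p}\rho_p\bigr)$, where the degree--$0$ term of $\expd\bigl(\sum_{p}\rho_p\bigr)$ is the empty diagram and every positive-degree term is a disjoint union of isolated prime-labelled vertices; hence, writing $\pi\colon\A^\aug(\delta)\to\A(\delta)$ for the $\Q$--linear projection that kills every diagram containing at least one isolated vertex, we get $\pi\circ Z^\aug=Z$ as invariants of $\Q\SK$--pairs, hence also as induced maps on $\G(\Al,\bl)$. The same description of $\expd\bigl(\sum_{p}\rho_p\bigr)$ gives $\pi\circ\tZ^\aug=\tZ$, and since $\psi$ preserves isolated vertices one has $\pi\circ\psi=\psi\circ\pi$ (on a diagram with a nonempty isolated part both sides are $0$; on a diagram with empty isolated part both equal the closed-up Jacobi part). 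Therefore $\pi\bigl(\psi\circ\tZ^\aug\bigr)=\psi\bigl(\pi\circ\tZ^\aug\bigr)=\psi\circ\tZ$. Applying $\pi$ to the equality $Z^\aug=\psi\circ\tZ^\aug$ obtained above then yields $Z=\psi\circ\tZ$ on $\G(\Al,\bl)$ for $Z$ either the Kricker or the Lescop invariant, which is the claim. I would note that for the Kricker invariant this identity is in fact immediate from Proposition~\ref{proptZKri}, so the genuinely new content is the statement for the Lescop invariant, obtained here because it shares with the Kricker invariant the characterizing relation with $\varphi$.

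I do not expect any serious obstacle: the argument is a bookkeeping-only consequence of the universality statements. The one point to be careful about is the interplay between the augmented and non-augmented pictures — checking that $\pi$ intertwines $Z^\aug$ with $Z$, and $\psi\circ\tZ^\aug$ with $\psi\circ\tZ$ — and, relatedly, that it is legitimate to write $Z^\aug=\psi\circ\tZ^\aug$ on $\G(\Al,\bl)$, for which one uses precisely that $\tZ^\aug$ is a section of $\varphi$ (Theorem~\ref{thtZauguniv}) rather than merely that $\varphi$ is surjective.
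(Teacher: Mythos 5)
Your argument is correct and follows essentially the same route as the paper: both reduce the claim to composing the relation between $Z$ (or $Z^\aug$) and $\varphi$ with the identity $\varphi\circ\tZ^\aug=\mathrm{id}$ coming from Theorem~\ref{thtZauguniv}. The only cosmetic difference is the handling of isolated vertices: the paper extends $Z\circ\varphi=\psi$ to $\A^\aug(\Al,\bl)$ via a map $\widehat\psi$ killing diagrams with isolated vertices (using multiplicativity of $Z$ under connected sum), whereas you invoke Theorem~\ref{thunivinv} for $Z^\aug$ and then project away the isolated vertices, which amounts to the same bookkeeping.
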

\begin{proof}
 If $Z$ stands for $Z^\Kri$ or $Z^\Les$, then $Z\circ\varphi=\psi:\A(\Al,\bl)\to\A(\delta)$ (see Theorem~\ref{thinvariantZ}). Further, $Z$ is multiplicative under connected sum, so that $Z_n$ vanishes on order--$n$ brackets that contain a genus--$0$ elementary surgery (by the same argument as in Theorem~\ref{thtZFTI}). Thus $Z\circ\varphi=\widehat\psi:\A^\aug(\Al,\bl)\to\A(\delta)$, where we define $\widehat\psi:\A^\aug(\Al,\bl)\to\A(\delta)$ as follows: for an augmented $(\Al,\bl)$--colored diagram $D$, $\widehat\psi(D)=0$ if $D$ contains an isolated vertex and $\widehat\psi(D)=\psi(D)$ otherwise. To conclude, we note that $\widehat\psi\circ\tZ^\aug=\psi\circ\tZ$. 
\end{proof}

Here, we are able to explicit and compare $Z^\Kri$ and $Z^\Les$ on the graded space $\G(\Al,\bl)$. In the case of $Z^\Kri$, our refinement $\tZ$ gives an expression of $Z^\Kri$ on the whole $\F_0(\Al,\bl)$ as the composition of the universal invariant $\tZ$ and the explicit map $\psi$ (Proposition~\ref{proptZKri}). A similar refinement of $Z^\Les$ would give an alternative construction of a universal invariant and would enable us to compare more deeply the Kricker and Lescop invariants.

\section{Knots in $\Z$--spheres}
\label{secZpairs}

In this section, we describe the differences that appear when we restrict our study to $\Z\SK$--pairs. Let $\F_0^\z$ be the $\Q$--vector space generated by all $\Z\SK$--pairs up to orientation-preserving homeomorphism. In the work of Garoufalidis--Kricker--Rozansky, the considered surgery move is the null borromean surgery (called null-move in \cite{GR}). One can also consider null $\Z$LP--surgeries, defined as the null LP--surgeries using $\Z$--handlebody and the $\Z$--homology. It turns out that these two moves define the same filtration of $\F_0^\z$: Auclair and Lescop proved that any null $\Z$LP--surgery can be realized by a borromean surgery on a null Y--link \cite[Lemma~4.11]{AL}. We denote $\G^\z$ the associated graded space. 

To a $\Z\SK$--pair $(M,K)$, we associate the {\em integral Alexander module} $\Al_\z(M,K)$ defined as the $\Ztt$ module $H_1(\tilde X;\Z)$, with the notations of Section~\ref{secbackground}. The Blanchfield form is defined on $\Al_\z$ similarly. We may note that $\Al(M,K)=\Al_\z(M,K)\otimes_\Z\Q$ and $\Al_\z(M,K)$ has no $\Z$--torsion (see \cite[Lemma~5.5]{M3}). Once again, the classes of $\Z\SK$--pairs up to null $\Z$LP--surgeries are characterized by the isomorphism classes of integral Blanchfield modules. Hence our filtration splits along the isomorphism classes of integral Blanchfield modules. We shall fix an abstract integral Blanchfield module $(\Al_\z,\bl)$ and consider the associated graded space $\G^\z(\Al_\z,\bl)$.

At the level of diagram spaces, we again use rational coefficients and univalent vertices labeled in the rational Alexander module; the only difference occurs in the definition of the relation Aut. On $(\Al,\bl)$--colored diagrams, the relation $Aut^\z$ is reduced to automorphisms of the Blanchfield module that are induced by automorphisms of the integral Blanchfield module. This is restrictive in general (see \cite[remark after Proposition~7.9]{M7}), so that the associated diagram space $\A^\z(\Al_\z,\bl)$ might be richer. 
At the level of invariants, we consider the same invariants $Z^\Kri$ and $Z^\Les$, and the invariant $\tZ^\z$ is defined using the same construction, but with values in $\A^\z(\Al_\z,\bl)$. Note that there is no ``augmented'' diagrams or invariants in this setting.

In \cite[Theorem~2.17]{M7}, a canonical surjective $\Q$--linear map $\varphi^\z:\A^\z(\Al_\z,\bl)\to\G^\z(\Al_\z,\bl)$ is constructed.

\begin{theorem} \label{thZZuniv}
 The invariant $\tZ^\z$ of $\Z\SK$--pairs induces the inverse of the map $\varphi^\z:\A^\z(\Al_\z,\bl)\to\G(\Al_\z,\bl)$.
\end{theorem}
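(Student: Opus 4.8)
The plan is to mirror the proof of Theorem~\ref{thtZauguniv}, transposing each ingredient from the rational $\Q\SK$--setting to the integral $\Z\SK$--setting. Since \cite[Theorem~2.17]{M7} tells us that $\varphi^\z:\A^\z(\Al_\z,\bl)\to\G^\z(\Al_\z,\bl)$ is surjective, it suffices to prove that $\tZ^\z\circ\varphi^\z$ is the identity of $\A^\z(\Al_\z,\bl)$. The construction of $\tZ^\z$ is literally the same as that of $\tZ$, only with the target diagram space $\A^\z(\Al_\z,\bl)$ in place of $\A(\Al,\bl)$; in particular all the local computations underlying Proposition~\ref{propkey}, the well-definedness lemmas of Section~\ref{secinvariant} and the invariance under Kirby moves go through verbatim, because they take place at the level of beaded Jacobi diagrams on $*_{\pi_0(L)}$ and of the operation $\omega$, and none of them used the relation Aut. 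So the only points that genuinely need rechecking are: (i) that $\tZ^\z_n$ is a finite type invariant of degree $n$ of $\Z\SK$--pairs with respect to null $\Z$LP--surgeries, vanishing on brackets with a genus--$0$ or genus--$1$ elementary surgery, and (ii) that $\tZ^\z_n\circ\varphi^\z_n$ is the identity on $\A^\z_n(\Al_\z,\bl)$.

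For step (i), I would first invoke the Auclair--Lescop result \cite[Lemma~4.11]{AL} already cited in the excerpt, which shows that the filtration of $\F_0^\z$ by null $\Z$LP--surgeries coincides with the one defined by borromean surgeries on null Y--links; this means that, up to reindexing, $\F^\z_{n+1}$ is generated by brackets of null borromean surgeries, so it is enough to treat borromean surgeries (there is no genus--$0$ or genus--$1$ elementary surgery to consider in the $\Z$--setting, which actually \emph{simplifies} the argument: no $\rho_p$, no $d$--tori). Then the induction of Theorem~\ref{thtZFTI} applies: the local contribution of a Y--graph to $\Zp$ is computed by Theorem~\ref{thLe} (Le), giving a leading term of degree $1$, and $n$ disjoint such contributions combine to produce only diagrams of degree $\geq n$, so $\tZ^\z_n$ vanishes on $\F^\z_{n+1}(\Al_\z,\bl)$. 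Hence $\tZ^\z$ descends to a map $\G^\z(\Al_\z,\bl)\to\A^\z(\Al_\z,\bl)$.

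For step (ii), I would repeat the proof of Proposition~\ref{proptZcircphi} essentially unchanged. Let $D$ be an elementary $(\Al_\z,\bl)$--colored diagram of degree $n$; using LD to open the trivalent edges, reduce to the case where $D$ is a disjoint union of $n$ tripods with univalent vertices labeled by $\gamma_i\in\Al_\z$ and prescribed equivariant linkings $f_{ij}$. Realize $D$ by a null Y--link $\Gamma$ in $(M,K)$ (with $(M,K)$ now a $\Z\SK$--pair realizing $(\Al_\z,\bl)$), with leaves $\ell_i$ and edge-components $k_i$. By Theorem~\ref{thLe}, the only degree--$n$ term of $\tZ^\z_n\circ\varphi^\z_n(D)$ is the disjoint union of $n$ tripods whose univalent vertices are labeled by the homology classes $x_i$ of meridians $m(\tilde k_i)$ of the lifts $\tilde k_i$, with linkings $\lk_e\bigl(m(\tilde k_i),m(\tilde k_j)\bigr)$; Lemma~\ref{lemmaTrivialSurgery} then identifies $m(\tilde k_i)$ with $\tilde\ell_i$ up to isotopy, whence $x_i=\gamma_i$ and the linkings match $f_{ij}$, so $\tZ^\z_n\circ\varphi^\z_n(D)=D$.

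The main subtlety—and the step I expect to be the real obstacle—is that the target diagram space $\A^\z(\Al_\z,\bl)$ uses the \emph{restricted} relation $Aut^\z$ (only automorphisms of the rational Blanchfield module induced by integral ones), so one must check that $\tZ^\z$ is well-defined with respect to $Aut^\z$ rather than the full Aut, i.e. that changing the chosen isomorphism $\xi:(\Al_\z,\bl)\to(\Al_\z,\bl)(M,K)$ by an integral automorphism changes $\tZ^\z(M,K)$ only by the $Aut^\z$ relation. This is weaker than what was proved for $\tZ$ (invariance under the full Aut), so it is automatic—but one should spell out that the winding matrix $W_L$ and the associated generators $x_i$ of the \emph{integral} Alexander module are what intervene in $\omega$, so that any change of surgery presentation induces an integral change of basis, hence an element of $Aut^\z$; this is exactly the content already used in the orientation-of-components and Kirby-II parts of the proof that $\tZ$ is an invariant, now read over $\Ztt$. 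Once this is in place, the argument closes exactly as in Theorem~\ref{thtZauguniv}: $\varphi^\z$ is onto and $\tZ^\z\circ\varphi^\z=\mathrm{id}$, so $\tZ^\z$ is the inverse of $\varphi^\z$.
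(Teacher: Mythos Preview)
Your proposal is correct and follows the same approach that the paper (implicitly) takes: the paper simply states the theorem after explaining the integral setup, the point being that all the arguments of Sections~\ref{secinvariant}--\ref{secuniversality} transpose, with the Auclair--Lescop reduction to null borromean surgeries removing the need for genus--$0$ and genus--$1$ surgeries and hence for any augmented version. Your write-up actually spells out more than the paper does; the only quibble is the phrasing around $Aut^\z$: since the ambiguity in choosing $\xi$ is by an \emph{integral} automorphism and $Aut^\z$ is precisely the relation identifying diagrams related by such automorphisms, well-definedness in $\A^\z$ is immediate for the same formal reason as in the rational case---the Kirby-move invariance proofs already establish equality in $\tilde\A$ before any Aut quotient is taken.
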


Once again, the Kricker invariant can be recovered from $\tZ^\z$: $Z^\Kri=\psi\circ\tZ^\z$. Further, both the Kricker and the Lescop invariants induce the same map on $\G^\z(\Al_\z,\bl)$, namely the map $\psi\circ\tZ^\z$.

\appendix

\section{Corollaries of the multinomial formula}

For integers $k>0$ and $k_1,\dots,k_p\geq0$, a multinomial coefficient is defined as $\binom{k}{k_1,\dots,k_p}=\prod_{i=1}^p\binom{\sum_{j=1}^ik_j}{k_i}=\frac{k!}{\prod_{i=1}^pk_i}$. It fits into the multinomial formula: 
$$\left(\sum_{i=1}^px_i\right)^k=\sum_{k_1+\dots+k_p=k}\binom{k}{k_1,\dots,k_p}\prod_{i=1}^px_i^{k_i},$$
which, for $x_i=1$, gives $\displaystyle\sum_{k_1+\dots+k_p=k}\binom{k}{k_1,\dots,k_p}=p$.
From this, one can deduce the following generalization for all $a,k>0$ and $b\geq0$ (proceed by induction on $p>0$ and $k\in\{0,\dots,p\}$).
\begin{equation} \label{formuleMult}
 \sum_{\substack{k_1+\dots+k_p=k\\k_1,\dots,k_{p-q}\geq0\\k_{p-q+1},\dots,k_p>0}}\binom{k}{k_1,\dots,k_p}=\sum_{j=0}^q(-1)^j\binom qj(p-j)^k
\end{equation}

We will also need the following formula.
\begin{lemma} \label{lemmabinom}
 For $a>b>0$, $\displaystyle\sum_{j=1}^a(-1)^j\binom ajj^b=0$.
\end{lemma}
\begin{proof}
 We claim that for $a>0$ and $b\in\{1,\dots,a\}$, there exist polynomial functions $f_{a,b}$ of degree~$b$ such that, for all $x\in\R$, $\sum_{j=1}^a(-1)^j\binom ajj^bx^j=f_{a,b}(x)\,(1-x)^{a-b}$. To see this, start with the binomial formula $\sum_{j=0}^a(-1)^j\binom ajx^j=(1-x)^{a}$, differentiate it and then multiply both sides by $x$. This gives the case $b=1$, and the induction step is obtained in the same way. Now evaluating at $x=1$ gives the required equality.
\end{proof}

\begin{lemma} \label{lemma:calcul}
 For all $k>0$, 
 $\displaystyle \sum_{a=0}^k\sum_{\substack{k_0+\cdots+k_a=k\\k_0\geq0,\ k_1,\ldots,k_a>0}}\frac{(-1)^a}{\prod_{j=0}^ak_j!}=0$.
\end{lemma}
\begin{proof}
 We have
 \begin{align*}
  k!\sum_{a=0}^k\sum_{\substack{k_0+\cdots+k_a=k\\k_0\geq0,\ k_1,\ldots,k_a>0}}\frac{(-1)^a}{\prod_{j=0}^ak_j!}&=\sum_{a=0}^k(-1)^a\sum_{\substack{k_0+\cdots+k_a=k\\k_0\geq0,\ k_1,\ldots,k_a>0}}\binom k{k_0,\dots,k_a}\\
  &=\sum_{a=0}^k(-1)^a\sum_{j=0}^a(-1)^j\binom aj(a-j+1)^k\\
  &=\sum_{a=0}^k\sum_{j=0}^a(-1)^j\binom aj(j+1)^k\\
  &=\sum_{j=0}^k(-1)^j(j+1)^k\sum_{a=j}^k\binom aj\\
  &=\sum_{j=0}^k(-1)^j(j+1)^k\binom{k+1}{j+1}\\
  &=\sum_{j=1}^{k+1}(-1)^{j+1}j^k\binom{k+1}{j}
 \end{align*}
 where the first equality comes from the definition of the binomial coefficient, and the second one from \ref{formuleMult}. This vanishes thanks to Lemma~\ref{lemmabinom}.
\end{proof}

\begin{lemma} \label{lemma:calculbis}
 For all $\ell>1$,
 $\displaystyle \sum_{c=1}^\ell\sum_{\substack{\ell_1+\cdots+\ell_c=\ell\\ \ell_1,\ldots,\ell_c>0}}\frac{(-1)^c}{c\prod_{j=1}^c\ell_j!}=0$.
\end{lemma}
\begin{proof}
 The computation is similar to that of the previous lemma.
 \begin{align*}
  \ell!\sum_{c=1}^\ell\sum_{\substack{\ell_1+\cdots+\ell_c=\ell\\\ell_1,\ldots,\ell_c>0}}\frac{(-1)^c}{\prod_{j=1}^c\ell_j!}&=\sum_{c=1}^\ell\frac{(-1)^c}c\sum_{\substack{\ell_1+\cdots+\ell_c=\ell\\\ell_1,\ldots,\ell_c>0}}\binom \ell{\ell_1,\dots,\ell_c}\\
  &=\sum_{c=1}^\ell\frac{(-1)^c}c\sum_{j=0}^c(-1)^j\binom cj(c-j)^\ell\\
  &=\sum_{c=1}^\ell\sum_{j=1}^c\frac{(-1)^j}c\binom cjj^\ell\\
  &=\sum_{c=1}^\ell\sum_{j=1}^c(-1)^j\binom{c-1}{j-1}j^{\ell-1}\\
  &=\sum_{c=0}^{\ell-1}\sum_{j=0}^c(-1)^{j+1}\binom{c}{j}(j+1)^{\ell-1}\\
  &=\sum_{j=0}^{\ell-1}(-1)^{j+1}(j+1)^{\ell-1}\sum_{c=j}^{\ell-1}\binom cj\\
  &=\sum_{j=0}^{\ell-1}(-1)^{j+1}(j+1)^{\ell-1}\binom{\ell}{j+1}\\
  &=\sum_{j=1}^\ell(-1)^jj^{\ell-1}\binom{\ell}{j}
 \end{align*}
 Once again, this vanishes thanks to Lemma~\ref{lemmabinom}.
\end{proof}

\def\cprime{$'$}
\providecommand{\bysame}{\leavevmode ---\ }
\providecommand{\og}{``}
\providecommand{\fg}{''}
\providecommand{\smfandname}{\&}
\providecommand{\smfedsname}{\'eds.}
\providecommand{\smfedname}{\'ed.}
\providecommand{\smfmastersthesisname}{M\'emoire}
\providecommand{\smfphdthesisname}{Th\`ese}

\end{document}